\newtheorem{thm}{Theorem}[section]
 \newtheorem{cor}[thm]{Corollary}
 \newtheorem{lem}[thm]{Lemma}
 \newtheorem{claim}[thm]{Claim}
 \newtheorem{prop}[thm]{Proposition}
 \theoremstyle{definition}
 \newtheorem{df}[thm]{Definition}
 \theoremstyle{remark}
 \newtheorem{rem}[thm]{Remark}
 \newtheorem{ex}{Example}
 \numberwithin{equation}{section}
\def\be#1 {\begin{equation} \label{#1}}
\newcommand{\ee}{\end{equation}}
\newcommand{\mb}{\medskip\noindent}
\newcommand{\gb}{\bigskip\noindent}
\newcommand{\R}{\mathbb R}
\newcommand{\M}{\mathcal M}
\newcommand{\N}{\mathbb N}
\newcommand{\s}{\mathcal S}
\newcommand{\F}{\mathcal F}
\def \dsp {\displaystyle}
\def \vsp {\vspace{6pt}}
\begin{document}

\author{Fr\'ed\'eric Bernicot}
\address{Fr\'ed\'eric Bernicot - CNRS - Universit\'e Lille 1 \\ Laboratoire de math\'ematiques Paul Painlev\'e \\ 59655 Villeneuve d'Ascq Cedex, France}
\curraddr{}
\email{frederic.bernicot@math.univ-lille1.fr}

\author{Pierre Germain}
\address{Pierre Germain - Courant Institute of Mathematical Sciences \\ New York University \\ 251 Mercer Street \\ New York, N.Y. 10012-1185 \\ USA}
\email{pgermain@cims.nyu.edu}

\title{Boundedness of bilinear multipliers whose symbols have a narrow support}

\subjclass[2000]{Primary 42B10 ; 42B20}

\keywords{Bilinear multipliers ; oscillatory integrals ; bilinear Bochner-Riesz means}

\date{\today}

\begin{abstract} This work is devoted to studying the boundedness on Lebesgue spaces of bilinear multipliers on $\R$ whose symbol is narrowly supported around a curve (in the frequency plane). We are looking for the optimal decay rate (depending on the width of this support) for exponents satisfying a sub-H\"older scaling. As expected, the geometry of the curve plays an important role, which is described. \\
This has applications for the bilinear Bochner-Riesz problem (in particular, boundedness of multipliers whose symbol is the characteristic function of a set), as well as for the bilinear restriction-extension problem.
\end{abstract}

\maketitle

\begin{quote}
\footnotesize\tableofcontents
\end{quote}

\section{Introduction}

\subsection{The central question}

Pseudo-products were introduced by Bony \cite{bony} and Coifman-Meyer~\cite{coifmanmeyer}; we shall define, and study them, only in the case of bilinear operators.
Given a symbol $m(\xi,\eta)$, the pseudo-product $B_m$, acting on functions over $\mathbb{R}^d$, is given by
\begin{align*}
B_m(f,g)(x) & := \frac{1}{(2 \pi)^{d/2}} \int_{\mathbb{R}^d} \int_{\mathbb{R}^d} e^{ix(\xi+\eta)} m(\xi,\eta) \widehat{f}(\xi) \widehat{g}(\eta)\,d\eta\,d\xi \\
& = \mathcal{F}^{-1} \left[\xi \rightarrow \int_{\mathbb{R}} m(\xi-\eta,\eta) \widehat{f}(\xi-\eta) \widehat{g}(\eta)\,d\eta\right](x)
\end{align*}
(notations, in particular the convention used for the Fourier transform, are given in Section~\ref{notationsandpreliminaries}).
Our aim is to study, for $d=1$, the connection between singularities of $m$ (on various scales, and along any possible smooth geometry) and the boundedness properties of $B_m$. 

A relevant model is the following: consider a smooth curve $\Gamma \subset \mathbb{R}^2$, and let $m_\epsilon$ be a symbol, of size less than $1$, and $0$ at a distance $\epsilon$ of $\Gamma$. What about the regularity of $m_\epsilon$? A first possibility is to simply ask that it varies on a typical length $\epsilon$; one can also ask more smoothness in the direction tangential to $\Gamma$: see definitions~\ref{tatou} and~\ref{herisson} for more precise definitions.

\bigskip

\mb {\bf Question:} {\it Set $d=1$. For which Lebesgue exponents $(p,q,r) \in [1,\infty]^3$ and for which functions $\alpha(\epsilon)$ does there hold
\begin{equation}
\label{ecureuil} \left\| B_{m_\epsilon} \right\|_{L^p\times L^q \rightarrow L^{r'}} \lesssim \alpha(\epsilon)
\end{equation}
or, to put it in a more symmetrical fashion,
\begin{equation}
\left| \langle B_{m_\epsilon}(f,g) \,,\,h \rangle \right| \lesssim \alpha(\epsilon) \left\|f \right\|_{L^p} \left\|f \right\|_{L^q} \left\|f \right\|_{L^r} \;\;?
\end{equation}
(Typically, $\alpha(\epsilon)$ will be a power function with perhaps a logarithmic correction).}

\bigskip

As we will see, answering to the above question contributes to solving the following problems:
\begin{itemize}
\item (Bilinear Bochner-Riesz) Given a compact domain $K$ with a smooth boundary, for which $p,q,r,\kappa$ is $B_{m^\kappa_K}$ bounded from $L^p \times L^q$ to $L^{r'}$, if $m^\kappa_K(\eta,\xi) = \chi_K(\xi,\eta) \operatorname{dist} ((\xi,\eta),\partial K)^{\kappa}$?
\item (Bilinear restriction-extension) Given a curve $\Gamma$, for which $p,q,r$ is $B_{d\sigma_\Gamma}$ bounded from $L^p \times L^q$ to $L^{r'}$?
\end{itemize}
(the notations $\chi_K$ and $d\sigma_\Gamma$ are defined in Section~\ref{notationsandpreliminaries}).

\subsection{Analogy with the linear case}

The above questions have clear analogs in the linear case: these are the well-known Bochner-Riesz (boundedness between Lebesgue spaces of $f \mapsto \mathcal{F}^{-1} \left[ \widehat{f} \chi_K \operatorname{dist} (\cdot , \partial K)^{\kappa}\right]$ for $K$ subset of $\mathbb{R}^d$), restriction (boundedness of $f \mapsto \widehat{f}|_\Gamma$ for $\Gamma$ hypersurface of $\mathbb{R}^d$) and extension (boundedness of $f \mapsto \widehat{f d\sigma_\Gamma}$) problems. Notice that combining the restriction and extension problem gives the transformation $f \mapsto \widehat{\widehat{f} d\sigma_\Gamma}$, which we called restriction-extension in the bilinear setting.

The case of dimension 1 only requires very standard harmonic analysis. The case of dimension 2, which we now address, is more subtle, and the geometry (of $K$, $\Gamma$) starts playing an important role.

Let us discuss first the Bochner-Riesz problem. By Plancherel, it is clear that Fourier multipliers with symbol $\chi_K \operatorname{dist} (\cdot , \partial K)^{\kappa}$ will be bounded on $L^2$. If $K$ is a polygon, they will be bounded on $L^p$ by appealing to the one-dimensional theory. It came as a surprise that if the boundary of $K$ is curved, the corresponding multiplier is only bounded on $L^p$ if $p=2$: this is the content of the Fefferman ball multiplier theorem~\cite{Fefferman}. We emphasize that for the linear theory, a key dichotomy is flat versus curved boundary of $\partial K$. Boundedness of Fourier multipliers with symbols $(|\xi|^2-1)^\kappa_+$, with $\kappa>0$, was characterized by Carleson and Sj\"olin~\cite{CS}, with different proofs proposed by Fefferman~\cite{Fefferman2} and Cordoba~\cite{Cordoba}.

The restriction and extension problems are simply dual to one another. Here again, the key distinction is $\Gamma$ flat versus $\Gamma$ curved: if $\Gamma$ is flat, no restriction theorem holds (ie $f \mapsto \widehat{f}|_\Gamma$ is never bounded), but if $\Gamma$ is curved, restriction properties can be proved. The full restriction theorem for the circle is due to Fefferman and Stein~\cite{Fefferman2}, with a different proof proposed by C\'ordoba~\cite{Cordoba2}.

Let us notice here that many of the above proofs rely on the study of {\it linear} operators with symbols of the type described above: $m_\epsilon(\eta,\xi)$.

The Bochner-Riesz or restriction problems in higher dimension $d \geq 3$ are still open, we refer to the monographs by Stein~\cite{Stein} and Grafakos~\cite{Grafakos} for an introduction.

\subsection{Known results for the bilinear case}

Much of the research on bilinear operators has focused on boundedness between Lebesgue spaces at the H\"older scaling: from $L^p \times L^q$ to $L^{r'}$, with $\frac{1}{p} + \frac{1}{q} = \frac{1}{r}$.

The first results, obtained by Coifman and Meyer~\cite{coifmanmeyer}, allowed for a singularity localized at a point: the symbol $m$ satisfies a Mikhlin-type condition $\left|\partial_\xi^\alpha \partial_\eta^\beta m(\xi,\eta) \right| \lesssim \frac{1}{(|\xi|+|\eta|)^{|\alpha| + |\beta|}}$. Then $B_m$ is bounded from $L^p \times L^q$ to $L^{r'}$ if $1=\frac{1}{p} + \frac{1}{q} + \frac{1}{r}$, and $1<p,q,r<\infty$. For another result on boundedness with a singularity at a point, see Gustafson, Nakanishi and Tsai~\cite{GNT}, and the version of their result in Guo and Pausader~\cite{GP}.

The bilinear Hilbert transform corresponds to taking $d=1$, and for $m$ the characteristic function of a (perhaps tilted) half-plane: $m$ is singular along a line. The celebrated results of Lacey and Thiele \cite{LT2, LT1, LT3, LT4} gave boundedness of $B_m$ from $L^p \times L^q$ to $L^{r'}$ with $1<p,q<\infty$ and $0<\frac{1}{r'}=\frac{1}{p}+\frac{1}{q}<\frac{3}{2}$. These results were later extended by Grafakos and Li \cite{GL0, Li} to cover the case where $m$ is the characteristic function of a polygon. We refer the reader to \cite{B}, where the first author proved boundedness for particular square functions built on a covering of the frequency plane with polygons.

Finally, let us discuss the case where $m$ is the characteristic function of the ball: the singularity is now localized on a curved set. Diestel and Grafakos proved  that the characteristic function of the four-dimensional ball is not a bounded bilinear multiplier operator from $L^p(\R^2) \times L^q (\R^2 )$ into $L^{r'} (\R^2 )$ outside the local $L^2$-case, i.e. when $1/p + 1/q + 1/r=1$ and $2 \leq p, q, r < \infty$ fails. Conversely, it was shown by Grafakos and Li \cite{GL} that the characteristic function of the unit disc in $\R^2$ is a bounded bilinear multiplier on $L^p(\R^2) \times L^q (\R^2 )$ into $L^{r'} (\R^2 )$ in the local $L^2$-case.  The corresponding problem in higher dimension remains unresolved. The positive results of boundedness can be extended to ellipses. 

Let us point out that the easiest case, i.e. for $p=q=r=2$ can be directly studied with Plancherel inequality. This very particular setting has been involved with the use of  $X^{s,b}$ spaces,  and also has been extended for less regular symbols to the so-called ``multilinear convolution in $L^2$'' by Tao in \cite{tao}.

\subsection{Obtained results}

While the previously cited works deal with the critical (and more complicated) case where the exponents $p,q,r\in[1,\infty]$ satisfy the H\"older scaling
$ 1=\frac{1}{r}+\frac{1}{p}+\frac{1}{q}$, and sometimes allow for Lebesgue exponents less than one, we shall also assume in this whole article that
$$
1 \leq p,q,r \leq \infty,
$$
and extend attention to the ``sub-critical'' range
\begin{equation}
\label{eq:subholder}
1\leq \frac{1}{r}+\frac{1}{p}+\frac{1}{q}.
\end{equation}

\bigskip

What are the important geometric features of $\Gamma$ as far as boundedness of $B_{m_\epsilon}$ is concerned? As will be illustrated in the following theorems, the crucial point is actually whether $\Gamma$ has tangents parallel to the axes $\{ \xi = 0 \}$, $\{ \eta = 0 \}$, and $\{ \xi+\eta = 0 \}$ axes.

\begin{df}
\label{def:noncar}
Given a smooth curve $\Gamma$ in the $(\xi,\eta)$ plane, its characteristic points are the points where its tangent is parallel to the $\{ \xi = 0 \}$, $\{ \eta = 0 \}$, or $\{ \xi+\eta = 0 \}$ axes. We call $\Gamma$ characteristic (respectively, non-characteristic) if such points exist (respectively, do not exist).
\end{df}

The best bounds for $B_{m_\epsilon}$ are obtained if $\Gamma$ is non-characteristic; the next best thing is when the set of characteristic points is finite, with non zero curvature of $\Gamma$; and the worst possibility is of course when a piece of $\Gamma$ is a segment parallel to one of the axes $\xi$, $\eta$, or $\xi+\eta$.

It is worth noticing that, as opposed to the linear case, the curvature of $\Gamma$ does not play any role {\it per se}, but only because a non zero curvature prevents the points close to a characteristic point of being too close to characteristic themselves. In particular, at non-characteristic points, it is indifferent whether $\Gamma$ has a curvature or not.

\bigskip

Before stating our theorems, let us define the regularity classes for $m_\epsilon$ which we will use.
The first class only requires $m_\epsilon$ to be supported in an $\epsilon$-neighborhood of $\Gamma$, with derivatives of order $\frac{1}{\epsilon}$.

\begin{df}
\label{tatou}
The scalar-valued symbol $m_\epsilon$ belongs to the class $\mathcal{M}_\epsilon^\Gamma$ if
\begin{itemize}
\item $\Gamma$ is a smooth curve in $\mathbb{R}^2$.
\item $m_\epsilon$ is supported in $B(0,1)$, as well as in a neighborhood of size $\epsilon$ of $\Gamma$.
\item $\left|\partial_\xi^\alpha \partial_\eta^\beta m_\epsilon (\xi,\eta)\right| \lesssim \epsilon^{-|\alpha|-|\beta|}$ for sufficiently many indices $\alpha$ and $\beta$.
\end{itemize}
\end{df}

The above class turns out to be too weak in the following case: $\Gamma$ characteristic, with a non-vanishing curvature, and nearly H\"older exponents. Then more tangential smoothness is required: this is the point of the next definition (which could be weakened a lot, but appropriate conditions would then become too technical).

\begin{df} 
\label{herisson} Close to $\Gamma$, it is possible to define ``normal directions'' simply by prolonging the normals to $\Gamma$, and ``tangential directions'': these are lines whose tangents are everywhere orthogonal to normal directions. If $\nu$ is the distance function to $\Gamma$, $\nabla \nu$ can be considered as the direction of the local normal coordinate and $(\nabla \nu)^{\perp}$ as the direction of the local tangential coordinate. We are interested in symbols $m_\epsilon$ satisfying a nice behavior in the tangential directions given by $(\nabla \nu)^\perp$. For a vector $X$, define
$$
\partial_X := X \cdot \nabla.
$$
The scalar-valued symbol $m_\epsilon$ belongs to the class $\mathcal{N}_\epsilon^\Gamma$ if
\begin{itemize}
\item $\Gamma$ is a smooth curve in $\mathbb{R}^2$. 
\item $m_\epsilon$ is supported in $B(0,1)$, as well as in a neighborhood of size $\epsilon$ of $\Gamma$.
\item for sufficiently many indices $\alpha,\beta$
$$ \left|\partial_{\nabla \nu}^\alpha \partial_{(\nabla \nu)^\perp} ^\beta m_\epsilon(\xi,\eta)\right| \lesssim \epsilon^{-|\alpha|}.$$
\end{itemize}
\end{df}

We now come to the obtained results; though we do not always state them in an optimal way for the sake of concision and clarity. The interested reader can refer to sections~\ref{sec:L2}, \ref{SOPP}, \ref{CTHP}, and \ref{sec:combinaison}, where the precise statements are given. Moreover, Section \ref{sec:extensions} contains extensions to rough curves $\Gamma$ (as soon as they satisfy some rectifiability and Ahlfors regularity properties).

\subsubsection{The non-characteristic case}

\begin{thm}[Corollary \ref{cor:nonc}] 
\label{thnc}
Assume that $\Gamma$ is nowhere characteristic, and let $m_\epsilon$ belong to $\mathcal{M}^\Gamma_\epsilon$; consider $p,q,r$ in $(1,\infty)$. Then
\begin{itemize} 
\item  If $\left\{ \displaystyle \begin{array}{l} 
1 \leq \frac{1}{p}+\frac{1}{q}+\frac{1}{r} \leq 2 \vsp \\ 
\frac{1}{r}+\frac{1}{q} \leq \frac{3}{2} \vsp \\ 
\frac{1}{p}+\frac{1}{q} \leq \frac{3}{2} \vsp \\ 
\frac{1}{p}+\frac{1}{r} \leq \frac{3}{2}, \vsp 
\end{array} \right.$, then $\displaystyle \|B_{m_\epsilon}\|_{L^p \times L^q \rightarrow L^{r'}} \lesssim \epsilon^{\frac{1}{p}+\frac{1}{q}+\frac{1}{r}-1}$,
and this exponent of $\epsilon$ is optimal.
\medskip

\item If $\displaystyle \left\{ \begin{array}{l} 
q \geq 2 \geq p,r \vsp \\ 
\frac{1}{p}+\frac{1}{r} \geq \frac{3}{2}, \vsp \end{array} \right.$, then $\displaystyle
\|B_{m_\epsilon}\|_{L^p \times L^q \rightarrow L^{r'}} \lesssim \epsilon^{\frac{1}{q}+\frac{1}{2}}$.
\medskip

\item The above statement of course remains true if the indices $(p,q,r)$ are permuted.
\medskip

\item If $\displaystyle \left\{ \begin{array}{l} p,q,r \leq 2 \\ \frac{1}{p}+\frac{1}{q}+\frac{1}{r} \geq 2 \end{array} \right.$, then $\|B_{m_\epsilon}\|_{L^p \times L^q \rightarrow L^{r'}} \lesssim \epsilon$
and this bound is optimal.

\end{itemize}
\end{thm}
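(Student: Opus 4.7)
The plan is to prove the four bullets by a combination of direct Plancherel/kernel estimates at several ``corner'' configurations and bilinear interpolation (possibly after an atomic decomposition of $m_\epsilon$ along $\Gamma$); optimality will be checked on explicit test functions. A systematic simplification is that under the non-characteristic hypothesis the trilinear form $\Lambda(f,g,h)=\langle B_{m_\epsilon}(f,g),h\rangle$ enjoys full $S_3$ symmetry in $(p,q,r)$ (via the changes of variables $\xi\leftrightarrow\eta$ and $\eta\leftrightarrow-\xi-\eta$, which permute the three axes of the hypothesis), so I only need to prove one representative inequality per bullet and recover the rest by permutation.

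The central base estimate is $\|B_{m_\epsilon}\|_{L^2\times L^2\to L^2}\lesssim\epsilon^{1/2}$. By Plancherel,
\[
\|B_{m_\epsilon}(f,g)\|_2^2=\int\Big|\int m_\epsilon(\xi,\zeta-\xi)\hat f(\xi)\hat g(\zeta-\xi)\,d\xi\Big|^2 d\zeta,
\]
and after Cauchy--Schwarz in $\xi$ the problem reduces to controlling $\sup_\zeta\int|m_\epsilon(\xi,\zeta-\xi)|\,d\xi$, which is $O(\epsilon)$ precisely because the non-characteristic hypothesis at the axis $\{\xi+\eta=0\}$ says that the slice of $\operatorname{supp}m_\epsilon$ by any line $\{\xi+\eta=\zeta\}$ has $\xi$-projection of length $O(\epsilon)$. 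The symmetric slicings in the other two axes, combined with $\|\hat f\|_\infty\le\|f\|_1$ and the analogous estimate for $g$, upgrade this to the edge bounds $L^1\times L^2\to L^2$, $L^2\times L^1\to L^2$, and $L^1\times L^1\to L^2$ with norm $\epsilon$. A direct absolute-value bound in Fourier gives $\|B_{m_\epsilon}\|_{L^1\times L^1\to L^\infty}\lesssim|\operatorname{supp}m_\epsilon|\lesssim\epsilon$, while integration by parts in the normal direction to $\Gamma$ on the kernel $K=\mathcal F^{-1}m_\epsilon$ (using smoothness at scale $\epsilon$) shows $\|K\|_1\lesssim 1$ and hence $\|B_{m_\epsilon}\|_{L^\infty\times L^\infty\to L^\infty}\lesssim 1$.

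Bilinear Riesz--Thorin interpolation between the above endpoints already recovers Region~I (first bullet) along the principal $L^2$-slices; to cover the interior of Region~I with the formula $\epsilon^{1/p+1/q+1/r-1}$ one decomposes $m_\epsilon=\sum_j m_j$ into $\sim 1/\epsilon$ smooth bumps supported in balls of radius $\sim\epsilon$ centered on $\Gamma$. Each piece $B_{m_j}$ is, after rescaling and frequency modulation, a Coifman--Meyer operator at scale $\epsilon$, so $\|B_{m_j}\|_{L^p\times L^q\to L^{r'}}\lesssim\epsilon^{1/p+1/q+1/r-1}$ for any $(p,q,r)\in(1,\infty)^3$ with $1/p+1/q+1/r\ge 1$. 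The non-characteristic hypothesis makes the $(\xi+\eta)$-supports of the pieces essentially disjoint, so a Littlewood--Paley square function estimate adapted to the resulting partition of $\Gamma$ lets one sum the pieces in $L^{r'}$ without losing a power of $\epsilon$. Region~III (fourth bullet) follows from the same decomposition argument with the bound $\epsilon$ propagating uniformly. For Region~II (second bullet), the key extra input is Bernstein: since $\operatorname{supp}m_\epsilon\subset B(0,1)$, $B_{m_\epsilon}(f,g)$ is Fourier-supported in $[-2,2]$, so $\|B_{m_\epsilon}(f,g)\|_\infty\lesssim\|B_{m_\epsilon}(f,g)\|_2$; combined with the $L^1\times L^2\to L^2$ edge bound this furnishes $\|B_{m_\epsilon}\|_{L^1\times L^2\to L^\infty}\lesssim\epsilon$, and interpolating against $L^2\times L^2\to L^2$ (weight $\epsilon^{1/2}$) produces $\epsilon^{1/q+1/2}$ throughout Region~II.

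Optimality for the first and fourth bullets is tested on explicit pairs $(f,g)$ against a model $m_\epsilon$ (e.g.\ a smooth bump on the $\epsilon$-tube of a short arc of $\Gamma$). For the first, $\hat f,\hat g$ are smooth bumps on unit-length intervals placed so that the Fourier integrand in the definition of $B_{m_\epsilon}(f,g)$ is of unit size on a unit arc of $\operatorname{supp}m_\epsilon$; direct computation matches $\epsilon^{1/p+1/q+1/r-1}$. For the fourth, $\hat f,\hat g$ are bumps of width $\sim\epsilon$ centered at compatible points of $\Gamma$; Fourier uncertainty gives an output of amplitude $\sim 1$ on a window of physical size $\sim\epsilon^{-1}$, yielding exactly the $\epsilon$ lower bound after normalizing the $L^p$, $L^q$, $L^{r'}$ norms. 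The main obstacle is the square function / orthogonality step in filling Region~I: constructing a Littlewood--Paley decomposition along $\Gamma$ that is compatible with all three projections $\xi$, $\eta$, $\xi+\eta$ uses the non-characteristic hypothesis essentially, and verifying that the associated square function inequalities hold on $L^p$ for the required range is where the geometric content of the theorem enters most strongly. A secondary delicate point is checking that the Riesz--Thorin interpolation producing Region~II genuinely stays inside $\{q\ge 2\ge p,r,\ 1/p+1/r\ge 3/2\}$ and does not overlap Region~I, where the better exponent $\epsilon^{1/p+1/q+1/r-1}$ would otherwise be erroneously predicted.
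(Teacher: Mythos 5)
Your endpoint estimates at $(2,2,2)$, $(1,2,2)$, $(1,1,2)$, $(1,1,1)$, and the Bernstein upgrade to $(1,2,1)$ are correct and close in spirit to the paper's, and the symmetry reduction is legitimate. But two steps fail as written. The most serious is the second bullet: it is not reachable by interpolation from your anchor set. Every anchor you prove with a nontrivial power of $\epsilon$ has $1/q\ge 1/2$ (i.e.\ $q\le 2$); your only anchor with $1/q<1/2$ is $L^\infty\times L^\infty\to L^\infty$ with bound $O(1)$. If a Region~II point $(1/p,1/q,1/r)=(a,b,c)$ with $b<1/2$ is written as a convex combination $\theta_0(0,0,0)+\sum_i\theta_i P_i$ with the $P_i$ among your $\epsilon$- and $\epsilon^{1/2}$-anchors, then since each $P_i$ has second coordinate $\ge 1/2$ one gets $\sum_i\theta_i\le 2b$, so the interpolated exponent is at most $2b=2/q$, strictly short of the claimed $1/q+1/2$ for $q>2$. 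What is missing is precisely an endpoint with one exponent at (or tending to) infinity and the other two near $1$ carrying $\epsilon^{1/2}$: this is the paper's point $(1,1,\infty)$ (Proposition~\ref{prop:11i}, proved by an $L^1$ bound on $\widehat{F_{y,z}}$ via interpolation of $L^2$ against $L^2(x^2dx)$), combined with the $(1,t,2)$-type bounds from Proposition~\ref{prop:L22}. Without such an ingredient bullet two is not proved.

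Second, your anchor $\|B_{m_\epsilon}\|_{L^\infty\times L^\infty\to L^\infty}\lesssim 1$ rests on $\|\mathcal{F}^{-1}m_\epsilon\|_{L^1(\R^2)}\lesssim 1$, which is false for the class $\mathcal{M}^\Gamma_\epsilon$: integration by parts in the normal direction exploits only scale-$\epsilon$ smoothness, and tangentially $\mathcal{M}^\Gamma_\epsilon$ also allows oscillation at scale $\epsilon$. For a non-characteristic line and $m_\epsilon(u,v)=\chi(u/\epsilon)\sum_j\pm\,\psi((v-j\epsilon)/\epsilon)$ with random signs, Khintchine gives $\|\widehat{m_\epsilon}\|_{L^1}\gtrsim\epsilon^{-1/2}$; the $O(1)$ kernel bound needs the tangentially regular class $\mathcal{N}^\Gamma_\epsilon$. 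Relatedly, the summation step you flag in Region~I is the actual content of the theorem: Rubio de Francia square functions are only available for exponents $\ge 2$, and the paper's device (Propositions~\ref{prop:L2} and~\ref{prop:L22}) is to place the single exponent below $2$ in a maximal-function slot --- possible for any of the three roles exactly because $\Gamma$ is nowhere characteristic --- and then to interpolate the four sub-squares; your proposal neither carries this out nor can substitute interpolation from your endpoints. Finally, your optimality tests are swapped: unit-scale bumps give the lower bound $\epsilon$ (fourth bullet), while the rescaled bumps of Fourier width $\epsilon$, as in Proposition~\ref{fourmi}~(ii), give $\epsilon^{\frac{1}{p}+\frac{1}{q}+\frac{1}{r}-1}$ (first bullet); as described, your computations yield the wrong exponents for the bullets to which you attach them.
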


The three cases distinguished in the above theorem cover the full range $1 < p,q,r < \infty$. The bounds are optimal except in the second case. Optimality extends to symbols in the smoother class $\mathcal{N}^\Gamma_\epsilon$, and this will also be the case in the next theorems when optimality is stated.

The proof of this theorem can be found in Section~\ref{sec:combinaison}, where the interpolation between endpoint type results obtained in sections~\ref{sec:L2} and~\ref{SOPP} is performed. The optimality statements follow from Section~\ref{NCATSC}.

\subsubsection{The non-vanishing curvature case}

In our first theorem, we only assume that $m_\epsilon \in \mathcal{M}^\Gamma_\epsilon$.

\begin{thm} 
\label{thnvc}
Assume $\Gamma$ has non-vanishing curvature, and let $m_\epsilon$ belong to $\mathcal{M}^\Gamma_\epsilon$; consider $p,q,r$ in $(1,\infty)$. Then
\begin{itemize}
\item If $\displaystyle p,q,r \geq 2$, then $\displaystyle \|B_{m_\epsilon}\|_{L^p \times L^q \rightarrow L^{r'}} \lesssim \epsilon^{\frac{1}{p}+\frac{1}{q}+\frac{1}{r}-1}$
and this power of $\epsilon$ is optimal.
\medskip

\item If $\displaystyle \left\{ \begin{array}{l} q > 2 \vsp \\ p,r < 2 \vsp \end{array} \right.$, then $\displaystyle \|B_{m_\epsilon}\|_{L^p \times L^q \rightarrow L^{r'}} \lesssim \epsilon^{-\frac{1}{2}+\frac{1}{q}+\frac{1}{2}\left(\frac{1}{p}+\frac{1}{r} \right) - \delta}$
for any $\delta>0$. The above power of $\epsilon$ is optimal up to the additional $\delta$.

\medskip

\item The above statement remains true if the indices $(p,q,r)$ are permuted.

\medskip
\item If $\displaystyle \frac{1}{p}+\frac{1}{q}+\frac{1}{r} > \frac{5}{2}$, then $\|B_{m_\epsilon}\|_{L^p \times L^q \rightarrow L^{r'}} \lesssim \epsilon$ and this bound is optimal.

\medskip
\item If $\displaystyle \left\{ \begin{array}{l} p,q,r<2 \vsp \\ \frac{3}{2} \leq \frac{1}{p}+\frac{1}{q}+\frac{1}{r} \leq \frac{5}{2}, \vsp \end{array}\right.$, then $\displaystyle\|B_{m_\epsilon}\|_{L^p \times L^q \rightarrow L^{r'}} \lesssim \epsilon^{\frac{1}{2} \left( \frac{1}{p}+\frac{1}{q}+\frac{1}{r}-\frac{1}{2} -\delta\right)}.$ for any $\delta>0$.
\end{itemize}
\end{thm}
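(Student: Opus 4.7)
Since $\Gamma$ has non-vanishing curvature, its set of characteristic points is finite, so the plan is to reduce to Theorem~\ref{thnc} by splitting $m_\epsilon$ with a smooth partition of unity into a non-characteristic core plus a neighborhood of each characteristic point. On the core the statement follows directly from Theorem~\ref{thnc} (with bilinear interpolation between the endpoint estimates of Sections~\ref{sec:L2} and~\ref{SOPP} filling in the mixed regimes). The real work is thus concentrated near a single characteristic point, where one may assume the tangent to be parallel to the $\xi$-axis and $\Gamma$ to be parametrized by $\eta=\phi(\xi)$ with $\phi'(0)=0$, $\phi''(0)\neq 0$, so that $\Gamma$ locally resembles a parabola.

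Near such a point I would perform a Whitney-type decomposition $m_\epsilon=m^{*}_\epsilon+\sum_{k\leq k_{*}}\psi_{k}\,m_\epsilon$, where $\psi_{k}$ is a smooth cut-off to $|\xi|\sim 2^{-k}$, $2^{-k_{*}}\sim\sqrt{\epsilon}$, and $m^{*}_\epsilon$ is the residual piece supported on $|\xi|\lesssim\sqrt\epsilon$. On the $k$-th annulus $\Gamma$ has slope $\sim 2^{-k}$, hence is quantitatively non-characteristic at that scale, and the anisotropic dilation $(\xi,\eta)\mapsto(2^{-k}\xi',2^{-2k}\eta')$ transports $\psi_{k}m_\epsilon$ to a symbol in the class $\mathcal{M}^{\widetilde\Gamma}_{\widetilde\epsilon}$ on a unit-size non-characteristic arc $\widetilde\Gamma$ with enlarged tube width $\widetilde\epsilon=\epsilon\,2^{2k}\leq 1$. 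Applying Theorem~\ref{thnc} to the rescaled operator and pulling back the three Lebesgue norms under the dilation yields a per-scale bound of the form $2^{-k\,a(p,q,r)}\,(\epsilon\,2^{2k})^{b(p,q,r)}$; the residual $m^{*}_\epsilon$, whose frequency support lies in a $\sqrt\epsilon\times\sqrt\epsilon$ square, is controlled separately by a Cauchy--Schwarz argument on the frequency side and contributes exactly $\epsilon$.

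Finally one sums the geometric series in $k$ up to $k_{*}$ and combines with the residual piece. In regime~(i) the sum converges at large $k$ and reproduces the endpoint scale $\epsilon^{1/p+1/q+1/r-1}$; in regime~(iv) it converges at small $k$ and is dominated by $m^{*}_\epsilon$, giving $\epsilon$; regimes~(ii) and~(v) are exactly the borderline where the summand is, up to slowly varying factors, independent of $k$, so summing $k_{*}\sim\log(1/\epsilon)$ terms produces the announced loss $\epsilon^{-\delta}$. Item~(iii) follows by symmetry after an affine change of coordinates exchanging the roles of $\xi$, $\eta$ and $\xi+\eta$, and optimality is obtained by testing $B_{m_\epsilon}$ on the explicit modulated bump families of Section~\ref{NCATSC}. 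The main obstacle is the anisotropic bookkeeping: one has to verify that the rescaled symbol really lies in $\mathcal{M}$ with width $\widetilde\epsilon=\epsilon\,2^{2k}$ (the rescaling produces extra, but harmless, smoothness in the tangential direction) and that the Jacobians in the three Lebesgue exponents combine into the stated powers of $2^{-k}$ and $\epsilon$, so that the critical threshold producing the logarithmic summation, hence the $\delta$-loss, falls precisely at the interfaces between cases~(i), (ii), (iv), (v).
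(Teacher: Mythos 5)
Your reduction to the non-characteristic case breaks down at its central step: the anisotropic (parabolic) dilation $(\xi,\eta)\mapsto(2^{-k}\xi',2^{-2k}\eta')$ is not a symmetry of bilinear multipliers. The trilinear form $\int m(\xi,\eta)\,\widehat f(\xi)\,\widehat g(\eta)\,\widehat h(-\xi-\eta)\,d\xi\,d\eta$ is invariant (up to explicit Jacobian powers) only under simultaneous dilations $(\xi,\eta)\mapsto(\lambda\xi,\lambda\eta)$, translations/modulations, and the $S_3$ symmetry permuting the three degenerate directions $\xi$, $\eta$, $\xi+\eta$; under $(\xi,\eta)\mapsto(a\xi,b\eta)$ with $a\neq b$ the third frequency becomes $a\xi'+b\eta'$, which is no longer a function of $\xi'+\eta'$, so there is no way to ``pull back the three Lebesgue norms under the dilation'' and apply Theorem~\ref{thnc} to the rescaled piece. (This is precisely the obstruction that makes the bilinear problem rigid compared with linear Bochner--Riesz/restriction, where parabolic rescaling of a cap is the standard move.) Even formally, the rescaled symbol would not lie in $\mathcal{M}^{\widetilde\Gamma}_{\widetilde\epsilon}$ with $\widetilde\epsilon=\epsilon 2^{2k}$: the tangential derivative of $\psi_k m_\epsilon$ is of size $\epsilon^{-1}$, so after the dilation it is of size $2^{-k}\epsilon^{-1}\gg\widetilde\epsilon^{-1}=2^{-2k}\epsilon^{-1}$ --- the rescaling makes the symbol \emph{rougher}, not smoother, relative to the new width. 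Finally, the residual square piece on $|\xi|\lesssim\sqrt\epsilon$ does not ``contribute exactly $\epsilon$'' uniformly in $(p,q,r)$: for instance at $(p,q,r)=(2,2,1)$ a frequency-side Cauchy--Schwarz on a $\sqrt\epsilon\times\epsilon$ box gives $\epsilon^{3/4}$, and by Proposition~\ref{fourmi}~(iii) this is essentially sharp, so near characteristic points this piece is the dominant obstruction rather than a harmless remainder.

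For comparison, the paper does not rescale at all: it proves endpoint bounds directly at the points $(1,1,2)$, $(2,2,1)$, $(\infty,1,2)$, $(1,1,\infty)$, $(1,1,1)$ (Propositions~\ref{prop:112}, \ref{prop:221}, \ref{prop:i12}, \ref{prop:11i}, \ref{prop:111}), where the curvature enters through support-size and oscillatory-integral estimates (intervals of length $\sqrt\epsilon$ near the characteristic point, $TT^*$ and kernel bounds, giving the exponents $3/4$, $1/4$, $1/2$ with logarithmic losses), and then interpolates, the $\delta$-loss absorbing the logarithms; the legitimate substitute for your rescaling is the decomposition into chords of length $\sim\sqrt\epsilon$ combined with frequency almost-orthogonality (projections $\chi_{I_k}(D)$ and Rubio de Francia square functions), as carried out in Sections~\ref{SOPP} and~\ref{CTHP}. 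Your treatment of the non-characteristic core, the finiteness of the characteristic set, and the use of Section~\ref{NCATSC} for optimality are fine, but the per-scale estimate and the geometric summation in $k$ have no valid foundation as written.
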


The above theorem only leaves out the case where exactly one of the three Lebesgue indices is less than 2. The bounds stated are optimal except in the last case above; see Section~\ref{bilinrestr} for some improvements in this direction.

In order to cover the remaining cases, more tangential regularity from $m_\epsilon$ is needed: we will now assume that it belongs to $\mathcal{N}^\Gamma_\epsilon$.

\begin{thm} Assume $\Gamma$ has non-vanishing curvature, and let $m_\epsilon$ belong to $\mathcal{N}^\Gamma_\epsilon$; consider $p,q,r$ in $(1,\infty)$.
\begin{itemize}
\item If $\displaystyle \left\{ \begin{array}{l} p < 2 \\ q>r>2 \\ \frac{1}{p}+\frac{1}{r}>1 \end{array} \right.$ then $\displaystyle \|B_{m_\epsilon}\|_{L^p \times L^q \rightarrow L^{r'}} \lesssim \epsilon^{-\frac{1}{2}+\frac{1}{q}+\frac{1}{2}\left(\frac{1}{p}+\frac{1}{r} \right)-\delta }$ for any $\delta>0$, and this is optimal up to the additional $\delta$.

\medskip
\item If $\displaystyle \left\{ \begin{array}{l} \frac{1}{p} + \frac{1}{r}<1 \\ \frac{1}{p}+\frac{1}{q}<1 \\ \frac{1}{p}+\frac{1}{q}+\frac{1}{r}>1 \end{array} \right.$, then $\displaystyle \|B_{m_\epsilon}\|_{L^p \times L^q \rightarrow L^{r'}} \lesssim \epsilon^{\frac{1}{p}+\frac{1}{q}+\frac{1}{r}-1-\delta}$ for any $\delta>0$, and this is optimal up to the additional $\delta$.

\item The above statements remain true if the indices $(p,q,r)$ are permuted.
\medskip
\end{itemize}
\end{thm}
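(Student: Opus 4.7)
The plan is to combine the endpoint estimates of Sections~\ref{sec:L2}, \ref{SOPP} and \ref{CTHP} with a tangential Fourier-series decomposition that captures the extra smoothness contained in $\mathcal{N}_\epsilon^\Gamma$ but absent from $\mathcal{M}_\epsilon^\Gamma$, and then to interpolate. Near each non-characteristic arc of $\Gamma$, parametrize by $(n,t)$ with $n=\nu$ the signed normal distance and $t$ an arclength-type coordinate along $\Gamma$, and expand
\[
m_\epsilon(\xi,\eta) \;=\; \sum_{k \in \mathbb{Z}} e^{2\pi i k\, t(\xi,\eta)}\, a_k\bigl(n(\xi,\eta),t(\xi,\eta)\bigr),
\]
with each $a_k$ supported in $\{|n|\le \epsilon\}$, belonging uniformly in $k$ to an $\mathcal{M}_\epsilon^\Gamma$-type class, and of size $\|a_k\|_\infty \lesssim (1+|k|)^{-N}$ for any $N$. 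This rapid decay is exactly what $\mathcal{N}_\epsilon^\Gamma$ yields through integration by parts in $t$; by contrast, the weaker class $\mathcal{M}_\epsilon^\Gamma$ would give decay only up to the cutoff $|k|\lesssim \epsilon^{-1}$, which is insufficient for the target exponents.

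For each $k$, view the modulated piece $M_k := e^{2\pi i k t}\, a_k$ as an $\mathcal{M}_\epsilon^\Gamma$ symbol twisted along the tangential direction. The modulation shifts wave packets in the normal spatial direction by $\sim k$, and, thanks to the non-vanishing curvature of $\Gamma$, a bilinear stationary-phase / extension argument in the spirit of Section~\ref{CTHP} produces an extra gain of the form $(1+|k|\sqrt{\epsilon})^{-\beta}$ (with $\beta>0$ depending on the Lebesgue exponents) on top of the base bound from Theorem~\ref{thnvc}. Summing over $k$, the tail $|k|\gtrsim \epsilon^{-1/2}$ is controlled trivially by the rapid decay of $a_k$, while the range $|k|\lesssim \epsilon^{-1/2}$ contributes at worst a $\log(1/\epsilon)$ factor: this is precisely the origin of the $\delta$-loss announced in the theorem.

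The two regimes of the statement are then obtained by real interpolation. In the first regime ($p<2$, $q>r>2$, $\tfrac{1}{p}+\tfrac{1}{r}>1$) the per-mode bound is interpolated against the $L^2$-endpoint of Section~\ref{sec:L2}, yielding the power $-\tfrac{1}{2}+\tfrac{1}{q}+\tfrac{1}{2}\bigl(\tfrac{1}{p}+\tfrac{1}{r}\bigr)$. In the second regime ($\tfrac{1}{p}+\tfrac{1}{r}<1$, $\tfrac{1}{p}+\tfrac{1}{q}<1$, $\tfrac{1}{p}+\tfrac{1}{q}+\tfrac{1}{r}>1$) it is interpolated against the $p,q,r\ge 2$ bound of Theorem~\ref{thnvc}, yielding the H\"older-scaling power $\tfrac{1}{p}+\tfrac{1}{q}+\tfrac{1}{r}-1$. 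The permutation-symmetry case then follows by duality and by swapping the roles of $\widehat{f}$, $\widehat{g}$ and $h$. The hardest step will be the uniform quantification of the stationary-phase gain in $(p,q,r)$ for output norms $L^{r'}$ with $r'<2$, where Plancherel is unavailable and one must invoke the bilinear Stein--Tomas-type analysis of Section~\ref{SOPP}; the $\delta$-loss is of logarithmic origin and appears unavoidable by this route, which is consistent with the optimality statement.
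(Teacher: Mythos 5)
There is a genuine gap, and it sits exactly where the paper has to work hardest. Your tangential Fourier expansion is fine as far as it goes: since symbols in $\mathcal{N}^\Gamma_\epsilon$ have $O(1)$ tangential derivatives, the coefficients in the arclength variable do decay like $(1+|k|)^{-N}$. But precisely because of that rapid decay, the sum over $k$ is never the issue: only $|k|=O(\log(1/\epsilon))$ modes matter, so your claimed ``extra gain $(1+|k|\sqrt{\epsilon})^{-\beta}$'' and the alleged logarithmic loss from the range $|k|\lesssim \epsilon^{-1/2}$ are both vacuous for the summation, and the entire difficulty is concentrated in a single mode, say $k=0$, i.e.\ a symbol depending only on the normal coordinate. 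For that symbol you give no proof at all: the ``bilinear stationary-phase / extension argument in the spirit of Section~\ref{CTHP}'' with an unspecified exponent $\beta(p,q,r)$ is an assertion of exactly the estimate one needs, not a derivation of it. Moreover, the endpoints you propose to interpolate against (Proposition~\ref{prop:L2} and the $\mathcal{M}^\Gamma_\epsilon$ bounds of Theorem~\ref{thnvc}) do not reach the stated regions: the whole point of passing to $\mathcal{N}^\Gamma_\epsilon$ is that when exactly one Lebesgue index is below $2$ the $\mathcal{M}$-class results leave a hole, and Section~\ref{SOPP} (which is an $\mathcal{M}$-class section, not a bilinear Stein--Tomas theory) cannot fill it.

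What the paper actually interpolates with are the two new $\mathcal{N}$-class endpoint estimates of Section~\ref{CTHP}: the $O(1)$ bound at $(2,2,\infty)$ and the bound $\epsilon^{\frac{3}{4q}+\frac{1}{2p}}|\log\epsilon|$ near $(\infty,\infty,1)$. Their proofs are not a tangential Fourier series but a decomposition of the $\epsilon$-neighborhood into caps of tangential length $\sqrt{\epsilon}$ (so that curvature makes the $\xi$- and $\eta$-projections of the caps almost disjoint), an anisotropic $L^1$ kernel bound for each cap which is where the tangential regularity of $\mathcal{N}^\Gamma_\epsilon$ enters (the cap kernel is $\lesssim \epsilon^{3/2}(\sqrt{\epsilon}|y|+\epsilon|z|)^{-N}$), and then Rubio de Francia square functions, a level-set decomposition and an $\ell^1$--$\ell^\infty$ interpolation of Kakeya type. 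These quantitative endpoints are the actual source of both the stated powers of $\epsilon$ and the $\delta$-loss (the logarithms enter through the endpoint bounds and the interpolation near them, not through a sum over tangential frequencies). To repair your argument you would either have to prove your per-mode gain with an explicit $\beta$ strong enough to recover these endpoints for the normal-only symbol, or simply establish the Section~\ref{CTHP} estimates and then carry out the interpolation; as written, the proposal assumes the theorem's hard core. The optimality claims, finally, need the explicit test-function computations of Proposition~\ref{fourmi}, which you do not address.
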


The three cases distinguished above do not completely cover the range where exactly one of the three Lebesgue indices is less than 2. We refrained from giving bounds for any $(p,q,r)$ since the obtained formulas become too complicated.

As for Theorem~\ref{thnc}, the two theorems above are proved by interpolating between the results of sections~\ref{sec:L2},~\ref{SOPP} and~\ref{CTHP}, and the optimality statements follow from Section~\ref{NCATSC}. This procedure is however not detailed, since it is very similar to Theorem~\ref{thnc}.

\subsubsection{The general case} 

\begin{thm}[Theorem \ref{thm:thmgeneral}] Let $\Gamma$ be an arbitrary curve, and let $m_\epsilon$ belong to $\mathcal{M}^\Gamma_\epsilon$; consider $p,q,r$ in $(1,\infty)$. Then
\begin{itemize}
\item If $\displaystyle p,q,r\geq 2$ and verify (\ref{eq:subholder}), then $\displaystyle \|B_{m_\epsilon}\|_{L^p \times L^q \rightarrow L^{r'}} \lesssim \epsilon^{\frac{1}{p}+\frac{1}{q}+\frac{1}{r}-1}$
and this power of $\epsilon$ is optimal.
\medskip

\item If at least two of the three indices $(p,q,r)$ are smaller than $2$, then $\displaystyle \|B_{m_\epsilon}\|_{L^p \times L^q \rightarrow L^{r'}} \lesssim \epsilon^{\inf\left(\frac{1}{p} , \frac{1}{q}, \frac{1}{r} \right)}$, and this power of $\epsilon$ is optimal.

\end{itemize}
\end{thm}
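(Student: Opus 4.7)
The plan is to reduce each item to a handful of endpoint bounds at corners of the cube $[0,1]^3$ of exponents $(a,b,c):=(1/p,1/q,1/r)$, and then conclude by multilinear Riesz--Thorin interpolation. The only information about $\Gamma$ that enters is the area estimate $|\mathrm{supp}\,m_\epsilon|\lesssim \epsilon$.

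For the first item ($p,q,r\geq 2$), the feasible region $\{(a,b,c)\in [0,1/2]^3:a+b+c\geq 1\}$ is the tetrahedron with vertices $V_0=(1/2,1/2,1/2)$ and the three permutations of $V_1=(1/2,1/2,0)$. The target exponent $a+b+c-1$ is the unique affine function equal to $1/2$ at $V_0$ and $0$ at each $V_1$, so it suffices to prove: $L^2\times L^2\to L^2$ with constant $\epsilon^{1/2}$ at $V_0$, and $L^2\times L^2\to L^1$ (and its two permutations) with constant $O(1)$ at each $V_1$. Both are the $L^2$-based estimates of Section~\ref{sec:L2} and are valid for arbitrary $\Gamma$.

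For the second item, one relies only on soft kernel bounds. Iterated integration by parts yields $|K_\epsilon(a,b)|\lesssim \epsilon(1+\epsilon|a|)^{-N}(1+\epsilon|b|)^{-N}$ for $K_\epsilon:=\mathcal{F}^{-1}m_\epsilon$, and writing $B_{m_\epsilon}(f,g)(x)=\int K_\epsilon(x-u,x-v)f(u)g(v)\,du\,dv$ with Minkowski's inequality produces immediately the vertex bounds $L^1\times L^1\to L^\infty$ with constant $\epsilon$ at $(1,1,1)$, and $L^1\times L^1\to L^1$, $L^1\times L^\infty\to L^\infty$, $L^\infty\times L^1\to L^\infty$ each with constant $O(1)$ at $(1,1,0)$, $(1,0,1)$, $(0,1,1)$. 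Together with the endpoint $L^2\times L^2\to L^\infty$ at $(1/2,1/2,1)$ with constant $\epsilon^{1/2}$ (from $|B_{m_\epsilon}(f,g)|\leq \|m_\epsilon\|_{L^2}\|f\|_{L^2}\|g\|_{L^2}$) and its permutations, and the $L^2\times L^1\to L^\infty$-type endpoints at $(1/2,1,1)$, $(1,1/2,1)$, $(1,1,1/2)$ with constant $\epsilon^{1/2}$ (from $\sup_b\|K_\epsilon(\cdot,b)\|_{L^2}\lesssim \epsilon^{1/2}$), multilinear interpolation inside the cube produces the bound $\epsilon^{\min(a,b,c)}$ throughout the region ``at least two of $a,b,c>1/2$''. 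The reason the interpolation recovers exactly $\min(a,b,c)$ is that every endpoint $v$ in the list satisfies $\alpha_v=\min(v)$, so the concave hull of the exponents is pointwise $\leq \min(\cdot)$; and the list is rich enough to span each of the three linear facets of the target.

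Optimality in both items is provided by the sharpness constructions of Section~\ref{NCATSC}, applied to $\Gamma$ equal to a segment parallel to one of the distinguished directions $\{\xi=0\}$, $\{\eta=0\}$, $\{\xi+\eta=0\}$ and a suitable tensor-product pair $(f,g)$. The main technical difficulty will lie in the first item's $L^2\times L^2\to L^2$ endpoint for arbitrary $\Gamma$: a direct Plancherel computation bounds $\|B_{m_\epsilon}\|^2_{L^2\to L^2}$ only by $\sup_\tau\int|m_\epsilon(\tau-\eta,\eta)|^2\,d\eta$, which can degrade to $\sqrt{\epsilon}$ whenever $\Gamma$ has a tangent parallel to $\{\xi+\eta=0\}$ and therefore gives only an $\epsilon^{1/4}$ operator bound; one recovers the correct $\epsilon^{1/2}$ by partitioning $\Gamma$ via a smooth cutoff according to which of the three dangerous directions is closest to its tangent, and applying Plancherel on each piece along the slicing direction that is transverse to that piece (this is precisely what Section~\ref{sec:L2} does).
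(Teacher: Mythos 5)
Your reduction of the first item to corner estimates is arithmetically correct, but the three corners $(\tfrac12,\tfrac12,0)$, $(\tfrac12,0,\tfrac12)$, $(0,\tfrac12,\tfrac12)$ are \emph{not} ``the $L^2$-based estimates of Section~\ref{sec:L2}'': Proposition~\ref{prop:L2} requires all three exponents to lie in $[2,\infty)$, and its mechanism genuinely breaks when one slot is $L^\infty$. The proof places Rubio de Francia square functions on the two functions whose associated frequency intervals (the $\xi$-, $\eta$- or $(\xi+\eta)$-projections of the $\epsilon$-balls) have bounded overlap; near a point of $\Gamma$ where the tangent is parallel to one of the three degenerate directions the overlapping family is imposed on you, so for instance to prove $L^2\times L^2\to L^1$ near a vertical or horizontal tangent you would have to put the square function on the $L^\infty$ (dual) slot, where Rubio de Francia's inequality fails. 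This is precisely why the paper establishes the trilinear endpoint of type $\{2,2,\infty\}$ only for non-vanishing curvature and the smaller class $\mathcal{N}^\Gamma_\epsilon$ (Section~\ref{CTHP}), and why Theorem~\ref{thm:thmgeneral} is organized so as never to need it for general $\Gamma$ (case a) keeps all exponents finite, case b) requires $\frac1p+\frac1q+\frac1r\ge\frac32$ and uses the points of Section~\ref{SOPP}). So in your scheme these corners are unproved claims (at least as delicate as anything in the paper, whether or not they are ultimately true), and they are also unnecessary: the whole region $p,q,r\in[2,\infty)$ of item one \emph{is} Proposition~\ref{prop:L2}, with no interpolation. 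Relatedly, what you single out as ``the main technical difficulty'' — the $(2,2,2)$ point, for which your slicing argument is fine — is the easy endpoint; the real difficulty of your route sits in the corners you attribute to Section~\ref{sec:L2}.

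For the second item your endpoint values are all correct, but the interpolation claim fails on the sub-region where exactly two indices are below $2$ and the third is large. Take $(p,q,r)=(5/3,5/3,10)$, i.e.\ $(a,b,c)=(0.6,0.6,0.1)$: every endpoint in your list carrying a positive power of $\epsilon$ has all three coordinates $\ge\tfrac12$, and the only endpoint with third coordinate $0$ is $(1,1,0)$; matching the third coordinate $0.1$ forces the total weight on points with third coordinate $\ge\tfrac12$ to be at most $0.2$, which can lower the first coordinate below $1$ by at most $0.2$, so it stays $\ge 0.8>0.6$. Hence $(0.6,0.6,0.1)$ is not even in the convex hull of your endpoints, and no bound at all — let alone $\epsilon^{1/10}$ — follows from them. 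The paper covers this region by proving the bound at $(2,2,r)$ with $r$ finite (Proposition~\ref{prop:L2}) and then lowering the two exponents from $2$ using the compact frequency support of the symbol (Bernstein; Remark~\ref{jaguar}, Proposition~\ref{prop:L2bis}); your plan has no substitute for this step. For the sub-case $p,q,r\le2$ your list does work, but only after adjoining $(\tfrac12,\tfrac12,\tfrac12)$ (proved in your item one, omitted from the item-two list), and the offered justification (``every endpoint satisfies $\alpha_v=\min(v)$, so the concave hull is $\le\min$'') argues the wrong inequality: what is needed, and what fails above, is that the target point lie in the hull of endpoints sitting on a single linear facet of $\min$. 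The optimality statements via Proposition~\ref{fourmi} and the segment examples are fine.
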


Of all possible values for $(p,q,r)$, the previous theorem only leaves aside the case where exactly two of the three indices $(p,q,r)$ are greater than 2. Once again, it can be obtained by interpolating between the results of sections~\ref{sec:L2},~\ref{SOPP}, and using the optimality criteria of Section~\ref{NCATSC}.

\begin{rem}[H\"older case]
Overall, when do we get the expected bound of order 1 for $B_{m_\epsilon}$ when the exponents $1 < p,q,r < \infty$ satisfy $\frac{1}{p}+\frac{1}{q}+\frac{1}{r}=1$? In two cases: either when $\Gamma$ is non-characteristic, or when $p,q,r>2$. If $\Gamma$ has non-zero curvature and $m_\epsilon \in \mathcal{N}^\Gamma_\epsilon$, then we are able to prove that $B_{m_\epsilon}$ has a norm $\sim \epsilon^\rho$ from $L^p \times L^q \rightarrow L^{r'}$ with $\rho \rightarrow 0$ as $\frac{1}{p}+\frac{1}{q}+\frac{1}{r}$ approaches 1 from above. We believe that the techniques developed in this paper can lead to the same result for arbitrary curves $\Gamma$.
\end{rem}

\subsection{Applications}

\subsubsection{Bilinear restriction-extension inequalities}

\label{bilinrestr}

The limiting point of view (when $\epsilon$ goes to $0$) can be partially described in term of ``bilinear restriction-extension'' inequalities. It is said that a curve $\Gamma$ satisfies such inequalities for exponents $(p,q,r) \in[1,\infty]^3$ if for every smooth, compactly supported function $\lambda$, the bilinear multiplier $B_m$ is bounded from $L^p \times L^q$ into $L^{r'}$ with $m=\lambda d\sigma_\Gamma$ and $d\sigma_\Gamma$ the arc-length measure on $\Gamma$ (carried on this curve).

It is easy to see that if (\ref{ecureuil}) holds for $p,q,r$ with $\alpha(\epsilon)\lesssim \epsilon$ then the operator $B_{\lambda d\sigma_\Gamma}$ inherits this boundedness. Indeed, it can be realized as the limit of the operators with symbol $\epsilon^{-1} (\lambda d\sigma_\Gamma) * \chi(\epsilon^{-1} \cdot)$, with $\chi$ a function in $\mathcal{C}^\infty_0$ with integral 1. In other words, $\Gamma$ satisfies then a $(p,q,r)$ bilinear restriction-extension inequality. 

Thus we deduce from theorems~\ref{thnc} and~\ref{thnvc} that if $\Gamma$ is non-characteristic, then $B_{d\sigma_\Gamma}$ is bounded for $p,q,r \geq 2$ and $\frac{1}{p} + \frac{1}{q} + \frac{1}{r} \geq 2$; whereas if the curvature of $\Gamma$ does not vanish, the condition becomes $\frac{1}{p} + \frac{1}{q} + \frac{1}{r} > \frac{5}{2}$.

This last condition is improved in Section~\ref{sec:bilinearrest}. Moreover, still in Section~\ref{sec:bilinearrest}, the relation between a bilinear restriction-extension inequality for exponents $(p,q,r)$ and the property (\ref{ecureuil}) for a decay rate $\alpha(\epsilon)=\epsilon$ is investigated. 
We shall prove that these are equivalent for specific symbols $m_\epsilon$ of the class ${\mathcal N}^\Gamma_\epsilon$.

\subsubsection{Bilinear Bochner-Riesz problem}

In Section~\ref{subsec:bochnerriesz}, conditions on a compact set $K$, and a real number $\lambda>0$, are deduced such that the operators with symbol
$$
\chi_K(\xi,\eta) \;\;\;\;\mbox{or}\;\;\;\; \chi_K(\xi,\eta) \operatorname{dist}((\xi,\eta),\partial K)^\kappa
$$
are bounded from $L^p \times L^q$ to $L^{r'}$.

As usual, this has the consequence, if $\textrm{Int}(K)$ contains $0$, and if we denote $\lambda K$ for the dilation of $K$ around 0 by a factor $\lambda$, that
$$
B_{\lambda K}^\kappa (f,g) \rightarrow fg \;\;\;\;\mbox{in $L^{r'}$}
$$
if $(f,g) \in L^p \times L^q$ and $\frac{1}{p}+\frac{1}{q} = \frac{1}{r'}$.

\subsubsection{Singular symbols}

In section~\ref{singularsymbols}, results on boundedness over Lebesgue spaces of operators with symbols of the type
$$
\Phi(\xi,\eta) \operatorname{dist}((\xi,\eta),\Gamma)^{-\alpha}
$$
(where $\Phi \in \mathcal{C}^\infty_0$, $\Gamma$ a smooth curve, and $\alpha$ a positive number) are given.

Recall that Kenig and Stein~\cite{KS} derived sharp results in the same spirit in the case when $\Gamma$ is a line; in this specific situation, the bilinear multiplier $B_{m_\epsilon}$ can be represented by bilinear fractional integral operators.  These authors were able to deal with Lebesgue exponents less than 1.

\subsubsection{Dispersive PDEs} \label{subsec:PDEs}

This paragraph is devoted to explaining one of the motivations for the study of our multilinear multipliers.

Let us consider the following dispersive PDE, with a real symbol $p$~:
$$ 
\left\{ \begin{array}{l}
\partial_t u + ip(D) u = B_m(u,u) \vsp \\
u(t=0)=u_0. \vsp 
\end{array} \right.
$$
where we follow the above notation in denoting $B_m$ for the pseudo-product with a symbol $m$; of course other nonlinearities can be dealt with in a similar way. For instance, the nonlinearity of the water-waves problem can be expanded as a sum of pseudo-product operators: see~\cite{GMS2}. Or it is well-known that a nonlinearity $H(u)$ can for many purposes be replaced by its paralinearization $ H(u(t,.)) \simeq \pi_{H'(u(t,.))} (u(t,.))$: see the seminal work of Bony~\cite{bony}).

In order to understand how $u$ behaves for large $t$, in particular whether it scatters, let us change the unknown function from $u$ to 
$$ f(t,x) = e^{itp(D)}[u(t,.)](x),$$
so that the PDE becomes
$$ \partial_t f = e^{itp(D)}[B_m(u,u)] = e^{itp(D)}[B_m(e^{-itp(D)}f, e^{-itp(D)}f)]$$
and integrating in time gives
\begin{align}
f(t,x) & = u_0(x) + \int_0^t e^{is p(D)} B_m(u(s),u(s))(x) \,ds \nonumber \\
& = u_0(x) + \int_0^t \int \int e^{ix\cdot(\xi+\eta)} e^{i s [p(\xi+\eta) - p(\eta) - p(\xi)]} \widehat{f}(s,\xi) \widehat{f}(s,\eta) m(\xi,\eta)\,d\xi\,d\eta\,ds   \label{duhamel}
\end{align}
we isolate in the right-hand side the oscillations in the term $e^{i[(s-t)p(\xi+\eta) - s p(\eta) - s p(\xi)]}$. With the phase function given by $\phi(\xi,\eta):=p(\xi+\eta) -  p(\eta) -  p(\xi)$, we get
\begin{equation} \label{eq:ff} f(t,x):=u_0(x) + e^{-itp(D)}\left[x\rightarrow \int_0^t \int \int e^{ix\cdot(\xi+\eta)} e^{is\phi(\xi,\eta)} \widehat{f}(s,\xi) \widehat{f}(s,\eta) m(\xi,\eta)\,d\xi\,d\eta\,d \right](x)s .
\end{equation}
So
$$ f(t)=u_0+ e^{-itp(D)} \left[\int_0^t  T_{e^{is\phi}m}(f(s),f(s)) ds\right].$$
Understanding the bilinear quantity above (for $f\in L^\infty_TL^p$) is of crucial importance. Let us consider first that $f$ is independent of $s$, which corresponds to examining the interaction of two linear waves. But the bilinear operator
\begin{equation} \label{eq:fff}
(f,g) \rightarrow  \int_0^t  B_{e^{is\phi}m}(f,g) ds
\end{equation}
has symbol $\displaystyle \frac{e^{it\phi} - 1}{i\phi}$ which, under suitable assumptions on $\phi$, fits into the previous setting, relatively to the curve $\Gamma:=\phi^{-1}(\{0\})$. We refer the reader to \cite{BG} (Section 9.3) for a first work of the authors concerning such bilinear oscillatory integrals. There, the proofs are based on decay in $s$ due to the assumed non-stationary phase $\phi$ and the set $\Gamma=\phi^{-1}(\{0\})$ did not play any role. Here we propose a more precise study in the one-dimensional case and when the symbol is supported near the curve $\Gamma$. We shall describe boundedness of bilinear quantities appearing in (\ref{eq:ff}) and (\ref{eq:fff}), see Subsection \ref{subsec:PDEs}.

\section{Notations and Preliminaries}
\label{notationsandpreliminaries}

\subsection{Some standard notations}

We adopt the following notations
\begin{itemize}
\item $A \lesssim B$ if $A \leq C B$ for some implicit, universal constant $C$. The value of $C$ may change from line to line.
\item $A \sim B$ means that both $A \lesssim B$ and $B \lesssim A$.
\item If $E$ is a set, $\chi_E$ is its characteristic function.
\item The ``japanese brackets`` $\langle \cdot \rangle$ stand for $\langle x \rangle = \sqrt{1+x^2}$.
\item If $\Gamma$ is a rectifiable curve, $d\sigma_\Gamma$ is the 1-dimensional Hausdorff measure restricted to $\Gamma$ and for $\epsilon>0$, we set $\Gamma_\epsilon$ for its $\epsilon$-neighborhood
$$ \Gamma_\epsilon := \left\{ \xi\in\R^2,\ d(x,\Gamma) \leq \epsilon\right\}.$$
\item ${\mathcal H}^1$ for the one-dimensional Hausdorff measure.
\item The standard $L^2$ scalar product is denoted $\langle f\,,\,g \rangle := \int_{\mathbb{R}^d}f \bar g$.
\item If $f$ is a function over $\mathbb{R}^d$ then its Fourier transform, denoted $\widehat{f}$, or $\mathcal{F}$, is given by
$$
\widehat{f}(\xi) = \mathcal{F}f (\xi) = \frac{1}{(2\pi)^{d/2}} \int e^{-ix\xi} f(x) \,dx \;\;\;\;\mbox{thus} \;\;\;\;f(x) = \frac{1}{(2\pi)^{d/2}} \int e^{ix\xi} \widehat{f}(\xi) \,d\xi.
$$
In the text, we systematically drop the constants such as $\frac{1}{(2 \pi)^{d/2}}$ since they are not relevant.
\item The Fourier multiplier with symbol $m(\xi)$ is defined by
$$
m(D)f = \mathcal{F}^{-1} \left[m \mathcal{F} f \right].
$$
\end{itemize}

\subsection{Some harmonic analysis}

Let us now recall some useful and well-known operators on $\R$.

\begin{df} \label{def:I}
For $s>0$, the fractional integral operator of order $s>0$ is defined by
$$ I_s(f)(x) := x \rightarrow \int_\R f(y) |x-y|^{s-1} dy.$$
For $s=0$, $I_0$ is not defined since $1/|\cdot|$ is not locally integrable, so we consider the right substitute: the Hardy-Littlewood maximal function:
$$ {\mathcal M}(f)(x):= \sup_{r>0} \frac{1}{2r} \int_{B(x,r)} |f(z)| dz.$$
For $s\geq 1$, we set $\M_s$ for its $L^s$-version defined by $\M_s(f) = [\M(|f|^s)]^{1/s}$.
\end{df}

These operators are bounded over in Lebesgue spaces.

\begin{prop}  \label{prop:fracint} Let $s\in (0,1)$ and let $1< p<q<\infty$ satisfying
 $$ \frac{1}{p} - \frac{1}{q} = s.$$
Then $I_s$ is bounded from $L^p$ to $L^q$ (see \cite{HL}).
For all $s\geq 1$ and $p\in (s,\infty]$, $\M_s$ is $L^p$-bounded.
\end{prop}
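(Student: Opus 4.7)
The plan is to treat the two statements separately, since they are essentially classical. For the fractional integral $I_s$, I would use the Hedberg-type argument that reduces the $L^p \to L^q$ bound of Hardy-Littlewood to a pointwise control by the maximal function $\M$, for which Proposition~\ref{prop:fracint} is also invoked. For the $\M_s$ bound, the proof is a one-line reduction to the classical Hardy-Littlewood maximal theorem via the identity $\M_s(f)^s = \M(|f|^s)$.

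For the first part, fix $f \in L^p$ with $\|f\|_{L^p}=1$ and $x \in \R$. Split the kernel at a radius $R>0$ to be optimized:
$$
I_s f(x) = \int_{|x-y| \leq R} f(y) |x-y|^{s-1}\,dy + \int_{|x-y| > R} f(y) |x-y|^{s-1}\,dy.
$$
The first piece is bounded by $C R^s \M(f)(x)$ after decomposing the ball into dyadic shells and using the definition of $\M$. The second piece is controlled by H\"older's inequality with the exponent $p'$: since $s \in (0,1)$ and $\frac{1}{p}-\frac{1}{q}=s$, a direct computation gives that $|x-y|^{s-1}$ belongs to $L^{p'}$ outside the ball of radius $R$, with norm $\sim R^{s-1/p}$. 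Plugging $\|f\|_{L^p} = 1$ yields a bound of the form $C R^{s-1/p}$. Optimizing over $R$ by balancing the two terms, i.e.\ taking $R \sim \M(f)(x)^{-p/1}$ (the precise power chosen so that both terms coincide), leads to the pointwise inequality
$$
|I_s f(x)| \lesssim \M(f)(x)^{1-sp/1}\,\|f\|_{L^p}^{sp/1}\;,
$$
with the exponents arranged so that raising to the power $q$ and integrating, using $\frac{1}{p}-\frac{1}{q}=s$ and the classical $L^p$-boundedness of $\M$, produces the claimed $L^p \to L^q$ bound. The bookkeeping of the exponents is the only slightly delicate point; the careful computation gives the exponent of $\M(f)(x)$ equal to $p/q$, and then $\|\M(f)^{p/q}\|_{L^q} = \|\M(f)\|_{L^p}^{p/q} \lesssim \|f\|_{L^p}^{p/q}$, which combined with $\|f\|_{L^p}^{1-p/q}=\|f\|_{L^p}^{sp}$ closes the estimate.

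For the second statement, note that by definition
$$
\|\M_s(f)\|_{L^p}^p = \int_{\R} \M(|f|^s)(x)^{p/s}\,dx = \|\M(|f|^s)\|_{L^{p/s}}^{p/s}.
$$
Since $p>s$ we have $p/s > 1$, so the classical $L^{p/s}$-boundedness of the Hardy-Littlewood maximal function gives $\|\M(|f|^s)\|_{L^{p/s}} \lesssim \||f|^s\|_{L^{p/s}} = \|f\|_{L^p}^s$, and raising to the power $1/s$ yields $\|\M_s(f)\|_{L^p} \lesssim \|f\|_{L^p}$. The endpoint case $p=\infty$ is immediate from the trivial $L^\infty$-bound on $\M$.

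The only real obstacle is getting the algebra of exponents right in the Hedberg splitting; once the pointwise inequality $|I_s f(x)| \lesssim \M(f)(x)^{p/q}\|f\|_{L^p}^{1-p/q}$ is established, the rest is mechanical. Both conclusions could equally well be invoked as black boxes from \cite{HL}, which is what the paper does.
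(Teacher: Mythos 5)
Your proof is correct. Note, however, that the paper does not actually prove this proposition: it is stated as a classical fact and delegated to the Hardy--Littlewood reference \cite{HL} (and, for the maximal function part, to the standard theory), so there is no internal argument to compare against. What you supply is the standard self-contained treatment: the Hedberg splitting of the kernel at a radius $R$, the dyadic-shell estimate $\lesssim R^{s}\M f(x)$ for the near part, the H\"older estimate $\lesssim R^{s-\frac{1}{p}}\|f\|_{L^p}$ for the far part (valid because $\frac{1}{p}-\frac{1}{q}=s$ with $q<\infty$ forces $s<\frac{1}{p}$, so $(s-1)p'<-1$), and the optimization $R\sim\M f(x)^{-p}$ (your ``$\M(f)(x)^{-p/1}$'' is a typographical slip but resolves to the right thing), giving the pointwise bound $|I_s f(x)|\lesssim \M f(x)^{p/q}\|f\|_{L^p}^{1-p/q}$ with $1-sp=p/q$; raising to the power $q$ and using the $L^p$-boundedness of $\M$ closes the estimate exactly as you say. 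The $\M_s$ statement is indeed the one-line reduction $\|\M_s f\|_{L^p}=\|\M(|f|^s)\|_{L^{p/s}}^{1/s}$ with $p/s>1$, and the $p=\infty$ endpoint is trivial. So your argument is a complete and correct substitute for the black-box citation; it buys self-containedness at the cost of a page of classical material the authors chose to omit.
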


We recall the boundedness of Rubio de Francia's square functions (see \cite{RF}):

\begin{prop} \label{prop:square} Let $2\leq p < \infty$ and $I:=(I_i)_i$ be a bounded covering of $\R$. Then the square function 
$$ f \rightarrow \left(\sum_i \left| \pi_{I_i}(f) \right|^2 \right)^{1/2}$$
is $L^p$-bounded. The non-smooth truncations $\pi_{I_i}$ can be replaced by smooth ones.
\end{prop}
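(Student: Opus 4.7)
The $L^2$ bound is immediate from Plancherel and the bounded-overlap assumption: for sharp truncations
\begin{equation*}
\int \sum_i |\pi_{I_i} f|^2 \;=\; \sum_i \int |\widehat f|^2 \chi_{I_i} \;\lesssim\; \|f\|_{L^2}^2,
\end{equation*}
and the same computation applies to smooth cut-offs. The substance of the statement is therefore the extension to $p>2$, and the plan is to follow Rubio de Francia's route \cite{RF}.

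The first step linearises via random signs. Setting $T_\varepsilon f := \sum_i \varepsilon_i \pi_{I_i} f$ for $\varepsilon = (\varepsilon_i) \in \{\pm 1\}^{\mathbb{N}}$, Khintchine's inequality reduces the desired $L^p$ bound for the square function to a uniform-in-$\varepsilon$ estimate $\|T_\varepsilon f\|_{L^p} \lesssim \|f\|_{L^p}$. The second step is to establish the weighted bound
\begin{equation*}
\int |T_\varepsilon f|^2 \, w \;\lesssim\; \int |f|^2 \, M w \qquad \text{for every } w \in A_1,
\end{equation*}
where $M$ is the Hardy--Littlewood maximal function. For smooth $\pi_{I_i}$, each individual projection has a kernel whose Calder\'on--Zygmund constants are invariant under modulation and dilation; the kernel of $T_\varepsilon$ then satisfies a H\"ormander condition with constants independent of $\varepsilon$ and of the covering $(I_i)_i$, which is what drives the weighted estimate. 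Rubio de Francia's extrapolation principle then converts this weighted $L^2$ bound into the unweighted $L^p$ inequality for every $p \geq 2$.

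The main obstacle is the second step, because the intervals $I_i$ are not assumed dyadic or of comparable size: there is no universal scale at which the kernel of $T_\varepsilon$ oscillates, and the uniform Calder\'on--Zygmund estimates rely crucially on the modulation and dilation invariance of smooth cut-offs. For the non-smooth truncations, one writes $\chi_{I_i}$ as a difference of two modulated half-line projections, realising $\pi_{I_i}$ as a combination of modulated Hilbert transforms, so that the same strategy applies after additional bookkeeping. The equivalence between the smooth and sharp versions asserted in the proposition follows by dominating sharp cut-offs by dilates of smooth bumps and using the stability of the bounded-overlap property under such fattening.
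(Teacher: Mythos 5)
Your Step 1--Step 2 reduction is a reduction to a false statement, so the argument breaks at its core. Khintchine only gives the \emph{averaged} identity $\bigl\| (\sum_i |\pi_{I_i}f|^2)^{1/2} \bigr\|_{L^p}^p \sim \mathbb{E}_\varepsilon \|T_\varepsilon f\|_{L^p}^p$, and you propose to prove the stronger uniform bound $\sup_\varepsilon \|T_\varepsilon\|_{L^p \to L^p} < \infty$ by exhibiting $T_\varepsilon$ as a Calder\'on--Zygmund operator with constants independent of $\varepsilon$ and of the covering. That uniform statement is false for $p>2$. Indeed $T_\varepsilon$ has a real symbol, hence is self-adjoint, so a uniform $L^p$ bound would give a uniform $L^{p'}$ bound with $p'<2$, and running Khintchine backwards would yield $\bigl\| (\sum_i |\pi_{I_i}f|^2)^{1/2} \bigr\|_{L^{p'}} \lesssim \|f\|_{L^{p'}}$; this fails already for the partition into unit intervals $I_n=[n,n+1)$: with $\widehat f = \chi_{[0,N]}$ one has $\|(\sum_n|\pi_{I_n}f|^2)^{1/2}\|_{L^{p'}} \gtrsim \sqrt N$ while $\|f\|_{L^{p'}} \sim N^{1/p}$, and the same example defeats smooth cut-offs. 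For the same reason the claimed uniform weighted bound $\int |T_\varepsilon f|^2 w \lesssim \int |f|^2 \mathcal{M}w$ cannot hold (by the duality trick with $w \in L^{(p/2)'}$, or by extrapolation, it would imply the uniform $L^p$ bound), and the kernel claim is where this shows up concretely: Calder\'on--Zygmund/H\"ormander conditions are not modulation invariant. A smooth cut-off adapted to a unit interval centred at frequency $N$ has kernel essentially $e^{iNx}\check\phi(x)$, whose derivative is of size $N$ rather than $|x|^{-2}$, and no choice of signs restores a uniform H\"ormander condition --- if it did, $T_\varepsilon$ would be uniformly bounded on every $L^p$, $1<p<\infty$, contradicting the failure below $L^2$ just described.

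For comparison, the paper does not prove this proposition at all: it invokes Rubio de Francia's theorem \cite{RF} as a known tool. The proofs that do work (Rubio de Francia's original argument, and the expositions in \cite{Lacey}) never linearise by random signs; they keep the $\ell^2$-valued structure, studying $f \mapsto (\widetilde\pi_{I_i} f)_i$ as a vector-valued operator and proving an $L^\infty \to \mathrm{BMO}$ or weighted $L^2$ estimate for the square function itself after a Whitney-type decomposition of each interval. This is essential precisely because the inequality being proved is one-sided ($p \geq 2$ only), whereas any uniform bound on the self-adjoint operators $T_\varepsilon$ would automatically dualise to exponents below $2$. Finally, your closing remark on passing between sharp and smooth truncations goes in the wrong direction: one cannot dominate a sharp Fourier projection by smooth bumps; the standard deduction writes $\pi_{I_i} = \pi_{I_i}\widetilde\pi_{I_i}$ with $\widetilde\pi_{I_i}$ a smooth cut-off equal to $1$ on $I_i$, and then uses the vector-valued inequality $\bigl\|(\sum_i |\pi_{I_i} g_i|^2)^{1/2}\bigr\|_{L^p} \lesssim \bigl\|(\sum_i |g_i|^2)^{1/2}\bigr\|_{L^p}$ for $1<p<\infty$, available because each $\pi_{I_i}$ is a combination of modulated Hilbert transforms.
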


\subsection{Bilinear multipliers}

Let us recall some usual facts about bilinear multipliers. To a symbol $m\in \s(\R^2)'$, we can define in the distributional sense the following bilinear multiplier~:
$$ T_m(f,g)(x):= \int_{\R^2} e^{ix(\xi+\eta)} \widehat{f}(\xi) \widehat{g}(\eta) m(\xi,\eta) d\xi d\eta.$$
It is well-known that reciprocally, any translation invariant bilinear multiplier which is bounded from $\s(\R) \times \s(\R)$ into $\s(\R)'$ can be written in the previous form.

Seeing things in physical space, the bilinear operator $T_m$ can be represented as
\begin{equation}
\label{physical}
B_{m_\epsilon}(f,g)(x) = \sqrt{2 \pi} \int \int \widehat{m_\epsilon}(y-x,z-x) f(y) g(z)\,dy\,dz.
\end{equation}

\section{The setting of rough curves and extension of the results}
\label{sec:extensions}

In this subsection, we define some notions of rough curve, allowing us to extend the results. We first recall the notion of {\it rectifiable} and {\it Ahlfors regular} curve, which will then support a measure. Then, we precise some of our results which still hold in this general setting.

\begin{df}[Rectifiable curve] A \emph{rectifiable curve} is the image of a continuous map $\gamma:[0,1]\to\R^d$ with
$$
    \textrm{length}(\gamma) := \sup_{0\leq t_0\leq\cdots\leq t_n\leq 1} \sum_{j=1}^n |\gamma(t_j)-\gamma(t_{j-1})| < \infty.
$$
We refer the reader to \cite{DS,Falconer} for more details concerning rectifiable sets. If $\Gamma$ is compact then it is a rectifiable curve if and only if it is connected and  has a finite one-dimensional Hausdorff measure. Then, it is well-known that we can construct a finite measure $d\sigma_\Gamma$ (corresponding to the measure of the length) on $\Gamma$ such that $d\sigma_\Gamma$ is equivalent to ${\mathcal H}^1_{|\Gamma}$ (the restriction of the one-dimensional Hausdorff measure to the set $\Gamma$). For any measurable subset $E\subset \Gamma$
$$ \sigma_\Gamma(E) \simeq {\mathcal H}^1_{|\Gamma}(E) = \lim_{\delta\to 0} \inf_{\genfrac{}{}{0pt}{}{E\subset \cup_i U_i}{\textrm{diam}(U_i)\leq \delta}} \sum_{i} \textrm{diam} (U_i)$$
where $U_i$ are open sets.
\end{df}

\begin{df}[Ahlfors regular curve] \label{def:alfors} A continuous curve $\Gamma \subset \R^2$ is said {\it Ahlfors regular} if
\be{eq:density} \sup_{z \in \Gamma} \ \sup_{\epsilon\in (0,1)} \ \frac{{\mathcal H}^1(\Gamma \cap \overline{B}(z,\epsilon))}{\epsilon} <\infty. \ee
For $\epsilon\leq \textrm{length}(\Gamma)$ and $z\in \Gamma$, then there exists a part of $\Gamma$ of length larger than $\epsilon$ included into $\overline{B}(z,\epsilon)$, so we obviously have
\be{eq:densitybis} 1\leq \inf_{z \in \Gamma} \ \sup_{\epsilon<\textrm{length}(\Gamma)} \ \frac{{\mathcal H}^1(\Gamma \cap \overline{B}(z,\epsilon))}{\epsilon} <\infty. \ee
\end{df}

\mb We refer the reader to \cite{DS} for the analysis of and on such curves, satisfying these regularity.

\begin{prop} \label{prop:weak} Let $\Gamma$ be a continuous Ahlfors regular curve. Then there exists a sequence of symbols $m_{\epsilon} \in \mathcal{M}_\epsilon^\Gamma$ for $\epsilon>0$ such that
$ \epsilon^{-1} m_\epsilon$ weakly converges to $d\sigma_\Gamma$ in the distributional sense.
\end{prop}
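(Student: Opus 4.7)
The natural candidate is a mollification of $d\sigma_\Gamma$. Pick a non-negative $\phi \in C^\infty_c(B(0,1))$ with $\int_{\R^2}\phi = 1$, set $\phi_\epsilon(x):=\epsilon^{-2}\phi(x/\epsilon)$, and define
$$
m_\epsilon(\xi,\eta) := \epsilon\,(\phi_\epsilon \ast d\sigma_\Gamma)(\xi,\eta).
$$
By a harmless rescaling we may assume $\Gamma \subset B(0,1/2)$ (Ahlfors regularity is scale-invariant), so that for $\epsilon \leq 1/2$ the support of $m_\epsilon$ sits inside the $\epsilon$-neighborhood $\Gamma_\epsilon \subset B(0,1)$. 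The plan is to check that $m_\epsilon \in \mathcal{M}^\Gamma_\epsilon$ using Ahlfors regularity to control the local mass of $d\sigma_\Gamma$, and then to verify weak convergence by a standard mollifier argument.

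For the size and derivative bounds, note that for every multi-index $\alpha$,
$$
\bigl|\partial^\alpha (\phi_\epsilon \ast d\sigma_\Gamma)(x)\bigr| \;=\; \bigl|((\partial^\alpha\phi_\epsilon)\ast d\sigma_\Gamma)(x)\bigr| \;\lesssim\; \epsilon^{-2-|\alpha|}\,\mathcal{H}^1\bigl(\Gamma \cap B(x,\epsilon)\bigr).
$$
If $B(x,\epsilon)$ meets $\Gamma$, picking $z\in \Gamma\cap B(x,\epsilon)$ gives $\Gamma\cap B(x,\epsilon)\subset \Gamma\cap B(z,2\epsilon)$, so by the Ahlfors regularity (\ref{eq:density}) the last mass is $\lesssim \epsilon$. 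Combining, $|\partial^\alpha(\phi_\epsilon \ast d\sigma_\Gamma)| \lesssim \epsilon^{-1-|\alpha|}$, which after multiplication by the factor $\epsilon$ in the definition of $m_\epsilon$ yields exactly $|\partial^\alpha m_\epsilon|\lesssim \epsilon^{-|\alpha|}$, the required bound. This shows $m_\epsilon \in \mathcal{M}^\Gamma_\epsilon$.

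For the weak convergence, let $\varphi$ be a test function. Writing $\check{\phi}_\epsilon(x):=\phi_\epsilon(-x)$,
$$
\langle \epsilon^{-1}m_\epsilon,\varphi\rangle \;=\; \int_{\R^2}(\phi_\epsilon\ast d\sigma_\Gamma)(x)\,\varphi(x)\,dx \;=\; \int_\Gamma (\check{\phi}_\epsilon \ast \varphi)(y)\,d\sigma_\Gamma(y).
$$
Since $\check{\phi}_\epsilon \ast \varphi \to \varphi$ uniformly on compact sets as $\epsilon\to 0$, and $d\sigma_\Gamma$ is a finite measure supported in $B(0,1/2)$, the right-hand side converges to $\int_\Gamma \varphi\,d\sigma_\Gamma = \langle d\sigma_\Gamma,\varphi\rangle$, which is the desired weak convergence.

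The only genuinely non-trivial ingredient is the use of Ahlfors regularity to obtain the local mass bound $\mathcal{H}^1(\Gamma\cap B(x,\epsilon))\lesssim \epsilon$ \emph{uniformly in $x$} (not just $x\in\Gamma$); everything else is formal. No further estimate is needed because the absence of any tangential smoothness requirement in $\mathcal{M}^\Gamma_\epsilon$ makes the naive mollification already sufficient.
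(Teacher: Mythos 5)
Your proof is correct and follows essentially the same route as the paper: you mollify $d\sigma_\Gamma$ at scale $\epsilon$ with the extra factor $\epsilon$, use Ahlfors regularity (upgraded from centers on $\Gamma$ to arbitrary centers by the doubling trick, which is exactly the paper's ``self-improvement'' of (\ref{eq:density})) to get $|\partial^\alpha m_\epsilon|\lesssim \epsilon^{-|\alpha|}$, and conclude weak convergence by a standard mollifier/Fubini argument, just as the paper does via a change of variables and uniform continuity of the test function.
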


\begin{proof} First since $\Gamma$ satisfies (\ref{eq:density}), it is obvious that $\Gamma$ is a rectifiable curve and so admits a arc-length measure (denoted $d\sigma_\Gamma$). Moreover it is easy to see that (\ref{eq:density}) self-improves in
\be{eq:density2} \sup_{z \in \R^2} \ \sup_{\epsilon\in (0,1/2)} \ \frac{{\mathcal H}^1(\Gamma \cap \overline{B}(z,\epsilon))}{\epsilon} <\infty \ee
Let choose a nonnegative smooth function $\chi:\R^2 \to \R$ supported on $\overline{B}(0,1)$ with $\int_{\R^2} \chi(x) dx=1$. Then for $\epsilon\in(0,1/2)$, we consider the following symbols
$$ m_\epsilon(x):= \int_\Gamma \frac{1}{\epsilon} \chi\left(\frac{x-y}{\epsilon}\right) d\sigma_\Gamma(y).$$
It is easy to see that $m_\epsilon$ is supported on $\Gamma_\epsilon$ and for all $\alpha\in\N^2$
\begin{align*}
 \left\| \partial^\alpha m_\epsilon\right\|_\infty & \lesssim \epsilon^{-|\alpha|} \sup_{x\in \Gamma_\epsilon} \frac{\sigma_\Gamma(\overline{B}(x,\epsilon))}{\epsilon}  \\
 & \lesssim \epsilon^{-|\alpha|} \sup_{x\in \Gamma_\epsilon} \frac{{\mathcal H}^1(\Gamma\cap \overline{B}(x,\epsilon))}{\epsilon} \lesssim \epsilon^{-|\alpha|},
\end{align*}
where we have used (\ref{eq:density2}). Consequently, the symbol $m_\epsilon$ belong to $\mathcal{M}_\epsilon^\Gamma$. In addition, for a compactly smooth function $f\in C^\infty_0(\R^2)$, we have 
\begin{align*}
 \epsilon^{-1} \int_{\R^2} m_\epsilon(x) f(x) dx & = \int_{\Gamma \times \R^2} f(x) \chi\left(\frac{x-y}{\epsilon}\right) \frac{dx d\sigma_\Gamma(y)}{\epsilon^2} \\
& = \int_{\Gamma \times \R^2} f(y+\epsilon z) \chi(z) dx d\sigma_\Gamma(y)dz.
\end{align*}
Hence, since $f$ is uniformly continuous and compactly supported, we easily check that
$$ \lim_{\epsilon \to 0} \epsilon^{-1} \int_{\R^2} m_\epsilon(x) f(x) dx = \int_{\Gamma} f(y) d\sigma_\Gamma(y),$$
thanks to the $L^1$-normalization of $\chi$. That concludes the proof of the proposition.
\end{proof}

\begin{lem} \label{lem:gammaeps} If $\Gamma$ is a rectifiable curve of finite length then there exists a constant $c_\Gamma$ such that for small enough $\epsilon>0$
$$ |\Gamma_\epsilon| \leq c_\Gamma \epsilon.$$
\end{lem}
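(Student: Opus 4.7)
The plan is to exploit the arclength parametrization of $\Gamma$ to cover it by a controlled number of small balls, and then estimate the Lebesgue measure of the enlarged neighborhood.

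First, since $\Gamma$ is rectifiable of finite length $L := \mathrm{length}(\Gamma) < \infty$, it admits an arclength (or at least 1-Lipschitz) parametrization $\tilde{\gamma} : [0, L] \to \R^2$, meaning $|\tilde{\gamma}(s) - \tilde{\gamma}(t)| \leq |s - t|$. Fix $\epsilon > 0$ small, set $N := \lceil L / \epsilon \rceil$, and consider the sampling points $z_k := \tilde{\gamma}(k \epsilon)$ for $k = 0, 1, \ldots, N$. Any point of $\Gamma$ is $\tilde{\gamma}(s)$ for some $s \in [0, L]$, hence lies within Euclidean distance at most $\epsilon$ of some $z_k$. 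Therefore the balls $B(z_k, \epsilon)$ cover $\Gamma$.

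Next, by definition $\Gamma_\epsilon$ consists of points whose distance to $\Gamma$ is at most $\epsilon$. Combining this with the previous covering by the triangle inequality, one gets
$$\Gamma_\epsilon \;\subset\; \bigcup_{k=0}^{N} B(z_k, 2\epsilon).$$
Using subadditivity of Lebesgue measure and the fact that each ball has area $4\pi\epsilon^2$, we conclude
$$|\Gamma_\epsilon| \;\leq\; (N+1) \cdot 4\pi\epsilon^2 \;\leq\; 4\pi\Big(\frac{L}{\epsilon} + 2\Big)\epsilon^2 \;\leq\; c_\Gamma\, \epsilon$$
for $\epsilon$ small enough, with $c_\Gamma$ depending only on the length $L$ of $\Gamma$.

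There is no real obstacle here: the only point requiring a bit of care is the existence of the 1-Lipschitz (arclength) parametrization, which is a standard fact about rectifiable curves (see for instance \cite{Falconer}). Once that is granted, the counting/covering argument is entirely elementary. Note that no Ahlfors regularity is needed at this stage, only finite length; Ahlfors regularity would give the same conclusion uniformly in the base point and is used elsewhere in the paper for finer statements.
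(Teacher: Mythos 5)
Your proof is correct, and it takes a genuinely different route from the paper's. You cover $\Gamma$ by roughly $L/\epsilon$ sample points $z_k=\tilde\gamma(k\epsilon)$ of a $1$-Lipschitz (arclength) parametrization, observe that $\Gamma_\epsilon\subset\bigcup_k B(z_k,2\epsilon)$, and conclude by subadditivity of Lebesgue measure, so that $|\Gamma_\epsilon|\lesssim (L/\epsilon)\,\epsilon^2$. The paper instead bounds the indicator of $\Gamma_\epsilon$ pointwise by the mollified measure $\frac{1}{\epsilon}\int_\Gamma \chi\left(\frac{z-y}{2\epsilon}\right)d\sigma_\Gamma(y)$: for $z\in\Gamma_\epsilon$ one has ${\mathcal H}^1\bigl(\Gamma\cap\overline{B}(z,2\epsilon)\bigr)\geq\epsilon$ because a connected curve of length at least $\epsilon$ passes within $\epsilon$ of $z$, and then Fubini gives $|\Gamma_\epsilon|\lesssim \epsilon\,\mathrm{length}(\Gamma)$. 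Both arguments use only finite length (no Ahlfors regularity, as you note). Yours is the more elementary and self-contained: it needs only the standard existence of the arclength parametrization and avoids invoking the measure $d\sigma_\Gamma$ and the connectedness-based lower bound on local length. The paper's version, on the other hand, reuses the smoothing-by-convolution setup already introduced for Proposition \ref{prop:weak}, which is why it is phrased that way there; it also yields the slightly stronger pointwise statement that the mollified arc-length measure dominates $\chi_{\Gamma_\epsilon}$, not just the measure bound.
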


\begin{proof} Consider $\epsilon$ small enough with respect to the length of $\Gamma$ and choose a nonnegative smooth function $\chi:\R^2 \to \R$ supported on $\overline{B}(0,1)$ with $\chi(x)=1$ for $|x|\leq 1$.
 For all $z\in \Gamma_\epsilon$,  we have
 $$ \frac{1}{\epsilon} \int_\Gamma \chi\left( \frac{z-y}{2\epsilon} \right) d\sigma_\Gamma(y) \geq \frac{{\mathcal H}^1(\Gamma \cap \overline{B}(z,2\epsilon))}{\epsilon}.$$
 However since $z\in \Gamma_\epsilon$, there exists $x\in \Gamma \cap \overline{B}(z,\epsilon)$, hence
 $$ {\mathcal H}^1(\Gamma \cap \overline{B}(z,2\epsilon)) \geq {\mathcal H}^1(\Gamma \cap \overline{B}(x,\epsilon)).$$
 Since $\epsilon$ is assumed to be smaller than the length of $\Gamma$, it follows that there exists at most a part of $\Gamma$ of length larger than $\epsilon$ included into $\overline{B}(x,\epsilon)$. We also deduce that
 $$ \frac{1}{\epsilon} \int_\Gamma \chi\left( \frac{z-y}{2\epsilon} \right) d\sigma_\Gamma(y) \geq 1.$$
 Consequently, 
\begin{align*} 
|\Gamma_\epsilon| & \leq \int_{\R^2} \frac{1}{\epsilon} \int_\Gamma \chi\left( \frac{z-y}{2\epsilon} \right) d\sigma_\Gamma(y) dz \\
 & \lesssim  \epsilon \left( \int_{\R^2} \chi( x)dx \right)  \left( \int_\Gamma d\sigma_\Gamma(y) \right) \\
 & \lesssim \epsilon ,
 \end{align*}
 where we have used that $\Gamma$ has a finite length.
 \end{proof}

Then, to extend the results, we have to perform a suitable decomposition of the curve. We also introduce the two following notions~:

\begin{df} \label{def:biLip} Let $\Gamma$ be a Ahlfors-regular curve in $\R^2$. Then consider $\pi_1,\pi_2,\pi_3$, respectively the orthogonal projection on the degenerate line $\{\eta=0\}$, $\{\xi=0\}$ and $\{\xi+\eta=0\}$. The curve $\Gamma$ is said to have ``finitely bi-Lipschitz projections'' if $\Gamma$ can be split into several pieces $(\Gamma_i)_{i=1...N}$ with for every $i$, a bi-Lipschitz parametrization of $ \pi_k(\Gamma_i)$ for at least two indices $k\in\{1,2,3\}$.
\end{df}

We know that an Ahlfors-regular curve can be split into bi-Lipschitz parametrized pieces. Here we required that these pieces have bi-Lipschitz parametrization through some projections.

Obviously, smooth curves satisfy this property, as well as polygons. We point out that some Ahlfors-regular curves (even with a finite length) may not respect this property, for example consider a logarithmic spiral.

\begin{df} A Ahlfors-regular curve $\Gamma$ in $\R^2$ is said to be nowhere characteristic if there exists a constant $c$ such that
for all real $t\in\R$, for every characteristic angle $\theta \in\{0,\pi/2,3\pi/4\}$ and all small enough $\epsilon>0$
\be{ass:nondegenerate} {\mathcal H}^1( \Gamma \cap \{(\xi,\eta),\ t\leq \xi-\tan(\theta)\eta \leq t+\epsilon\}) \leq c \epsilon. \ee
This assumption describes that the curve $\Gamma$ has no tangential directions (Ahlfors regular curve admits almost every where a tangential vector) which would be characteristic. \\
It is easily to check that a nowhere characteristic curve has ``finitely bi-Lipschitz projections".
\end{df}

With these two notions, we let the reader check the following possible extensions (we only give a sample of these).

\begin{prop} \label{prop:extension} We can extend the results as follows~:
\begin{itemize}
\item Let $\Gamma$ be a Ahlfors regular curve in $\R^2$ having ``finitely bi-Lipschitz projections'' (not necessarily bounded) then for $p,q,r\in[2,\infty)$ there exists a constant $c$ such that for all symbols $m_\epsilon \in \mathcal{M}^\Gamma_\epsilon$,
$$\displaystyle \|B_{m_\epsilon}\|_{L^p \times L^q \rightarrow L^{r'}} \leq c \epsilon^{\frac{1}{p}+\frac{1}{q}+\frac{1}{r}-1}.$$
\item If $\Gamma$ is nowhere characteristic then we can allow one of the three exponents to be lower than $2$.
\item Let $\Gamma$ be Ahlfors regular, nowhere characteristic, then bilinear Fourier restriction-extension inequalities still holds for $\frac{1}{p}+\frac{1}{q}+\frac{1}{r}>2$ (since (\ref{ecureuil}) is true with $\alpha(\epsilon)\lesssim \epsilon$).
\end{itemize}
\end{prop}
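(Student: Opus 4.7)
The strategy is to reduce the rough-curve statement to the $L^2$-based endpoint estimates from Section~\ref{sec:L2}, which depend only on measure-theoretic rather than smooth properties of $\Gamma$. The workflow is: decompose, localize, apply $L^2$ endpoints, interpolate.

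First I would use the ``finitely bi-Lipschitz projections'' hypothesis to write $\Gamma = \bigcup_{i=1}^N \Gamma_i$, each $\Gamma_i$ admitting a bi-Lipschitz parametrization through at least two of the three projections $\pi_1,\pi_2,\pi_3$. I would then lift this splitting off $\Gamma$ by a smooth partition of unity attached to a tubular coordinate system of width $\sim \epsilon$ around each $\Gamma_i$, thereby writing $m_\epsilon = \sum_i m_\epsilon^{(i)}$ with each piece supported in the $\epsilon$-neighborhood of $\Gamma_i$ and satisfying the same $\epsilon$-scale derivative bounds. It suffices to prove the estimate for a single piece.

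For each piece, the bi-Lipschitz projection onto $\pi_k$ implies a uniform transverse cross-section bound $|\Gamma^{(i)}_\epsilon \cap \pi_k^{-1}(c)|_1 \lesssim \epsilon$, which together with Lemma~\ref{lem:gammaeps} gives $|\Gamma^{(i)}_\epsilon| \lesssim \epsilon$. These geometric facts feed the standard $L^2$-endpoints exactly as for smooth curves: Cauchy--Schwarz in $(\xi,\eta)$ with Plancherel yields the $L^2 \times L^2 \to L^\infty$ bound $\lesssim \|m_\epsilon^{(i)}\|_{L^2} \lesssim \epsilon^{1/2}$, while a Schur-type test using the cross-section bound along $\pi_3$ gives the $L^2 \times L^2 \to L^2$ bound $\lesssim \epsilon^{1/2}$, with analogous estimates along $\pi_1, \pi_2$ giving the dual $L^\infty \times L^2 \to L^2$ variants. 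Having two bi-Lipschitz projections per piece is exactly enough to cover the endpoints needed when $p,q,r \in [2,\infty)$, and bilinear complex interpolation against the trivial H\"older bound produces the rate $\epsilon^{1/p+1/q+1/r-1}$.

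For the second bullet, the nowhere-characteristic assumption (\ref{ass:nondegenerate}) is precisely what upgrades the bi-Lipschitz property to \emph{all three} projections simultaneously on each piece. Together with the single-scale oscillation estimates from Section~\ref{SOPP}, this provides additional endpoints below $L^2$, and interpolation then permits one of the three indices to drop below~$2$. For the third bullet, apply Proposition~\ref{prop:weak} to produce $m_\epsilon \in \mathcal{M}^\Gamma_\epsilon$ with $\epsilon^{-1} m_\epsilon \rightharpoonup d\sigma_\Gamma$; the first two bullets give $\|B_{\epsilon^{-1}m_\epsilon}\|_{L^p \times L^q \to L^{r'}} \lesssim 1$ uniformly in $\epsilon$ whenever $\frac{1}{p}+\frac{1}{q}+\frac{1}{r} > 2$, and a distributional limit transfers this to $B_{\lambda\, d\sigma_\Gamma}$ for any smooth compactly supported $\lambda$, as already explained below (\ref{ecureuil}).

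The main obstacle is the geometric step above: extracting the uniform transverse cross-section bound $|\Gamma^{(i)}_\epsilon \cap \pi_k^{-1}(c)|_1 \lesssim \epsilon$ directly from the bi-Lipschitz hypothesis, rather than from smoothness. Bi-Lipschitzness of $\pi_k|_{\Gamma_i}$ controls arc-length along the projected axis but must be leveraged to show that any arc of $\Gamma_i$ sitting above a window $[c,c+\epsilon]$ is contained in a box of size $\epsilon \times O(\epsilon)$ and therefore has transverse width $\lesssim \epsilon$ after $\epsilon$-thickening. This is the one place where the smoothness used in the earlier theorems must be genuinely replaced, and it is also where condition (\ref{ass:nondegenerate}) enters decisively for the second bullet.
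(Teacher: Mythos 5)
Your geometric reduction is essentially the right one and matches the paper's intent: splitting $\Gamma$ into finitely many pieces, each with two bi-Lipschitz projections, and extracting from this the scale-$\epsilon$ cross-section/bounded-covering information is exactly what Lemma~\ref{lem:covering} encodes, and your two easy endpoints ($L^2\times L^2\to L^\infty\lesssim\epsilon^{1/2}$ from $\|m_\epsilon\|_{L^2}$, and $L^2\times L^2\to L^2\lesssim\epsilon^{1/2}$ by Cauchy--Schwarz with a transverse cross-section) are correct. The gap is the interpolation step. The claimed range $p,q,r\in[2,\infty)$ contains H\"older-scaling points with finite exponents (e.g.\ $(2,4,4)$) and points arbitrarily close to the vertices $(2,2,\infty)$, $(2,\infty,2)$, $(\infty,2,2)$, so any interpolation scheme with the affine rate $\epsilon^{\frac1p+\frac1q+\frac1r-1}$ must input uniform-in-$\epsilon$ ($O(1)$) bounds at or near these H\"older configurations. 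There is no ``trivial H\"older bound'' here: the symbols are one-scale bumps strung along a curve, not Coifman--Meyer symbols, and the $O(1)$ estimates at H\"older scaling in the local-$L^2$ range are precisely the nontrivial content of Proposition~\ref{prop:L2} at $s=0$, proved via Rubio de Francia square functions plus a maximal function. Your proposed substitute, ``Schur-type tests along $\pi_1,\pi_2$ giving the dual $L^\infty\times L^2\to L^2$ variants,'' does not deliver them: a Plancherel/Schur argument cannot see $L^\infty$ data at all; if you mean a rate $\epsilon^{1/2}$ at $(\infty,2,2)$, that is false in general (freeze $\widehat f$ near a frequency $\xi_0$ lying under a point of $\Gamma$, so that $B_{m_\epsilon}(f,\cdot)$ acts like the multiplier $m_\epsilon(\xi_0,D)$, of $L^2$ operator norm $\sim 1$ for suitable symbols in the class); and if you mean $O(1)$, then for a piece whose two good projections are $\pi_1,\pi_2$ the two $L^2$ slots at $(\infty,2,2)$ sit in directions $2$ and $3$, and direction $3$ may carry no almost-orthogonality whatsoever (a piece nearly parallel to $\{\xi+\eta=0\}$), so the natural orthogonality argument loses $\epsilon^{-1/2}$. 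This is exactly the difficulty the paper's route is designed to avoid: in the proof of Proposition~\ref{prop:L2} the third function is only handled through $\sup_i|\Delta_{I_i^3}h|\lesssim\mathcal M h$, which needs no disjointness, so two good directions per piece suffice and the bound $\epsilon^{\frac1p+\frac1q+\frac1r-1}$ comes out directly for all finite $p,q,r\ge2$ in one pass, with the freedom to assign the maximal-function slot to the bad direction. You should either rerun that argument (your cross-section bound feeds Lemma~\ref{lem:covering} verbatim) or genuinely prove the H\"older-vertex estimates; as written, the heart of the first two bullets is assumed rather than proved.

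A secondary slip concerns the third bullet: $\frac1p+\frac1q+\frac1r>2$ forces at least two of the exponents to be below $2$, so the needed rate $\alpha(\epsilon)\lesssim\epsilon$ cannot follow from your first two bullets (which allow at most one exponent below $2$). One must first extend the nowhere-characteristic endpoint estimates of Section~\ref{SOPP} (the points $(1,1,2)$, $(2,2,1)$, $(1,1,1)$ with rate $\epsilon$, whose proofs use only cross-section bounds that (\ref{ass:nondegenerate}) supplies) to the rough setting, interpolate, and only then pass to the limit via Proposition~\ref{prop:weak} as you describe.
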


Indeed, the technical point relies on this following lemma, which gives a suitable decomposition at the scale $\epsilon$~:

\begin{lem} \label{lem:covering} 
Let $\Gamma$ be a Ahlfors regular curve in $\R^2$ having ``finitely bi-Lipschitz projections'' (see above Definition \ref{def:biLip}) and fix $\epsilon>0$. Then there exist a number $N$ (independent on $\epsilon>0$) and $N$ collections of balls ${\mathcal B}_j:=(B(z_i,\epsilon))_{i\in I_j}$ with $z_i\in \Gamma$ satisfying the following property: \\
Let us write $I_{j,i}^1$, $I_{j,i}^2$ and $I_{j,i}^3$ intervals of length equal to $2\epsilon$ such that for all $i\in I_j$
\be{eq:inter} \left\{(\xi,\eta,\xi+\eta), (\xi,\eta)\in \overline{B}(z_i,\epsilon)\right\} \subset I_{j,i}^1 \times I_{j,i}^2 \times I_{j,i}^3. \ee
For $l=1,2$ and $l=3$, there exists a subset $ I_j^l \subset I_j$ such that $I_j = I_j^1\cup I_j^2 \cup I_j^3$ and for each $l\in \{1,2,3\}$ and $k\in\{1,2,3\}\setminus \{l\}$, the intervals $(I_{j,i}^k)_{i\in I_j^l}$ are disjoint.
Here the constant $N$ is a numerical constant, independent with respect to $\epsilon$.
\end{lem}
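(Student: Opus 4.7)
The plan is to exploit the finitely bi-Lipschitz projections hypothesis to decompose $\Gamma$ into a bounded number of pieces on each of which two of the three projections $\pi_1,\pi_2,\pi_3$ are quantitatively bi-Lipschitz, and then to discretize each piece at the scale $\epsilon$ so that the spacings of the projected sample points are tightly controlled.

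First I would invoke the hypothesis to write $\Gamma = \bigcup_{s=1}^{N_0} \Gamma_s$, where each $\Gamma_s$ admits a bi-Lipschitz parametrization through $\pi_k$ and $\pi_{k'}$ for some pair $\{k,k'\} \subset \{1,2,3\}$. Denote by $l(s) \in \{1,2,3\}$ the remaining index and by $c > 0$ a uniform lower bi-Lipschitz constant across all pieces and both non-missing projections. Parametrizing $\Gamma_s$ by arc length $\gamma_s \colon [0,L_s] \to \R^2$, introduce sample points $z_n^{(s)} := \gamma_s(n\epsilon)$ for $n = 0,1,\dots,\lfloor L_s/\epsilon \rfloor$. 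Since arc length dominates Euclidean distance, consecutive centers lie within $\epsilon$ of each other and $\bigcup_n \overline{B}(z_n^{(s)},\epsilon) \supset \Gamma_s$; aggregating over $s$ yields a covering of $\Gamma$ by balls of radius $\epsilon$ centered on $\Gamma$.

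Next, the bi-Lipschitz estimate would give, for all $n,m$ in piece $\Gamma_s$ and each $k \neq l(s)$,
\[
|\pi_k(z_n^{(s)}) - \pi_k(z_m^{(s)})| \geq c\,|n-m|\,\epsilon.
\]
Choose an integer $M$ (independent of $s$ and $\epsilon$) such that $Mc$ exceeds a suitable fixed constant comparable to the length of the intervals $I^k_{j,i}$ divided by $\epsilon$, and split $\{0,\dots,\lfloor L_s/\epsilon\rfloor\}$ into $M$ residue classes modulo $M$. Within any single residue class, any two distinct sample centers have $\pi_k$-projections at distance $\geq Mc\epsilon$, which forces the corresponding intervals $I^k_{j,i}$ to be pairwise disjoint, and symmetrically for $\pi_{k'}$.

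Declaring each (piece, residue class) pair to be a separate collection $\mathcal{B}_j$ produces $N \leq N_0 M$ collections, a quantity independent of $\epsilon$. For such a $\mathcal{B}_j$ originating from the piece $\Gamma_s$, set $I_j^{l(s)} := I_j$ and $I_j^l := \emptyset$ for $l \neq l(s)$; the required decomposition $I_j = I_j^1 \cup I_j^2 \cup I_j^3$ then holds trivially, and the disjointness assertion for $k \neq l$ is precisely what the preceding construction supplies. The main (mild) obstacle will be to verify uniformity of the bi-Lipschitz constant $c$ across all $N_0$ pieces, and to absorb boundary contributions near the junctions between the $\Gamma_s$ into a slightly enlarged but still $\epsilon$-independent $N$; both are routine consequences of working with a fixed decomposition inherited from the definition of ``finitely bi-Lipschitz projections''.
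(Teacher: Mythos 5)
Your proposal is correct and follows essentially the route the paper intends: the paper leaves the details to the reader, indicating exactly this argument, namely that the ``finitely bi-Lipschitz projections'' hypothesis fixes, on each of the finitely many pieces $\Gamma_s$, the same pair of projections along which points separate linearly, so that an $\epsilon$-scale discretization of each piece split into boundedly many subfamilies (your residue classes mod $M$) yields the disjointness of the projected intervals with $N$ independent of $\epsilon$. The points you flag (uniformity of the bi-Lipschitz constants over the fixed finite decomposition, and comparability of the given parametrization with arc length) are indeed routine and are precisely the details the paper omits.
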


We let the details to the reader. The proof is a direct consequence of the geometrical assumptions "finitely bi-Lipschitz projections". Indeed if the curve $\Gamma$ is only assumed to be bi-Lipschitz, then by considering $\gamma$ a bi-Lipschitz parametrization, we have that $|x-y|\lesssim |\gamma(x) - \gamma(y)|$ for all $x,y$. It follows that for at least two integers $k,k'$, $|x-y|\lesssim |\pi_k(\gamma(x))-\pi_k(\gamma(y))|$ (we recall that $\pi_k$ is the projection on one of the degenerate lines). The main difficulty is that these two integers $k,k'$ may depend on the couple $(x,y)$. The geometrical assumption allows us to deal with same integers $k,k'$ for all points $x,y$.

\section{Necessary conditions and the specific case of a straight line for $\Gamma$}

\label{NCATSC}

\subsection{Necessary conditions}

It is well-known that multilinear multipliers (commuting with the simultaneous translations) cannot loose integrability. So any bilinear multiplier can be bounded from $L^p \times L^q$ into $L^{r'}$ only if
$$ \frac{1}{r'} \leq \frac{1}{p}+ \frac{1}{q}$$ which corresponds to (\ref{eq:subholder}).

\begin{rem}
\label{jaguar}
Next notice that, since $\Gamma$ is compactly supported, (\ref{ecureuil}) holds for $(p,q,r,\alpha(\epsilon))$ if it does hold for $(P,Q,R,\alpha(\epsilon))$ with $p \leq P$, $q \leq Q$ and $r \leq R$. Thus our job will be to push indices up !
\end{rem}

Let $\Gamma$ be a smooth and compact curve in $\R^2$. Let us recall that we are looking for exponents $p,q,r\in [1,\infty]$ verifying (\ref{eq:subholder}) and function $\alpha(\epsilon)$ such that for small enough $\epsilon>0$, any symbol $m_\epsilon$ in the class $\mathcal{M}_\epsilon^\Gamma$ or $\mathcal{N}_\epsilon^\Gamma$ gives rise to a bilinear multiplier $B_{m_\epsilon}$ with
\be{ecureuil2} \left\|B_{m_\epsilon}\right\|_{L^p \times L^q \rightarrow L^{r'}} \lesssim \alpha(\epsilon).\ee
This subsection is devoted to describe some necessary conditions.

\begin{prop}[Necessary conditions]\label{fourmi}
Assume that $m_\epsilon \in \mathcal{N}_\epsilon^\Gamma$, $m_\epsilon \geq 0$, and $m_\epsilon = 1$ on a (non empty) curve $\Gamma' \subset \Gamma$. With $p,q,r$ verifying (\ref{eq:subholder}), for~(\ref{ecureuil2}) to hold, it is necessary that
\begin{itemize}
\item[(i)] $\alpha(\epsilon) \gtrsim \epsilon$ (for any $\Gamma$), \vsp
\item[(ii)] $\alpha(\epsilon) \gtrsim \epsilon^{\frac{1}{p} + \frac{1}{q} + \frac{1}{r} - 1}$ (for any $\Gamma$), \vsp
\item[(iii)] $\alpha(\epsilon) \gtrsim \epsilon^{-\frac{1}{2}+\frac{1}{q}+\frac{1}{2}\left(\frac{1}{p}+\frac{1}{r}\right)+\delta }$ for $\delta>0$ (if $\Gamma'$ has non -vanishing curvature and has, in the coordinates $(\xi,\eta)$, a tangent which is parallel to the $\xi$ axis).
\item[(iv)] $\alpha(\epsilon) \gtrsim \epsilon^{\frac{1}{p}}$ (if $m_\epsilon(\xi,\eta) = \chi\left(\frac{\xi}{\epsilon}\right) \chi(\eta)$, with $\chi$ a smooth function, equal to $1$ on $B(0,1)$, and $0$ outside of $B(0,2)$).
\end{itemize}
\end{prop}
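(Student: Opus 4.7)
Each of the four lower bounds will be obtained by substituting explicit test triples $(f,g,h)$ into the trilinear form
\[
\langle B_{m_\epsilon}(f,g),h\rangle = \int m_\epsilon(\xi,\eta)\,\widehat{f}(\xi)\,\widehat{g}(\eta)\,\overline{\widehat{h}(\xi+\eta)}\,d\xi\,d\eta
\]
and comparing with $\|f\|_p\|g\|_q\|h\|_r$. A preliminary observation used throughout is that since $m_\epsilon\ge 0$, $m_\epsilon\equiv 1$ on $\Gamma'$, and $m_\epsilon\in\mathcal{N}_\epsilon^\Gamma$ has tangential derivatives of size $O(1)$ and normal derivatives of size $O(\epsilon^{-1})$, there is a small constant $c>0$ such that $m_\epsilon\gtrsim 1$ on the $c\epsilon$-tubular neighborhood of $\Gamma'$; more generally, on any rectangle of the form (tangential length $L$) $\times$ ($c\epsilon$ normal) centered at a point of $\Gamma'$, provided $L$ is small enough that the local tangential direction varies only slightly.

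For (i), take $f,g,h$ Schwartz with $\widehat{f}\equiv\widehat{g}\equiv\widehat{h}\equiv 1$ on $B(0,2)\supset\operatorname{supp} m_\epsilon$; then $\langle B_{m_\epsilon}(f,g),h\rangle=\int m_\epsilon\gtrsim\epsilon$ (the area of the $c\epsilon$-tube about $\Gamma'$), while the three $L^p$-norms are $O(1)$. For (ii), localize at a point $(\xi_0,\eta_0)\in\Gamma'$ by setting $\widehat{f}(\xi)=\psi(\tfrac{\xi-\xi_0}{\epsilon})$, $\widehat{g}(\eta)=\psi(\tfrac{\eta-\eta_0}{\epsilon})$, and $\widehat{h}(\zeta)=\psi(\tfrac{\zeta-\xi_0-\eta_0}{\epsilon})$ for a nonnegative bump $\psi$ equal to $1$ near the origin. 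Scaling gives $\|f\|_p\sim\epsilon^{1-1/p}$ and similarly for $g$ and $h$; on the $\epsilon\times\epsilon$ box around $(\xi_0,\eta_0)$ every factor is bounded below and $m_\epsilon\sim 1$, so the trilinear form is $\gtrsim\epsilon^2$ and dividing produces (ii).

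For (iii), I use a Knapp-type construction that exploits the non-vanishing curvature. Near a characteristic point $(\xi_0,\eta_0)\in\Gamma'$ at which the tangent is parallel to the $\xi$-axis, the curve is of the form $\eta=\eta_0+c_1(\xi-\xi_0)^2+O((\xi-\xi_0)^3)$ with $c_1\neq 0$; in particular, for $|\xi-\xi_0|\le\sqrt{\epsilon}$ the curve remains within $O(\epsilon)$ of the horizontal line $\eta=\eta_0$. Then take
\[
\widehat{f}(\xi)=\psi\!\bigl(\tfrac{\xi-\xi_0}{\sqrt{\epsilon}}\bigr),\qquad \widehat{g}(\eta)=\psi\!\bigl(\tfrac{\eta-\eta_0}{\epsilon}\bigr),\qquad \widehat{h}(\zeta)=\psi\!\bigl(\tfrac{\zeta-\xi_0-\eta_0}{\sqrt{\epsilon}}\bigr),
\]
giving $\|f\|_p\sim\epsilon^{(1-1/p)/2}$, $\|g\|_q\sim\epsilon^{1-1/q}$, and $\|h\|_r\sim\epsilon^{(1-1/r)/2}$. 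On a $\sqrt{\epsilon}\times c\epsilon$ rectangle aligned with the tangent, $m_\epsilon\gtrsim 1$ and all three Fourier factors are bounded below, so the trilinear form is $\gtrsim\sqrt{\epsilon}\cdot\epsilon=\epsilon^{3/2}$; the ratio yields $\alpha(\epsilon)\gtrsim\epsilon^{-1/2+1/q+(1/p+1/r)/2}$. The arbitrarily small $\delta$ is absorbed by slightly shrinking the tangential scale to $\epsilon^{1/2-\delta}$, which controls the cubic error in the local expansion of $\Gamma$ and keeps $m_\epsilon$ uniformly close to $1$ on the whole rectangle; this verification is the main (though minor) technical point.

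For (iv), the separable symbol factorizes the bilinear operator as $B_{m_\epsilon}(f,g)=[\chi(D/\epsilon)f]\cdot[\chi(D)g]$ up to a harmless constant. Take $f(x)=\epsilon\,\phi(\epsilon x)$ with $\phi$ Schwartz, $\widehat{\phi}(0)=1$ and $\operatorname{supp}\widehat{\phi}\subset[-1,1]$, and $g=\psi$ with $\operatorname{supp}\widehat{\psi}\subset[-1,1]$, so that both Fourier projections act as the identity. Scaling yields $\|f\|_p\sim\epsilon^{1-1/p}$ and $\|g\|_q\sim 1$, while on the unit-scale effective support of $\psi$ one has $f(x)=\epsilon\,\phi(\epsilon x)\sim\epsilon\,\phi(0)$, whence $\|fg\|_{r'}\sim\epsilon$. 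Dualizing against an appropriate $h\in L^r$ produces $\alpha(\epsilon)\gtrsim\epsilon/\epsilon^{1-1/p}=\epsilon^{1/p}$, which is (iv).
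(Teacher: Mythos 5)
Your proposal is correct. Parts (i), (ii) and (iv) follow essentially the same route as the paper: (i) is the paper's test with $\widehat f=\widehat g=\widehat h=1$ on a ball containing $\operatorname{Supp} m_\epsilon$, (ii) is the same $\epsilon$-scaling example (the paper rescales fixed bumps assuming $\Gamma'$ passes through the origin, you center an $\epsilon$-bump at an arbitrary point of $\Gamma'$ -- identical computation), and (iv) is the paper's separable example up to normalization (just make sure $\phi(0)\neq 0$, e.g.\ by taking $\widehat\phi\ge 0$; the condition $\widehat\phi(0)=1$ alone does not guarantee it). Part (iii) is where you genuinely diverge: the paper uses homogeneous test functions $\widehat f=\Phi(\xi)|\xi|^{-a}$, $\widehat g=\Phi(\eta)|\eta|^{-b}$, $\widehat h=\Phi(\zeta)|\zeta|^{-c}$ with $a=1-\tfrac1p-\tfrac\delta3$, etc., and lower-bounds the integral over $\{|\eta|\lesssim\epsilon,\ |\xi|\sim\sqrt\epsilon\}$; the $\delta$-loss there is forced by the need to keep these power-law functions in $L^p$, $L^q$, $L^r$. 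Your Knapp-type choice (bumps at scale $\sqrt\epsilon$ in $\xi$ and $\xi+\eta$, at scale $\epsilon$ in $\eta$, tested on a $c\sqrt\epsilon\times c\epsilon$ rectangle along the tangent, where tangency plus nonvanishing curvature keep the rectangle within $O(\epsilon)$ of $\Gamma'$, so $m_\epsilon\gtrsim1$ there by $m_\epsilon=1$ on $\Gamma'$ and $|\nabla m_\epsilon|\lesssim\epsilon^{-1}$, and all factors are nonnegative so restricting the integral is legitimate) yields $\alpha(\epsilon)\gtrsim\epsilon^{-\frac12+\frac1q+\frac12(\frac1p+\frac1r)}$ with no $\delta$ at all, which is stronger than the stated (iii) and still consistent with the paper's upper bounds (which lose a $\delta$ on the other side). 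One small blemish: your closing remark about ``shrinking the tangential scale to $\epsilon^{1/2-\delta}$'' is both unnecessary and has the sign backwards -- for small $\epsilon$ one has $\epsilon^{1/2-\delta}>\epsilon^{1/2}$, and over that longer range the quadratic deviation $\sim\epsilon^{1-2\delta}$ takes the rectangle out of the $\epsilon$-neighborhood of $\Gamma$; the construction at scale $c\sqrt\epsilon$ with a small constant $c$ already controls the quadratic and cubic errors and needs no $\delta$.
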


\begin{proof} Let consider a suitable nonnegative symbol $m_\epsilon$.
$(i)$ Take $R$ such that $\operatorname{Supp} m_\epsilon \subset B(0,R)$, and $f$, $g$, and $h$ in $\mathcal{S}(\R)$ such that $\widehat{f} = \widehat{g} = \widehat{h} = 1$ on $B(0,10 R)$. Then an obvious computation with Lemma \ref{lem:gammaeps} give
$$
\langle B_{m_\epsilon}(f,g) \,,\,h \rangle \sim \epsilon,
$$
hence the bound on $\alpha$.
\bigskip

$(ii)$ Assume that $\Gamma'$ goes through $(0,0)$, and take $f$, $g$, $h$ in $\mathcal{S}(\R)$ such that $\widehat{f}, \widehat{g}, \widehat{h} \geq 0$ and $\operatorname{Supp} \widehat{f}$, $\operatorname{Supp} \widehat{g}$, $\operatorname{Supp} \widehat{h} \subset B(0,\frac{1}{100})$. Define then
$$
f^\epsilon = \epsilon f(\epsilon \cdot), \qquad g^\epsilon = \epsilon g(\epsilon \cdot)\quad \mbox{and}\quad h^\epsilon = \epsilon h(\epsilon \cdot).
$$
Then obviously
\begin{align*}
\langle B_{m_\epsilon}(f^\epsilon ,g^\epsilon ) \,,\,h^\epsilon \rangle & = \epsilon^ 2 \int\int \widehat{f}(\xi) \widehat{g}(\eta) \widehat{h}(-\xi-\eta) m_\epsilon(\epsilon\xi,\epsilon \eta) d\xi d\eta \\
 & \sim \epsilon^2
\end{align*}
whereas
$$
\left\| f^\epsilon \right\|_{L^p} \left\| g^\epsilon \right\|_{L^q} \left\| h^\epsilon \right\|_{L^r} \sim \epsilon^{3 - \frac{1}{p} - \frac{1}{q} - \frac{1}{r}};
$$
this gives the bound on $\alpha$.

\bigskip

$(iii)$ We only treat the case where $p,q,r > 1$, a small modification being needed if one of the exponents is $1$. With the hypotheses made on $\Gamma'$, this curve can be parameterized in some region by $\eta=\phi(\xi)$, with $\phi'$ vanishing at a point, $\phi'(\xi_0)=0$, but $\phi''(\xi_0)\neq 0$ since the curvature of $\Gamma'$ does not vanish. For simplicity, we shall assume that $\phi'(0)=0$, and $\phi''(0)=1$. The test functions we will use read
$$
\widehat{f}(\xi) = \frac{\Phi(\xi)}{|\xi|^a}\;\;,\;\;\widehat{g}(\xi) = \frac{\Phi(\xi)}{|\xi|^b}\;\;\mbox{and}\;\;\widehat{h}(\xi)=\frac{\Phi(\xi)}{|\xi|^c},
$$
where $\Phi \in \mathcal{C}^\infty_0$ is non negative and does not vanish at $0$, and
$$
a = 1-\frac{1}{p}-\frac{\delta}{3}, \quad b = 1-\frac{1}{q}-\frac{\delta}{3}\quad \mbox{and}\quad c = 1-\frac{1}{q}-\frac{\delta}{3}
$$
for some small $\delta>0$. It is easy to check that $f$, $g$, and $h$ belong, respectively, to $L^p$, $L^q$, and $L^r$. Next we want to estimate $\langle B_{m_\epsilon}(f,g) \,,\,h \rangle$. For some appropriate constant $c_0$,
$$
\langle B_{m_\epsilon}(f,g) \,,\,h \rangle = \int \int m_\epsilon(\xi,\eta) \widehat{f}(\xi) \widehat{g}(\eta) \widehat{h}(-\eta-\xi)\,d\eta\,d\xi \gtrsim \int_{|\eta| \leq c_0 \epsilon} \int_{|\xi|\sim c_0 \sqrt{\epsilon}} \frac{1}{|\xi|^a} \frac{1}{|\eta|^b} \frac{1}{|\xi+\eta|^c} \,d\eta \, d\xi.
$$
An easy computation gives then
$$
\langle B_{m_\epsilon}(f,g) \,,\,h \rangle \geq \epsilon^{-\frac{1}{2}+\frac{1}{q}+\frac{1}{2}\left(\frac{1}{p}+\frac{1}{r}\right)+3\delta}
$$
which is the desired result.

\bigskip

$(iv)$ With $m_\epsilon = \chi\left(\frac{\xi}{\epsilon}\right) \chi(\eta)$, $B_{m_\epsilon}(f,g) = \chi\left(\frac{D}{\epsilon}\right) f \chi(D) g$. Choosing $f,g,h$ in the Schwartz class with Fourier transforms localized in $B \left( 0,\frac{1}{2} \right)$, set furthermore $f^\epsilon = f(\epsilon \cdot)$.
Then
$$
< B_{m_\epsilon}(f,g)\,,\,h> = \int f(\epsilon \cdot) g h \rightarrow f(0) \int gh \;\;\;\;\;\;\mbox{as $\epsilon \rightarrow 0$}
$$
whereas
$$
\left\|f^\epsilon\right\|_{L^p} \|g\|_{L^q} \|h\|_{L^r} = \epsilon^{-\frac{1}{p}} \|f\|_{L^p} \|g\|_{L^q} \|h\|_{L^r}.
$$
This implies immediately $(iv)$.
\end{proof}

\subsection{Necessary and sufficient conditions in the case $m_\epsilon = \left[ \lambda d\sigma_{\Gamma} \right] * \epsilon^{-1} \chi_\epsilon$}

We assume in this subsection that 
$$
m_\epsilon = \left[ \lambda d\sigma_{\Gamma} \right] * \epsilon^{-1} \chi_\epsilon,
$$
where $\chi_\epsilon = \chi(\epsilon^{-1} \cdot)$ a smooth, nonnegative function, supported on $B(0,2)$, equal to one on $B(0,1)$; $\lambda$ belongs to $\mathcal{C}^\infty_0$; and $d\sigma_\Gamma$ is the arc-length measure on the smooth curve $\Gamma$. Notice that this type of $m_\epsilon$ belongs to $\mathcal{M}_\epsilon$; and it belongs to $\mathcal{N}_\epsilon^\Gamma$  if $\lambda = 1$, $\Gamma = \mathbb{S}^1$.

\begin{prop} 
\begin{itemize}
\label{macareux}
\item[(i)] Let $m_\epsilon = d\sigma_{\Gamma} * \epsilon^{-1} \chi_\epsilon$. Then if the curvature of $\Gamma$ does not vanish,
$$
\left\| B_{m_\epsilon} \right\|_{L^{1}\times L^1 \rightarrow L^{p'}} \sim \left\{ \begin{array}{ll} \epsilon & \mbox{if $2 < p'<\infty$} \vsp \\ \epsilon \sqrt{- \log \epsilon}  & \mbox{if $p'=p=2$} \vsp \\ \epsilon^{1/2+1/p} & \mbox{if $1 < p' < 2$} \vsp  \end{array} \right.
$$
\item[(ii)] We have the restricted type estimate at the point $(1,1,2)$: for any three sets $F$, $G$, and $H$,
$$
\left| \left< B_{m_\epsilon} (\chi_F,\chi_G)\,,\,\chi_H \right> \right| \lesssim \epsilon |F| |G| |H|^{1/2}.
$$
\item[(iii)] If $\Gamma$ is non-characteristic on $\operatorname{Supp} \lambda$, the only change to $(i)$ is the point $(1,1,2)$ which becomes
$$
\left\| B_{m_\epsilon} \right\|_{L^1 \times L^1 \rightarrow L^2} \sim \epsilon.
$$
\end{itemize}
\end{prop}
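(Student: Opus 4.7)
The argument rests on the physical-space representation (\ref{physical}) combined with the factorization $\widehat{m_\epsilon}(\xi,\eta) = \widehat{d\sigma_\Gamma}(\xi,\eta) \cdot \epsilon\,\widehat{\chi}(\epsilon\xi,\epsilon\eta)$. The second factor has magnitude $\sim \epsilon$ on $B(0,1/\epsilon)$ and decays rapidly beyond. The first factor is governed by stationary phase on $\Gamma$: since $\Gamma$ has non-vanishing curvature, $|\widehat{d\sigma_\Gamma}(\xi,\eta)|\lesssim \langle (\xi,\eta)\rangle^{-1/2}$, and this bound is sharp in frequency directions normal to a characteristic point of $\Gamma$.

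For the upper bound in $(i)$, I would apply Minkowski's inequality to the kernel representation. After the change of variable $u=y-x$ one finds
\begin{equation*}
\|B_{m_\epsilon}(f,g)\|_{L^{p'}} \;\leq\; \|f\|_{L^1}\|g\|_{L^1}\,\sup_{w\in\R}\phi(w),\qquad \phi(w):=\|\widehat{m_\epsilon}(u,u-w)\|_{L^{p'}(du)}.
\end{equation*}
Inserting the stationary-phase bound together with the truncation $|u|\lesssim 1/\epsilon$ reduces $\phi(w)^{p'}$ to an explicit one-dimensional integral $\epsilon^{p'}\int_{|u|\lesssim 1/\epsilon}\langle u\rangle^{-p'/2}\,du$. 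The three regimes $p'>2$, $p'=2$, $p'<2$ produce respectively a bounded, a logarithmically divergent, and a polynomially divergent integral ($\sim\epsilon^{p'/2-1}$), yielding after the $p'$-th root exactly the three announced rates $\epsilon$, $\epsilon\sqrt{-\log\epsilon}$, and $\epsilon^{1/2+1/p}$.

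For the lower bound in $(i)$, choose $f,g\in\mathcal{S}(\R)$ with $\widehat f=\widehat g\equiv 1$ on $\mathrm{supp}(m_\epsilon)$, possible by compactness of $\Gamma$. Then $B_{m_\epsilon}(f,g)(x)$ equals (up to constants) $\widehat{m_\epsilon}(-x,-x)$, whose $L^{p'}$ norm is computed directly from the stationary-phase asymptotics of $\widehat{d\sigma_\Gamma}$ along the diagonal, matching the upper bound. For part $(ii)$, I would Cauchy--Schwarz the trilinear pairing on the Fourier side against the splitting $m_\epsilon=m_\epsilon^{1/2}\cdot m_\epsilon^{1/2}$:
\begin{equation*}
|\langle B_{m_\epsilon}(\chi_F,\chi_G),\chi_H\rangle| \leq \Bigl(\iint m_\epsilon|\widehat{\chi_F}|^2|\widehat{\chi_G}|^2\,d\xi d\eta\Bigr)^{\!1/2} \Bigl(\iint m_\epsilon|\widehat{\chi_H}(-\xi-\eta)|^2\,d\xi d\eta\Bigr)^{\!1/2}.
\end{equation*}
The second factor is $\lesssim(\epsilon|H|)^{1/2}$ via the foliation by level lines $\xi+\eta=\mathrm{cst}$, each of which hits $\Gamma_\epsilon$ in a set of length $\lesssim\epsilon$; the first is $\lesssim\epsilon|F||G|$ using $\|\widehat{\chi_F}\|_\infty\leq |F|$, $\|\widehat{\chi_G}\|_\infty\leq |G|$ and Lemma~\ref{lem:gammaeps} giving $|\Gamma_\epsilon|\lesssim\epsilon$. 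Multiplying yields $\epsilon|F||G||H|^{1/2}$ with no logarithmic loss.

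For $(iii)$, the non-characteristic assumption precludes any tangent of $\Gamma$ parallel to $\xi+\eta=0$, so along the diagonal direction used for $\phi(w)$ in $(i)$, iterated integration by parts produces rapid decay $|\widehat{d\sigma_\Gamma}(u,u-w)|\lesssim\langle u\rangle^{-N}$ for every $N$ in the relevant cone. The $L^2$ integral of the kernel is then $\lesssim\epsilon^2$ uniformly in $w$, removing the log and yielding $\|B_{m_\epsilon}\|_{L^1\times L^1\to L^2}\lesssim\epsilon$; the matching lower bound is supplied by Proposition~\ref{fourmi}$(i)$. The main obstacle I anticipate lies in the lower bound of $(i)$: one must verify that $\widehat{d\sigma_\Gamma}$ really saturates the $|\cdot|^{-1/2}$ stationary-phase envelope along the diagonal, which requires exhibiting a characteristic point of $\Gamma$ with tangent parallel to $\xi+\eta=0$ -- automatic for the circle, but demanding more care for an arbitrary curve of non-vanishing curvature.
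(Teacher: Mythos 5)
Your part (i) is essentially the paper's own argument: the reduction $\|B_{m_\epsilon}\|_{L^1\times L^1\to L^{p'}}=\sup_{y,z}\|\widehat{m_\epsilon}(y-x,z-x)\|_{L^{p'}(dx)}$ (the paper proves this as an equality, you get the upper half by Minkowski, which suffices), combined with $|\widehat{d\sigma_\Gamma}(X)|\lesssim\langle X\rangle^{-1/2}$ and its saturation in normal directions; your diagonal test functions give the lower bounds exactly when the direction $(1,1)$ lies in the span of the normals of $\Gamma$, which is the same implicit assumption the paper makes (its ``normals span $[\alpha,\beta]$'' discussion), so the caveat you flag is not a defect relative to the paper.

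The genuine gap is in (ii). Your estimate of the $\chi_H$ factor uses the claim that every line $\xi+\eta=\mathrm{const}$ meets $\Gamma_\epsilon$ in a set of length $\lesssim\epsilon$. That is precisely the non-characteristic property with respect to $\{\xi+\eta=0\}$, and it fails in the curved situation that (ii) is about: at a point of $\Gamma$ whose tangent is parallel to $\{\xi+\eta=0\}$ (present for the circle), the tangent line meets $\Gamma_\epsilon$ in a set of length $\sim\sqrt{\epsilon}$, and the correct slice bound is $\min(\sqrt{\epsilon},\epsilon|s-s_0|^{-1/2})$. With that bound (and noting your first factor is in fact $\sqrt{\epsilon}\,|F||G|$, not $\epsilon|F||G|$), the weighted Cauchy--Schwarz scheme only produces something like $\epsilon^{3/4}|F||G||H|^{1/2}$ or $\epsilon|F||G||H|^{3/4}$, never $\epsilon|F||G||H|^{1/2}$: support-size information alone cannot remove the $\sqrt{-\log\epsilon}$ that genuinely sits in the $L^1\times L^1\to L^2$ norm. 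The paper instead works in physical space, using the curvature-induced kernel decay $|\widehat{m_\epsilon}(X)|\lesssim\epsilon\langle X\rangle^{-1/2}$, integrating out $F$ trivially and bounding $\iint\langle(x,z)\rangle^{-1/2}\chi_G(z)\chi_H(x)\,dx\,dz\lesssim|G|\int_0^{|H|}x^{-1/2}dx\lesssim|G||H|^{1/2}$; this is the mechanism that produces $|H|^{1/2}$ with the full factor $\epsilon$. A secondary inaccuracy: in (iii) you assert rapid decay of the kernel along the whole line $(u,u-w)$, but the non-characteristic hypothesis only excludes the diagonal (and axis) directions from the normal cone; for $|u|\lesssim|w|$ the point $(u,u-w)$ can point in a normal direction of $\Gamma$, where the decay is only $|X|^{-1/2}$. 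The conclusion $\lesssim\epsilon^2$ survives because such points lie at distance $\gtrsim|w|$ and occupy a $u$-set of measure $\lesssim|w|$, but that accounting is missing from your argument as written.
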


The curvature of the curve $\Gamma$ can be used via the following result (we move the reader to Section 3.1 Chap VIII of \cite{Stein} for details on this topic), the proofs rely on the study of oscillatory integrals.

\begin{lem} \label{lem:kernelosci} Assume that $\Gamma$ is a smooth curve in $\R^2$ with a non-vanishing Gaussian curvature. Then for all compactly smooth function $\psi$ in $\R^2$, we have
$$ \left|\int_{\R^2} e^{i(x_1\xi+x_2\eta)} \psi(\xi,\eta) d\sigma_{\Gamma}(\xi,\eta) \right|\lesssim (1+|(x_1,x_2)|)^{-1/2},$$
where $d\sigma_\Gamma$ is the carried surface measure on $\Gamma$. 
\end{lem}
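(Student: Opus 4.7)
The plan is to reduce the surface integral to a one-dimensional oscillatory integral and apply the classical van der Corput lemma. First, for $|(x_1,x_2)| \leq 1$ the bound is trivial since $\psi\,d\sigma_\Gamma$ is a finite measure, so I would restrict to $|x| \geq 1$, set $\omega := x/|x| \in S^1$, and try to prove $|J(\omega)| \lesssim |x|^{-1/2}$ uniformly in $\omega$. Using a finite partition of unity subordinate to a cover of $\Gamma \cap \operatorname{Supp}\psi$ by arcs that each admit a smooth arc-length parametrization $\gamma : I \to \R^2$, I reduce to estimating oscillatory integrals of the form
$$
J(\omega) = \int_I e^{i|x|\Phi(t)}\,a(t)\,dt, \qquad \Phi(t) := \langle \omega, \gamma(t) \rangle,
$$
with $a \in C^\infty_c(I)$ and phase parameter $|x|$.

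Because $\gamma$ is arc-length parametrized, $\gamma'(t)$ is a unit tangent and $\gamma''(t) = \kappa(t) n(t)$ where $n(t)$ is the unit normal and $\kappa(t)$ is the curvature, nonzero by hypothesis on $\operatorname{Supp} \psi$. Hence
$$
\Phi'(t) = \langle \omega, \gamma'(t)\rangle, \qquad \Phi''(t) = \kappa(t)\langle \omega, n(t)\rangle,
$$
and since $\{\gamma'(t), n(t)\}$ is an orthonormal basis of $\R^2$,
$$
|\Phi'(t)|^2 + |\langle \omega, n(t)\rangle|^2 = |\omega|^2 = 1.
$$
This dichotomy is the crux: whenever $|\Phi'|$ is small, $|\langle \omega,n\rangle|$ is close to $1$, forcing $|\Phi''|$ to be bounded below by $|\kappa|$.

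I then fix a small $\delta > 0$ and split $I$ into $\{|\Phi'| \geq \delta\}$ and $\{|\Phi'| < \delta\}$ by a smooth cut-off. On the first set, $\Phi$ has no critical points and repeated integration by parts yields decay $|x|^{-N}$ for every $N$. On the second, the identity above gives $|\Phi''| \geq c > 0$ uniformly, and the classical van der Corput lemma with second derivative bound (Stein, Chapter~VIII, \S1) gives $|J| \lesssim |x|^{-1/2}$. Summing the contributions of the finitely many charts yields the desired estimate.

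The main obstacle to watch is uniformity of all the constants in $\omega \in S^1$. This is harmless here: both the finite covering of $\operatorname{Supp}\psi\cap \Gamma$ and the constants $c, \delta$ only depend on $\psi$ and on lower/upper bounds for $|\kappa|$ on $\operatorname{Supp} \psi$, so none of them degenerate as $\omega$ varies. Once this uniformity is verified the proof is essentially a direct application of van der Corput, which is exactly why the paper refers the reader to Stein rather than reproducing it.
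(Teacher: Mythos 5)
Your proof is correct, and it is precisely the classical argument the paper invokes: the paper gives no proof of this lemma but refers to Stein (Chapter VIII, oscillatory integrals/stationary phase), and your reduction to a one-dimensional phase $\Phi(t)=\langle \omega,\gamma(t)\rangle$ with the dichotomy $|\Phi'(t)|^2+|\langle\omega,n(t)\rangle|^2=1$, non-stationary phase where $|\Phi'|\geq\delta$ and van der Corput where $|\Phi''|\gtrsim|\kappa|$, is exactly that standard proof, with the uniformity in $\omega$ handled as you indicate.
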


\begin{proof} \noindent \underline{Another expression for $\| B_{m_\epsilon} \|_{L^1 \times L^1 \rightarrow L^{p'}}$} Suppose $B$ is a general bilinear operator with a kernel $K(x,y,z)$ which is smooth and decaying at infinity:
$$
B(f,g)(x) = \int \int K(x,y,z) f(y) g(z) \,dy\,dz.
$$
Then
\begin{equation}
\label{pilgrimbis}
\|B\|_{L^1 \times L^1 \rightarrow L^{q}} = \sup_{y,z} \| K(\cdot,y,z) \|_{L^q}.
\end{equation}
The first side of the above equality is straightforward:
$$
\|B(f,g)(x)\|_{L^q} = \left\| \int \int K(x,y,z) f(y) g(z) \,dy\,dz \right\|_{L^q} \lesssim \sup_{y,z} \| K(x,y,z) \|_{L^q} \|f\|_{L^1} \|g\|_{L^1}.
$$
To see the other direction, let $(f_n)=\left(n^{-1}\phi\left(\frac{\cdot}{n}\right)\right)$, with $\phi \in \mathcal{C}^\infty_0$, be an approximation of the Dirac delta function: this sequence has constant norm 1 in $L^1$, but converges to $\delta$ weak-star in the sense of bounded measures. Then
$$
B(f_n(\cdot-y_0),f_n(\cdot-z_0))(x) \rightarrow K(x,y_0,z_0)\;\;\;\;\mbox{in $L^1$ as $n \rightarrow \infty$}
$$
This implies $\|B\|_{L^1 \times L^1 \rightarrow L^{q}} \geq \|K(\cdot,y_0,z_0) \|_{L^q}$, hence the equality~(\ref{pilgrimbis}). Translating~(\ref{pilgrimbis}) in the context of our problem, this gives
\begin{equation}
\label{milkywaybis}
\|B_{m_\epsilon}\|_{L^1 \times L^1 \rightarrow L^{p'}} = \sup_{y,z} \left\| \widehat{m_\epsilon}(y-x,z-x) \right\|_{L^{p'}}.
\end{equation}

\bigskip

\noindent \underline{Estimates on the convolution kernels} It is well-known (see Lemma~\ref{lem:kernelosci}), that, if the curvature of $\Gamma$ does not vanish, we have the bound
$$
\left| \widehat{\lambda d\sigma_{\Gamma}} (x) \right| \lesssim \frac{1}{\langle x \rangle^{1/2}} \;\;\;\mbox{for $x$ in $\mathbb{R}^2$}.
$$
Furthermore, examining further the stationary phase argument which gives the above, we find the following: suppose that the normals to $\Gamma$ on $\operatorname{Supp} \lambda$ span a subset $[\alpha,\beta]$ of $\mathbb{S}^1$. Then if $\frac{(x_1,x_2)}{|(x_1,x_2)|}$ belongs to $[\alpha+\epsilon,\beta-\epsilon]$, for a small $\epsilon$,
$$
\left| \widehat{\lambda d\sigma_{\Gamma}} (x) \right| \gtrsim \frac{1}{\langle x \rangle^{1/2}}.
$$
On the other hand, if $\frac{(x_1,x_2)}{|(x_1,x_2)|}$ does not belong $[\alpha-\epsilon,\beta+\epsilon]$, for a small $\epsilon$,
$$
\left| \widehat{\lambda d\sigma_{\Gamma}} (x) \right| \lesssim \frac{1}{\langle x \rangle^{N}}.
$$
Finally, recall that 
$$
\mathcal{F} \left[ \lambda d\sigma_{\Gamma} \right] * \epsilon^{-1} \chi_\epsilon = \mathcal{F} \left[ \lambda d\sigma_{\Gamma} \right] \epsilon \widehat{\chi} (\epsilon \cdot).
$$
The above bounds, combined with~(\ref{milkywaybis}), give the desired bounds, except for the restricted type estimates, to which we now turn.

\bigskip

\noindent  \underline{The restricted type estimate} For these estimates, we argue with the physical space version of $B_{m_\epsilon}$. Recall $m_\epsilon = \chi_\epsilon * d\sigma_{\Gamma}$. Thus
$$
\widehat{m_\epsilon} = \epsilon \widehat{\lambda d\sigma_{\Gamma}} \widehat{\chi_\epsilon}.
$$
Since $\chi$ is in the Schwartz class, so is $\widehat{\chi}$; since $\Gamma$ has a non-vanishing curvature, it is well known that
$$
\left| \widehat{\lambda  d\sigma_{\Gamma}}(X) \right| \lesssim \frac{1}{\sqrt{\langle X \rangle}}.
$$
Thus
$$
| \widehat{m_\epsilon}(X)| \lesssim \frac{1}{\sqrt{\langle X \rangle}}.
$$
This implies
$$
| \widehat{m_\epsilon}(x-y,x-z)| \lesssim \epsilon \frac{1}{\sqrt{\langle (x-y,x-z) \rangle}} \lesssim \epsilon \frac{1}{\sqrt{\langle (x-y,z-y) \rangle}}.
$$
Therefore,
\begin{equation}
\begin{split}
\left| \langle B_{m_\epsilon} (\chi_E,\chi_F)\,,\,\chi_G) \right| & \lesssim \epsilon \int \int \int 
\frac{1}{\sqrt{\langle (x-y,z-y) \rangle}} \chi_F(y) \chi_G(z) \chi_H(x)\,dx\,dy\,dz \\
& \lesssim \epsilon| F| \sup_y \int \int \frac{1}{\sqrt{\langle (x-y,z-y) \rangle}}
 \chi_G(z) \chi_H(x)\,dx\,dz
\end{split}
\end{equation}
By symmetry in $y$, the desired estimate will be a consequence of the inequality
$$
\int \int \frac{1}{\sqrt{\langle (x,z) \rangle}} \chi_G(z) \chi_H(x)\,dx\,dz \lesssim |G| |H|^{1/2}.
$$
But this inequality follows from
\begin{equation}
\begin{split}
\int \int \frac{1}{\sqrt{\langle (x,z) \rangle}} \chi_G(z) \chi_H(x)\,dx\,dz & \leq \int \chi_G(z) \int \frac{1}{\sqrt{\langle x \rangle}} \chi_H(x)\,dx\,dz\\
& \leq |G| \int_0^{|H|} \frac{1}{\sqrt{\langle x \rangle}}\,dx \lesssim |E| |F|^{1/2}.
\end{split}
\end{equation}
\end{proof}

\subsection{The specific case where $\Gamma$ is a line}

Consider a symbol of the type $m_\epsilon(\xi,\eta) = \chi \left( \frac{ \xi - \lambda \eta}{\epsilon} \right)$ with $\chi$ a smooth function supported in $\overline{B(0,1)}$. Such symbols belong to $\mathcal{M}_\epsilon^\Gamma$ relatively to the line
$$ \Gamma:=\{(\xi,\eta), \ \xi=\lambda \eta\}.$$
The degenerate lines are those corresponding for $\lambda\in\{0,-1\}$.

\subsubsection{The non-characteristic case}

In that case, we have
$$
B_{m_\epsilon}(f,g) (x) = \int_{\R} \epsilon \widehat{\chi} (\epsilon y) f (x+ y) g(x-\lambda y) \,dy
$$
thus
\be{eq:droite}
\langle B_{m_\epsilon}(f,g)\,,\,h \rangle = \int_{\R^2} \epsilon \widehat{\chi} (\epsilon y) f (x+y) g(x-\lambda y) h(x) \,dy dx.
\ee

\begin{prop} If $\lambda \neq 0,-1$ then
\begin{equation}
\label{moineau}
\left| \langle T_{m_\epsilon}(f,g)\,,\,h \rangle \right| \lesssim \epsilon^{\rho} \|f\|_{L^p} \|g\|_{L^q} \|h\|_{L^r}.
\end{equation}
holds if and only if
$$
1 \leq \rho + 1 = \frac{1}{p} + \frac{1}{q} + \frac{1}{r} \leq 2.
$$
\end{prop}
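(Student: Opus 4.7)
The plan is to prove the two directions separately. For necessity, I would apply the general lower bounds already established in the paper to the symbol at hand: choosing a non-negative $\chi$ with $\chi(0) = 1$ makes $m_\epsilon \geq 0$ and $m_\epsilon \equiv 1$ on the line $\Gamma$ itself, so $m_\epsilon \in \mathcal{N}_\epsilon^\Gamma$ and Proposition \ref{fourmi} applies. The constraint $\frac{1}{p}+\frac{1}{q}+\frac{1}{r} \geq 1$ is then the sub-H\"older scaling \eqref{eq:subholder}; the constraint $\frac{1}{p}+\frac{1}{q}+\frac{1}{r} \leq 2$ is exactly Proposition \ref{fourmi}(i), which forces $\rho \leq 1$, while Proposition \ref{fourmi}(ii) confirms that the specific value $\rho = \frac{1}{p}+\frac{1}{q}+\frac{1}{r}-1$ is optimal.

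For sufficiency, the strategy is to exploit the explicit physical-space formula \eqref{eq:droite}, establish the bound at the six vertices of the closed slab $\{(1/p, 1/q, 1/r) \in [0,1]^3 : 1 \leq 1/p + 1/q + 1/r \leq 2\}$, and then fill in the whole slab by trilinear Riesz--Thorin interpolation. Set $K(y) := \epsilon \widehat{\chi}(\epsilon y)$; since $\chi \in C_c^\infty \subset \mathcal{S}$ one has $K \in \mathcal{S}$ with $\|K\|_{L^s} \sim \epsilon^{1-1/s}$, so $\|K\|_{L^1} \lesssim 1$ and $\|K\|_{L^\infty} \lesssim \epsilon$. The six vertices split into two types (each a permutation-orbit of three).

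At the three ``sum $= 1$'' vertices, e.g. $(p,q,r) = (1,\infty,\infty)$, one simply pulls out $\|g\|_\infty \|h\|_\infty$ and uses translation invariance of $\|\cdot\|_{L^1}$ together with $\|K\|_{L^1} \lesssim 1$ to conclude with exponent $\rho = 0$. At the three ``sum $= 2$'' vertices, e.g.\ $(p,q,r) = (1,1,\infty)$, the substitutions $u = x+y$ and $v = (1+\lambda)y$ recast \eqref{eq:droite} as
$$
\langle B_{m_\epsilon}(f,g), h \rangle \;\leq\; \frac{1}{|1+\lambda|}\|h\|_{L^\infty} \int |f(u)|\, \bigl(|K_\lambda| * |g|\bigr)(u)\,du,
$$
with $K_\lambda(v) := K(v/(1+\lambda))$, and Young's inequality gives $\|K_\lambda * |g|\|_{L^\infty} \leq \|K\|_{L^\infty} \|g\|_{L^1} \lesssim \epsilon \|g\|_{L^1}$, yielding the desired $\epsilon^1$. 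The two remaining vertices are handled by the analogous substitutions corresponding to permuting the roles of $(f,g,h)$ in \eqref{eq:droite}. Then trilinear complex interpolation provides, at any interior point of the slab, the bound $\epsilon^\rho$ with $\rho$ the convex combination of the vertex exponents $\{0,1\}$, namely $\rho = \frac{1}{p}+\frac{1}{q}+\frac{1}{r}-1$.

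The hypothesis $\lambda \neq 0, -1$ (i.e.\ $\Gamma$ non-characteristic) enters precisely at the ``sum $= 2$'' vertices: the Jacobians $|1+\lambda|$, $|\lambda|$, $|1+\lambda|/|\lambda|$ arising in the three analogous changes of variable must be non-zero for the $y$-integral to be converted into a convolution in one of the three variables, which is what enables the $L^1 \to L^\infty$ gain of $\epsilon$ from $K$ to be transferred. If $\lambda \in \{0,-1\}$, one of these convolutions degenerates (the kernel decouples from one of the variables), and the $\epsilon$-gain is destroyed; this is the main obstacle, but the non-characteristic assumption is exactly what rules it out.
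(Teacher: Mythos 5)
Your sufficiency argument is essentially the paper's: the paper establishes the bound on the whole face $\frac{1}{p}+\frac{1}{q}+\frac{1}{r}=1$ by Young's inequality and on the face $\frac{1}{p}+\frac{1}{q}+\frac{1}{r}=2$ by H\"older in $x$ followed by Young applied to (\ref{eq:droite}), then interpolates; proving the six vertex estimates and interpolating is the same argument in a different dress, and your identification of where $\lambda\neq 0,-1$ enters (the non-degeneracy of the changes of variables among $x+y$, $x-\lambda y$, $x$) is correct in spirit. (Minor slip: the Jacobians at the three ``sum $=2$'' vertices are $|1+\lambda|$, $|\lambda|$ and $1$ --- the vertex $(1,\infty,1)$ in fact requires no condition on $\lambda$ --- but this does not affect the conclusion.)

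The necessity direction, however, has a genuine gap. Proposition \ref{fourmi} only gives \emph{lower} bounds on $\alpha(\epsilon)$, hence \emph{upper} bounds on $\rho$: items (i) and (ii) yield $\rho\leq 1$ and $\rho\leq \frac{1}{p}+\frac{1}{q}+\frac{1}{r}-1$. They cannot produce the equality $\rho+1=\frac{1}{p}+\frac{1}{q}+\frac{1}{r}$ that the statement asserts, and without that equality your step ``$\rho\leq 1$ forces $\frac{1}{p}+\frac{1}{q}+\frac{1}{r}\leq 2$'' is a non sequitur; nor do these bounds rule out the regime $\frac{1}{p}+\frac{1}{q}+\frac{1}{r}>2$, where the estimate in fact fails for \emph{every} $\rho$. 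The missing ingredient is the exact scale covariance of this particular family of symbols, $m_\epsilon(\delta\xi,\delta\eta)=m_{\epsilon/\delta}(\xi,\eta)$: testing (\ref{moineau}) against dilates $f(\delta\cdot)$, $g(\delta\cdot)$, $h(\delta\cdot)$ shows that the $L^p\times L^q\times L^r$ form has norm exactly a constant times $\epsilon^{\frac{1}{p}+\frac{1}{q}+\frac{1}{r}-1}$, which pins down $\rho=\frac{1}{p}+\frac{1}{q}+\frac{1}{r}-1$; this is precisely the paper's first step (``the scaling imposes $\rho=\frac{1}{p}+\frac{1}{q}+\frac{1}{r}-1$''), and only in combination with the lower bound $\gtrsim\epsilon$ (the non-vanishing restriction limit, i.e. the analogue of Proposition \ref{fourmi}(i)) does it give $\frac{1}{p}+\frac{1}{q}+\frac{1}{r}\leq 2$. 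A secondary caveat: Proposition \ref{fourmi} is stated for symbols of class $\mathcal{N}_\epsilon^\Gamma$, supported in $B(0,1)$ around a compact curve, whereas $\chi\left(\frac{\xi-\lambda\eta}{\epsilon}\right)$ is supported on an unbounded strip; the test-function computations do adapt, but you should either remark on this or carry them out directly for the strip symbol.
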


\begin{proof} Let us just point out some easy observations, whose proofs we leave to the reader.
\begin{enumerate}
\item First of all, the scaling imposes $\rho = \frac{1}{p}+\frac{1}{q}+\frac{1}{r}-1$.
\item We already have explained that necessarily $\frac{1}{p}+\frac{1}{q}+\frac{1}{r} \geq 1$.
\item Next, the exponent $\frac{1}{p}+\frac{1}{q}+\frac{1}{r}-1$ cannot be larger than 1; otherwise you get a vanishing limit as $\epsilon$ goes to zero for the ``restriction problem''.
\item It is easy to see (by Young's inequality) that~(\ref{moineau}) holds for $\frac{1}{p}+\frac{1}{q}+\frac{1}{r} = 1$ if $1 \leq p,q,r \leq \infty$ (with $\rho=0$).
\item Finally, the case $\frac{1}{p}+\frac{1}{q}+\frac{1}{r}=2$ with $1 \leq p,q,r \leq \infty$ and $\rho=1$ follows easily from (\ref{eq:droite}) by H\"older's inequality with respect to $x$ and then Young inequality.
\end{enumerate}
Interpolating between the two last points, and taking into account the necessary conditions derived above, ends the proof.  
\end{proof}

\subsubsection{The characteristic case}

Let us now consider one of the degenerate lines, for instance when $\lambda=0$. In that case, we have
$$
T_{m_\epsilon}(f,g) = \left[ \chi \left( \frac{D}{\epsilon} \right) f \right] g
$$
and so
$$
\langle T_{m_\epsilon}(f,g)\,,\,h \rangle = \int \left[ \chi \left( \frac{D}{\epsilon} \right) f \right] g h.
$$
\begin{prop} If $\lambda = 0$ then
\begin{equation}
\label{mesange}
\left| \langle T_{m_\epsilon}(f,g)\,,\,h \rangle \right| \lesssim \epsilon^{\rho} \|f\|_{L^p} \|g\|_{L^q} \|h\|_{L^r}.
\end{equation}
holds if and only if
$$
1 \leq \rho + 1 = \frac{1}{p} + \frac{1}{q} + \frac{1}{r} \leq 2 \quad \textrm{with} \quad \frac{1}{q} + \frac{1}{r} \leq 1.
$$
\end{prop}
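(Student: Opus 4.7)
The plan is to exploit the decoupling of the characteristic case $\lambda = 0$: the trilinear form reads
$$
\langle T_{m_\epsilon}(f,g), h\rangle \;=\; \int_{\R} \bigl[\chi(D/\epsilon) f\bigr] \cdot g \cdot h \,dx,
$$
so $f$ is simply filtered by $\chi(D/\epsilon)$ (whose convolution kernel is $\epsilon \check{\chi}(\epsilon \cdot)$) and then multiplied pointwise against $gh$. Sufficiency thereby reduces to a H\"older + Young argument, and necessity to two scaling tests.

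For sufficiency, set $\tfrac{1}{s} := 1 - \tfrac{1}{q} - \tfrac{1}{r}$: the hypothesis $\tfrac{1}{q}+\tfrac{1}{r} \leq 1$ gives $s \in [1,\infty]$, and $\tfrac{1}{p}+\tfrac{1}{q}+\tfrac{1}{r} \geq 1$ gives $s \geq p$. H\"older bounds the trilinear form by $\|\chi(D/\epsilon) f\|_{L^s} \|g\|_{L^q} \|h\|_{L^r}$. Since $\|\epsilon \check{\chi}(\epsilon \cdot)\|_{L^t} \lesssim \epsilon^{1-1/t}$, Young's inequality from $L^p$ to $L^s$ (valid precisely because $s \geq p$, with $\tfrac{1}{t} = 1 + \tfrac{1}{s} - \tfrac{1}{p}$) yields
$$
\|\chi(D/\epsilon) f\|_{L^s} \lesssim \epsilon^{1/p - 1/s} \|f\|_{L^p} = \epsilon^{\rho} \|f\|_{L^p}.
$$

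For necessity, the subH\"older condition $\tfrac{1}{p}+\tfrac{1}{q}+\tfrac{1}{r} \geq 1$ is the general necessary condition already noted in Section~\ref{NCATSC}; the exact power $\rho = \tfrac{1}{p}+\tfrac{1}{q}+\tfrac{1}{r} - 1$ is imposed by a joint dilation of $(f,g,h)$, in the spirit of Proposition~\ref{fourmi}(ii). The specific new constraint $\tfrac{1}{q} + \tfrac{1}{r} \leq 1$ is detected by a test that captures the decoupling. I would take $f^\epsilon(x) := \epsilon f(\epsilon x)$ where $\widehat{f}$ is a smooth bump supported in a small enough neighborhood of $0$ so that $\widehat{f^\epsilon}$ lies in the region where $\chi(\xi/\epsilon) \approx \chi(0)$, whence $\chi(D/\epsilon) f^\epsilon \approx \chi(0) f^\epsilon$ (assuming the symbol is non-degenerate, i.e.\ $\chi(0) \neq 0$, which is the only interesting case). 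With $g$, $h$ fixed Schwartz functions satisfying $\int g h \neq 0$, a direct computation gives
$$
\bigl|\langle T_{m_\epsilon}(f^\epsilon, g), h\rangle\bigr| \;\sim\; \epsilon, \qquad \|f^\epsilon\|_{L^p}\|g\|_{L^q}\|h\|_{L^r} \;\sim\; \epsilon^{1-1/p},
$$
so the claimed inequality forces $\rho \leq \tfrac{1}{p}$, i.e.\ $\tfrac{1}{q}+\tfrac{1}{r} \leq 1$. The remaining upper bound $\tfrac{1}{p}+\tfrac{1}{q}+\tfrac{1}{r} \leq 2$ is then automatic, since $p \geq 1$ forces $\tfrac{1}{p} \leq 1$.

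There is no genuine obstacle here: the degeneracy $\lambda = 0$ has removed all multilinear interaction on the physical side, so the estimate reduces to a linear Young inequality followed by a pointwise H\"older. What is worth emphasizing is that this same decoupling is what generates the new constraint $\tfrac{1}{q}+\tfrac{1}{r} \leq 1$ (absent in the non-characteristic case): the product $g h$ must sit in some $L^{s'}$ with $s' \geq 1$, and this places a binding restriction on $(q,r)$ that no amount of smoothing on $f$ can relax.
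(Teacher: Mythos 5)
Your proof is correct; it uses the same basic ingredients as the paper (the factorization $T_{m_\epsilon}(f,g)=[\chi(D/\epsilon)f]\,g$, H\"older, Young, and scaling tests) but organizes the sufficiency half differently. The paper's sketched argument checks two endpoint families, namely the H\"older-scaling triples $\frac{1}{p}+\frac{1}{q}+\frac{1}{r}=1$ (Young, $\rho=0$) and the triples $(1,q,r)$ with $\frac{1}{q}+\frac{1}{r}=1$ ($\rho=1$), and then interpolates. You instead prove the bound directly at every admissible triple in one stroke: H\"older with the exponent $s$ given by $\frac{1}{s}=1-\frac{1}{q}-\frac{1}{r}$ (which is exactly where $\frac{1}{q}+\frac{1}{r}\le 1$ enters), followed by Young from $L^p$ to $L^s$ for the kernel $\epsilon\,\check\chi(\epsilon\cdot)$, the admissibility condition $s\ge p$ being precisely the sub-H\"older constraint and the power $\epsilon^{1/p-1/s}=\epsilon^{\rho}$ coming out automatically. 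This avoids interpolation and makes transparent why $\frac{1}{q}+\frac{1}{r}\le 1$ is the binding new constraint in the characteristic case. The necessity half coincides with the paper's: the general sub-H\"older condition and the dilation argument force $\rho=\frac{1}{p}+\frac{1}{q}+\frac{1}{r}-1$ exactly as in points 1 to 3 of the preceding proposition and Proposition~\ref{fourmi}, and your frequency-localized test $f^\epsilon=\epsilon f(\epsilon\cdot)$ is the Proposition~\ref{fourmi}$(iv)$-type example giving $\rho\le\frac{1}{p}$, i.e. $\frac{1}{q}+\frac{1}{r}\le 1$ (if $\chi(0)=0$ one simply modulates the test to a frequency where $\chi$ does not vanish). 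Your remark that the upper bound $\frac{1}{p}+\frac{1}{q}+\frac{1}{r}\le 2$ is then automatic is also correct.
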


We do not detail the proof. Indeed points 1 to 4 of the previous proof remain valid. Furthermore, it is easy to see that the following condition in necessary: $\frac{1}{q}+\frac{1}{r} \leq 1$. Finally, it is easy to see that the case $(p,q,r) = (1,q,r)$ with $\frac{1}{q}+\frac{1}{r} = 1$ is admissible, which allows us to conclude.

\section{The local-$L^2$ case with finite exponents} \label{sec:L2}

\mb Let us first study the case where the three exponents $p,q,r$ belong to $[2,\infty)$.

\begin{prop} \label{prop:L2} Consider $\Gamma$ a compact and smooth curve. Let $p,q,r\in [2,\infty)$ be exponents satisfying
$$ 2s:= \frac{1}{p}+\frac{1}{q}+\frac{1}{r} -1\geq 0.$$
Then there exists a constant $C=C(p,q,r)$ such that for every $\epsilon>0$ and symbols $m_\epsilon\in \mathcal{M}_\epsilon^\Gamma$ , then  
$$ \left\|T_{m_\epsilon}(f,g) \right\|_{L^{r'}} \leq C \epsilon^ {\frac{1}{p}+\frac{1}{q}+\frac{1}{r}-1} \|f\|_{L^p} \| g\|_{L^q}.$$
Moreover, Proposition \ref{fourmi} implies that the decay in $\epsilon$ is optimum.
\end{prop}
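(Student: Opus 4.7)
The plan is to combine two complementary per-piece estimates with Rubio de Francia's square function and a bilinear interpolation. First, apply Lemma~\ref{lem:covering} at scale $\epsilon$: it produces $N$ sub-collections of $\epsilon$-balls $(B(z_i,\epsilon))_{i \in I_j}$ covering $\Gamma$, together with a splitting $I_j = I_j^1 \cup I_j^2 \cup I_j^3$ such that $(I_{j,i}^k)_{i\in I_j^l}$ is disjoint for each $l$ and each $k\neq l$. A smooth partition of unity subordinate to this covering writes $m_\epsilon = \sum_{j,i} m_\epsilon^{(j,i)}$ with each piece still in $\mathcal{M}_\epsilon^\Gamma$, supported in a single $\epsilon$-ball, and satisfying $T_{m_\epsilon^{(j,i)}}(f,g) = T_{m_\epsilon^{(j,i)}}(\pi_{I_i^1} f,\pi_{I_i^2} g)$ with output frequency-localized to $I_i^3$.

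Next, establish two per-piece estimates. A Plancherel argument, exploiting that the support of $m_\epsilon^{(j,i)}$ has slices of width $\lesssim\epsilon$ in the antidiagonal direction, gives
$$
\|T_{m_\epsilon^{(j,i)}}(f,g)\|_{L^2} \lesssim \epsilon^{1/2}\,\|\pi_{I_i^1}f\|_{L^2}\,\|\pi_{I_i^2}g\|_{L^2}.
$$
Separately, a few integrations by parts yield $|\widehat{m_\epsilon^{(j,i)}}(u,v)|\lesssim \epsilon^2(1+\epsilon|u|+\epsilon|v|)^{-N}$, so from the physical space formula~(\ref{physical}),
$$
|T_{m_\epsilon^{(j,i)}}(f,g)(x)| \lesssim \mathcal{M}(\pi_{I_i^1}f)(x)\,\mathcal{M}(\pi_{I_i^2}g)(x).
$$

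Now assemble the two endpoints. At $(p,q,r)=(2,2,2)$: in sub-collections $I_j^1$ or $I_j^2$ the output intervals $I_i^3$ are disjoint, so $\|\sum_i T_{m_\epsilon^{(j,i)}}(f,g)\|_{L^2}^2 = \sum_i\|T_{m_\epsilon^{(j,i)}}(f,g)\|_{L^2}^2$ by Plancherel, and inserting the $L^2$ per-piece bound together with the remaining disjointness yields $\epsilon^{1/2}\|f\|_2\|g\|_2$; in $I_j^3$ (where $I_i^1,I_i^2$ are disjoint) the same bound follows from the triangle inequality, the per-piece bound, and Cauchy-Schwarz in $i$. At a H\"older-scaling point $(p,q,r)\in(2,\infty)^3$ with $\frac{1}{p}+\frac{1}{q}+\frac{1}{r}=1$: in $I_j^3$ insert the pointwise bound, then Cauchy-Schwarz in $i$, H\"older in $x$ with $1/r'=1/p+1/q$, the Fefferman--Stein vector-valued maximal inequality, and Proposition~\ref{prop:square} to get $\|T_{m_\epsilon}\|_{L^p\times L^q\to L^{r'}}\lesssim 1$; in $I_j^1$ and $I_j^2$ use the disjointness of the outputs to obtain $\|\sum_i F_i\|_{L^{r'}}\lesssim\|(\sum_i|F_i|^2)^{1/2}\|_{L^{r'}}$ for $r'\leq 2$ (the dual of Proposition~\ref{prop:square}) and then apply the pointwise estimate together with the pointwise control $\sup_i \mathcal{M}(\pi_{I_i^1}f)\lesssim \mathcal{M}(\mathcal{M} f)$.

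Finally, interpolate. Any $(p,q,r)\in[2,\infty)^3$ with $\frac{1}{p}+\frac{1}{q}+\frac{1}{r}\geq 1$ is a convex combination of $(2,2,2)$ with weight $\lambda=2(\frac{1}{p}+\frac{1}{q}+\frac{1}{r}-1)\in[0,1]$ and a point on the H\"older face inside $(2,\infty)^3$; bilinear Riesz--Thorin between the two endpoints produces the decay $\epsilon^{\lambda/2}=\epsilon^{\frac{1}{p}+\frac{1}{q}+\frac{1}{r}-1}$, exactly as required, and optimality has already been recorded in Proposition~\ref{fourmi}(ii). The main difficulty is the orchestration of the three sub-collections in Lemma~\ref{lem:covering}: since each provides disjointness of only two of the three families $(I_i^k)$, the Rubio de Francia step has to be reshuffled in each sub-collection, switching between input disjointness (applying Proposition~\ref{prop:square} to $f$ or $g$) and output disjointness (applying its dual form to $\sum_i T_{m_\epsilon^{(j,i)}}(f,g)$).
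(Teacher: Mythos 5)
Your architecture is genuinely different from the paper's: instead of interpolating, the paper obtains the gain $\epsilon^{2s}$ directly, by factorizing the per-piece kernel and inserting fractional integrals $I_{s_p},I_{s_q}$ with $s_p+s_q=2s$, $s_p<\frac1p$, $s_q<\frac1q$ (possible because $r<\infty$), then using Marcinkiewicz--Zygmund $\ell^2$-valued extensions, Rubio de Francia square functions in $f,g$, and a maximal function on the dualizing function $h$; this treats the whole closed range $[2,\infty)^3$ in one stroke. Your two endpoint families (the $(2,2,2)$ bound $\epsilon^{1/2}$ and the $O(1)$ bound on the H\"older face), the per-piece $L^2$ and kernel estimates, and the orchestration of the three sub-collections of Lemma~\ref{lem:covering} are essentially sound (the only harmless imprecision being that for targets with one exponent equal to $2$ the H\"older endpoint also has an exponent equal to $2$, not strictly inside $(2,\infty)^3$; Rubio de Francia at exponent $2$ is trivial, so this costs nothing).

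The genuine gap is in the final interpolation claim: it is false that every $(p,q,r)\in[2,\infty)^3$ with $\frac1p+\frac1q+\frac1r\geq1$ is a convex combination of $(\frac12,\frac12,\frac12)$ and a H\"older-scaling point with all exponents finite. If exactly two exponents equal $2$, say $(p,q,r)=(2,2,r)$ with $2<r<\infty$, the line through $(\frac12,\frac12,\frac12)$ and $(\frac12,\frac12,\frac1r)$ is $\{(\frac12,\frac12,t)\}$ and meets the plane $\{x+y+z=1\}$ only at $(\frac12,\frac12,0)$, i.e.\ at $(2,2,\infty)$. So your scheme would require a uniform $L^2\times L^2\to L^1$ bound for arbitrary $m_\epsilon\in\mathcal{M}^\Gamma_\epsilon$, which you have not proved --- and which is precisely the delicate endpoint the paper only obtains in Section~\ref{CTHP}, under the stronger hypotheses $m_\epsilon\in\mathcal{N}^\Gamma_\epsilon$ and non-vanishing curvature. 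These edge triples (and their permutations $(2,q,2)$, $(p,2,2)$) are part of the statement of Proposition~\ref{prop:L2}, with the nontrivial decay $\epsilon^{1/r}$, and they cannot be reached from your two families of estimates by interpolation, nor recovered from nearby interior points via the exponent-lowering Remark~\ref{jaguar} (that would lose a positive power of $\epsilon$). Summing your per-piece bounds directly also fails there, since the number of pieces is $\sim\epsilon^{-1}$. To close the gap you either need a direct argument on the edges $(2,2,r)$ or, more simply, the paper's device of converting the kernel decay into an $L^p$-improving fractional integration, which handles the borderline exponents uniformly.
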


\begin{proof}  First, the domain $\Gamma_\epsilon$ can be covered by balls of radius $\epsilon$ with bounded intersection. A partition of the unity associated to this covering allows us to split the symbol $m_\epsilon$ as follows~:
$$ m_\epsilon = \sum_{i\in \Theta } m_\epsilon^i$$
where for each index $i$, $m_\epsilon^i$ is a symbol satisfies the same regularity as $m_\epsilon$ and is supported in a ball of radius $\epsilon$.
Let us write $I_i^1$, $I_i^2$ and $I_i^3$ for intervals of length comparable to $\epsilon$ such that
$$\left\{ (\xi,\eta,\xi+\eta), m_\epsilon^i (\xi,\eta) \neq 0\right\} \subset I_i^1 \times I_i^2 \times I_i^3.$$
For $J$ an interval, we write $\Delta_J$ the Fourier multiplier associated to the symbol $\phi_J$ (a smooth version of ${\bf 1}_J$ at the scale $|J|$ such that ${\bf 1}_J \leq \phi_J \leq {\bf 1}_{2J}$). \\
The kernel $\widehat{m_\epsilon}$ satisfies
$$ \left|\widehat{m_\epsilon^i}(y,z)\right|\lesssim \frac{\epsilon^2}{(1+\epsilon|(y,z)|)^N}$$
for all nonnegative real $N$ (since $m_\epsilon^i$ is supported on a ball of radius $\epsilon$). Hence, the operator $T_{m_\epsilon^i}$ satisfies
$$ \left|\langle T_{m_\epsilon^i}(f,g) , h\rangle \right| \lesssim \int_{\R^3} \frac{\epsilon^2}{(1+\epsilon|(x-y,x-z)|)^N} |\Delta_{I_i^1} f(y)| | \Delta_{I_i^2} g(z)| | \Delta_{I_i^3} h(x)| dxdydz.$$
Then
$$ \left| \langle T_m(f,g),h\rangle\right| \lesssim \sum_{i} \int_{\R^3} \frac{\epsilon^2}{(1+\epsilon|(x-y,x-z)|)^N} |\Delta_{I_i^1} f(y)| | \Delta_{I_i^2} g(z)| | \Delta_{I_i^3} h(x)| dxdydz.$$
By Lemma~\ref{lem:covering}, it suffices to treat the case where $(I_i^1)$ and $(I_i^2)$ form a bounded covering of the real line.
Let assume that $s>0$ (we explain at the end of the proof the modifications for $s=0$). Consider non-negative real numbers $s_p,s_q\in (0,1)$ such that $2s=s_p+s_q$ and
$$  \frac{1}{p}-s_p >0 \qquad \frac{1}{q}-s_q >0.$$
This is possible since
$$ \frac{1}{r} = \frac{1}{p}+\frac{1}{q} -2s >0.$$
Using that 
$$ (1+\epsilon|(x-y,x-z)|) \leq (1+\epsilon|x-y|) (1+\epsilon|x-z|),$$
we get for $N_p=1-s_p$ and $N_q = 1-s_q$
\begin{equation*}
\begin{split}
& \left| \langle T_m(f,g),h\rangle\right| \\
&\;\;\;\;\;\;\;\;\; \lesssim \sum_{i}\int_\R \left(\int_{\R} \frac{\epsilon}{(1+\epsilon|x-y|)^{N_p}} |\Delta_{I_i^1} f(y)| dy \right) \left(\int_{\R} \frac{\epsilon}{(1+\epsilon|x-z|)^{N_q}} |\Delta_{I_i^2} g(z)| dz \right) | \Delta_{I_i^3} h(x)| dx \\
&\;\;\;\;\;\;\;\;\; \lesssim  \epsilon^{2s} \sum_{i} \int_\R I_{s_p}(|\Delta_{I_i^1} f|)(x) I_{s_q}( |\Delta_{I_i^2} g|)(x) | \Delta_{I_i^3} h(x)| dx \\
&\;\;\;\;\;\;\;\;\; \lesssim  \epsilon^{2s}  \int_\R   \left( \sum_{i} I_{s_p}(|\Delta_{I_i^1} f|)(x)^2 \right)^ {1/2} \left( \sum_{i\in} I_{s_q}( |\Delta_{I_i^2} g|)(x)^2 \right) ^{1/2}  \sup_{i}| \Delta_{I_i^3} h(x)| dx,
\end{split}
\end{equation*}
where $I_{s_p}$ is the fractional integral operator of order $s_p$ (see Definition \ref{def:I}). Using that the maximal operator is bounded by the Hardy-Littlewood maximal function, we deduce that
$$ \left| \langle T_m(f,g),h\rangle\right| \lesssim  \epsilon^{2s}  \int_\R   \left( \sum_{i} I_{s_p} (|\Delta_{I_i^1} f|)(x)^2 \right)^ {1/2} \left( \sum_{i} I_{s_q} ( |\Delta_{I_i^2} g|)(x)^2 \right) ^{1/2}  \M(h) (x) dx.$$
Then by H\"older inequality with the exponents $p_s,q_s$ such that
$$ \frac{1}{r'} = \frac{1}{p}+\frac{1}{q} -2s = \left(\frac{1}{p}-s_p \right) + \left(\frac{1}{q} -s_q\right) :=\frac{1}{p_s}+\frac{1}{q_s},$$ it follows (by boundedness of $\M$ over Lebesgue spaces) that
$$ \left| \langle T_m(f,g),h\rangle\right| \lesssim  \epsilon^{2s}  \left\| \left( \sum_{i} I_{s_p}(|\Delta_{I_i^1} f|)^2 \right)^ {1/2} \right\|_{L^{p_s}} \left\| \left( \sum_{i} I_{s_q}( |\Delta_{I_i^2} g|)^2 \right) ^{1/2} \right\|_{L^{q_s}}  \| h \|_{L^{r}}.$$
As a consequence, 
\begin{align*}
\left| \langle T_m(f,g),h\rangle\right| & \lesssim  \epsilon^{2s}  \left\| \left( \sum_{i} I_{s_p}(|\Delta_{I_i^1} f|)^2 \right)^ {1/2}\right\|_{L^{p_s}} \left\| \left( \sum_{i} I_{s_q}( |\Delta_{I_i^2} g|)^2 \right) ^{1/2}\right\|_{L^{q_s}}  \| h\|_{L^{r}}.
\end{align*}
Thanks to Proposition \ref{prop:fracint}, we know that the fractional integral operator $I_{s_p}$ (resp.  $I_{s_q})$ is bounded from $L^p$ to $L^{p_s}$ (resp. from $L^q$ to $L^{q_s}$). Then it admits an $l^2$-valued extension (see Theorem 4.5.1 in \cite{Grafakos} and the original work of Marcinkiewicz and Zygmund \cite{MZ}). Consequently,
$$ A(3) \lesssim  \epsilon^{2s}  \left\| \left( \sum_{i} |\Delta_{I_i^1} f|^2 \right)^ {1/2}\right\|_{L^p} \left\| \left( \sum_{i} |\Delta_{I_i^1} g|^2 \right) ^{1/2}\right\|_{L^q}  \| h\|_{L^{r}}.$$
Recall that $(I_i^1)$ and $(I_i^2)$ form a bounded covering. We can thus apply Rubio de Francia's result (see Proposition \ref{prop:square}) for $p,q\geq 2$ to obtain
$$ \left| \langle T_m(f,g),h\rangle\right| \lesssim  \epsilon^{2s}  \left\|f \right\|_{L^p} \left\|  g \right\|_{L^q}  \| h\|_{L^{r}}.$$
Let us now deal with the limit case $s=0$. In this particular situation, argue similarly, replacing the fractional integrations operators $I_{s_p}$ and $I_{s_q}$ by the Hardy-Littlewood maximal operator $\M$. We let the reader check that everything still works with this minor modification since the Hardy-Littlewood maximal function admits $l^2$-valued extension too (see Theorem 4.6.6 in \cite{Grafakos} and the original work of Fefferman and Stein \cite{FS}).
\end{proof}

Remark~\ref{jaguar} implies the following corollary.

\begin{prop} \label{prop:L2bis} Let $\Gamma$ be a smooth and compact curve, and $m_\epsilon \in \mathcal{M}_\epsilon^\Gamma$. For exponents $p,q,r \in [1,\infty)$, there exists a constant $C=C(p,q,r)$ such that
$$ \left\|T_{m_\epsilon}(f,g) \right\|_{L^{r'}} \leq C \epsilon^ {\frac{1}{\max\{p,2\}}+\frac{1}{\max\{q,2\}}+\frac{1}{\max\{r,2\}}-1 } \|f\|_{L^p} \| g\|_{L^q}$$
as soon as 
$$ 2s:= \frac{1}{\max\{p,2\}}+\frac{1}{\max\{q,2\}}+\frac{1}{\max\{r,2\}} -1 \geq 0.$$
\end{prop}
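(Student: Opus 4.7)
The plan is to reduce directly to Proposition \ref{prop:L2} by first replacing every exponent less than $2$ by $2$, and then descending back down to the original exponents via Remark \ref{jaguar}. Concretely, given $p,q,r \in [1,\infty)$ as in the hypothesis, introduce
$$P := \max\{p,2\}, \qquad Q := \max\{q,2\}, \qquad R := \max\{r,2\},$$
so that $P,Q,R \in [2,\infty)$ and $p \leq P$, $q \leq Q$, $r \leq R$. The quantity $2s$ appearing in the statement is by definition $\frac{1}{P}+\frac{1}{Q}+\frac{1}{R}-1$, and the hypothesis $2s \geq 0$ is exactly what is required to invoke Proposition \ref{prop:L2} with the triple $(P,Q,R)$.

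Applying Proposition \ref{prop:L2} at $(P,Q,R)$ yields, for any symbol $m_\epsilon \in \mathcal{M}_\epsilon^\Gamma$,
$$\|T_{m_\epsilon}(f,g)\|_{L^{R'}} \lesssim \epsilon^{\frac{1}{P}+\frac{1}{Q}+\frac{1}{R}-1} \|f\|_{L^P} \|g\|_{L^Q}.$$
Now invoke Remark \ref{jaguar}: because $\Gamma \subset B(0,1)$ and hence $m_\epsilon$ has compactly supported frequencies, the bound $(\ref{ecureuil})$ at a triple of exponents automatically entails the same bound at any triple of smaller exponents (with the same decay rate in $\epsilon$). Since $p \leq P$, $q \leq Q$, $r \leq R$, this gives
$$\|T_{m_\epsilon}(f,g)\|_{L^{r'}} \lesssim \epsilon^{\frac{1}{P}+\frac{1}{Q}+\frac{1}{R}-1} \|f\|_{L^p} \|g\|_{L^q},$$
which is the claimed inequality.

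The transition from $(P,Q,R)$ back to $(p,q,r)$ is really just Bernstein's inequality for functions with compactly supported Fourier transform: inserting smooth frequency cutoffs $\phi(D)$ with $\phi \in \mathcal{C}^\infty_0$ equal to $1$ on $B(0,1)$ on each slot of the trilinear form $\langle T_{m_\epsilon}(f,g), h\rangle$ (which is legitimate since the symbol $m_\epsilon$ vanishes outside $B(0,1)$, as does the support of its push-forward in the $\xi+\eta$ variable), one may upgrade $\|\phi(D)f\|_{L^P} \lesssim \|\phi(D)f\|_{L^p} \lesssim \|f\|_{L^p}$ and likewise for $g$ and $h$, because $\phi(D)f$ has Fourier support in a fixed compact set. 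Since this principle is explicitly invoked as Remark \ref{jaguar}, there is no serious obstacle in the proof: the entire argument is a one-line reduction. The only mild care needed is to check that the assumption $2s \geq 0$ translates correctly between the two formulations, which is immediate from the definitions of $P,Q,R$.
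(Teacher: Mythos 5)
Your proof is correct and follows exactly the paper's route: the paper states Proposition \ref{prop:L2bis} as an immediate consequence of Proposition \ref{prop:L2} at the raised exponents $(\max\{p,2\},\max\{q,2\},\max\{r,2\})$ together with Remark \ref{jaguar}. Your additional justification of the descent via compact frequency support (Bernstein) is consistent with what that remark relies on, so nothing is missing.
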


Finally, when the curve $\Gamma$ is supposed to be nowhere characteristic, we can improve the decay in $\epsilon$ in the non local-$L^2$ case.

\begin{prop} \label{prop:L22} Consider $\Gamma$ a smooth and compact curve, which is nowhere characteristic (see Definition \ref{def:noncar}).  For exponents $p,q,r\in (1,\infty)$ verifying (\ref{eq:subholder}) and $\min\{p,q,r\}<2$, there exists a constant $C=C(p,q,r)$ such that
$$ \left\|T_{m_\epsilon}(f,g) \right\|_{L^{r'}} \leq C \epsilon^{\rho } \|f\|_{L^p} \| g\|_{L^q}$$
with 
$$ \rho:=\min\left\{ \frac{1}{\max\{p,2\} }+\frac{1}{\max\{q,2\} } +\frac{1}{\max\{r,2\} } -1 +\left( \max\{\frac{1}{p},\frac{1}{q},\frac{1}{r}\} - \frac{1}{2}\right) , 1\right\}.$$
\end{prop}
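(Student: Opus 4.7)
By the symmetry of the trilinear form $\langle T_{m_\epsilon}(f,g),h\rangle$ under permutations of $(f,g,h)$ (which corresponds to permuting the three frequency axes $\{\xi=0\}$, $\{\eta=0\}$, $\{\xi+\eta=0\}$), one may assume without loss of generality that $\max\{1/p,1/q,1/r\}=1/p$, so that $p<2$ and $p\leq q,r$. The strategy will be to adapt the proof of Proposition~\ref{prop:L2}, exploiting the stronger conclusion of Lemma~\ref{lem:covering} afforded by the nowhere-characteristic hypothesis.

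Since $\Gamma$ is nowhere characteristic, its projections onto the three degenerate axes are bi-Lipschitz (Definition~\ref{def:biLip}); Lemma~\ref{lem:covering} therefore yields, up to splitting into a bounded number of subcollections, a decomposition $m_\epsilon=\sum_i m_\epsilon^i$ for which \emph{all three} families of frequency-localization intervals $(I_i^1)$, $(I_i^2)$, $(I_i^3)$ are bounded coverings of $\R$. This is the crucial improvement over Proposition~\ref{prop:L2}, where only two of the three families were simultaneously available. In the core case $p<2\leq q,r$, I then apply Cauchy--Schwarz in the summation index $i$ in the form
$$
\sum_i a_ib_ic_i\leq\Bigl(\sup_i a_i\Bigr)\Bigl(\sum_i b_i^2\Bigr)^{1/2}\Bigl(\sum_i c_i^2\Bigr)^{1/2},
$$
placing the supremum on the $f$-index (whose exponent $p<2$ precludes a direct use of Rubio de Francia) and the $\ell^2$-sums on the $g$- and $h$-indices. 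The supremum is dominated pointwise by $\sup_i|\Delta_{I_i^1}f|\lesssim\M f$, which is $L^p$-bounded since $p>1$.

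The remaining steps will mirror Proposition~\ref{prop:L2}: split the kernel as $\frac{\epsilon^2}{(1+\epsilon|(x-y,x-z)|)^N}\lesssim \frac{\epsilon}{(1+\epsilon|x-y|)^{1-s_p}}\frac{\epsilon}{(1+\epsilon|x-z|)^{1-s_q}}$, apply the pointwise fractional-integration bound, and conclude via Hölder, the Marcinkiewicz--Zygmund vector-valued extension of fractional integration, and Rubio de Francia's square function (Proposition~\ref{prop:square}) on the $g$- and $h$-directions (valid because $q,r\geq 2$ and those families are bounded coverings). Choosing $s_p,s_q\geq 0$ with $(1/p-s_p)+(1/q-s_q)+1/r=1$ produces the decay $\epsilon^{s_p+s_q}=\epsilon^{1/p+1/q+1/r-1}$, matching $\rho$ in this subrange. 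The remaining subranges (where additionally $q$ or $r$ lies below $2$) will then be obtained by interpolating this core estimate with Proposition~\ref{prop:L2bis} and with the trivial bound
$$
\|T_{m_\epsilon}\|_{L^1\times L^1\to L^\infty}\leq\|\widehat{m_\epsilon}\|_{L^\infty}\leq\|m_\epsilon\|_{L^1}\lesssim\epsilon,
$$
which follows from Lemma~\ref{lem:gammaeps} and which also accounts for the cap $\rho\leq 1$.

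The main obstacle, and the only substantially new use of the nowhere-characteristic hypothesis, is the upgraded conclusion of Lemma~\ref{lem:covering} ensuring that all three interval families serve simultaneously as bounded coverings. This is precisely what makes the $\sup$-on-one-index/$\ell^2$-on-the-other-two decomposition compatible with Rubio de Francia in both $\ell^2$-directions while the Hardy--Littlewood maximal function controls the small-exponent direction; once this geometric input is secured, the Hölder-scaling bookkeeping and the final interpolation are routine.
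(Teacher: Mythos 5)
Your core case (exactly one exponent below $2$) is essentially the paper's own argument: since $\Gamma$ is nowhere characteristic, the decomposition of Lemma \ref{lem:covering} can be arranged so that the two interval families attached to the exponents $\geq 2$ are bounded coverings, and one reruns the proof of Proposition \ref{prop:L2} with the supremum on the small-exponent factor and Rubio de Francia on the other two. One small repair is needed in your bookkeeping: after splitting the kernel, the $f$-factor you must control is $\sup_i I_{s_p}(|\Delta_{I_i^1}f|)$ in $L^{p_s}$ (with $\tfrac{1}{p_s}=\tfrac1p-s_p$), not merely $\sup_i|\Delta_{I_i^1}f|$ in $L^p$; this is fine because $\sup_i I_{s_p}(|\Delta_{I_i^1}f|)\leq I_{s_p}\bigl(\sup_i|\Delta_{I_i^1}f|\bigr)\lesssim I_{s_p}(\M f)$ and $I_{s_p}:L^p\to L^{p_s}$ for $p>1$, but it has to be said.

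The genuine gap is your last step. Interpolating the core estimates with Proposition \ref{prop:L2bis} and the $\epsilon$-bound at $(1,1,1)$ cannot produce the gain $\max\{\tfrac1p,\tfrac1q,\tfrac1r\}-\tfrac12$ once two or three exponents lie below $2$. Interpolation can only give, at a target $T=(\tfrac1p,\tfrac1q,\tfrac1r)$, the supremum of $\sum_j\theta_je_j$ over convex representations $T=\sum_j\theta_jP_j$ by your ingredient points, and the affine functional $L(x,y,z)=2-x-y+z$ dominates the exponent of every ingredient you allow: $L=1$ at $(1,1,1)$; on the core family $L\geq x+y+z-1$ because there $x+y\leq\tfrac32$; and $L\geq\min(x,\tfrac12)+\min(y,\tfrac12)+\min(z,\tfrac12)-1$ on $[0,1]^3$ since $x+\min(x,\tfrac12)\leq\tfrac32$, $y+\min(y,\tfrac12)\leq\tfrac32$ and $\min(z,\tfrac12)\leq z$. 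Hence every interpolant at $T$ has exponent at most $L(T)$. At $(p,q,r)=(10/9,10/9,2)$ this caps you at $L(0.9,0.9,0.5)=0.7$, while the proposition claims $\rho=0.9$; the same functional shows failure when all three exponents are below $2$ (e.g.\ $(\tfrac1p,\tfrac1q,\tfrac1r)=(0.9,0.85,0.55)$, where $\rho=0.9$ but $L=0.8$). The missing tool is Remark \ref{jaguar}: because $m_\epsilon$ is supported in $B(0,1)$, the core estimate at $(p,2,r)$ (resp.\ $(p,2,2)$) transfers with the \emph{same} power of $\epsilon$ to every $q\leq2$ (resp.\ $q,r\leq2$), and this non-interpolative step is how the paper reaches the remaining ranges; alternatively you would have to enlarge your interpolation family with the nowhere-characteristic endpoint bounds such as Proposition \ref{prop:112}(i), which is a different argument from the one you propose.
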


Since $\min\{p,q,r\}<2$ then $\left( \max\{\frac{1}{p},\frac{1}{q},\frac{1}{r}\} - \frac{1}{2}\right)$ is non negative so the new exponent $\rho$ is bigger than the one given by the previous proposition.

\begin{proof} First assume that only one of the three exponents $p,q,r$ is lower than $2$. Since $p,q,r$ play a symmetrical role, assume that $p=\min\{p,q,r\}\in(1,2)$.  
The proof is exactly the same as for Proposition \ref{prop:L2}, with the following modification. Since the curve $\gamma$ is supposed to be nowhere characteristic, then we can perform the decomposition  (explained in Lemma \ref{lem:covering}) only for $k=1$. The Proposition follows by Remark~\ref{jaguar}.
\end{proof}

\section{Study of particular points}

\label{SOPP}

\subsection{The point $(1,1,2)$} \label{subsec:112}

\begin{prop} \label{prop:112}
Let $m_\epsilon \in \mathcal{M}^\Gamma_\epsilon$. Then
\begin{itemize}
\item[(i)] If $\Gamma$ is nowhere characteristic,
$$
\left\| B_{m_\epsilon} (f,g) \right\|_{L^2} \lesssim \epsilon \|f\|_{L^1} \|g\|_{L^1}.
$$
\item[(ii)] If $\Gamma$ has a non vanishing curvature,
$$
\left\| B_{m_\epsilon} (f,g) \right\|_{L^2} \lesssim \epsilon \sqrt{- \log \epsilon} \|f\|_{L^1} \|g\|_{L^1}.
$$
and we have the restricted type inequality: for any three sets $F$, $G$, and $H$,
$$
\left| \left< B_{m_\epsilon} (\chi_F,\chi_G)\,,\,\chi_H \right> \right| \lesssim \epsilon |F| |G| |H|^{1/2}.
$$ 

\item[(iii)] If $\Gamma$ is arbitrary,
$$
\left\| B_{m_\epsilon} (f,g) \right\|_{L^2} \lesssim \sqrt{\epsilon} \|f\|_{L^1} \|g\|_{L^1}.
$$
\end{itemize}
Furthermore, the above estimates are optimal in that the powers of $\epsilon$ cannot be improved.
\end{prop}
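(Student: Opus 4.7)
The plan is to reduce the proof to the slice structure of $\Gamma_\epsilon$, via the identity
$$\|B_{m_\epsilon}\|_{L^1\times L^1\to L^2} = \sup_{y,z\in\R}\|\widehat{m_\epsilon}(y-\cdot,z-\cdot)\|_{L^2_x}$$
established in the proof of Proposition~\ref{macareux} (equation~(\ref{milkywaybis})). For fixed $y,z$, substituting $\eta=\zeta-\xi$ identifies the $x$-Fourier variable with $\zeta=\xi+\eta$, and Plancherel in $x$ yields
$$\|\widehat{m_\epsilon}(y-\cdot,z-\cdot)\|_{L^2_x}^2 \sim \int_\R\Bigl|\int_\R m_\epsilon(\xi,\zeta-\xi)\,e^{i(z-y)\xi}\,d\xi\Bigr|^2 d\zeta \lesssim \int_\R|\Gamma_\zeta|^2\,d\zeta,$$
uniformly in $y,z$ (the modulus bound $|m_\epsilon|\leq 1$ discards oscillation), where $\Gamma_\zeta:=\{\xi\in\R:(\xi,\zeta-\xi)\in\mathrm{supp}\,m_\epsilon\}$ is the slice of $\Gamma_\epsilon\cap B(0,1)$ by the affine line $\{\xi+\eta=\zeta\}$. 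The three operator-norm estimates thereby reduce to geometric bounds on $\int|\Gamma_\zeta|^2d\zeta$, paired with the uniform $L^1$ control $\int|\Gamma_\zeta|d\zeta\leq|\Gamma_\epsilon\cap B(0,1)|\lesssim\epsilon$ provided by Lemma~\ref{lem:gammaeps}.

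For \emph{(iii)} it is enough to use the crude $|\Gamma_\zeta|\leq 1$, giving $\int|\Gamma_\zeta|^2d\zeta\leq\int|\Gamma_\zeta|d\zeta\lesssim\epsilon$, hence the $\sqrt{\epsilon}$ bound. For \emph{(i)}, nowhere-characteristicness forces every slicing line to meet $\Gamma$ transversally with angle bounded below, so $|\Gamma_\zeta|\lesssim\epsilon$ and $\int|\Gamma_\zeta|^2d\zeta\lesssim\epsilon^2$. For \emph{(ii)} the crude estimate only gives $\epsilon^{3/4}$, so I would run a finer analysis: near a characteristic point $p^*$ with $\xi+\eta(p^*)=\zeta^*$, the local normal form $\xi+\eta=\zeta^*+cs^2+O(s^3)$ with $c\neq 0$ gives $|\Gamma_\zeta|\lesssim\sqrt{\epsilon}$ on the window $|\zeta-\zeta^*|\leq\epsilon$ and $|\Gamma_\zeta|\lesssim\epsilon/\sqrt{|\zeta-\zeta^*|}$ beyond it, and integration produces the logarithmic loss $\int|\Gamma_\zeta|^2d\zeta\lesssim\epsilon^2\log(1/\epsilon)$.

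The restricted-type bound in \emph{(ii)} is strictly sharper than the operator norm and cannot come from the slice reduction; I would obtain it by reproducing the end of the proof of Proposition~\ref{macareux}(ii), which deduces the estimate from the pointwise kernel bound
$$|\widehat{m_\epsilon}(X)|\lesssim\epsilon\,\langle X\rangle^{-1/2}$$
through Fubini in $y$ (absorbing $|F|$) and the rearrangement step $\int\int\langle(x,z)\rangle^{-1/2}\chi_G(z)\chi_H(x)\,dx\,dz\lesssim|G||H|^{1/2}$. For $m_\epsilon$ in the isotropic class $\mathcal{M}^\Gamma_\epsilon$, rather than the factorized form $\chi_\epsilon*d\sigma_\Gamma$ of Proposition~\ref{macareux}, the plan is to decompose $m_\epsilon$ with a partition of unity on $\Gamma$ into $\sim 1/\epsilon$ pieces supported in $\epsilon$-balls (as in the proof of Proposition~\ref{prop:L2}): each piece has Fourier transform of size $\epsilon^2\langle\epsilon X\rangle^{-N}$, and summing the phases of the resulting modulated bumps gives a Riemann sum that replicates the stationary-phase decay of $\widehat{d\sigma_\Gamma}$ granted by Lemma~\ref{lem:kernelosci}. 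This is the main obstacle, since the slice analysis is essentially direct once set up. Finally, optimality is supplied by Proposition~\ref{fourmi}(i) for \emph{(i)}, by Proposition~\ref{macareux}(i) for \emph{(ii)}, and for \emph{(iii)} by testing the symbol $m_\epsilon(\xi,\eta)=\chi((\xi+\eta)/\epsilon)\chi(\xi)$ (associated to a segment of the characteristic line $\{\xi+\eta=0\}$) against Dirac approximations $f,g$: the kernel factorizes as $\widehat{m_\epsilon}(y-x,z-x)=\epsilon\,\hat\chi(\epsilon(z-x))\hat\chi(y-z)$, and a direct computation gives $\|B_{m_\epsilon}(f,g)\|_{L^2}\sim\sqrt{\epsilon}\|f\|_{L^1}\|g\|_{L^1}$.
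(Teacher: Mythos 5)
Your handling of the three $L^1\times L^1\to L^2$ bounds is correct and is essentially the paper's argument in a slightly different dress: the paper runs a $TT^*$ computation and bounds $\|B_{m_\epsilon}\|_{L^1\times L^1\to L^2}^2$ by $\int\bigl(\int|m_\epsilon(\xi-\eta,\eta)|\,d\eta\bigr)^2d\xi$, which is exactly your $\int|\Gamma_\zeta|^2\,d\zeta$; your route through the kernel identity (\ref{milkywaybis}) and Plancherel in $x$ lands on the same quantity, and your slice estimates (transversal slices of length $\lesssim\epsilon$ in the non-characteristic case; the $\sqrt{\epsilon}$ versus $\epsilon/\sqrt{|\zeta-\zeta^*|}$ dichotomy near a point where the tangent is parallel to $\{\xi+\eta=0\}$, integrating to $\epsilon^2|\log\epsilon|$; the crude bound together with $|\Gamma_\epsilon|\lesssim\epsilon$ from Lemma \ref{lem:gammaeps} in the general case) coincide with the paper's, and in fact your exponents are the consistent ones where the paper's displayed computation has slips. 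The optimality part is also fine: Proposition \ref{fourmi} (i) for (i), Proposition \ref{macareux} (i) for (ii), and your explicit anti-diagonal-segment symbol for (iii) is just the permuted version of Proposition \ref{fourmi} (iv), worked out concretely.

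The genuine gap is the restricted-type inequality in (ii), exactly where you flag ``the main obstacle''. Your plan is to prove the pointwise bound $|\widehat{m_\epsilon}(X)|\lesssim\epsilon\langle X\rangle^{-1/2}$ for an arbitrary $m_\epsilon\in\mathcal{M}^\Gamma_\epsilon$ by splitting into $\epsilon$-ball pieces and arguing that the sum of modulated bumps reproduces the stationary-phase decay of $\widehat{d\sigma_\Gamma}$. This cannot work: membership in $\mathcal{M}^\Gamma_\epsilon$ imposes no control on the phases of the pieces, and the centered bound is simply false for the whole class. For instance, if $\tilde m_\epsilon\in\mathcal{M}^\Gamma_\epsilon$ is nonnegative with $\int\tilde m_\epsilon\sim\epsilon$ and $\omega\in\R^2$ has $|\omega|\sim\epsilon^{-1}$, then $m_\epsilon:=e^{i\omega\cdot(\xi,\eta)}\tilde m_\epsilon$ still belongs to $\mathcal{M}^\Gamma_\epsilon$, yet $|\widehat{m_\epsilon}(\omega)|=|\widehat{\tilde m_\epsilon}(0)|\sim\epsilon$, far above $\epsilon\langle\omega\rangle^{-1/2}\sim\epsilon^{3/2}$; no Riemann-sum heuristic can recover the $\langle X\rangle^{-1/2}$ decay without tangential regularity of the type built into $\mathcal{N}^\Gamma_\epsilon$ or the convolution structure $\lambda\,d\sigma_\Gamma*\epsilon^{-1}\chi_\epsilon$. (This example only defeats the kernel bound, not necessarily the restricted-type estimate itself, since translating the kernel is harmless there; but it shows your derivation breaks at its key step.) For what it is worth, the paper does not prove this inequality in that generality either: in its proof of the proposition the restricted-type bound is delegated to Proposition \ref{macareux} (ii), i.e.\ to the specific symbols $\lambda\,d\sigma_\Gamma*\epsilon^{-1}\chi_\epsilon$, where Lemma \ref{lem:kernelosci} applies directly. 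So for that item you should either restrict to such symbols (or to $\mathcal{N}^\Gamma_\epsilon$-type hypotheses), or supply a genuinely different argument.
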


\begin{proof} \underline{The $TT^*$ argument.} It is the first step of the proof:
\begin{equation}
\label{casoar}
\begin{split}
\left\| B_{m_\epsilon} (f,g) \right\|_{L^2}^2 & = \left\| \mathcal{F} B_{m_\epsilon} (f,g) \right\|_2^2 \\
& = \int \int \int m_\epsilon (\xi-\eta,\eta) \widehat{f}(\xi - \eta) \widehat{g}(\eta) \overline{m_\epsilon} (\xi-\zeta,\zeta) \overline{\widehat{f}} (\xi-\zeta) \overline{\widehat{g}}(\zeta) \,d\eta\,d\zeta\,d\xi \\
& = \int \int \int \int f(x^1) \overline f(x^2) g(y^1) \overline g(y^2) K_\epsilon(x^1-y^1,x^2-y^2,-x^1-x^2)\,dx^1\,dx^2\,dy^1\,dy^2,
\end{split}
\end{equation}
where
$$
K_\epsilon(a,b,c) = \int \int \int m_\epsilon(\xi-\eta,\eta) \overline{m_\epsilon}(\xi-\zeta,\zeta) e^{i\eta a + \zeta b + \xi c} d\eta \, d\zeta \, d\xi.
$$
In other words, $K_\epsilon = \mathcal{F}^{-1} (m_\epsilon(\xi-\eta,\eta) \overline{m_\epsilon}(\xi-\zeta,\zeta))$, if one views $m_\epsilon(\xi-\eta,\eta) m_\epsilon(\xi-\zeta,\zeta)$ as a function of $\eta,\zeta,\xi$. 

\bigskip

\noindent  \underline{Upper bounds for $B_{m_\epsilon}:L^1 \times L^1 \rightarrow L^2$.}
It is clear from (\ref{casoar}) that
$$
\left\| B_{m_\epsilon} (f,g) \right\|_{L^2}^2 \leq \|K_\epsilon\|_{L^\infty(\R^3)} \|f\|_{L^1}^2 \|g\|_{L^1}^2.
$$
Hence
$$
\|B_{m_\epsilon}\|_{L^1 \times L^1 \rightarrow L^2}^2 \leq \|K_\epsilon\|_{L^\infty(\R^3)} \leq \left\| m_\epsilon(\xi-\eta,\eta) m_\epsilon(\xi-\zeta,\zeta) \right\|_{L^1(\R^3)} = \int \left( \int \left|m_\epsilon(\xi - \eta,\eta)\right| \,d\eta \right)^2 d\xi.
$$
We now distinguish between the three cases of the theorem:
\begin{itemize}
\item If $\Gamma$ is non characteristic, then for any $\xi$, $\int m_\epsilon(\xi - \eta,\eta)\,d\eta \lesssim \epsilon$ thus
$$
\int \left( \int \left| m_\epsilon(\xi - \eta,\eta) \right| \,d\eta \right)^2 d\xi \lesssim \epsilon^2 
$$
and $\|B_{m_\epsilon}\|_{L^1 \times L^1 \rightarrow L^2} \lesssim \epsilon$.

\item If $\Gamma$ is arbitrary, then Cauchy-Schwarz gives
$$
\int \left( \int \left| m_\epsilon(\xi - \eta,\eta) \right| \,d\eta \right)^2 d\xi \lesssim \int \int \left| m_\epsilon(\xi - \eta,\eta)\right| ^2 \,d\eta \, d\xi \lesssim \epsilon
$$
which implies $\|B_{m_\epsilon}\|_{L^1 \times L^1 \rightarrow L^2} \lesssim \sqrt{\epsilon}$.

\item If $\Gamma$ has a non-vanishing curvature, the estimate is a little more involved. It is easy to see that one can restrict to regions where $\Gamma$ is parameterized as $\xi = \Phi (\eta)$. Then $m_\epsilon(\xi-\eta,\eta)$ is localized $\epsilon$ away from $\xi = \eta + \Phi(\eta)$. Difficulties appear where $\Phi'(\eta) = -1$; let us assume, without loss of generality, that $\Phi'(0)=-1$. Picking $C_0$ big enough,  and $\delta$ small enough, a small computation shows that for $|\xi| \leq C_0 \epsilon$, $\int m_\epsilon(\xi - \eta,\eta)\,d\eta \lesssim \sqrt{\epsilon}$, whereas for $\delta \geq \xi \geq C_0 \epsilon$, $\int m_\epsilon(\xi - \eta,\eta)\,d\eta \lesssim \frac{\epsilon}{\sqrt{|\xi|}}$. Thus
$$
\int_{|\xi|\leq \delta} \left( \int m_\epsilon(\xi - \eta,\eta)\,d\eta \right)^2 d\xi = \int_{|\xi| \leq C_0 \epsilon} \epsilon\,d\xi + \int_{\delta \geq \xi \geq C_0 \epsilon} \frac{\epsilon}{\xi} \,d\xi \lesssim \epsilon |\log (\epsilon)|.
$$
This gives $\|B_{m_\epsilon}\|_{L^1 \times L^1 \rightarrow L^2} \lesssim \sqrt \epsilon \sqrt{- \log \epsilon}$.
\end{itemize}

\bigskip

\noindent  \underline{Optimality} It follows by Proposition~\ref{macareux} and Proposition~\ref{fourmi}.
\end{proof}

\subsection{The point $(2,2,1)$} \label{subsec:221}

\begin{prop}  \label{prop:221} Let $m_\epsilon \in \mathcal{M}_\Gamma^\epsilon$.
\begin{itemize}
\item[(i)] If $\Gamma$ is nowhere characteristic,
$$
\left\| B_{m_\epsilon} (f,g) \right\|_{L^\infty} \lesssim \epsilon \|f\|_{L^2} \|g\|_{L^2}.
$$
\item[(ii)] If $\Gamma$ has a non vanishing curvature,
$$
\left\| B_{m_\epsilon} (f,g) \right\|_{L^\infty} \lesssim \epsilon^{3/4} \|f\|_{L^2} \|g\|_{L^2}.
$$
\item[(iii)] If $\Gamma$ is arbitrary,
$$
\left\| B_{m_\epsilon} (f,g) \right\|_{L^\infty} \lesssim \sqrt{\epsilon} \|f\|_{L^2} \|g\|_{L^2}.
$$
\end{itemize}
Furthermore, the above exponents of $\epsilon$ are optimal.
\end{prop}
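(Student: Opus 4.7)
The plan is to start from the pointwise estimate $|B_{m_\epsilon}(f,g)(x)| \le \int_{\R^2} |m_\epsilon(\xi,\eta)|\,|\widehat f(\xi)|\,|\widehat g(\eta)|\,d\xi\,d\eta$, which is independent of $x$ and thus controls $\|B_{m_\epsilon}(f,g)\|_{L^\infty}$. Two tools will bound the right-hand side: two-dimensional Cauchy--Schwarz, together with the tensor identity $\|\widehat f\otimes\widehat g\|_{L^2(\R^2)}=\|f\|_{L^2}\|g\|_{L^2}$,
\[ (\ast\ast)\quad \int\int |m_\epsilon|\,|\widehat f|\,|\widehat g|\,d\xi\,d\eta \,\le\, |\Gamma_\epsilon|^{1/2}\|f\|_{L^2}\|g\|_{L^2},\]
and Schur's test for the bilinear form with kernel $|m_\epsilon|$,
\[ (\ast)\quad \int\int |m_\epsilon|\,|\widehat f|\,|\widehat g|\,d\xi\,d\eta \,\le\, \Bigl(\sup_\xi\int|m_\epsilon|\,d\eta\Bigr)^{1/2}\Bigl(\sup_\eta\int|m_\epsilon|\,d\xi\Bigr)^{1/2}\|f\|_{L^2}\|g\|_{L^2}.\]

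Cases (i) and (iii) are immediate. In (iii), Lemma \ref{lem:gammaeps} gives $|\Gamma_\epsilon|\lesssim\epsilon$, so $(\ast\ast)$ yields $\sqrt\epsilon\,\|f\|_{L^2}\|g\|_{L^2}$, and the exponent is sharp by Proposition \ref{fourmi}(iv) applied to a $\Gamma$ containing an axis-parallel segment. In (i), nowhere-characteristic means the tangent of $\Gamma$ is never parallel to the $\xi$- or $\eta$-axis, so both vertical and horizontal slices of $\Gamma_\epsilon$ have measure $\lesssim\epsilon$; then $(\ast)$ gives $\epsilon\,\|f\|_{L^2}\|g\|_{L^2}$, and optimality is Proposition \ref{fourmi}(ii).

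The real work is case (ii). Near a characteristic point with tangent parallel to the $\eta$-axis, parameterized locally as $\Gamma:\xi=\xi_0+\tfrac{c}{2}\eta^2+o(\eta^2)$, the vertical slice $\int|m_\epsilon(\xi,\cdot)|\,d\eta$ swells to $\sim\sqrt\epsilon$ as $\xi\to\xi_0$, so $(\ast)$ alone yields only $\sqrt\epsilon$. To recover $\epsilon^{3/4}$, I decompose $\Gamma$ into a small neighborhood of each characteristic point of $\xi$- or $\eta$-axis tangent type (finitely many, by non-vanishing curvature) and a ``good'' complement on which $(\ast)$ still gives $\epsilon\,\|f\|_{L^2}\|g\|_{L^2}$; points of tangent $\{\xi+\eta=0\}$ type cause no trouble for $(\ast)$ and are absorbed in the good part. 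In each bad neighborhood I perform a dyadic decomposition into shells $\Gamma_k=\{|\eta|\sim 2^{-k}\}$, for $k=0,\dots,K$ with $K\sim\tfrac12\log_2(1/\epsilon)$, plus an innermost cap $\{|\eta|\lesssim\sqrt\epsilon\}$. On $\Gamma_k$ the Schur suprema are $\sim\epsilon\cdot 2^k$ (vertical) and $\sim\epsilon$ (horizontal), and the dyadic $\xi$- and $\eta$-projections $A_k,B_k$ of the shell are pairwise disjoint across $k$. The Schur estimate restricted to the shell gives
\[
\int\int_{\Gamma_k^\epsilon}|m_\epsilon|\,|\widehat f|\,|\widehat g|\,d\xi\,d\eta\,\lesssim\,\epsilon\,2^{k/2}\,\|\widehat f\,\chi_{A_k}\|_{L^2}\,\|\widehat g\,\chi_{B_k}\|_{L^2},
\]
and summing over $k$ by Cauchy--Schwarz, placing the weight $2^k$ on the factor whose projections are dyadic disjoint (here $B_k$, since the shells are dyadic in $\eta$):
\[
\sum_{k\le K}\epsilon\,2^{k/2}\|\widehat f\,\chi_{A_k}\|_{L^2}\|\widehat g\,\chi_{B_k}\|_{L^2} \le \epsilon\Bigl(\sum_k\|\widehat f\,\chi_{A_k}\|_{L^2}^2\Bigr)^{1/2}\Bigl(\sum_k 2^k\|\widehat g\,\chi_{B_k}\|_{L^2}^2\Bigr)^{1/2} \le \epsilon\cdot 2^{K/2}\|f\|_{L^2}\|g\|_{L^2} \lesssim \epsilon^{3/4}\|f\|_{L^2}\|g\|_{L^2}.
\]
The innermost cap has $\epsilon$-neighborhood of area $\sim\epsilon^{3/2}$, and $(\ast\ast)$ applied locally contributes $\sqrt{\epsilon^{3/2}}=\epsilon^{3/4}$; optimality is Proposition \ref{fourmi}(iii). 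The main obstacle is precisely this dyadic summation: the Schur weight $2^{k/2}$ grows to $\epsilon^{-1/4}$ at the natural transition scale $|\eta|\sim\sqrt\epsilon$, and only the $\ell^2$-orthogonality coming from the dyadic disjointness of projections in the tangential variable prevents the total from reverting to $\sqrt\epsilon$.
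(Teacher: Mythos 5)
Your proof is correct, but it mixes routes: part of it coincides with the paper, part is genuinely different, and one step is more elaborate than necessary. For (iii), your direct two--dimensional Cauchy--Schwarz using $\|m_\epsilon\|_{L^2}^2\lesssim |\Gamma_\epsilon|\lesssim\epsilon$ is simpler than the paper's argument, which dualizes to the $(1,2,2)$ bound $\|B_{m_\epsilon}(f,g)\|_{L^2}\lesssim\sqrt{\epsilon}\,\|f\|_{L^1}\|g\|_{L^2}$ and then uses Hausdorff--Young and Plancherel; the substance ($L^2$--smallness of the symbol) is the same. For (i) and (ii) the paper slices $\Gamma_\epsilon$ into horizontal strips of height $\epsilon$, so that each piece lies in a box of size $\epsilon\times\epsilon$ (non-characteristic case) or $\sqrt{\epsilon}\times\epsilon$ (near a tangency, by the curvature), and concludes by Cauchy--Schwarz on each box together with almost orthogonality of the projections; your Schur-test formulation of (i) is the same estimate in compact form. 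In (ii), however, your motivating claim --- that after localizing near a single vertical-tangency point the Schur bound $(\ast)$ only yields $\sqrt{\epsilon}$ --- is incorrect: on such a neighborhood the two Schur suprema are $\sim\sqrt{\epsilon}$ (vertical slices) and $\sim\epsilon$ (horizontal slices), so $(\ast)$ already gives $\sqrt{\sqrt{\epsilon}\cdot\epsilon}=\epsilon^{3/4}$; it is only the \emph{global} application of $(\ast)$ to a curve containing both vertical and horizontal tangencies that degenerates to $\sqrt{\epsilon}$, and this is exactly what the paper's localized strip decomposition (equivalently, localized Schur) exploits. Your dyadic-shell argument is nonetheless correct as written: the shell Schur constants $\epsilon 2^{k}$ and $\epsilon$, the bounded overlap of the projections $A_k$, $B_k$, the innermost cap of area $\sim\epsilon^{3/2}$, and the weighted summation producing the factor $2^{K/2}\sim\epsilon^{-1/4}$ all check out, so the detour costs effort but not validity. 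Your optimality citations (Proposition~\ref{fourmi}, items (ii), (iii) --- up to the $\delta$-loss, exactly as in the paper --- and (iv) with an axis-parallel segment) match the paper's; note only that ``nowhere characteristic'' in the paper also excludes tangents parallel to $\{\xi+\eta=0\}$, which is harmless here since you only use the two coordinate directions.
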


\begin{proof}
\underline{Proof of $(i)$: $\Gamma$ non characteristic.} Define $\Gamma_\epsilon$ to be an $\epsilon$-neighbourhood of $\Gamma$. We split $\Gamma_\epsilon$ by considering its intersection with strips $\{(\xi,\eta)\,,\,n\epsilon < \eta \leq (n+1)\epsilon \}$ where $n \in \mathbb{Z}$ - of course, only finitely many of these intersections are non empty. Since $\Gamma$ is non characteristic, it is possible to write
$$
\Gamma_\epsilon \cap \{(\xi,\eta)\,,\,n\epsilon < \eta \leq (n+1)\epsilon \} \subset \{(\xi,\eta)\,,\,n\epsilon < \eta \leq (n+1)\epsilon\;\mbox{and}\; x_n^\epsilon - C_0 \epsilon < \xi \leq x_n^\epsilon + C_0 \epsilon\}
$$
where $(x_n^\epsilon)$ is a family of real numbers; $C_0$ a constant independent of $\epsilon$; the above decomposition is almost orthogonal in $\xi$, as it obviously is in $\eta$: there exists a constant $M$, also independent of $\epsilon$, such that at most $M$ intervals $[x_n^\epsilon - C_0 \epsilon,x_n^\epsilon + C_0 \epsilon]$ can have a non empty intersection.
Then by Cauchy-Schwarz and the almost orthogonality property,
\begin{equation*}
\begin{split}
\left| B_{m_\epsilon}(f,g)(x) \right| & = \left| \int \int e^{ix(\xi+\eta)} m_\epsilon(\xi,\eta) \widehat{f}(\xi) \widehat{g}(\eta)\,d\eta\,d\xi \right| \\
& \lesssim \int \int_{\Gamma_\epsilon} |\widehat{f}(\xi)| |\widehat{g}(\eta)| \,d\eta\,d\xi \\
& \lesssim \sum_n \int_{x_n^\epsilon-C_0 \epsilon}^{x_n^\epsilon+C_0 \epsilon} \int_{n\epsilon}^{(n+1)\epsilon} |\widehat{f}(\xi)| |\widehat{g}(\eta)| \,d\eta\,d\xi \\
& = \sum_n \int_{x_n^\epsilon-C_0 \epsilon}^{x_n^\epsilon+C_0 \epsilon} |\widehat{f}(\xi)| \,d\xi \int_{n\epsilon}^{(n+1)\epsilon} |\widehat{g}(\eta)| \,d\eta \\
& \lesssim \sum_n \sqrt{\epsilon} \| \widehat{f} \|_{L^2([x_n^\epsilon-C_0 \epsilon, x_n^\epsilon+C_0 \epsilon])} \sqrt{\epsilon} \left\|\widehat{g}\right\|_{L^2([n \epsilon,(n+1)\epsilon])} \\
& \lesssim \epsilon \left[ \sum_n \| \widehat{f} \|_{L^2([x_n^\epsilon-C_0 \epsilon, x_n^\epsilon+C_0 \epsilon])}^2 \right]^{1/2} \left[ \sum_n \left\|\widehat{g}\right\|_{L^2([n \epsilon,(n+1)\epsilon])}^2 \right]^{1/2} \\
& \lesssim \epsilon \|f\|_{L^2} \|g\|_{L^2}.
\end{split}
\end{equation*}
The norm $\epsilon$ for this bilinear operator is of course optimal by Proposition~\ref{fourmi} $(i)$.

\bigskip

\noindent  \underline{Proof of $(ii)$: $\Gamma$ has non vanishing curvature.} The proof of $(i)$ is valid except where $\Gamma$, in $(\xi,\eta)$ coordinates, has a tangent which is parallel to the $\xi$ or $\eta$ axes. By symmetry it suffices to focus on the former possibility, and assume that $\Gamma$ can be parameterized by $\eta = \phi(\xi)$, the problem being to treat regions where $\phi'$ vanishes. Without loss of generality, let us assume that $\phi'$ remains small, say less than $1/10$; it means that $\Gamma_\epsilon$ is contained in $\{ |\phi(\xi)-\eta| < 3 \epsilon \}$.
Proceeding as above, we split $\Gamma_\epsilon$ by considering its intersection with strips $\{(\xi,\eta)\,,\,n\epsilon < \eta \leq (n+1)\epsilon \}$ where $n \in \mathbb{Z}$. These intersections can be covered as follows:
\begin{align*}
\Gamma_\epsilon \cap \{n\epsilon < \eta \leq (n+1)\epsilon \} & \subset \{(\xi,\eta)\,,\,n\epsilon < \eta < (n+1)\epsilon\;\mbox{and}\;(n-1)\epsilon < \phi(\xi) < (n+2)\epsilon \} \\
&  := 
(x_n^\epsilon, y_n^\epsilon) \times (n\epsilon,(n+1) \epsilon).
\end{align*}
The almost orthogonality property for the intervals $(x_n^\epsilon,y_n^\epsilon)$ is obvious from their definition. Furthermore, since the curvature of $\Gamma$ does not vanish, their size can be bounded by
$$
y_n^\epsilon - x_n^\epsilon \lesssim \sqrt{\epsilon}.
$$
It is then easy to follow the proof of $(i)$, and get the desired estimate; it is optimal by Proposition~\ref{fourmi} $(iii)$.

\bigskip

\noindent  \underline{Proof of $(iii)$: $\Gamma$ arbitrary.} By duality, it suffices to prove
$$
\left\|B_{m_\epsilon}(f,g)\right\|_{L^2} \lesssim \sqrt{\epsilon} \|f\|_{L^1} \|g\|_{L^2}.
$$
But this is a simple consequence of the Cauchy-Schwarz, Hausdorff-Young, and Plancherel inequalities:
\begin{equation*}
\begin{split}
\left\| B_{m_\epsilon}(f,g) \right\|_{L^2} & = \left\| \int m_\epsilon(\xi-\eta,\eta) \widehat{f}(\xi-\eta) \widehat{g}(\eta) \,d\eta\right\|_{L^2(\xi)} \\ 
& \leq \left\| \left\| \widehat{f}\right\|_{L^\infty}  \left[ \int m_\epsilon(\xi-\eta,\eta)^2 \,d\eta \right]^{1/2} \|\widehat{g}\|_{L^2} \right\|_{L^2(\xi)} \\
& \lesssim \|f\|_{L^1} \|g\|_{L^2} \left[ \int \int m_\epsilon(\xi-\eta,\eta)^2 \,d\eta \,d\xi \right]^{1/2} \lesssim \sqrt{\epsilon} \|f\|_{L^1} \|g\|_{L^2}.
\end{split}
\end{equation*}
This bound is optimal by Proposition~\ref{fourmi} $(iv)$.
\end{proof}

\subsection{The point $(\infty,1,2)$} \label{subsec:i12}

\begin{prop} \label{prop:i12} Let $m_\epsilon \in \mathcal{M}_\epsilon^\Gamma$.
\begin{itemize}
\item[(i)] If $\Gamma$ is nowhere characteristic,
$$
\left\| B_{m_\epsilon} (f,g) \right\|_{L^2} \lesssim \epsilon^{1/4} \|f\|_{L^\infty} \|g\|_{L^1}.
$$
\item[(ii)] If $\Gamma$ has a non vanishing curvature,
$$
\left\| B_{m_\epsilon} (f,g) \right\|_{L^2} \lesssim \epsilon^{1/4} \sqrt{-\log(\epsilon)} \|f\|_{L^\infty} \|g\|_{L^1}.
$$
\item[(iii)] If $\Gamma$ is arbitrary,
$$
\left\| B_{m_\epsilon} (f,g) \right\|_{L^2} \lesssim \|f\|_{L^\infty} \|g\|_{L^1}.
$$
\end{itemize}
Furthermore, the bounds $(ii)$ and $(iii)$ are optimal up to the logarithmic factor.
\end{prop}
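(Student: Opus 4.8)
The estimate is of the same nature as Propositions \ref{prop:112} and \ref{prop:221}, and I would prove it with the same tools, the only genuinely new difficulty being the $L^\infty$--endpoint. The starting point is the identity, obtained by writing out $\langle B_{m_\epsilon}(f,g),h\rangle=\int\!\!\int m_\epsilon(\xi,\eta)\widehat f(\xi)\widehat g(\eta)\overline{\widehat h(\xi+\eta)}\,d\xi\,d\eta$ and performing the linear change of variables $\eta=-\xi-\zeta$,
$$
\langle B_{m_\epsilon}(f,g),h\rangle=\langle B_{m'_\epsilon}(f,h),g\rangle,\qquad m'_\epsilon(\xi,\zeta):=m_\epsilon(\xi,-\xi-\zeta).
$$
Since this change of variables permutes the three degenerate lines, the curve $\Gamma'$ attached to $m'_\epsilon$ is non-characteristic, has non-vanishing curvature, or is arbitrary exactly when $\Gamma$ is, and $m'_\epsilon\in\mathcal{M}^{\Gamma'}_\epsilon$; taking the supremum over $\|g\|_{L^1}\le1$, Proposition \ref{prop:i12} is therefore equivalent to $\|B_{m'_\epsilon}(f,h)\|_{L^\infty}\lesssim c(\epsilon)\|f\|_{L^\infty}\|h\|_{L^2}$ with the respective $c(\epsilon)$. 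Working with an $L^\infty$ target is what makes this tractable: a direct $TT^*$ treatment of $B_{m_\epsilon}\colon L^\infty\times L^1\to L^2$, as in Proposition \ref{prop:112}, produces a kernel on $\mathbb R^3$ whose relevant mixed norm is already of size $\epsilon^{-1/2}$, the infinite measure of the line defeating a naive bound when the output is $L^2$.

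\textbf{Case (iii) (arbitrary $\Gamma$).} For the equivalent $L^\infty\times L^2\to L^\infty$ statement I would carry out the $\xi$-integration first, keeping $f$ out of absolute values: with $A_\zeta:=\mathcal F^{-1}_\xi[m'_\epsilon(\cdot,\zeta)]$ one has $B_{m'_\epsilon}(f,h)(x)=\int e^{ix\zeta}(A_\zeta*f)(x)\,\widehat h(\zeta)\,d\zeta$, and Cauchy--Schwarz in $\zeta$, Plancherel, and $|(A_\zeta*f)(x)|\le\|f\|_{L^\infty}\|A_\zeta\|_{L^1}$ give
$$
\|B_{m'_\epsilon}(f,h)\|_{L^\infty}\le\|f\|_{L^\infty}\|h\|_{L^2}\Big(\int\|A_\zeta\|_{L^1(\mathbb R)}^2\,d\zeta\Big)^{1/2}.
$$
For a smooth curve each slice $m'_\epsilon(\cdot,\zeta)$ is supported in boundedly many intervals, so $\|A_\zeta\|_{L^1}\lesssim\log(1/\epsilon)$ always and $\|A_\zeta\|_{L^1}\lesssim1$ except on a $\zeta$-set of measure $\lesssim\epsilon$ (where $\Gamma'$ has a tangent parallel to a degenerate line); hence the right-hand side is $\lesssim\|f\|_{L^\infty}\|h\|_{L^2}$. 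This is the same mechanism as in the proofs of Propositions \ref{prop:221}(iii) and \ref{prop:112}(iii).

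\textbf{Cases (i)--(ii) (the sharper bounds).} The argument above only yields the trivial power $\epsilon^0$, so for these cases I would instead first collapse $g$ to a Dirac via $\|B_{m_\epsilon}(f,g)\|_{L^2}\le\|g\|_{L^1}\sup_z\|B_{m_\epsilon}(f,\delta_z)\|_{L^2}=\|g\|_{L^1}\,\|B_{m_\epsilon}(\cdot,\delta_0)\|_{L^\infty\to L^2}\,\|f\|_{L^\infty}$ (the last equality by translation invariance), and then use $\widehat{\delta_0}=1$ and Plancherel to write $\|B_{m_\epsilon}(f,\delta_0)\|_{L^2}^2=\int_\lambda|\int_\xi m_\epsilon(\xi,\lambda-\xi)\widehat f(\xi)\,d\xi|^2\,d\lambda$. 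Next I decompose $\Gamma_\epsilon$ at scale $\epsilon$, $m_\epsilon=\sum_i m^i$ with each $m^i$ supported in an $\epsilon$-box, arranging by the non-characteristic hypothesis (Lemma \ref{lem:covering}, resp.\ the strip decomposition in the proof of Proposition \ref{prop:221}(ii)) that the $\xi$-projections $I^1_i$ and the $(\xi+\eta)$-projections $I^3_i$ each form almost-disjoint families. Since only $O(1)$ terms survive for each $\lambda$, this bounds $\|B_{m_\epsilon}(f,\delta_0)\|_{L^2}^2$ by $\lesssim\sum_i|I^3_i|\,\|\widehat f\|_{L^1(I^1_i)}^2\lesssim\epsilon\sum_i\|\widehat f\|_{L^1(I^1_i)}^2$ (at boxes sitting near a point where $\Gamma$ has a tangent parallel to a degenerate line one replaces the crude size bound by the $\sqrt{\epsilon}$ effective-width bound coming from van der Corput / Lemma \ref{lem:kernelosci}), and one finishes by turning $\|\widehat f\|_{L^1(I^1_i)}$ into $\epsilon^{1/2}\|\pi_{I^1_i}f\|_{L^2}$ and running Rubio de Francia's square function (Proposition \ref{prop:square}) over the almost-disjoint family $(I^1_i)$ — interpolated against the trivial $L^\infty$ endpoint since the $L^\infty$ norm of $f$ is what is available. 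This is exactly the step that produces the factor $\epsilon^{-1/2+\cdots}$, hence the final $\epsilon^{1/4}$ in (i); in case (ii) the strips accumulating at a characteristic point form a geometric family whose widths sum with a logarithmic loss, giving the extra $\sqrt{-\log\epsilon}$, just as the logarithm arose in Propositions \ref{prop:112}(ii) and \ref{macareux}. Optimality of (ii) and (iii) follows from Propositions \ref{fourmi} and \ref{macareux}.

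\textbf{Main obstacle.} The crux is precisely the interaction of the $L^\infty$ input (equivalently, $L^\infty$ output) with the $L^2$ norm on the other side. One cannot simply pass to $|\widehat f|$ in the frequency decomposition — that fails for oscillating $f$ — so the Fourier-side argument has to be routed through a Rubio--de-Francia-type square function over the $\epsilon$-boxes, i.e.\ through the almost-orthogonality of the decomposition rather than the triangle inequality. Since Rubio de Francia is unavailable at $p=\infty$, reconciling it with $\|f\|_{L^\infty}$ is what forces the suboptimal exponent $\epsilon^{1/4}$ (the sharp power, consistent with Proposition \ref{fourmi}(ii), is $\epsilon^{1/2}$, which is why optimality is claimed only for (ii) and (iii)), and, in the curved case, the logarithmic factor. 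Once the $\epsilon$-scale decomposition and the van der Corput estimate for the effective width are in place, the remaining computations are routine and parallel those already carried out in this section.
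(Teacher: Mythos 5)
Your reduction of (i)--(ii) to the single estimate $\|B_{m_\epsilon}(f,\delta_0)\|_{L^2}\lesssim \epsilon^{1/4}\|f\|_{L^\infty}$ is legitimate, but the argument you then sketch never actually produces a positive power of $\epsilon$, and this is precisely the heart of the proposition. After Plancherel you must estimate $\int_\lambda\bigl|\int m_\epsilon(\xi,\lambda-\xi)\widehat f(\xi)\,d\xi\bigr|^2d\lambda$ for $f\in L^\infty$; the quantities you invoke, $\|\widehat f\|_{L^1(I^1_i)}$ and the square function $\bigl(\sum_i\|\pi_{I^1_i}f\|_{L^2}^2\bigr)^{1/2}$, are not controlled by $\|f\|_{L^\infty}$ (indeed $\widehat f$ is only a tempered distribution, and each $\|\pi_{I^1_i}f\|_{L^2(\R)}$ is a global $L^2$ norm). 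Carried out as written, your chain yields an $L^2\times L^1\to L^2$ bound of size $\epsilon$, i.e.\ the point $(2,1,2)$, and the proposed fix, ``interpolate against the trivial $L^\infty$ endpoint,'' is not an argument: interpolating an $L^2$-input estimate with any $L^\infty\to L^\infty$ information gives $L^p\to L^p$ statements, never an $L^\infty$-input, $L^2$-output bound (the measure space is infinite), and there is no second estimate with $L^\infty$ input available to interpolate with. So cases (i) and (ii) are not proved. The paper handles the $L^\infty$ input by a $TT^*$ argument: expand $\|B_{m_\epsilon}(f,g)\|_{L^2}^2$ as a quadrilinear form, pull out $\|f\|_{L^\infty}^2\|g\|_{L^1}^2$ there, and reduce to $\sup_y\|\widehat{F_y}\|_{L^1(\R^2)}$ with $F_y(\alpha,\beta)=\int m_\epsilon(\alpha,\xi-\alpha)\,m_\epsilon(\beta,\xi-\beta)\,e^{-iy\xi}d\xi$; in the non-characteristic case the near-diagonal support $|\alpha-\beta|\lesssim\epsilon$, the bounds $\|F_y\|_{L^\infty}\lesssim\epsilon$, $\|\nabla^2_{\alpha,\beta}F_y\|_{L^\infty}\lesssim\epsilon^{-1}$, and the inequality $\|\widehat{F_y}\|_{L^1}\lesssim\|F_y\|_{L^2}^{1/2}\|\nabla^2_{\alpha,\beta}F_y\|_{L^2}^{1/2}$ give $\|\widehat{F_y}\|_{L^1}\lesssim\sqrt\epsilon$, hence $\epsilon^{1/4}$ after taking square roots; the logarithm in (ii) arises from the same computation near the characteristic points.

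Concerning (iii), your dualized slicing argument ($L^\infty\times L^2\to L^\infty$ plus Cauchy--Schwarz in $\zeta$) is a reasonable alternative to the paper's $TT^*$ computation, but the pointwise claims you base it on fail for general members of $\mathcal{M}^\Gamma_\epsilon$: a symbol oscillating at scale $\epsilon$ along a slice of length $\ell(\zeta)$ (say a chirp over a segment parallel to the slicing direction) has $\|A_\zeta\|_{L^1}$ as large as $(\ell(\zeta)/\epsilon)^{1/2}\sim\epsilon^{-1/2}$, not $\log(1/\epsilon)$, and away from the tangency set one cannot claim $\|A_\zeta\|_{L^1}\lesssim 1$ either. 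The bound you actually need, $\int\|A_\zeta\|_{L^1}^2d\zeta\lesssim1$, is nevertheless true and follows from $\|A_\zeta\|_{L^1}\lesssim\|m'_\epsilon(\cdot,\zeta)\|_{L^2}^{1/2}\|\partial_\xi m'_\epsilon(\cdot,\zeta)\|_{L^2}^{1/2}\lesssim(\ell(\zeta)/\epsilon)^{1/2}$ together with $\int\ell(\zeta)\,d\zeta\lesssim|\Gamma_\epsilon|\lesssim\epsilon$ (Lemma \ref{lem:gammaeps}); with that substitution (iii) closes, and your optimality references to Propositions \ref{fourmi} and \ref{macareux} are correct. The missing mechanism in (i)--(ii), however, is a genuine gap.
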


\begin{proof} \noindent \underline{The $TT^*$ argument.}
Recall~(\ref{casoar}):
\begin{equation*}
\left\| B_{m_\epsilon} (f,g) \right\|_{L^2}^2 = \int \int \int \int f(x^1) \overline{f}(x^2) g(y^1) \overline{g}(y^2) K_\epsilon(x^1-y^1,y^2-x^2,x^2-x^1)\,dx^1\,dx^2\,dy^1\,dy^2,
\end{equation*}
where
$$
K_\epsilon(a,b,c) = \int \int \int m_\epsilon(\xi-\eta,\eta) \overline{m_\epsilon}(\xi-\zeta,\zeta) e^{i\eta a + \zeta b + \xi c} d\eta \, d\zeta \, d\xi.
$$
Thus
\begin{equation*}
\begin{split}
\left\| B_{m_\epsilon} (f,g) \right\|_{L^2}^2 & \leq \|f\|_{L^\infty}^2 \int \int |g(y^1)| |g(y^2)| \int \int \left| K_\epsilon(x^1-y^1,y^2-x^2,x^2-x^1) \right| \,dx^1\,dx^2\,dy^1\,dy^2, \\
& \lesssim  \left(\sup_{y^1,y^2} \int \int \left| K_\epsilon(x^1-y^1,y^2-x_2,x^2-x^1)\right| \,dx^1\,dx^2\right) \|f\|_{L^\infty}^2 \|g\|_{L^1}^2.
\end{split}
\end{equation*}
Everything now boils down to estimating
$$
\int \int | K_\epsilon(x^1-y^1,y^2-x^2,x^2-x^1) | \,dx^1\,dx^2.
$$
A change of variables gives
$$
K_\epsilon(x^1-y^1,y^2-x^2,x^2-x^1) = \int \int F_{y}(\alpha,\beta) e^{i\alpha(y^1-x^1)} e^{i\beta(x^2-y^2)} \,d\alpha d\beta = \widehat{F_y} (x^1-y^1,y^2-x^2)
$$
where
\begin{equation}
\label{lapin}
F_{y}(\alpha,\beta) = \int m_\epsilon (\alpha,\xi-\alpha) m_\epsilon (\beta,\xi-\beta) e^{-iy\xi} \,d\xi\;\;\;\;\mbox{and}\;\;\;\;y = y^1 - y^2.
\end{equation}
Combining the few last lines,
\begin{equation}
\label{lievre}
\left\| B_{m_\epsilon} (f,g) \right\|_{L^2}^2 \lesssim \left(\sup_y \|\widehat{F_y}\|_{L^1}\right) \|f\|_{L^\infty}^2 \|g\|_{L^1}^2.
\end{equation}

\bigskip

\noindent \underline{Proof of $(i)$: $\Gamma$ non characteristic.} If $\Gamma$ is non characteristic, the support of $(\alpha,\xi) \mapsto m_\epsilon(\alpha,\xi-\alpha)$ is contained in the set $\{ |\alpha - \phi(\xi)| \leq C_0 \epsilon \}$ for a certain invertible function $\phi$, and a constant $C_0$. Given the definition of $F_y$~(\ref{lapin}), this implies immediately that
$$
| \operatorname{Supp} F_y | \sim \epsilon \;\;\;\;\mbox{and}\;\;\;\;\|F_y\|_{L^\infty(\R^2)} \lesssim \epsilon.
$$
Furthermore, since taking derivatives of the symbol $m_\epsilon$ essentially amounts to multiplying it by $\frac{1}{\epsilon}$, we also obtain
$$
\| \nabla^2_{\alpha,\beta} F_{y} \|_{L^\infty(\R^2)} \lesssim \frac{1}{\epsilon}.
$$
Combining these two estimates with Plancherel's identity gives now:
\begin{equation*}
\begin{split}
\|\widehat{F_y}\|_{L^1(\R^2)} & \lesssim \| \widehat{F_y} \|_{L^2(\R^2)}^{1/2} \| |\cdot|^2 \widehat{F_y} \|_{L^2(\R^2)}^{1/2} = \|F_y\|_{L^2(\R^2)}^{1/2} \left\| \nabla^2_{\alpha,\beta} F_{y} \right\|_{L^2(\R^2)}^{1/2} \\
& \lesssim \left[ | \operatorname{Supp} F_y |^{1/2} \|F_y\|_{L^\infty(\R^2)} | \operatorname{Supp} F_y |^{1/2} \| \nabla^2_{\alpha,\beta} F_{y}\|_{L^\infty(\R^2)} \right]^{1/2} \lesssim \sqrt{\epsilon}.
\end{split}
\end{equation*}
By~(\ref{lievre}), this gives the desired bound.

\bigskip

\noindent \underline{Proof of $(ii)$: $\Gamma$ has non vanishing curvature.} Consider the curve $\Gamma$ in the coordinates $(\alpha,\xi-\alpha)$. In regions where it can be parameterized by $\alpha = \phi(\xi)$, with $\phi$ smooth, and with a smooth inverse, the result follows from $(i)$. Difficulties appear when the parameterization becomes $\alpha = \phi(\xi)$, with $\phi'$ vanishing, or $\xi = \psi(\alpha)$, with $\psi'$ vanishing. We focus on these two cases from now on. In both cases, we assume for simplicity that the vanishing occurs at $0$: $\phi'(0)=0$ and $\psi'(0)=0$. Since the curvature does not vanish, $\phi''(0)$ as well as $\psi''(0)$ are non zero. Focusing on a small neighborhood $[-\delta,\delta]$ of $0$ in both cases, we will simply consider that $\phi(\xi) = \xi^2$, $\psi(\alpha)=\alpha^2$: it makes notations lighter, while retaining all the essential difficulties.

\begin{itemize} 
\item 
Let us start with the case where $\Gamma$ is parameterized by $\alpha = \xi^2$, where we restrict $\alpha$ to $[-\delta,\delta]$. The support of $(\alpha,\xi) \mapsto m_\epsilon(\alpha,\xi-\alpha)$ is then contained in the set $\{\alpha \in [-\delta,\delta], |\alpha - \xi^2| \leq 2 \epsilon \}$. For fixed $\alpha$, the set $\{\xi\,,\,|\xi^2-\alpha|<2 \epsilon\}$ has size 
\begin{equation*}
| \{\xi\,,\,|\xi^2-\alpha|< 2 \epsilon\} | \lesssim \left\{ \begin{array}{ll} \sqrt{\epsilon} & \mbox{if $|\alpha| < 10 \epsilon$} \vsp \\ \frac{\epsilon}{\sqrt{|\alpha|}} & \mbox{if $|\alpha|>10\epsilon$} \vsp \end{array} \right..
\end{equation*}
This implies
\begin{equation*}
| F_y(\alpha,\beta) | \lesssim \left\{ \begin{array}{ll} \sqrt{\epsilon} & \mbox{if $|\alpha| < 10 \epsilon$} \vsp \\ \frac{\epsilon}{\sqrt{|\alpha|}} & \mbox{if $|\alpha|>10\epsilon$} \vsp \end{array} \right..
\end{equation*}
Since furthermore $|\alpha-\beta|< 3 \epsilon$ on the support of $F_y$, we obtain
\begin{equation*}
\begin{split}
\|F_y\|_2^2 & = \int \int |F_y(\alpha,\beta)|^2\,d\alpha\,d\beta \lesssim \int \epsilon \sup_{\beta} |F_y(\alpha,\beta)|^2 \,d\alpha \\
& \lesssim \int_{|\alpha|<10\epsilon} \epsilon^2\,d\alpha + \int_{10\epsilon<|\alpha|<\delta} \frac{\epsilon}{\sqrt{|\alpha|}}\,d\alpha \lesssim - \epsilon^3 \log(\epsilon).
\end{split}
\end{equation*}
\item Let us consider now the case where $\Gamma$ is parameterized by $\xi = \alpha^2$. The support of $(\alpha,\xi) \mapsto m_\epsilon(\alpha,\xi-\alpha)$ is then contained in the set $\{\alpha \in [-\delta,\delta], |\xi - \alpha^2| \leq 2 \epsilon \}$. An examination of the definition of $F_y$ reveals that
$$
\| F_y \|_{L^\infty(\R^2)} \lesssim \epsilon
$$
and that, for fixed $\alpha$, the set $\operatorname{Supp} F_y(\alpha,\cdot)$ has size
\begin{equation*}
| \operatorname{Supp} F_y(\alpha,\cdot) | \lesssim \left\{ \begin{array}{ll} \sqrt{\epsilon} & \mbox{if $|\alpha| < 10 \sqrt{\epsilon}$} \vsp \\ \frac{\epsilon}{|\alpha|} & \mbox{if $|\alpha|>10\epsilon$} \vsp \end{array} \right.. 
\end{equation*}
This implies immediately that
$$
|\operatorname{Supp} F_y| \lesssim -\epsilon \log (\epsilon),
$$
which gives in turn, recalling that $\| F_y \|_{L^\infty(\R^2)} \lesssim \epsilon$,
$$
\|F_y\|_{L^2(\R^2)}^2 \lesssim |\operatorname{Supp} F_y| \| F_y \|_{L^\infty(\R^2)}^2 \lesssim - \epsilon^3 \log(\epsilon).
$$
\end{itemize}
Thus we could prove in both cases that $\|F_y\|_{L^2(\R^2)}^2 \lesssim - \epsilon^3 \log(\epsilon)$. One can deduces similarly that $\|\nabla^2_{\alpha,\beta} F_{y} \|_{L^2(\R^2)}^2 \lesssim - \frac{\log(\epsilon)}{\epsilon}$. These two bounds imply $\|F_y\|_{L^1(\R^2)} \lesssim -\sqrt{\epsilon} \log(\epsilon)$, which is the desired bound. It is optimal by Proposition~\ref{fourmi} $(iii)$.

\bigskip

\noindent \underline{Proof of $(iii)$: $\Gamma$ arbitrary.} The $L^2$ norm of $F_y$ can be estimated by Cauchy-Schwarz' inequality:
\begin{align*}
\|F_y\|_{L^2(\R^2)}^2 & = \int \int \left( \int m_\epsilon (\alpha,\xi-\alpha) m_\epsilon (\beta,\xi-\beta) e^{-iy\xi} \,d\xi \right)^2 \,d\alpha \,d\beta \\
& \leq \left(\int  \int \left|m_\epsilon (\alpha,\xi-\alpha)\right|^2 \,d\xi d\alpha\right) \left( \int \int \left|m_\epsilon (\beta,\xi-\beta)\right|^2 \,d\xi d\beta\right) \\ 
& =  \left(\int \int \left|m_\epsilon (\alpha,\xi-\alpha)\right|^2 \,d\xi \,d\alpha\right)^2 \\
& \lesssim \epsilon^2.
\end{align*}
Recall that taking derivatives of the symbol $m_\epsilon$ essentially amounts to multiplying it by $\frac{1}{\epsilon}$. Therefore, proceeding as above one can prove
$$
\left\| \nabla^2_{\alpha,\beta} F_{y} \right\|_{L^2(\R^2)}^2 \lesssim \epsilon^2 \frac{1}{\epsilon^4} = \frac{1}{\epsilon^2}.
$$
Putting these two estimates together gives, with the help of Plancherel's identity:
$$
\|\widehat{F_y}\|_{L^1(\R^2)} \lesssim \| \widehat{F_y} \|_{L^2(\R^2)}^{1/2} \| |\cdot|^2 \widehat{F_y} \|_{L^2(\R^2)}^{1/2} = \|F_y\|_{L^2(\R^2)}^{1/2} \left\| \nabla^2_{\alpha,\beta} F_{y} \right\|_{L^2(\R^2)}^{1/2} \lesssim 1.
$$
By~(\ref{lievre}), this gives the bound that we sought. It is optimal by Proposition~\ref{fourmi} $(iv)$. \end{proof}

\subsection{The point $(1,1,\infty)$} \label{subsec:11i}

\begin{prop} \label{prop:11i}
\begin{itemize} Let $m_\epsilon \in \mathcal{M}^\Gamma_\epsilon$.
\item[(i)] If $\Gamma$ is nowhere characteristic,
$$
\left\| B_{m_\epsilon} (f,g) \right\|_{L^1} \lesssim \sqrt{\epsilon} \|f\|_{L^1} \|g\|_{L^1}.
$$
\item[(ii)] If $\Gamma$ has a non-vanishing curvature,
$$
\left\| B_{m_\epsilon} (f,g) \right\|_{L^1} \lesssim \sqrt{\epsilon} |\log \epsilon| \|f\|_{L^1} \|g\|_{L^1}.
$$
\item[(iii)] If $\Gamma$ is arbitrary,
$$
\left\| B_{m_\epsilon} (f,g) \right\|_{L^1} \lesssim \|f\|_{L^1} \|g\|_{L^1}.
$$
\end{itemize}
Furthermore, the $\epsilon$-dependence of these bounds are optimal up to the logarithmic factor.
\end{prop}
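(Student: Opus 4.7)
The plan is to reduce the $L^1\times L^1 \to L^1$ bound to a one-dimensional Fourier estimate. Starting from the physical-space representation~(\ref{physical}), pointwise estimates and Fubini give
$$
\|B_{m_\epsilon}(f,g)\|_{L^1} \lesssim \left(\sup_{y,z}\int_\R |\widehat{m_\epsilon}(y-x,z-x)|\,dx\right)\|f\|_{L^1}\|g\|_{L^1},
$$
and the inner supremum depends only on $c=z-y$. Substituting $\tau=\xi+\eta$ in the $2D$ Fourier integral identifies
$$
\widehat{m_\epsilon}(u,u+c) = (2\pi)^{-1/2}\widehat{H_c}(u),\qquad H_c(\tau):=\int m_\epsilon(\tau-\eta,\eta)\,e^{-ic\eta}\,d\eta,
$$
so the whole task becomes the one-dimensional estimate $\sup_c\|\widehat{H_c}\|_{L^1(\R)}$.

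\medskip

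To control this $L^1$ Fourier norm I would apply the elementary interpolation $\|\widehat F\|_{L^1} \lesssim \|F\|_{L^2}^{1/2}\|F'\|_{L^2}^{1/2}$, obtained by cutting $\int|\widehat F|$ at $|u|=R$, applying Cauchy-Schwarz with the weight $|u|$ on the tail and Plancherel on the core, and optimizing in $R$. Three general features of $H_c$ then drive the estimate: \emph{(a)} the pointwise bound $|H_c(\tau)| \leq {\mathcal H}^1\{\eta:(\tau-\eta,\eta)\in\Gamma_\epsilon\}$ is the length of the slice of $\Gamma_\epsilon$ cut out by the degenerate line $\{\xi+\eta=\tau\}$, which is the only place the geometry of $\Gamma$ enters; \emph{(b)} differentiating $m_\epsilon$ in $\xi$ costs a factor $\epsilon^{-1}$, so $|H_c'(\tau)|\lesssim\epsilon^{-1}|H_c(\tau)|$; \emph{(c)} the total mass bounds $\|H_c\|_{L^1}\lesssim\|m_\epsilon\|_{L^1}\lesssim\epsilon$ (by Lemma~\ref{lem:gammaeps}) and $\|H_c'\|_{L^1}\lesssim 1$ hold for every $\Gamma$.

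\medskip

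The three cases then fall out:
\begin{itemize}
\item[(i)] If $\Gamma$ is nowhere characteristic, every line $\{\xi+\eta=\tau\}$ meets $\Gamma$ transversally, so the slice length is $O(\epsilon)$ uniformly in $\tau$; the crude $\|H_c\|_{L^2}^2\leq\|H_c\|_\infty\|H_c\|_{L^1}$ then gives $\|H_c\|_{L^2}\lesssim\epsilon$, $\|H_c'\|_{L^2}\lesssim 1$, hence $\|\widehat{H_c}\|_{L^1}\lesssim\sqrt\epsilon$.
\item[(ii)] If $\Gamma$ has non-vanishing curvature, the critical contribution comes from a tangency of $\Gamma$ to the degenerate axis, locally modelled by $\xi+\eta-\tau_0\simeq(\eta-\eta_0)^2$; the slice length is $\sqrt\epsilon$ for $|\tau-\tau_0|\leq\epsilon$ and $\epsilon/\sqrt{|\tau-\tau_0|}$ beyond, and a direct integration (not the crude $L^\infty\cdot L^1$ bound) yields $\|H_c\|_{L^2}^2\lesssim\epsilon^2|\log\epsilon|$ and $\|H_c'\|_{L^2}^2\lesssim|\log\epsilon|$, leading to $\|\widehat{H_c}\|_{L^1}\lesssim\sqrt\epsilon\,|\log\epsilon|$ after careful bookkeeping of the near-tangent and transverse regimes.
\item[(iii)] For arbitrary $\Gamma$ the slice can be $O(1)$ long, so only $\|H_c\|_{L^2}\lesssim\sqrt\epsilon$ and $\|H_c'\|_{L^2}\lesssim\epsilon^{-1/2}$ are available, giving the trivial bound $\|\widehat{H_c}\|_{L^1}\lesssim 1$.
\end{itemize}

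\medskip

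The optimality claims follow from the test constructions of Section~\ref{NCATSC}: testing against approximate identities $(f_n,g_n)\to(\delta_0,\delta_0)$ gives $B_{m_\epsilon}(f_n,g_n)(x)\to\sqrt{2\pi}\,\widehat{m_\epsilon}(-x,-x)$ pointwise, so by Fatou $\|B_{m_\epsilon}\|_{L^1\times L^1\to L^1}\gtrsim \|\widehat{H_0}\|_{L^1}$, which combined with Propositions~\ref{fourmi} and~\ref{macareux} reproduces the stated lower bounds up to the logarithmic loss. The main obstacle I foresee is Case (ii): the naive interpolation above loses a half-power of $|\log\epsilon|$, and recovering the sharp exponent requires the two-regime integration described, in the spirit of the proof of Proposition~\ref{prop:112}(ii).
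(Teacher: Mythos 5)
Your proposal is correct and is essentially the paper's own argument: your $H_c$ is (up to a modulation) the paper's $F_{y,z}$, and the proof proceeds identically — reduce via the kernel representation to $\sup_c\|\widehat{H_c}\|_{L^1}$, apply $\|\widehat{H_c}\|_{L^1}\lesssim\|H_c\|_{L^2}^{1/2}\|H_c'\|_{L^2}^{1/2}$, and bound the $L^2$ norms by the length of the slices of $\Gamma_\epsilon$ along the lines $\{\xi+\eta=\tau\}$ (with the two-regime $\sqrt\epsilon$ versus $\epsilon/\sqrt{|\tau-\tau_0|}$ computation in the curvature case, which indeed yields $\sqrt\epsilon\,|\log\epsilon|^{1/2}\le\sqrt\epsilon|\log\epsilon|$), optimality being cited from the constructions of Section~\ref{NCATSC} just as in the paper. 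Only note that the pointwise claim $|H_c'(\tau)|\lesssim\epsilon^{-1}|H_c(\tau)|$ should be stated as $|H_c'(\tau)|\lesssim\epsilon^{-1}\times(\text{slice-length at }\tau)$, since oscillation in $e^{-ic\eta}$ can make $|H_c(\tau)|$ smaller than the slice length; this is exactly how you use it, so nothing else changes.
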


\begin{proof} \underline{Proof of $(i)$: $\Gamma$ non characteristic.} Recall~(\ref{physical})
$$
B_{m_\epsilon}(f,g) = \int \int  \widehat{m_\epsilon}(y-x,z-x) f(y) g(z) \,dy\,dz;
$$
this implies
$$
\left\| B_{m_\epsilon}(f,g) \right\|_{L^1} \leq \|f\|_{L^1} \|g\|_{L^1} \left(\sup_{y,z} \left\|\widehat{m_\epsilon}(y-\cdot,z-\cdot) \right\|_{L^1}\right).
$$
Thus it suffices to estimate $\sup_{y,z} \left\|\widehat{m_\epsilon} (y-\cdot,z-\cdot) \right\|_{L^1}$. In order to do so, write $\widehat{m_\epsilon}$ as
$$
\widehat{m_\epsilon}(y-x,z-x) = \int e^{i\xi(x-y)} \int e^{i\eta (x-z)} m_\epsilon(\xi,\eta) \,d\eta\,d\xi := \int e^{ix \alpha} F_{y,z}(\alpha) \,d\alpha,
$$
with
$$ F_{y,z}(\alpha):= \int_{\xi+\eta=\alpha} e^{-i(\xi y+\eta z)} m_\epsilon(\xi,\eta) \,d\eta\,d\xi.$$
Hence$$
\sup_{y,z} \left\|\widehat{m_\epsilon}(y-\cdot,z-\cdot) \right\|_{L^1} = \sup_{y,z} \left\| \widehat{F_{y,z}} \right\|_{L^1}.
$$
In order to estimate $\widehat{F_{y,z}}$ in $L^1$, we want to interpolate it between $L^2$ and $L^2(x^2 dx)$. Since $\Gamma$ is non characteristic and bounded,
$$
\left\| \widehat{F_{y,z}} \right\|_{L^2}^2 = \left\| F_{y,z} \right\|_{L^2}^2 \lesssim  \left\| F_{y,z} \right\|_{L^\infty}^2 \lesssim \epsilon^2.
$$
Similarly, using the fact that $\nabla m_\epsilon$ has size at most $\frac{1}{\epsilon}$, one finds
$$
\left\| x \widehat{F_{y,z}}(x) \right\|_{L^2}^2 = \left\| \partial_\xi F_{y,z} \right\|_{L^2}^2  \lesssim \epsilon^2 \frac{1}{\epsilon^2}.
$$
This gives the estimate since
$$
\sup_{y,z} \left\|\widehat{m_\epsilon}(y-\cdot,z-\cdot) \right\|_{L^1} \lesssim \sup_{y,z} \left\| \widehat{F_{y,z}} \right\|_{L^1} \lesssim \|\widehat{F_{y,z}}\|_{L^2}^{1/2} \|x \widehat{F_{y,z}}(x)\|_{L^2}^{1/2}\lesssim \sqrt{\epsilon}.
$$
Optimality is a consequence of Proposition~\ref{macareux}.

\bigskip \noindent \underline{Proof of $(ii)$: $\Gamma$ has a non-vanishing curvature.} Proceeding as above, things boil down to estimating $\|\widehat{F_t} \|_{L^1}$ for every $t=(y,z)$; as above, we will obtain this estimate by interpolating $L^1$ between $L^2$ and $L^2(x^2 dx)$. 

Treating the parts of $\Gamma$ which are non-characteristic can be done by using the previous case. We now focus on a part of $\Gamma$ which is characteristic, namely it has a tangent parallel to the $(\xi-\eta)$ axis. For the sake of simplicity, we just consider a model case: $\Gamma$ will be given (say in the ball of radius $1$) around $(0,0)$ by the equation $(\xi+\eta)^2 = (\xi - \eta)^2$. Next, we denote $\Gamma_\epsilon$ for the set of points which are within $\epsilon$ of $\Gamma$, and $\mathcal{D}_\alpha$ for the line given by the equation $\xi+\eta = \alpha$.

The formula giving $\F_{(y,z)}$ implies immediately that
$$
|F_{(y,z)}(\alpha)| \leq |\mathcal{D}_\alpha \cup \Gamma_\epsilon| \lesssim \left\{ \begin{array}{ll} \sqrt{|\alpha|} & \mbox{if $|\alpha|\leq 100 \epsilon$} \\ \frac{\epsilon}{\sqrt{|\alpha}|} & \mbox{otherwise} \end{array} \right.
$$
Thus by Plancherel's inequality
$$
\left\| \widehat{F_{y,z}} \right\|_{L^2}^2 = \left\|F_{y,z}(\alpha)\right\|_2^2 \leq \int_{|\alpha| \leq 1} |\mathcal{D}_\alpha \cup \Gamma_\epsilon|^2 \,d\alpha \lesssim \epsilon^2 |\log \epsilon|.
$$
One finds as above
$$
\|x \widehat{F_t}\|_{L^2}^2 = \|\partial_\xi F_t \|_{L^2}^2 \lesssim  |\log \epsilon|,
$$
and the result follows by interpolation. It is optimal up to the logarithmic factor by Proposition~\ref{macareux}.

\bigskip \noindent \underline{Proof of $(iii)$: arbitrary $\Gamma$.} Still following the above pattern, we get by Cauchy-Schwarz
$$
\|\widehat{F_t}\|_{L^2}^2 \lesssim \|F_t\|_{L^2}^2 \leq \int \left| \int  |m_\epsilon(\xi-\eta,\eta)| \,d\eta\right|^2 \,d\xi \lesssim \int \int \left| m_\epsilon(\xi,\eta) \right|^2 \,d\eta\,d\xi \lesssim \epsilon.
$$
Similarly,
$$
\|x \widehat{F_t}\|_{L^2}^2 = \|\partial_\xi F_t \|_{L^2}^2 \lesssim \frac{1}{\epsilon}.
$$
This gives
$$
\|x \widehat{F_t}\|_{L^1} \lesssim \|\widehat{F_t}\|_{L^2}^{1/2}\|x \widehat{F_t}(x)\|_{L^2}^{1/2} \lesssim 1
$$
which allows us to conclude the proof. Optimality follows from Proposition~\ref{fourmi} $(iv)$.
\end{proof}

\subsection{The point $(1,1,1)$} \label{subsec:111}

\begin{prop} \label{prop:111}
For an arbitrary $\Gamma$,
$$
\left\| B_{m_\epsilon} (f,g) \right\|_{L^\infty} \lesssim \epsilon \|f\|_{L^1} \|g\|_{L^1},
$$
and the $\epsilon$-dependence of the bound is optimal.
\end{prop}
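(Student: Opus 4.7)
The plan is to reduce the $L^\infty$ estimate to the simple observation that $\widehat{m_\epsilon}$ is uniformly bounded by $\|m_\epsilon\|_{L^1}$, and that the latter is of order $\epsilon$ because $m_\epsilon$ is bounded and supported in the $\epsilon$-neighborhood of $\Gamma$ intersected with the fixed ball $B(0,1)$.

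First I would invoke the physical-space representation~(\ref{physical}) of the bilinear operator,
$$
B_{m_\epsilon}(f,g)(x) = \sqrt{2\pi}\int\!\!\int \widehat{m_\epsilon}(y-x,z-x)\,f(y)\,g(z)\,dy\,dz,
$$
and apply the trivial pointwise bound
$$
\bigl|B_{m_\epsilon}(f,g)(x)\bigr| \leq \sqrt{2\pi}\,\|\widehat{m_\epsilon}\|_{L^\infty(\R^2)}\,\|f\|_{L^1}\,\|g\|_{L^1},
$$
so that the whole problem reduces to the estimate $\|\widehat{m_\epsilon}\|_{L^\infty}\lesssim \epsilon$.

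Next I would bound $\|\widehat{m_\epsilon}\|_{L^\infty}$ by $\|m_\epsilon\|_{L^1}$ (Hausdorff--Young in its elementary form). By Definition~\ref{tatou}, the symbol satisfies $|m_\epsilon|\lesssim 1$ and is supported in $B(0,1)\cap \Gamma_\epsilon$, hence
$$
\|m_\epsilon\|_{L^1(\R^2)} \lesssim \bigl|B(0,1)\cap \Gamma_\epsilon\bigr|.
$$
Since $\Gamma\cap B(0,2)$ is a rectifiable curve of finite length, Lemma~\ref{lem:gammaeps} gives $|B(0,1)\cap \Gamma_\epsilon|\lesssim \epsilon$, which closes the upper bound.

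For optimality, I would quote Proposition~\ref{fourmi}$(i)$: even for the best possible $m_\epsilon$, one has $\alpha(\epsilon)\gtrsim \epsilon$, so the exponent $1$ of $\epsilon$ cannot be improved. There is no real obstacle here: the argument is just the bilinear analogue of Young's inequality $\|T_m\|_{L^1\to L^\infty}\leq \|\widehat{m}\|_\infty$, and the geometric content reduces to the elementary fact $|\Gamma_\epsilon\cap B(0,1)|\lesssim\epsilon$.
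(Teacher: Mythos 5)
Your argument is correct and is essentially the paper's own proof: both rest on the physical-space representation~(\ref{physical}), the bound $\|\widehat{m_\epsilon}\|_{L^\infty}\lesssim\|m_\epsilon\|_{L^1}\lesssim\epsilon$ (the paper states the $L^1$ bound directly, you justify it via Lemma~\ref{lem:gammaeps}), and optimality from Proposition~\ref{fourmi}~$(i)$. The only cosmetic difference is that the paper phrases the estimate in dual form by pairing with $h\in L^1$, while you bound $B_{m_\epsilon}(f,g)(x)$ pointwise, which is equivalent.
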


\begin{proof} The optimality follows from Proposition~\ref{fourmi} $(i)$. To prove that the bound holds, recall~(\ref{physical}), which gives
$$
< B_{m_\epsilon}(f,g),h> = \int \int \int \widehat{m_\epsilon}(y-x,z-x) f(y) g(z) h(x)\,dx\,dy\,dz.
$$
Therefore,
$$
\left| < B_{m_\epsilon}(f,g),h> \right| \lesssim \left\| \widehat{m_\epsilon} \right\|_{L^\infty(\R^2)} \|f\|_{L^1} \|g\|_{L^1} \|h\|_{L^1},
$$
and
$$
\left\| \widehat{m_\epsilon} \right\|_{L^\infty(\R^2)} \lesssim \left\| m_\epsilon \right\|_{L^1(\R^2)} \lesssim \epsilon.
$$
The optimality comes from Proposition \ref{fourmi}.
\end{proof}

\section{Close to H\"older points, in the non-vanishing curvature case}

\label{CTHP}

In this section, we examine the case of Lebesgue exponents $(p,q,r)$, with $\frac{1}{p}+\frac{1}{q}+\frac{1}{r}$ close to 1 when $\Gamma$ has a non-vanishing curvature. If all three exponents are larger than 2, this case is taken care of by Proposition~\ref{prop:L2}, and the assumption $m_\epsilon \in \mathcal{M}_\epsilon^\Gamma$ suffices. If one exponent is less than 2, it seems that more regularity is needed from $m_\epsilon$, namely that it belongs to $\mathcal{N}_\epsilon^\Gamma$. We will distinguish two cases: $(p,q,r) = (2,\infty,2)$; and $(p,q,r)$ close to $(\infty,\infty, 1)$. Interpolation will then give all Lebesgue exponents such that $\frac{1}{p}+\frac{1}{q}+\frac{1}{r}>1$, with an arbitrarily small deviation from the optimal bound $\epsilon^{\frac{1}{p}+\frac{1}{q}+\frac{1}{r}-1}$.

\subsection{The point $(2,2,\infty)$}

\begin{prop} Assume that $m_\epsilon$ belongs to $\mathcal{N}^\Gamma_\epsilon$, and that $\Gamma$ has a non-vanishing curvature. Then
$$
\|B_{m_\epsilon}(f,g)\|_{L^1} \lesssim \|f\|_{L^2} \|g\|_{L^2}.
$$
\end{prop}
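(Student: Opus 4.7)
The plan is to reduce the estimate by duality to an $L^2\times L^\infty\to L^2$ bound for a related operator, and then to run the same $TT^*$ scheme used in Propositions~\ref{prop:221} and~\ref{prop:i12}, extracting the needed improvement from the extra tangential smoothness of the $\mathcal{N}^\Gamma_\epsilon$ class. Concretely, the trilinear form $\Lambda(f,g,h)=\langle B_{m_\epsilon}(f,g),h\rangle=\int m_\epsilon(\xi,\eta)\,\hat f(\xi)\,\hat g(\eta)\,\overline{\hat h(\xi+\eta)}\,d\xi d\eta$ has two nontrivial adjoint realizations obtained from the substitutions $(\xi,\eta)\mapsto(-\xi-\eta,\eta)$ or $(\xi,\eta)\mapsto(\xi,-\xi-\eta)$. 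These are linear isomorphisms of the Fourier plane, so they preserve both the non-vanishing curvature of $\Gamma$ and the $\mathcal{N}$-class regularity. Hence the claim $\|B_{m_\epsilon}(f,g)\|_{L^1}\lesssim\|f\|_{L^2}\|g\|_{L^2}$ is equivalent, after such an adjoint, to showing
\[
\|B_{\tilde m_\epsilon}(f,h)\|_{L^2}\lesssim\|f\|_{L^2}\|h\|_{L^\infty}
\]
for some $\tilde m_\epsilon\in\mathcal{N}^{\tilde\Gamma}_\epsilon$ with $\tilde\Gamma$ of non-vanishing curvature.

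Next I apply $TT^*$ in the spirit of Proposition~\ref{prop:i12}. Writing $B_{\tilde m_\epsilon}(f,h)$ via its physical-space kernel and expanding $\|B_{\tilde m_\epsilon}(f,h)\|_2^2$ exactly as in~(\ref{casoar}), one separates the $h$-dependence by the crude pointwise bound $|h|\le\|h\|_\infty$ and integrates out the dual variable. After the change of variables analogous to~(\ref{lapin}), this yields an estimate of the form
\[
\|B_{\tilde m_\epsilon}(f,h)\|_2^2\lesssim\|h\|_\infty^2\,\|f\|_2^2\,\sup_{y}\big\|\widehat{F_y}\big\|_{L^1(\mathbb R^2)},
\]
where $F_y(\alpha,\beta)=\int \tilde m_\epsilon(\alpha,\xi-\alpha)\,\overline{\tilde m_\epsilon(\beta,\xi-\beta)}\,e^{-iy\xi}\,d\xi$. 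Everything reduces to the uniform bound $\sup_y\|\widehat{F_y}\|_{L^1(\mathbb R^2)}\lesssim 1$.

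To establish this bound I use the interpolation employed in Proposition~\ref{prop:i12},
\[
\big\|\widehat{F_y}\big\|_{L^1}\lesssim\big\|\widehat{F_y}\big\|_{L^2}^{1/2}\,\big\|\,|\cdot|^{2}\widehat{F_y}\big\|_{L^2}^{1/2}
=\|F_y\|_{L^2}^{1/2}\,\|\nabla^2_{\alpha,\beta}F_y\|_{L^2}^{1/2},
\]
combined with the Plancherel-based computations of $\|F_y\|_{L^2}^2$ and $\|\nabla^2_{\alpha,\beta}F_y\|_{L^2}^2$. The $L^2$ bound on $F_y$ comes verbatim from Proposition~\ref{prop:i12}(ii), using that $\tilde\Gamma$ has non-vanishing curvature. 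The improvement with respect to that proposition is in bounding $\|\nabla^2_{\alpha,\beta}F_y\|_{L^2}$: each differentiation is decomposed according to its tangential and normal components with respect to $\tilde\Gamma$, and each tangential derivative of $\tilde m_\epsilon$ now costs only a factor $O(1)$ rather than $O(1/\epsilon)$ thanks to the $\mathcal{N}^{\tilde\Gamma}_\epsilon$ regularity. Tracking these factors carefully through the stationary-phase analysis on $\tilde\Gamma$ yields a gain of essentially $\sqrt\epsilon|\log\epsilon|^{1/2}$ over the $\mathcal{M}$-class estimate, which is precisely what is needed to obtain $\|\widehat{F_y}\|_{L^1}\lesssim 1$ in place of the $\sqrt\epsilon\,|\log\epsilon|^{1/2}$ bound of Proposition~\ref{prop:i12}(ii).

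The main obstacle lies in this last bookkeeping step: one must carefully account, in the two-derivative $L^2$-estimate of $F_y$, for which of the two $\tilde m_\epsilon$-factors is differentiated in the tangential versus the normal direction, and in particular handle the regions near the (isolated) characteristic points of $\tilde\Gamma$, where the tangent aligns with a coordinate axis and the tangential/normal decomposition becomes singular. This is done exactly as in the two-case analysis of Proposition~\ref{prop:i12}(ii)—parabolic local model around each characteristic point—but now every occurrence of a tangential $\partial\tilde m_\epsilon$ contributes $O(1)$ instead of $O(1/\epsilon)$, which is the source of the $O(1)$ final bound.
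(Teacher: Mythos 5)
Your reduction step is where the argument breaks down. In the $TT^*$ scheme of Proposition~\ref{prop:i12}, the supremum over the parameter $y=y^1-y^2$ of $F_y$ can be pulled out precisely because the function carrying the variables $y^1,y^2$ is in $L^1$; the $L^\infty$ function's variables $x^1,x^2$ are then integrated, producing $\sup_y\|\widehat{F_y}\|_{L^1(\R^2)}$. In your setting the roles are reversed: after the crude bound $|h|\le\|h\|_\infty$ you must \emph{integrate} over $y^1,y^2$, i.e.\ over the very variables on which the parameter of $F_y$ depends, and you are left with $\|h\|_\infty^2\iint|f(x^1)||f(x^2)|\,G(x^2-x^1)\,dx^1dx^2$ where $G(c)=\iint|K_\epsilon(a,b,c)|\,da\,db$. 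Since $f$ is only in $L^2$, Young's inequality then requires $\|K_\epsilon\|_{L^1(\R^3)}=\iiint|\widehat{F_y}(a,b)|\,da\,db\,dy\lesssim 1$, not $\sup_y\|\widehat{F_y}\|_{L^1(\R^2)}\lesssim 1$; the extra $y$-integration is costly because $\widehat{F_y}$ has no decay in $y$ until $|y|\sim\epsilon^{-1}$ (the $\xi$-smoothness of $m_\epsilon(\alpha,\xi-\alpha)$ is only at scale $\epsilon$ generically), so this quantity is not uniformly bounded. There is also a quick internal contradiction showing your claimed intermediate inequality cannot be true: the bound $\sup_y\|\widehat{F_y}\|_{L^1}\lesssim\sqrt{\epsilon}\,|\log\epsilon|$ is proved in Proposition~\ref{prop:i12}(ii) for \emph{any} symbol of class $\mathcal{M}^\Gamma_\epsilon$ with non-vanishing curvature, so your reduction would yield $\|B_{m_\epsilon}\|_{L^2\times L^2\to L^1}\lesssim\epsilon^{1/4}|\log\epsilon|^{1/2}\to 0$, contradicting the necessary condition $\alpha(\epsilon)\gtrsim\epsilon^{\frac1p+\frac1q+\frac1r-1}=1$ of Proposition~\ref{fourmi}(ii) at $(p,q,r)=(2,2,\infty)$. (Relatedly, your final ``bookkeeping'' claim that each derivative in $\nabla^2_{\alpha,\beta}F_y$ can be made tangential is not available: $\partial_\alpha,\partial_\beta$ differentiate $m_\epsilon$ in the fixed direction $(1,-1)$, which is tangent to $\Gamma$ only at isolated points, so generically these derivatives still cost $\epsilon^{-1}$ even for $\mathcal{N}^\Gamma_\epsilon$ symbols.)

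The missing ingredient is an orthogonality mechanism, which is what the paper uses instead. Its proof decomposes $m_\epsilon$ into $\sim\epsilon^{-1/2}$ pieces supported on chords of length $\sqrt{\epsilon}$; the $\mathcal{N}^\Gamma_\epsilon$ regularity enters exactly there, giving each piece a kernel uniformly bounded in $L^1(\R^2)$ (this is where tangential smoothness at scale $\sqrt{\epsilon}$ along the chord pays off), and the pieces are then summed \emph{without loss} by Cauchy--Schwarz in $k$ together with the almost-disjointness of the frequency projections $\chi_{I_k}(D)f$, $\chi_{J_k}(D)g$ and Plancherel. A purely positive argument that replaces $h$ by $\|h\|_\infty$ and puts absolute values on the kernel cannot see this cancellation, which is why your route, as set up, cannot reach the uniform $O(1)$ bound.
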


\begin{proof}
\noindent
\noindent \underline{Step 1: decomposition of $m_\epsilon$.}
The proof of the proposition presents new difficulties when the tangent of $\Gamma$ is parallel to one of the coordinate axes; otherwise, it is possible to rely on Proposition~\ref{prop:L22}. 

For the sake of simplicity in the notations, we will only treat a model case. Namely, we shall assume that $\Gamma$ is the circle with radius one and center $(\xi=0,\eta = 1)$ i.e. $\Gamma$ is given by the equation $\xi^2 + (\eta-1)^2 = 1$. We shall focus on the tangency point of the circle with the $\xi$ axis, $(\xi=0,\eta=0)$: thus we can assume that $m_\epsilon=0$ if $|(\xi,\eta)| \geq \frac{1}{20}$.
Recall that the support of $m_\epsilon$ is contained in a strip of width $2\epsilon$ around $\Gamma$.

Next we split smoothly $m_\epsilon$ into a sum of symbols each of which is supported on a chord of length $\frac{1}{\sqrt{\epsilon}}$. Switch for a moment to polar coordinates with center $(\xi = 0,\eta=1)$, denoting $\theta$ for the angular coordinate, with the convention that $\theta = 0$ corresponds to the $\eta$ axis below $(0,1)$: $\{\xi=0,\eta\leq 1\}$. Next let $\Phi$ be a smooth function on $\mathbb{R}$, equal to $0$ outside of $[-2,2]$, equal to $1$ on $[-1,1]$, and such that $\sum_{n \in \mathbb{Z}} \Phi(\cdot-n) = 1$. Finally set
$$
m_\epsilon^k(\xi,\eta) := m_\epsilon(\xi,\eta) \Phi \left( \frac{\theta}{\sqrt{\epsilon}} - k \right)
$$
so that
$$
m_\epsilon(\xi,\eta) = \sum_{|k|\leq \frac{1}{10\sqrt{\epsilon}}} m_\epsilon^k(\xi,\eta)
$$
(notice that the above sum runs over $|k|\leq \frac{1}{10\sqrt{\epsilon}}$ due to our assumption that $m_\epsilon$ vanishes for $|(\xi,\eta)| \geq \frac{1}{20}$).
So each of the symbols is supported on a chord of length $\sim \sqrt{\epsilon}$, thickened to reach a width of length $\sim \epsilon$, and with an angular parameter $\theta \sim k \sqrt{\epsilon}$. Thus it suffices to control
\begin{equation*}
\begin{split}
B_{m_\epsilon}(f,g)(x) = \sum_k \int \int \widehat{m^k_\epsilon}(y-x,z-x)f(y)g(z)\,dy\,dz.
\end{split}
\end{equation*}
Now denote $I_k$, respectively $J_k$, the intervals given by the projection of the support of $m_\epsilon^k(\xi,\eta)$ on the $\xi$, respectively $\eta$ axis. It is easy to check that these intervals are almost orthogonal:
$$
\forall x , \;\;\;\;\;\sum_k \chi_{I_k}(x) \lesssim 1 \;\;\;\;\mbox{and} \;\;\;\;\sum_k \chi_{J_k}(x) \lesssim 1 
$$
(where the implicit constants do not depend on $k$). Define 
$$
f_k := \chi_{I_k}(D) f \;\;\;\;g_k := \chi_{J_k}(D) g.
$$
The quantity to control can thus also be written
\begin{equation}
\label{chouette}
B_{m_\epsilon}(f,g)(x) = \sum_k \int \int \widehat{m^k_\epsilon}(y-x,z-x)f_k(y)g_k(z)\,dy\,dz.
\end{equation}

\bigskip
\noindent \underline{Step 2: examination of the kernels.} We claim that the kernels $\widehat{m^k_\epsilon}$ are uniformly bounded in $L^1(\mathbb{R}^2$. By translation and rotation invariance, it suffices to see this for $\widehat{m^0_\epsilon}$. 
Then, with the notations of Definition \ref{herisson}, in the chord of length $\sqrt{\epsilon}$ around the point $(0,0)$ we can write
$$ \partial_\xi = \partial_{(\nabla \nu)^\perp} + O(\epsilon^{1/2}) \partial_{\nabla \nu}.$$
Since $m_\epsilon$ belongs to $\mathcal{N}^\Gamma_\epsilon$, we also have
$$
\left\| \partial_\xi^\alpha \partial_\eta^\beta m^0_\epsilon \right\|_{L^1(\R^2)} \lesssim \epsilon^{3/2} \epsilon^{-\frac{\alpha}{2} - \beta}.
$$
This gives on the Fourier side (keeping in mind that $\mathcal{F}$ maps $L^1$ to $L^\infty$)
\begin{equation}
\label{puffin}
\left| \widehat{m^0_\epsilon}(y,z) \right| \lesssim \epsilon^{3/2} \inf \left(1\,,\,\frac{1}{\left( \sqrt{\epsilon} |y| + \epsilon |z| \right)^N} \right)
\end{equation}
for any number $N$. This implies obviously the desired bound.

\bigskip
\noindent \underline{Step 3: orthogonality;} It suffices now to use that the $(I_k)$ and $(J_k)$ form a bounded covering of the real line to get
\begin{equation*}
\begin{split}
\left\|B_{m_\epsilon}(f,g)\right\|_1 & \lesssim \sum_k \left\| \int \int \widehat{m^k_\epsilon}(y-x,z-x)f_k(y)g_k(z)\,dy\,dz \right\|_{L^1(x)} \\
& \lesssim \sum_k \left\| \widehat{m^k_\epsilon} \right\|_{L^1(\mathbb{R}^2)} \left\|f_k\right\|_{L^2(I_k)} \left\|g_k\right\|_{L^2(j_k)} \\
& \lesssim \left( \sum_k \left\|f_k\right\|_{L^2(I_k)}^2 \right)^{1/2}  \left( \sum_k \left\|f_k\right\|_{L^2(J_k)}^2 \right)^{1/2} \\
& \lesssim \|f\|_{L^2} \|g\|_{L^2}.
\end{split}
\end{equation*}
\end{proof}

\subsection{Points close to $(\infty,\infty,1)$}

\begin{prop} Assume that $m_\epsilon$ belongs to $\mathcal{N}^\Gamma_\epsilon$, and that $\Gamma$ has a non-vanishing curvature. Then for exponents $p,q \geq 2$,
$$
\|B_{m_\epsilon}(f,g)\|_{L^\infty} \lesssim \epsilon^{\frac{3}{4q}+\frac{1}{2p}} |\log \epsilon| \|f\|_{L^p} \|g\|_{L^q}.
$$
\end{prop}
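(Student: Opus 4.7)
The plan is to mirror the chord decomposition of the preceding subsection and combine a mixed-norm H\"older inequality with a single Bernstein upgrade. As in the model case treated there, I would focus on the portion of $\Gamma$ near its tangent to the $\xi$-axis; the other two characteristic tangent directions are handled by the same argument after permuting coordinates. Decompose $m_\epsilon=\sum_k m_\epsilon^k$ with each $m_\epsilon^k$ supported on a chord of tangential length $\sim\sqrt{\epsilon}$ and normal width $\sim\epsilon$, let $I_k,J_k$ denote the $\xi$- and $\eta$-projections of the support (both of length $\lesssim\sqrt{\epsilon}$), and set $f_k:=\chi_{I_k}(D)f$, $g_k:=\chi_{J_k}(D)g$. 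The kernel estimate \eqref{puffin} applies unchanged and yields $|\widehat{m_\epsilon^k}(y,z)|\lesssim\epsilon^{3/2}(1+\sqrt{\epsilon}|y|+\epsilon|z|)^{-N}$ in the tangent/normal frame of the $k$-th chord.

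For the per-chord bound I would set $s:=2q$, so $s'=\tfrac{2q}{2q-1}$, and first use Bernstein on $g_k$ (whose Fourier transform sits in an interval of length $\lesssim\sqrt{\epsilon}$) to get $\|g_k\|_{L^{2q}}\lesssim\epsilon^{1/(4q)}\|g_k\|_{L^q}$. Then applying mixed-norm H\"older to
\[
B_{m_\epsilon^k}(f,g)(x)=\int\!\!\int\widehat{m_\epsilon^k}(y-x,z-x)f_k(y)g_k(z)\,dy\,dz
\]
with the kernel paired in $L^{p'}_y L^{s'}_z$, the direct computation $\|\widehat{m_\epsilon^k}\|_{L^{p'}_y L^{s'}_z}\lesssim\epsilon^{3/2-1/(2p')-1/s'}=\epsilon^{1/(2p)+1/(2q)}$ combines with Bernstein to yield
\[
\|B_{m_\epsilon^k}(f,g)\|_{L^\infty}\lesssim\epsilon^{\frac{1}{2p}+\frac{3}{4q}}\|f_k\|_{L^p}\|g_k\|_{L^q}.
\]

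Summing over $k$ by Cauchy--Schwarz together with the Minkowski integral inequality (for $p,q\ge 2$) and Rubio de Francia's square function (Proposition \ref{prop:square}) on the bounded coverings $(I_k),(J_k)$ gives
\[
\sum_k\|f_k\|_{L^p}\|g_k\|_{L^q}\le\Bigl(\sum_k\|f_k\|_{L^p}^2\Bigr)^{1/2}\Bigl(\sum_k\|g_k\|_{L^q}^2\Bigr)^{1/2}\lesssim\|f\|_{L^p}\|g\|_{L^q}.
\]
The factor $|\log\epsilon|$ is expected to emerge from a dyadic decomposition over angular shells $|\theta_k|\sim 2^j\sqrt{\epsilon}$ when the mixed-norm kernel estimate is performed in the original $(y,z)$ coordinates rather than the rotated tangent/normal frame of each chord: the rotated kernel then produces a factor $(\sqrt{\epsilon}\,|\theta_k|)^{-1/s'}$ whose geometric summation contributes a single logarithm.

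The main obstacle will be identifying the correct asymmetric H\"older/Bernstein split: the Bernstein upgrade has to be placed on the variable dual to the normal direction of the tangent being expanded about, which is what forces the asymmetry in $p$ and $q$ of the final bound. Contributions from chords near the other characteristic points of $\Gamma$ (tangents parallel to $\{\eta=0\}$ or $\{\xi+\eta=0\}$) then follow by an identical argument with the roles of the three axes suitably permuted.
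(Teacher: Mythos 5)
There is a genuine gap, and it sits exactly where your argument must work hardest, namely for $p,q$ large (recall the proposition's whole interest is the limit $(p,q,r)\to(\infty,\infty,1)$). Your summation step claims
$$\Bigl(\sum_k\|f_k\|_{L^p}^2\Bigr)^{1/2}\lesssim\|f\|_{L^p},$$
justified by ``Minkowski $+$ Rubio de Francia''. But for $p\geq 2$ Minkowski's integral inequality gives $\bigl\|(\sum_k|f_k|^2)^{1/2}\bigr\|_{L^p}\leq(\sum_k\|f_k\|_{L^p}^2)^{1/2}$, i.e.\ the square-function norm (the quantity Rubio de Francia controls) is the \emph{smaller} of the two; the reverse inequality you need holds only for $p\leq 2$ and is false for $p>2$ (take $\widehat f=\chi_{[0,1]}$ split into $N$ equal intervals: the left side is $\sim N^{1/2-1/p}$ while $\|f\|_{L^p}\sim1$). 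With $N\sim\epsilon^{-1/2}$ chords this costs a factor $\epsilon^{-(1/4-1/(2p))-(1/4-1/(2q))}$, which overwhelms your per-chord gain $\epsilon^{\frac{1}{2p}+\frac{3}{4q}}$ as soon as $p,q$ are large, so an absolute chord-by-chord summation cannot yield the stated bound. A second, related problem: your mixed-norm computation of $\|\widehat{m^k_\epsilon}\|_{L^{p'}_yL^{s'}_z}$ is only valid for the chord aligned with the axes; for the $k$-th chord the kernel is a tube of dimensions $\epsilon^{-1/2}\times\epsilon^{-1}$ tilted by the angle $k\sqrt\epsilon$, its iterated norm in the original $(y,z)$ variables is genuinely $k$-dependent, and your assertion that the rotations only contribute ``a single logarithm'' is exactly the unproved heart of the matter (the actual $|\log\epsilon|$ in the statement has a different origin).

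The paper's proof never sums per-chord operator norms. It keeps the square-function structure: after Cauchy--Schwarz in $k$ and H\"older in $z$ one reduces to the vector-valued Claim~\ref{autour},
$$\Bigl\|\Bigl[\sum_k\Bigl|\int F^k_\epsilon(y,z)f_k(y)\,dy\Bigr|^2\Bigr]^{1/2}\Bigr\|_{L^{q'}(z)}\lesssim\epsilon^{\frac{3}{4q}+\frac{1}{2p}}|\log\epsilon|\,\Bigl\|\Bigl[\sum_kf_k^2\Bigr]^{1/2}\Bigr\|_{L^p},$$
which is proved by decomposing each $f_k$ over its level sets (this is where the $|\log\epsilon|$ comes from), discretizing $z$ into intervals of length $\epsilon^{-1/2}$, and then exploiting the geometry of the tilted supports: for fixed $k$ a given point lies in at most $\sim(k\sqrt\epsilon)^{-1}$ of the intervals $I^{k,Z}_\epsilon$, and one interpolates $\ell^2_k$ between $\ell^1_k$ and $\ell^\infty_k$ using this overlap count together with the bound~(\ref{eq:sumei}) on $\sum_k|E^j_k|$. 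This Kakeya-type counting across different $k$ is the mechanism that replaces your missing reverse square-function inequality; as the remark following the proof makes explicit, the estimate is essentially a bilinear Kakeya bound, and no argument treating the chords independently and summing their contributions in absolute value can recover it. To repair your proposal you would need to reorganize it around such a vector-valued/overlap estimate rather than a per-chord Bernstein--H\"older bound.
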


\begin{rem}
This proposition is interesting in the limit where $p$ and $q$ tend to $\infty$. One approaches the point $(p,q,r) = (\infty,\infty,1)$, with a bound $O(\epsilon^{\frac{3}{4q}+\frac{1}{2p}})$ which converges to the optimal one at the limit point, namely $O(1)$.
\end{rem}

\begin{proof}
\noindent
\underline{Step 1: decomposition of $m_\epsilon$.}
This step is identical to Step 1 of the previous proposition; so we simply adopt the notations defined there.
The only difference is that it now suffices to control (by translation invariance)
\begin{equation}
\label{chouettebis}
B_{m_\epsilon}(f,g)(0) = \sum_k \int \int \widehat{m^k_\epsilon}(y,z)f_k(y)g_k(z)\,dy\,dz.
\end{equation}

\bigskip \noindent
\underline{Step 2: reduction to simpler kernels.}
The choice of the length scale $\sqrt{\epsilon}$ ensures that $\widehat{m^k_\epsilon}$ is essentially supported on a rectangle. 
We shall establish this for $m^0_\epsilon$, the general case following by rotating the plane. Since $m^0_\epsilon$ belongs to $\mathcal{N}^\Gamma_\epsilon$, it satisfies
$$
\left\| \partial_\xi^\alpha \partial_\eta^\beta m^0_\epsilon \right\|_{L^1(\R^2)} \lesssim \epsilon^{3/2} \epsilon^{-\frac{\alpha}{2} - \beta}.
$$
This gives on the Fourier side (keeping in mind that $\mathcal{F}$ maps $L^1$ to $L^\infty$)
\begin{equation}
\label{puffinbis}
\left| \widehat{m^0_\epsilon}(y,z) \right| \lesssim \epsilon^{3/2} \inf \left(1\,,\,\frac{1}{\left( \sqrt{\epsilon} |y| + \epsilon |z| \right)^N} \right)
\end{equation}
for any number $N$. Denoting $F$ for the characteristic function of the unit cube, the above inequality implies that $\widehat{m^0_\epsilon}(y,z)$ can be bounded by
$$
\left| \widehat{m^0_\epsilon}(y,z) \right| \lesssim \epsilon^{3/2} \sum_{\ell \in \mathbb{N}} \alpha_\ell F(2^{-\ell} \sqrt{ \epsilon}y\,,2^{-\ell} \epsilon z)
$$
where the sequence $(\alpha_k)$ decays very fast. Denoting $R_\phi$ for the rotation of angle $\phi$ around $(0,0)$, one can show just like the above inequality that
$$
\left| \widehat{m^k_\epsilon}(y,z) \right| \lesssim \epsilon^{3/2} \sum_{\ell \in \mathbb{N}} \alpha_\ell F \left( R_{k \sqrt{\epsilon}} (2^{-\ell} \sqrt{ \epsilon}y\,,2^{-\ell} \epsilon z) \right).
$$
We see from~(\ref{chouettebis}) that
\begin{equation}
\label{mesangebis}
\left| B_{m_\epsilon}(f,g)(0) \right| \lesssim \epsilon^{3/2} \sum_k \sum_\ell \alpha_\ell \int F \left( R_{k \sqrt{\epsilon}} (2^{-\ell} \sqrt{\epsilon}y\,,\,2^{-\ell} \epsilon z) \right) |f_k(y)| \, |g_k(z)|\,dy\,dz.
\end{equation}

\bigskip \noindent \underline{Step 3: the crucial claim and why it implies the proposition.}
Let us denote from now on
$$
F^k_\epsilon(y,z) = F \left( R_{k \sqrt{\epsilon}} (\sqrt{\epsilon}y\,,\,\epsilon z) \right).
$$
We will prove the following claim, which will imply the proposition.
\begin{claim}
\label{autour}
For any sequence of functions $(f_k)$,
\begin{equation}
\label{hirondelle}
\left\| \left[ \sum_k \left| \int F^k_\epsilon(y,z) f_k(y) \,dy \right|^2 \right]^{1/2} \right\|_{L^{q'}(z)} \lesssim \epsilon^{\frac{3}{4q}+\frac{1}{2p}} |\log \epsilon| \left\| \left[ \sum_k f_k^2 \right]^{1/2} \right\|_{L^p}.
\end{equation}
\end{claim}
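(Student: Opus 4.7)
The function $A_kf_k(z):=\int F_\epsilon^k(y,z)f_k(y)\,dy$ integrates $f_k$ over the $y$-slice at height $z$ of a rectangle $R_k$ of dimensions $\sim\epsilon^{-1/2}\times\epsilon^{-1}$, tilted about the origin by an angle $k\sqrt\epsilon$. Unwinding the change of variables in the definition of $F_\epsilon^k$, this slice is a $y$-interval $I_k(z)$ of length $\sim\epsilon^{-1/2}$ centered at $y\approx k\epsilon z$, and $A_kf_k(z)$ vanishes outside $|z|\lesssim\epsilon^{-1}$.

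My plan is to establish Claim~\ref{autour} by proving the two endpoint estimates $(p,q)=(\infty,\infty)$ (target $|\log\epsilon|$) and $(p,q)=(2,2)$ (target $\epsilon^{5/8}|\log\epsilon|$), then to recover the general $(p,q)\in[2,\infty]^2$ case by bilinear interpolation in the $\ell^{2}$-valued setting. Both endpoints proceed by a $TT^{*}$-type dualization against test sequences $(g_k)$ with $\|(\sum_k g_k^2)^{1/2}\|_{L^q(z)}\le 1$: expanding
$$\Big|\sum_k\int g_k(z)A_kf_k(z)\,dz\Big|=\Big|\int \sum_k f_k(y)\Big(\int F_\epsilon^k(y,z)g_k(z)\,dz\Big)dy\Big|,$$
applying Cauchy--Schwarz in $k$ inside the $y$-integral, and H\"older in $y$ reduces matters to an $L^{p'}(\ell^2)$-estimate for the adjoint, whose kernel is controlled by the geometric overlap of the rectangles $R_k$, $R_{k'}$.

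The arithmetic heart of the argument is the Córdoba-type overlap bound
$$|R_k\cap R_{k'}|\lesssim \frac{\epsilon^{-3/2}}{1+|k-k'|},$$
obtained by an elementary computation: two rectangles of long axis $\epsilon^{-1}$ and short axis $\epsilon^{-1/2}$, whose angular separation is $|k-k'|\sqrt\epsilon$, intersect in a parallelogram of the above area. Combining this with Plancherel in $z$ and a Rubio de Francia decoupling in $y$ (Proposition~\ref{prop:square}, applied to the intervals $I_k$ which form a bounded covering after thinning) yields the $(2,2)$ endpoint; at the $(\infty,\infty)$ endpoint the overlap bound together with Cauchy--Schwarz in $k$ directly controls the $L^{1}_z$ mass. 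In both cases the logarithm arises from $\sum_{|k-k'|\le\epsilon^{-1/2}}(1+|k-k'|)^{-1}\sim|\log\epsilon|$.

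The main obstacle is the delicate coupling between the Córdoba-style combinatorial counting --- which is natural for $L^2$-based estimates --- and the $L^p(\ell^2)$ structure required at the interpolated exponents; bridging this is precisely the role of the Rubio de Francia square function, which converts the $\ell^2$-sum over thin rectangles into an $L^p$-norm on the superposition of $f_k$'s. The logarithmic factor is unavoidable for this method and indeed propagates to the final bound of the proposition.
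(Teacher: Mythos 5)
There is a genuine gap — in fact several. First, your two endpoints are not provable as you state them. At $(p,q)=(2,2)$ both sides of (\ref{hirondelle}) decouple in $k$ (they are $\ell^2_k$ sums of $L^2$ norms), so your claimed endpoint $\epsilon^{5/8}|\log\epsilon|$ would force $\|A_k\|_{L^2\to L^2}\lesssim\epsilon^{5/8}$ for each single $k$; but $A_k$ integrates against the indicator of a tube of dimensions $\epsilon^{-1/2}\times\epsilon^{-1}$, so $\|A_k\|_{L^2\to L^2}\sim\epsilon^{-3/4}$. Likewise the $(\infty,\infty)$ endpoint fails already for one nonzero $f_k=\chi_{[0,\epsilon^{-1/2}]}$. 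What the paper actually proves (and uses) is the claim with the prefactor $\epsilon^{3/2}$ of (\ref{mesangebis}) carried along — this is the factor $\epsilon\sqrt{\epsilon}$ that appears in (\ref{lapinbis}); a proof must account for it, and yours does not. Second, even after renormalizing, interpolating only $(2,2)$ with $(\infty,\infty)$ covers only the diagonal $p=q$, while the claim is applied for independent $p,q\ge 2$; you would at least need the mixed corners $(2,\infty)$ and $(\infty,2)$, which are genuinely mixed-norm estimates and are nowhere addressed. Third, Rubio de Francia (Proposition \ref{prop:square}) cannot enter the proof of the claim at all: the $(f_k)$ are arbitrary functions with no Fourier localization; the square-function estimate is used only afterwards, when the claim is applied to $f_k=\chi_{I_k}(D)f$.

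Beyond these, the proposed mechanism is not the one that closes the estimate. After renormalization, the $(2,2)$ endpoint is trivial (decoupling gives $\epsilon^{3/4}$, better than needed) and requires no overlap information, while at $(\infty,\infty)$ Cauchy--Schwarz in $k$ combined only with the pairwise bound $|R_k\cap R_{k'}|\lesssim\epsilon^{-3/2}(1+|k-k'|)^{-1}$ loses a power of $\epsilon$; the gain has to come from the finer per-point counting — for fixed $y$ the $z$-extent of $R_k$ is $\sim\epsilon^{-1}/(1+|k|)$, equivalently the fact that a given point lies in at most $\sim(|k|\sqrt{\epsilon})^{-1}$ of the intervals $I^{k,Z}_\epsilon$ — played against the pointwise constraint $\sum_k f_k^2\le 1$. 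That is exactly the engine of the paper's argument, which proceeds quite differently from your plan: a layer-cake decomposition $f^j_k=2^j\chi_{E^j_k}$ (this, not $\sum_{k,k'}(1+|k-k'|)^{-1}$, is where the $|\log\epsilon|$ comes from), a discretization of $z$ at scale $\epsilon^{-1/2}$, the measure bound (\ref{eq:sumei}) which is how the $L^p(\ell^2)$ norm enters for general $p$, and an interpolation of the inner $\ell^2_k$-norm between $\ell^1_k$ and $\ell^\infty_k$; this treats all $p,q\ge2$ simultaneously, with no endpoint interpolation in $(p,q)$. Finally, note an internal inconsistency in your geometry: if the slice $I_k(z)$ were centered at $y\approx k\epsilon z$, the tubes would drift by at most one width over their full length and would essentially coincide, contradicting your overlap bound; the intended geometry (visible in Step 5 of the paper) has centers $z\tan(k\sqrt{\epsilon})\approx k\sqrt{\epsilon}\,z$, and your overlap estimate is correct only in that setting.
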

Why does this claim imply the proposition? 
Starting from~(\ref{chouettebis}) and using successively the Cauchy-Schwarz (in $k$) and H\"older (in $z$) inequalities; the above claim; and Rubio de Francia's inequality gives
\begin{equation}
\begin{split}
& \left| B_{m_\epsilon}(f,g)(0) \right| \\
& \;\;\;\;\; \lesssim \sum_k \sum_\ell \alpha_\ell \int \int F \left( R_{ k \sqrt{\epsilon}} (2^{-\ell} \sqrt{\epsilon}y\,,\,2^{-2\ell} \epsilon z) \right) |f_k(y)| \, |g_k(z)| \,dy\,dz \\
& \;\;\;\;\; \lesssim \sum_\ell \alpha_\ell  \left\| \left[ \sum_k \left| \int F \left( R_{k \sqrt{\epsilon}} (2^{-\ell} \sqrt{\epsilon}y\,,\,2^{-2 \ell} \epsilon z \right) |f_k(y)| \,dy \right|^2 \right]^{1/2} \right\|_{L^{q'}(z)} \left\| \left[ \sum_k g_k^2 \right]^{1/2} \right\|_{L^q} \\
& \;\;\;\;\; \lesssim \epsilon^{\frac{3}{4q}+\frac{1}{2p}} |\log \epsilon| \left\| \left[ \sum_k f_k^2 \right]^{1/2} \right\|_{L^p} \left\| \left[ \sum_k g_k^2 \right]^{1/2} \right\|_{L^q} \\
& \;\;\;\;\; \lesssim \epsilon^{\frac{3}{4q}+\frac{1}{2p}} |\log \epsilon| \left\| f \right\|_{L^p} \left\|g\right\|_{L^q}.
\end{split}
\end{equation}
This is exactly the statement of the proposition.

\bigskip \noindent \underline{Step 4: decomposition of $f_k$ along its level sets.}
Write
$$
f_k(y) = \sum_j f_k^j(y)
$$
where $f_k^j(y)$ takes either values between $2^{j-1}$ and $2^j$, or equals $0$. We can a fortiori assume that $f_k^j$ either takes the value $2^j$, or equals $0$. In other words, we will assume that
$$
f_k^j = 2^j \chi_{E^k_j}
$$
for some set $E^k_j$. Observe that for any $z$, $\operatorname{Supp} F^k_\epsilon(\cdot,z) \subset \left[-\frac{C_0k}{\sqrt{\epsilon}}\,,\,\frac{C_0k}{\sqrt{\epsilon}}\right]$ for a constant $C_0$. Thus we can assume
$$
E^k_j \subset \left[ -\frac{k}{\sqrt{\epsilon}}\,,\,\frac{k}{\sqrt{\epsilon}} \right],
$$
for the parts of $E^k_j$ outside of $ \left[ -\frac{k}{\sqrt{\epsilon}}\,,\,\frac{k}{\sqrt{\epsilon}} \right]$ do not contribute to~(\ref{hirondelle}).
We will need a bound on $\sum_{k=1}^n |E^j_k|$, which we now derive:
\begin{align}
\sum_{k=-n}^n |E^j_k| & = \sum_{k=-n}^n \int_{-\frac{C_0 n}{\sqrt{\epsilon}}}^{\frac{C_0 n}{\sqrt{\epsilon}}} \chi_{E^j_k}(y)\,dy = 2^{-2j} \int_{-\frac{C_0 n}{\sqrt{\epsilon}}}^{\frac{C_0 n}{\sqrt{\epsilon}}} \sum_{k=-n}^n f^j_k (y)^2\,dy \nonumber \\
&  \lesssim 2^{-2j} \left\| \left(\sum_k (f_k^j)^2 \right)^{1/2} \right\|_{L^p}^2 \left( \frac{n}{\sqrt{\epsilon}} \right)^{1-\frac{2}{p}}. \label{eq:sumei}
\end{align}
Finally, observe that it suffices to prove the Claim~\ref{autour} when $f_k$ is replaced by $f^j_k$. Indeed, the scales $j$ such that $2^j < \epsilon^{100}$ can be estimated trivially, whereas summing over the other scales simply contributes $\log \epsilon$. Thus it suffices to prove
$$
\left\| \left[ \sum_k \left| \int F^k_\epsilon(y,z) f_k^j(y) \,dy \right|^2 \right]^{1/2} \right\|_{L^{q'}(z)} \lesssim \epsilon^{\frac{3}{4q}+\frac{1}{2p}} \left\| \left[ \sum_k f_k^2 \right]^{1/2} \right\|_{L^p}
$$
in order to deduce the claim.

\bigskip \noindent \underline{Step 5: discretization of the $z$ variable.} The variable $z$ in~(\ref{hirondelle}) can be restricted to $|z| \in \left[ -\frac{2}{\epsilon}\,,\,\frac{2}{\epsilon} \right]$, for otherwise $F_\epsilon^k(y,z)$ vanishes. We now split the interval $\left[ -\frac{2}{\epsilon}\,,\,\frac{2}{\epsilon} \right]$ into $\sim \frac{1}{\sqrt{\epsilon}}$ intervals $\left[ \frac{Z}{\sqrt{\epsilon}} \, , \, \frac{Z+1}{\sqrt{\epsilon}}\right]$ ($Z$ is an integer such that $|Z| \leq \frac{2}{\sqrt{\epsilon}}$).

Observe that if $z \in \left[ \frac{Z}{\sqrt{\epsilon}} \, , \, \frac{Z+1}{\sqrt{\epsilon}}\right]$, $\operatorname{Supp}_y F_\epsilon^k(y,z) \subset I_{\epsilon}^{k,z}$ where
$$
I_{\epsilon}^{k,z} := \left[ \tan(k\sqrt{\epsilon}) \frac{Z}{\sqrt{\epsilon}} - \frac{C_0}{\epsilon}\,,\,\tan(k\sqrt{\epsilon}) \frac{Z}{\sqrt{\epsilon}} + \frac{C_0}{\epsilon} \right].
$$
for a sufficiently big constant $C_0$. Thus, if $z \in \left[ \frac{Z}{\sqrt{\epsilon}} \, , \, \frac{Z+1}{\sqrt{\epsilon}}\right]$,
$$
\left| \int F^k_\epsilon(y,z) \chi_{E}(y) \,dy \right| \lesssim \epsilon \sqrt{\epsilon} \left| I_{\epsilon}^{k,z} \cap E_j^k \right|.
$$
This implies that
\begin{equation}
\label{lapinbis}
\begin{split}
\left\| \left[ \sum_k \left| \int F^k_\epsilon(y,z) f_k^j(y) \,dy \right|^2 \right]^{1/2} \right\|_{L^{q'}(z)} 
& \lesssim \epsilon \sqrt{\epsilon} 2^j \left[ \sum_{|Z|\leq \frac{2}{\sqrt{\epsilon}}} \frac{1}{\sqrt{\epsilon}} \left( \sum_k \left| I_{\epsilon}^{k,Z} \cap E^j_k \right|^2 \right)^{\frac{q'}{2}} \right]^{\frac{1}{q'}} \\
& = \epsilon^{\frac{3}{2}-\frac{1}{2q'}} 2^j \left[ \sum_Z \left\| \left| I_{\epsilon}^{k,Z} \cap E^j_k \right| \right\|_{\ell^2(k)}^{q'} \right]^{\frac{1}{q'}}.
\end{split}
\end{equation}

\bigskip \noindent \underline{Step 6: proof of the claim.} We will bound the above right-hand side by interpolating the $\ell^2$ norm between $\ell^1$ and $\ell^\infty$. The $\ell^\infty$ bound is the easier one: since the number of indices $Z$ is of the order of $\frac{1}{\sqrt{\epsilon}}$, and the length of $I_{\epsilon}^{k,z}$ is bounded by $2C_0 \frac{1}{\sqrt{\epsilon}}$,
\begin{equation}
\label{souris}
\left[ \sum_Z \left\| \left| I_{\epsilon}^{k,Z} \cap E^j_k \right| \right\|_{\ell^\infty_k}^{q'} \right]^{\frac{1}{q'}} = \left[ \sum_Z \left( \sup_k \left| I_{\epsilon}^{k,Z} \cap E^j_k \right| \right)^{q'} \right]^{\frac{1}{q'}} \lesssim \left[ \frac{1}{\sqrt{\epsilon}} \left( \frac{1}{\sqrt{\epsilon}} \right)^{q'} \right]^\frac{1}{q'} \lesssim \epsilon^{-\frac{1}{2}-\frac{1}{2q'}}.
\end{equation}
For the $\ell^1$ bound, use first the embedding $\ell^1 \hookrightarrow \ell^q$,
\begin{equation}
\label{musaraigne}
\begin{split}
\left[ \sum_Z \left\| \left| I_{\epsilon}^{k,Z} \cap E^j_k \right| \right\|_{\ell^1_k}^{q'} \right]^{\frac{1}{q'}} = \left[ \sum_Z \left( \sum_k \left| I_{\epsilon}^{k,Z} \cap E^j_k \right| \right)^{q'} \right]^{\frac{1}{q'}} \lesssim \sum_Z \sum_k \left| I_{\epsilon}^{k,Z} \cap E^j_k \right|
\end{split}
\end{equation}
Next use that, given $k$, a number $x$ can belong to at most $\sim \frac{1}{k \sqrt{\epsilon}}$ intervals $I^{k,Z}_\epsilon$. This implies that $\sum_k |I_{\epsilon}^{k,Z} \cap E| \lesssim \frac{1}{k\sqrt{\epsilon}} |E|$. Thus the above can be bounded with (\ref{eq:sumei}) by 
\begin{equation}
\label{mulot}
\begin{split}
(\ref{musaraigne}) & \lesssim \sum_{|k|\leq\frac{1}{10 \sqrt{\epsilon}}} \frac{|E^j_k|}{k \sqrt{\epsilon}} \\
& \lesssim \frac{1}{\sqrt{\epsilon}} \sum_{n=0}^{\frac{1}{10\sqrt{\epsilon}}-1} \left[\left(\frac{1}{n} - \frac{1}{n+1} \right) \left(\sum_{k=0}^n |E^j_k|\right)\right] + \frac{\sqrt{\epsilon}}{\sqrt{\epsilon}} \sum_{k=0}^{\frac{1}{10 \sqrt{\epsilon}}} |E^j_k| \\
& \lesssim 2^{-2j} \left\| \left( \sum_k (f^j_k)^2 \right)^{1/2}\right\|_{L^p}^2 \left[ \frac{1}{\sqrt{\epsilon}} \sum_{n=0}^{\frac{1}{10\sqrt{\epsilon}}-1} \frac{1}{1+n^2} \left(\frac{n}{\sqrt{\epsilon}}\right)^{1-\frac{2}{p}} + \left(\frac{1}{\epsilon}\right)^{1-\frac{2}{p}} \right] \\
& \lesssim 2^{-2j} \left\| \left( \sum_k (f^j_k)^2 \right)^{1/2}\right\|_{L^p}^2 \epsilon^{-1+\frac{1}{p}}.
\end{split}
\end{equation}
Starting with the inequality~(\ref{lapinbis}), and interpolating between~(\ref{souris}) and~(\ref{mulot}) gives 
\begin{equation*}
\begin{split}
\left\| \left[ \sum_k \left| \int F^k_\epsilon(y,z) f_k^j(y) \,dy \right|^2 \right]^{1/2} \right\|_{L^{q'}(z)} & \lesssim \epsilon^{\frac{3}{2}-\frac{1}{2q'}} 2^j \left[ \epsilon^{-\frac{1}{2}-\frac{1}{2q'}} 2^{-2j} \left\| \left( \sum_k (f^j_k)^2 \right)^{1/2}\right\|_{L^p}^2 \epsilon^{-1+\frac{1}{p}} \right]^{1/2} \\
& \lesssim \epsilon^{\frac{3}{4q}+\frac{1}{2p}} \left\| \left( \sum_k (f^j_k)^2 \right)^{1/2}\right\|_{L^p}.
\end{split}
\end{equation*}
As noticed at the end of Step 4, this inequality implies the claim; this concludes the proof.
\end{proof}

\begin{rem} Let us come back to (\ref{mesangebis}). It can be written as 
follows
$$ \left|B_{m_\epsilon}(f,g)(0)\right|\lesssim \sum_{k,l} 2^{2l} 
\alpha_{l} \frac{\epsilon^{3/2}}{2^{2l}}\int_{(\sqrt{\epsilon} 
y,\epsilon z) \in R_{-k\sqrt{\epsilon}}([-2^l,2^l]^2)} |f_k \otimes 
g_k(y,z)| dydz.$$
The set
$$ \{(y,z),\ (\sqrt{\epsilon} y,\epsilon z) \in 
R_{-k\sqrt{\epsilon}}([-1,1]^2)\}$$
is a rectangle of dimensions $2^{l+1} \epsilon^{-1/2}$ and 
$2^{l+1}\epsilon^{-1}$ whose the measure is $2^{2l+2}\epsilon^{-3/2}$. 
As a consequence, we have
\begin{align*}
  \left|B_{m_\epsilon}(f,g)(0)\right| & \lesssim \sum_{k,l} 2^{2l} 
\alpha_{l} {\mathcal K}_{\epsilon^{-1/2}} (f_k \otimes g_k)(0,0) \\
  & \lesssim \sum_{k} {\mathcal K}_{\epsilon^{-1/2}}(f_k \otimes g_k)(0,0),
  \end{align*}
with ${\mathcal K}_{\epsilon^{-1/2}}$ the Kakeya maximal operator on 
$\R^2$ (see Section 10.3 in \cite{Grafakos} for a modern review on this 
subject).
Translating in $x$, we get
$$  \left|B_{m_\epsilon}(f,g)(x)\right|  \lesssim \sum_{k} {\mathcal 
K}_{\epsilon^{-1/2}}(f_k \otimes g_k)(x,x).$$
So the boundedness of $B_{m_\epsilon}$ is closely related to the 
boundedness of a ``bilinear Kakeya operator'' (the one corresponding to 
restrict a $2$-dimensional linear Kakeya operator on the diagonal.
\end{rem}

\section{Interpolation of the different results} \label{sec:combinaison}

\begin{thm} \label{thm:thmgeneral} Let $p,q,r\in [1,\infty]$ be exponents satisfying
$$ 1 \leq \frac{1}{p} + \frac{1}{q} + \frac{1}{r}.$$
Then for all smooth and bounded curve $\Gamma$, we have
$$ \left\| T_{m_\epsilon} \right\|_{L^p \times L^q \rightarrow L^{r'}} \lesssim \epsilon^\rho,$$
where
\be{eq:rho} \rho:=\frac{1}{\max\{p,2\} }+\frac{1}{\max\{q,2\} } +\frac{1}{\max\{r,2\} } -1 \ee
in the following cases:
\begin{itemize}
 \item[a)] if $1\leq \frac{1}{p}+\frac{1}{q}+\frac{1}{r}\leq \frac{3}{2}$ with $p,q,r<\infty$ and $\rho\geq 0$;
 \item[b)] if $ \frac{3}{2}\leq \frac{1}{p}+\frac{1}{q}+\frac{1}{r}$ (with eventual one infinite exponent and in this case, $\rho$ is always non-negative).
\end{itemize}
Moreover, if $p,q,r\leq 2$ the exponent $\rho$ can be improved in $\tilde{\rho}$ given by
$$ \tilde{\rho}:= \min\left\{\frac{1}{p},\frac{1}{q},\frac{1}{r}\right\}.$$
We point out that if at least two of the three indices $(p,q,r)$ are lower than $2$, then $\rho=\tilde{\rho}$.
\end{thm}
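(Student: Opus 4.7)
The plan is to deduce the theorem from the endpoint estimates proved in sections~\ref{sec:L2} and~\ref{SOPP} by multilinear Riesz--Thorin interpolation. Throughout I will use the symmetry of the trilinear form $\langle B_{m_\epsilon}(f,g),h\rangle$: for an arbitrary curve~$\Gamma$ and $m_\epsilon\in\mathcal{M}_\epsilon^\Gamma$, a linear change of frequency variables turns $B_{m_\epsilon}$ into a bilinear operator of the same class for another arbitrary curve, so a bound at $(p,q,r)$ automatically yields the analogous bound at every permutation of the triple. The first step is immediate and handles all of case~(a) together with the part of case~(b) where no exponent is infinite: for $p,q,r\in[1,\infty)$ the hypothesis $s\geq 0$ of Proposition~\ref{prop:L2bis} coincides with $\rho\geq 0$, and that proposition already gives $\|T_{m_\epsilon}\|_{L^p\times L^q\to L^{r'}}\lesssim\epsilon^{\rho}$.

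When exactly one exponent is infinite, say $p=\infty$, the constraint $\tfrac1q+\tfrac1r\geq\tfrac32$ confines $(1/q,1/r)$ to the triangle with vertices $(1,\tfrac12),(\tfrac12,1),(1,1)$, corresponding to $(q,r)\in\{(1,2),(2,1),(1,1)\}$. At each vertex one has $\rho=0$, and a matching bound $\|B_{m_\epsilon}\|\lesssim 1$ is provided by Proposition~\ref{prop:i12}(iii), its symmetric version, and the symmetric version of Proposition~\ref{prop:11i}(iii). Trilinear interpolation among these endpoints then yields the uniform bound $\epsilon^0=\epsilon^\rho$ on the whole triangle, completing case~(b). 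For the $\tilde\rho$--improvement when $p,q,r\leq 2$, I will interpolate on the cube $(1/p,1/q,1/r)\in[\tfrac12,1]^3$: Proposition~\ref{prop:111} provides $\epsilon^1$ at the vertex $(1,1,1)$, while the seven remaining vertices $(1,1,\tfrac12),\ldots,(\tfrac12,\tfrac12,\tfrac12)$ carry $\epsilon^{1/2}$ by Propositions~\ref{prop:112}(iii), \ref{prop:221}(iii), \ref{prop:L2} combined with the trilinear symmetry. A direct check shows that $\min(1/p,1/q,1/r)$ is exactly the concave upper envelope of these vertex values on the cube: at any interior point $(a,b,c)$ with $c=\min(a,b,c)$, the largest weight one can place on $(1,1,1)$ in a convex decomposition of $(a,b,c)$ over cube vertices is $2c-1$, producing the interpolated exponent $\tfrac12+\tfrac12(2c-1)=c=\tilde\rho$.

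The final assertion, that $\rho=\tilde\rho$ whenever at least two of $p,q,r$ are $\leq 2$, is immediate arithmetic: if $p,q\leq 2$, then $\tfrac1{\max(p,2)}=\tfrac1{\max(q,2)}=\tfrac12$, so $\rho=\tfrac1{\max(r,2)}$, which coincides with $\tilde\rho=\min(1/p,1/q,1/r)$ because $1/p,1/q\geq\tfrac12\geq 1/\max(r,2)$. The main obstacle is combinatorial rather than analytic: one must check in each step that the exponent triples covered by the theorem lie in the convex hull of endpoints with compatible bounds, and that the best exponent obtainable by interpolation matches $\rho$ or $\tilde\rho$ exactly. The most delicate piece is the concave-envelope computation just outlined, where the extra gain $\epsilon^1$ available at $(1,1,1)$ (as against $\epsilon^{1/2}$ at every other cube vertex) must be spread over the cube in precisely the way that produces $\min(1/p,1/q,1/r)$.
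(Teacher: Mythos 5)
Your proof is correct and follows essentially the same route as the paper: Proposition \ref{prop:L2bis} for all finite exponents, interpolation of the endpoint bounds of Propositions \ref{prop:i12} and \ref{prop:11i} (permuted via the trilinear symmetry) on the triangle with one infinite exponent, and interpolation on the cube $[1/2,1]^3$ between the $\epsilon^{1}$ bound of Proposition \ref{prop:111} at $(1,1,1)$ and the $\epsilon^{1/2}$ bounds at the remaining vertices, your maximal-weight computation being exactly the convex decomposition the paper writes out explicitly. The only caveat is the final remark: your one-line argument (like the paper's own assertion) gives $\rho=\tilde{\rho}$ only when the third exponent is at least $2$; if all three exponents are below $2$ (e.g.\ $p=q=r=1$) one has $\rho=\tfrac12<\tilde{\rho}$, but this does not affect the proof, since that regime is covered by the cube interpolation.
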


\begin{figure}
\psfragscanon
\psfrag{000}[l]{$(0,0,0)$}
\psfrag{100}[l]{$(1,0,0)$}
\psfrag{001}[l]{$(0,0,1)$}
\psfrag{010}[l]{$(0,1,0)$}
\psfrag{222}[l]{$(\frac{1}{2},\frac{1}{2},\frac{1}{2})$}
\psfrag{112}[l]{$(1,1,\frac{1}{2})$}
\psfrag{111}[l]{$(1,1,1)$}
\psfrag{011}[l]{$(0,1,1)$}
\psfrag{211}[l]{$(\frac{1}{2},1,1)$}
\psfrag{221}[l]{$(\frac{1}{2},\frac{1}{2},1)$}
\psfrag{121}[l]{$(1,\frac{1}{2},1)$}
\psfrag{212}[l]{$(\frac{1}{2},1,\frac{1}{2})$}
\psfrag{122}[l]{$(1,\frac{1}{2},\frac{1}{2})$}
\psfrag{C}[l]{$\bar{C}$}
\begin{center}
\resizebox{0.7\textwidth}{!}{\includegraphics{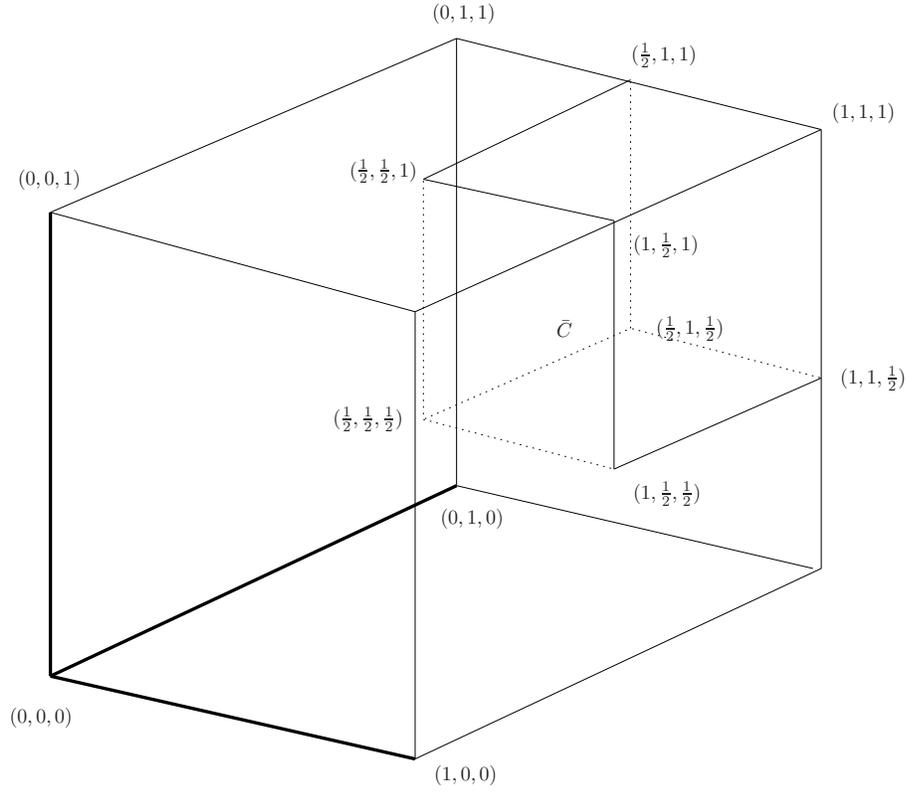}}
\end{center}
\caption{exponents $(p^{-1},q^{-1},r^{-1})\in [0,1]^3$}
\label{figure1}
\end{figure}

\begin{proof}
The case for $p,q$ and $r$ finite was proved in Proposition \ref{prop:L2bis}.  So, let us just consider the case $b-)$ with only one infinite exponent (since we cannot have two infinite exponents), by symmetry $p=\infty$. Then $\frac{1}{q}+\frac{1}{r}\geq \frac{3}{2}$ implies $q,r\leq 2$. Proposition \ref{prop:i12}  proves the result for $(q,r)=(1,2)$ and by symmetry for $(q,r')=(2,1)$ and Proposition \ref{prop:11i} proves the result for $(q,r)=(1,1)$. So by interpolation, we have the desired estimate for all exponents $\frac{1}{q}+\frac{1}{r}\geq \frac{3}{2}$, which ends the proof of case $b)$. \\
Let us now show the last claim about an improvement for $p,q,r\leq 2$. Indeed, from Propositions \ref{prop:L2}, \ref{prop:112} and \ref{prop:221}, we know that the exponent $\rho=1/2$ is optimal on the points $(p,q,r)=(2,2,2)$, $(2,2,1)$, $(1,1,2)$. As a consequence by symmetry and interpolation, we know that we can obtain an exponent $1/2$ as soon as $\max\{p,q,r\}=2$, which corresponds to $\rho$ given by (\ref{eq:rho}). Since Proposition \ref{prop:111} proves that we can have an exponent $1$ at the point $(1,1,1)$, we can interpolate each point $u:=(x,y,z)$ belonging to the cube $C:=[1/2,1]^{3}$ by the end-point $(1,1,1)$ with another point belonging to the subset 
$$\bar{C}:=\{(p^{-1},q^{-1},r^{-1}), \ \max\{p,q,r\}=2\}.$$ Indeed, if $x=\min\{x,y,z\}$ then we have
$$ (x,y,z) = (2x-1)(1,1,1) + 2(z-x)(\frac{1}{2},\frac{1}{2},1) + 2(y-x)(\frac{1}{2},1,\frac{1}{2}) + (2+2x-y-z)(\frac{1}{2},\frac{1}{2},\frac{1}{2}),$$
which by interpolation gives the exponents
$$ \tilde{\rho} = (2x-1) + \frac{1}{2}\left[2(z-x)+2(y-x) + (2+2x-y-z)  \right] = x.$$
\end{proof}

\mb
As we have seen the exponent can be improved if the curve $\Gamma$ is nowhere characteristic.

\begin{thm} Let $p,q,r\in [1,\infty]$ be exponents satisfying
\be{eq:homo} 1 \leq \frac{1}{p} + \frac{1}{q} + \frac{1}{r} \ee
and $\min\{p,q,r\}<2$ (else we are in the local-$L^2$ case, and the above estimates cannot be improved).
Then for all smooth, bounded and nowhere characteristic curve $\Gamma$, we have
$$ \left\| T_{m_\epsilon} \right\|_{L^p \times L^q \rightarrow L^{r'}} \lesssim \epsilon^\rho,$$
where
\be{eq:rho2} \rho:=\min\left\{ \frac{1}{\max\{p,2\} }+\frac{1}{\max\{q,2\} } +\frac{1}{\max\{r,2\} } -1 +\left( \max\{\frac{1}{p},\frac{1}{q},\frac{1}{r}\} - \frac{1}{2}\right) , 1\right\} \ee
in the following cases:
\begin{itemize}
 \item[a)] if $1\leq \frac{1}{p}+\frac{1}{q}+\frac{1}{r}\leq 2$, $p,q,r<\infty$ and in that case, $\rho$ is always non-negative ; 
 \item[b)] if $\frac{1}{p}+\frac{1}{q}+\frac{1}{r}=2$ (with eventually one infinite exponent).
\end{itemize}
Moreover, if $p,q,r\leq 2$ and $\frac{1}{p}+\frac{1}{q}+\frac{1}{r}\geq 2$, then $\rho=1$.
\end{thm}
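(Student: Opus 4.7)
My plan is to follow the interpolation scheme of Theorem~\ref{thm:thmgeneral}, but to feed in at each step the sharper bounds available in the nowhere-characteristic setting. There are three essentially separate components.

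First, the interior of case~(a)---that is, $p,q,r\in(1,\infty)$ with $1\le \frac{1}{p}+\frac{1}{q}+\frac{1}{r}\le 2$ and $\min(p,q,r)<2$---is exactly Proposition~\ref{prop:L22}, which already delivers the exponent $\rho$ of~(\ref{eq:rho2}). For admissible triples lying on a coordinate face of the sub-H\"older cube (one of $p,q,r$ equals $1$), I would recover the bound by multilinear complex interpolation between Proposition~\ref{prop:L22}, applied at a nearby interior triple, and the corner estimates of Step~3 below. Since the formula in~(\ref{eq:rho2}) is piecewise linear in $(1/p,1/q,1/r)$, the interpolation exponent matches~(\ref{eq:rho2}) at the face.

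Second, for case~(b) with one infinite exponent, the conditions $\frac{1}{p}+\frac{1}{q}+\frac{1}{r}=2$ and $p,q,r\ge 1$ force (up to permutation) the single triple $(\infty,1,1)$, at which~(\ref{eq:rho2}) evaluates to $\rho=1/2$. The required bound $\|B_{m_\epsilon}\|_{L^\infty\times L^1\to L^\infty}\lesssim\sqrt{\epsilon}$ is then Proposition~\ref{prop:11i}(i), after permuting the three slots using the symmetry of the trilinear form $\langle B_{m_\epsilon}(f,g),h\rangle$ in $(p,q,r)$ (a permutation that changes $\Gamma$ into a curve obtained by a linear change of coordinates, which preserves the ``nowhere characteristic'' property since the three distinguished directions are permuted among themselves).

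Third, for the ``moreover'' clause, the region $\{p,q,r\le 2,\ \frac{1}{p}+\frac{1}{q}+\frac{1}{r}\ge 2\}$ corresponds in reciprocal coordinates to the convex polytope
\[
\bigl\{(x,y,z)\in[1/2,1]^3 : x+y+z\ge 2\bigr\},
\]
whose seven vertices are $(1,1,1)$, the three permutations of $(1,1,1/2)$, and the three permutations of $(1/2,1/2,1)$. Each corresponds to a Lebesgue triple at which an $\epsilon^{1}$ bound has been established: Proposition~\ref{prop:111} at $(1,1,1)$; Proposition~\ref{prop:112}(i) (plus slot permutation) at the $(1,1,2)$-type corners; and Proposition~\ref{prop:221}(i) (plus permutation) at the $(2,2,1)$-type corners. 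Multilinear complex interpolation then pins $\rho=1$ throughout the polytope, which is what the ``moreover'' statement asserts.

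The genuine difficulty I foresee is none of these steps individually but rather the bookkeeping in Step~1: verifying face-by-face that interpolating Proposition~\ref{prop:L22} against the Step~3 corners reproduces~(\ref{eq:rho2}) exactly on the faces $p=1$, $q=1$, $r=1$ of the sub-H\"older cube. This is essentially automatic because~(\ref{eq:rho2}) is precisely the piecewise-linear function generated by the chosen interpolation data, but each face must still be checked separately to ensure that no better endpoint needs to be invoked.
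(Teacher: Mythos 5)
Your overall scheme is the same as the paper's: case a) is Proposition~\ref{prop:L22} essentially verbatim, the only genuinely new triples in case b) are the permutations of $(1,1,\infty)$, handled by Proposition~\ref{prop:11i}(i) together with the permutation symmetry of the trilinear form (your remark that the induced change of variables permutes the three degenerate directions, hence preserves the nowhere-characteristic property, is the right justification), and the ``moreover'' region is obtained by interpolating the seven vertices of $\{p,q,r\le 2,\ \tfrac1p+\tfrac1q+\tfrac1r\ge 2\}$ using Propositions~\ref{prop:111}, \ref{prop:112}(i) and \ref{prop:221}(i); your citation of Proposition~\ref{prop:111} at $(1,1,1)$ is in fact more accurate than the paper's, which refers to Proposition~\ref{prop:11i} at that vertex.

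The one step that does not work as written is your treatment in Step~1 of the faces where an exponent equals $1$. A nontrivial convex combination of interpolation data can only land on the face $\{1/p=1\}$ if both endpoints already lie on that face, so Proposition~\ref{prop:L22} ``applied at a nearby interior triple'' cannot serve as an endpoint; and the $\epsilon^1$ vertices available on that face all have $q,r\le 2$, so their convex hull misses triples such as $(p,q,r)=(1,4,4)$, where (\ref{eq:rho2}) claims $\rho=1/2$ but the best one can extract by interpolating the listed endpoint estimates (permutations of Proposition~\ref{prop:i12}(i)) is $\epsilon^{1/4}$. Approaching the face from the interior and lowering indices via Remark~\ref{jaguar} only yields $\epsilon^{\rho-\delta}$ for each $\delta>0$, with constants depending on $\delta$, not $\epsilon^{\rho}$. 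To be fair, the paper's own proof is no better on this boundary: it simply invokes Proposition~\ref{prop:L22}, which is stated only for $p,q,r\in(1,\infty)$, so such triples are glossed over in the original as well; but your claim that the face case is ``essentially automatic'' by piecewise linearity is not correct, and this part either needs a genuine endpoint argument (an $L^1$-version of Proposition~\ref{prop:L22}) or should be restricted to $p,q,r\in(1,\infty)$ as in the paper.
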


\begin{proof} For the case $a)$, it is a direct consequence of Proposition \ref{prop:L22} : if $p:=\min\{p,q,r\}$ then we can estimate the operator in $L^p$ and not in $L^2$ since the curve is nowhere characteristic. Indeed if two exponents are lower than $2$, it is obvious that $\rho\geq 0$ and if only one exponent is lower than $2$ then 
$$ \rho = \frac{1}{p}+\frac{1}{q}+\frac{1}{r} -1 \geq 0$$
due to (\ref{eq:homo}). So the improved exponent $\rho$ is always non-negative. \\
Concerning the case $b)$, the point $(1,1,\infty)$ has been studied in Proposition \ref{prop:11i} and so by symmetry and interpolation we get all the points $(p,q,r)$ with $\frac{1}{p}+\frac{1}{q}+\frac{1}{r}=2$. \\
Let us now check the last point when $\frac{1}{p}+\frac{1}{q}+\frac{1}{r}\geq 2$. The extremal points of the corresponding set of exponents are $(1,1,1)$, $(1,2,1)$, $(1,2,2)$ and the symmetrical points. So the result is obtained by interpolation and Propositions \ref{prop:112}, \ref{prop:221} and \ref{prop:11i}.
\end{proof}

\begin{cor} \label{cor:nonc} Assume that $\Gamma$ is nowhere characteristic, and let $m_\epsilon$ belong to $\mathcal{M}^\Gamma_\epsilon$. Then
\begin{itemize} 
\item  If $\left\{ \displaystyle \begin{array}{l} 
1 \leq \frac{1}{p}+\frac{1}{q}+\frac{1}{r} < 2 \vsp \\ 
\frac{1}{r}+\frac{1}{q} \leq \frac{3}{2} \vsp \\ 
\frac{1}{p}+\frac{1}{q} \leq \frac{3}{2} \vsp \\ 
\frac{1}{p}+\frac{1}{r} \leq \frac{3}{2}, \vsp  
\end{array} \right.$, then $\displaystyle \|B_{m_\epsilon}\|_{L^p \times L^q \rightarrow L^{r'}} \lesssim \epsilon^{\frac{1}{p}+\frac{1}{q}+\frac{1}{r}-1}$,
and this exponent of $\epsilon$ is optimal.
\medskip

\item If $\displaystyle \left\{ \begin{array}{l} q \geq 2 \geq p,r  \vsp \\ \frac{1}{p}+\frac{1}{r} \geq \frac{3}{2}, \vsp  \end{array} \right.$, then $\displaystyle
\|B_{m_\epsilon}\|_{L^p \times L^q \rightarrow L^{r'}} \lesssim \epsilon^{\frac{1}{q}+\frac{1}{2}}$.
\medskip

\item The above statement of course remains true if the indices $(p,q,r)$ are permuted.
\end{itemize}
\end{cor}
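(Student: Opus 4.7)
I would deduce the corollary by assembling the endpoint estimates of Sections~\ref{sec:L2} and~\ref{SOPP} with bilinear Riesz-Thorin interpolation; optimality in Case~1 follows from Proposition~\ref{fourmi}. The symmetry under permutations of $(p,q,r)$ is automatic from the symmetry of the trilinear form $\langle B_{m_\epsilon}(f,g),h\rangle$, so I focus only on the two substantive estimates.

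For Case~1 the target is the sub-H\"older exponent $\tfrac{1}{p}+\tfrac{1}{q}+\tfrac{1}{r}-1$, which is affine in $(1/p,1/q,1/r)$ and therefore preserved under Riesz-Thorin. I split the region according to how many of $p,q,r$ lie in $[2,\infty)$. If all three do, Proposition~\ref{prop:L2} gives the bound directly. If exactly one of them is $<2$, say $p<2\le q,r$, a direct computation shows that the $\rho$ of Proposition~\ref{prop:L22} collapses to $\tfrac12+\tfrac1q+\tfrac1r-1+\tfrac1p-\tfrac12=\tfrac1p+\tfrac1q+\tfrac1r-1$, which is again the H\"older exponent. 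If two (or three) of the indices lie in $(1,2)$, the Case-1 pairwise constraints $\tfrac{1}{p_i}+\tfrac{1}{p_j}\le\tfrac32$ are precisely the feasibility condition for bilinear Riesz-Thorin joining faces on which one coordinate equals~$2$; on each such face the previous sub-case already gives the H\"older exponent, and the affineness of that exponent transfers it to the interior. Optimality is then exactly Proposition~\ref{fourmi}~$(ii)$.

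For Case~2 the target is $\epsilon^{\frac1q+\frac12}$, which is constant in $(1/p,1/r)$ for fixed $q$. I establish it by reducing to two pieces of the $(1/p,1/r)$-triangle on which the bound is already available and then interpolating between them with $q$ fixed. Along the line $p=r=1$, interpolating Proposition~\ref{prop:112} (bound $\epsilon$ at the trilinear point $(1,1,2)$) with Proposition~\ref{prop:11i} (bound $\sqrt{\epsilon}$ at $(1,1,\infty)$) in the third trilinear coordinate yields $\epsilon^{\frac12+\frac1q}$ at trilinear $(1,1,q)$; trilinear symmetry moves $q$ into the middle slot and gives the corollary-notation point $(1,q,1)$ with the same bound. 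Along the curve $\tfrac1p+\tfrac1r=\tfrac32$ with $p,r\in(1,2)$ and $q\ge 2$, the bound $\epsilon^{\frac1q+\frac12}$ is exactly the H\"older exponent produced by Case~1. Any target $(p,q,r)$ in the strict Case-2 interior lies on a segment in $(1/p,1/r)$-space joining $(1,1)$ to a point $(1/p_b,1/r_b)$ on the line $1/p_b+1/r_b=\tfrac32$, and bilinear Riesz-Thorin delivers $\epsilon^{\frac1q+\frac12}$ at the target because both interpolation endpoints enjoy the same bound; a short algebraic check shows that such a decomposition with parameters in the admissible range exists iff $\tfrac1p+\tfrac1r\ge\tfrac32$, which is exactly the hypothesis of Case~2.

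The only delicate step is the book-keeping: verifying in each sub-region that the $\rho$ of Proposition~\ref{prop:L22} collapses to the H\"older exponent (Case~1) or to $\tfrac1q+\tfrac12$ (Case~2) on the relevant boundary faces, and checking that the pairwise inequalities in the statement are exactly the feasibility conditions for the Riesz-Thorin interpolations between the two distinguished boundary configurations. Once these elementary verifications are in place, no analysis beyond what has been done in Sections~\ref{sec:L2},~\ref{SOPP}, and~\ref{NCATSC} is required.
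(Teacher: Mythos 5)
Your Case 1 argument is essentially the paper's own: Proposition \ref{prop:L2} for $p,q,r\ge 2$, the collapse of the exponent of Proposition \ref{prop:L22} to the H\"older value when exactly one index is below $2$, and then convex-hull interpolation, the pairwise constraints $\frac1{p_i}+\frac1{p_j}\le\frac32$ being exactly the description of that hull; optimality from Proposition \ref{fourmi}~(ii) is also how the paper argues. For Case 2 with $q>2$ your fixed-$q$ decomposition (vertex $(1,q,1)$ obtained from Propositions \ref{prop:112} and \ref{prop:11i}, joined to the face $\frac1p+\frac1r=\frac32$ where Case 1 applies) is a correct and essentially equivalent variant of the paper's interpolation of six extremal points.

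There is, however, a gap at the face $q=2$ of the second bullet, which the statement includes ($q\ge 2$). There the claimed bound is $\epsilon^{\frac1q+\frac12}=\epsilon$, but your route breaks down: the intended second endpoint on the line $\frac1p+\frac1r=\frac32$ with $q=2$ has $\frac1p+\frac1q+\frac1r=2$, which is excluded from Case 1 (strict inequality), and at such points the only estimate among the ingredients you cite, namely Proposition \ref{prop:L22}, gives merely $\epsilon^{\max(1/p,1/r)}$, which is strictly weaker than $\epsilon$ unless $p=1$ or $r=1$. Nor can these points be recovered by interpolating Case-1 points, since their trilinear sum equals (or exceeds) $2$ while every Case-1 point has sum $<2$ and the sum is affine. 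The missing ingredient is Proposition \ref{prop:221}: in the nowhere characteristic case it gives the bound $\epsilon$ at $(2,2,1)$, hence by permutation at $(1,2,2)$ and $(2,1,2)$, and interpolating these with $(1,2,1)$ from Proposition \ref{prop:112} fills in the whole slice $q=2$, $\frac1p+\frac1r\ge\frac32$. This is precisely why the paper lists $(1,2,2)$ and $(2,2,1)$ among its extremal points for the second claim. Once you add Proposition \ref{prop:221} to your list of interpolation endpoints, your argument closes and coincides in substance with the paper's proof.
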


\begin{proof} Let consider the first point. Since the previous result, we know that the first claim is true for $(p,q,r)$ such that 
$$ 1 \leq \frac{1}{p}+\frac{1}{q}+\frac{1}{r} \leq 2$$
with at most one exponent lower than $2$. This set of exponents is also composed of $4$ sub-squares of length $1/2$. Then interpolating between them, the convex hull of these ones is exactly described by the given inequalities. \\
The second claim is a consequence of the interpolation of the different extremal points : $(p,q,r)=(1,2,2), (2,2,1), (1,t,2),(2,t,1)$ (with $t\rightarrow \infty$ described in the last theorem) and $(p,q,r)=(1,2,1),(1,\infty,1)$ (obtained in Subsections \ref{subsec:112} and \ref{subsec:11i}).
 \end{proof}

\section{Bilinear Fourier transform restriction-extension inequalities} \label{sec:bilinearrest}

\begin{df} Let $p,q,r\in [1,\infty]$ satisfy (\ref{eq:subholder}). We say that a curve $\Gamma \subset \R^2$ satisfies a {\it $(p,q,r)$ restriction-extension inequality} if the frequency restriction-extension bilinear multiplier
$$ T_\Gamma:=(f,g) \rightarrow T_\Gamma(f,g)(x):=\int e^{ix(\xi+\eta)} \widehat{f}(\xi) \widehat{g}(\eta) d\sigma_\Gamma(\xi,\eta),$$
where $d\sigma_\Gamma$ is the arc-length measure on the curve, is bounded from $L^p(\R)\times L^q(\R)$ into $L^{r'}(\R)$.
\end{df}
 
Let us first say a few words concerning the linear theory. For exponents $p,r\in[1,\infty]$ and $\Gamma$ a curve in $\R^2$, we could ask when the linear operator on $\R^2$
$$ U_\Gamma(f):=x \rightarrow \int e^{ix.\xi} \widehat{f}(\xi) d\sigma_\Gamma(\xi)$$
is bounded from $L^p(\R^2)$ into $L^{r'}(\R^2)$. This operator is a multiplier and corresponds to the convolution operation with ``$\widehat{d\sigma_\Gamma}$''. 

\mb From Lemma \ref{lem:kernelosci}, it follows that for a compact smooth curve $\Gamma$, $\widehat{d\sigma_\Gamma}$ belongs to $L^{4+\epsilon}(\R^2)$ for every $\epsilon>0$. By Young inequality, we deduce that the operator $U_\Gamma$ is bounded from $L^p(\R^2)$ to $L^{r'}(\R^2)$ for every exponents $p,r\geq 1$ satisfying
$$ \frac{1}{r'}+1 <\frac{1}{4}+\frac{1}{p},$$
which is equivalent to 
$$ \frac{1}{p}+\frac{1}{r}>\frac{7}{4}.$$

\gb Under the same assumption of non-vanishing curvature, we now want to obtain similar results for the bilinear operator $T_\Gamma$.

\begin{prop}[Bilinear restriction]  \label{prop:bilinearrestriction} Assume that the compact and smooth curve $\Gamma$ has a non vanishing Gaussian curvature. Then, for all exponents $p,q,r\in[1,\infty)$ satisfying
\be{eq:condiexp0} \frac{1}{p}+\frac{1}{q}+\frac{1}{r} > \frac{5}{2}, \ee
we have the bound
$$ \|T_\Gamma(f,g) \|_{L^{r'}} \lesssim \| f\|_{L^p} \|g\|_{L^q},$$
so $\Gamma$ satisfies a $(p,q,r)$ restriction-extension inequality.
\end{prop}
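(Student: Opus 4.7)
The approach I would take is the approximation strategy outlined by the authors in Section \ref{bilinrestr}: approximate the singular symbol $d\sigma_\Gamma$ by smooth symbols belonging to $\mathcal{M}_\epsilon^\Gamma$, apply the bound $\lesssim \epsilon$ available under the hypothesis $\frac{1}{p}+\frac{1}{q}+\frac{1}{r}>\frac{5}{2}$, and pass to the limit $\epsilon\to 0$. More precisely, I would invoke Proposition \ref{prop:weak} to produce a sequence $m_\epsilon\in\mathcal{M}_\epsilon^\Gamma$ such that $\epsilon^{-1}m_\epsilon$ converges to $d\sigma_\Gamma$ in the distributional sense. Note that since $\Gamma$ is smooth and compact, the hypotheses of Proposition \ref{prop:weak} (Ahlfors regularity) are automatically satisfied.

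Next, I would apply the fourth bullet of Theorem \ref{thnvc} (the non-vanishing curvature case): since $\frac{1}{p}+\frac{1}{q}+\frac{1}{r}>\frac{5}{2}$, we have
$$
\|B_{m_\epsilon}\|_{L^p\times L^q\to L^{r'}}\lesssim \epsilon,
$$
which rescales to $\|B_{\epsilon^{-1}m_\epsilon}\|_{L^p\times L^q\to L^{r'}}\lesssim 1$ uniformly in $\epsilon$. Equivalently, for every triple of Schwartz functions $f,g,h$ with, say, compactly supported Fourier transforms,
$$
\left|\langle B_{\epsilon^{-1}m_\epsilon}(f,g)\,,\,h\rangle\right|\ =\ \left|\int_{\R^2}\epsilon^{-1}m_\epsilon(\xi,\eta)\widehat f(\xi)\widehat g(\eta)\widehat h(-\xi-\eta)\,d\xi\,d\eta\right|\lesssim \|f\|_{L^p}\|g\|_{L^q}\|h\|_{L^r}.
$$
The integrand on the left is the pairing of the distribution $\epsilon^{-1}m_\epsilon$ against the smooth compactly supported function $\Phi(\xi,\eta):=\widehat f(\xi)\widehat g(\eta)\widehat h(-\xi-\eta)$. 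By the weak convergence in Proposition \ref{prop:weak}, this pairing converges to $\int_\Gamma \Phi\,d\sigma_\Gamma=\langle T_\Gamma(f,g),h\rangle$, so passing to the limit yields
$$
|\langle T_\Gamma(f,g)\,,\,h\rangle|\lesssim \|f\|_{L^p}\|g\|_{L^q}\|h\|_{L^r}
$$
for all such triples. A standard density argument (using that the set of Schwartz functions with compactly supported Fourier transform is dense in every $L^s$, $1\leq s<\infty$) then extends the estimate to arbitrary $f\in L^p$, $g\in L^q$, $h\in L^r$, giving the announced bound $\|T_\Gamma(f,g)\|_{L^{r'}}\lesssim \|f\|_{L^p}\|g\|_{L^q}$.

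The mild technical obstacle is that Theorem \ref{thnvc} is stated for $(p,q,r)\in(1,\infty)^3$, whereas here we allow $p,q,r\in[1,\infty)$. However the strict inequality $\frac{1}{p}+\frac{1}{q}+\frac{1}{r}>\frac{5}{2}$ keeps us in the interior of the region where the bound $\lesssim \epsilon$ holds, and the endpoint cases involving an index equal to $1$ are handled by interpolating the $\epsilon$-bound from Theorem \ref{thnvc} against the endpoint estimates established in Section \ref{SOPP}—in particular the restricted-type bound at $(1,1,2)$ provided by Proposition \ref{prop:112}(ii), which avoids the logarithmic loss present in the strong-type estimate at that corner and yields the sharp bound $\lesssim \epsilon$ throughout the open region $\frac{1}{p}+\frac{1}{q}+\frac{1}{r}>\frac{5}{2}$ by real interpolation. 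Once this uniform $\epsilon$-bound is in hand, the approximation/density argument above concludes the proof.
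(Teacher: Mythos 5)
Your proposal is correct and follows essentially the same route as the paper: the paper's own proof simply says ``this follows from Theorem~\ref{thnvc}'', relying on the limiting argument already described in Section~\ref{bilinrestr} (realizing $d\sigma_\Gamma$ as the weak limit of $\epsilon^{-1}m_\epsilon$ with $m_\epsilon\in\mathcal{M}^\Gamma_\epsilon$ and using the uniform bound $\lesssim\epsilon$ in the regime $\frac1p+\frac1q+\frac1r>\frac52$), which is exactly what you spell out, and your treatment of the indices equal to $1$ (restricted-type interpolation at the corners, or alternatively Remark~\ref{jaguar}) is consistent with how the paper obtains that range. The only difference is that the paper also records an alternative proof via the decay $|\widehat{d\sigma_\Gamma}(x)|\lesssim \langle x\rangle^{-1/2}$ (Lemma~\ref{lem:kernelosci}), which places the kernel in $L^{4+\delta}$ and concludes by Brascamp--Lieb, but your argument matches the primary one.
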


\begin{proof} This follows from Theorem~\ref{thnvc}. An alternative proof is as follows: due to the assumption on the curvature, Lemma \ref{lem:kernelosci}, yields
$$  \left|\int e^{i(x_1\xi+x_2\eta)} d\sigma_\Gamma(\xi,\eta)\right| \lesssim (1+|(x_1,x_2)|)^{-1/2}. $$
Consequently, the bilinear kernel $K$ in $\R^2$ of $T_\Gamma$ belongs to $L^{4+\epsilon}(\R^2)$ for all $\epsilon>0$. Then the result comes from usual estimates for bilinear convolution (Brascamp-Lieb inequality~\cite{BL}~\cite{L}).
\end{proof}

\begin{prop} Assume that the curve $\Gamma \subset \R^2$ satisfies a $(p,q,r)$-restriction inequality with $p,q,r\in (1,\infty)$. Then for all smooth symbol $m\in\s(\R^2)$, there exists a constant $C$ such that 
 $$ \|B_{md\sigma_\Gamma} (f,g) \|_{L^{r'}} \leq C \| f\|_{L^p} \|g\|_{L^q}.$$
\end{prop}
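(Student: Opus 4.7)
The plan is to reduce the boundedness of $B_{m\,d\sigma_\Gamma}$ to that of $T_\Gamma = B_{d\sigma_\Gamma}$ by Fourier-expanding the smooth multiplicative factor $m$ and exploiting the translation invariance of Lebesgue norms.

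First, since $m \in \mathcal{S}(\R^2)$, its inverse Fourier transform $\check m$ is Schwartz, hence in $L^1(\R^2)$, and Fourier inversion gives
\[
  m(\xi,\eta) \;=\; \int_{\R^2} \check m(s,t)\, e^{i(s\xi + t\eta)}\,ds\,dt.
\]
Plugging this into the definition of $B_{m\,d\sigma_\Gamma}$ and interchanging the order of integration (justified by absolute convergence, since $\check m \in L^1$, $\widehat{f},\widehat{g}$ are bounded on the compact set $\Gamma \cap \operatorname{Supp} m$, and $d\sigma_\Gamma$ has finite mass there), I would rewrite
\[
  B_{m\,d\sigma_\Gamma}(f,g)(x) \;=\; \int_{\R^2} \check m(s,t)\, \Bigl[\int e^{ix(\xi+\eta)} \bigl(e^{is\xi}\widehat{f}(\xi)\bigr) \bigl(e^{it\eta}\widehat{g}(\eta)\bigr)\, d\sigma_\Gamma(\xi,\eta)\Bigr]\, ds\, dt.
\]
Recognizing $e^{is\xi}\widehat{f}(\xi) = \mathcal{F}[f(\cdot + s)](\xi)$ (and similarly for $g$), the bracketed integral is exactly $T_\Gamma(f_s, g_t)(x)$ with $f_s := f(\cdot + s)$, $g_t := g(\cdot + t)$.

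The final step is to take the $L^{r'}$ norm, apply Minkowski's integral inequality to bring the $ds\,dt$ integral outside, and use the assumed restriction-extension bound $\|T_\Gamma(f_s, g_t)\|_{L^{r'}} \lesssim \|f_s\|_{L^p}\|g_t\|_{L^q}$ together with the translation invariance $\|f_s\|_{L^p} = \|f\|_{L^p}$, $\|g_t\|_{L^q} = \|g\|_{L^q}$. This yields
\[
  \|B_{m\,d\sigma_\Gamma}(f,g)\|_{L^{r'}} \;\lesssim\; \Bigl(\int_{\R^2} |\check m(s,t)|\, ds\, dt\Bigr) \|f\|_{L^p}\|g\|_{L^q},
\]
and the parenthesized constant is finite because $\check m \in \mathcal{S}(\R^2) \subset L^1(\R^2)$, completing the proof with $C = \|\check m\|_{L^1}$.

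There is essentially no obstacle here: the argument is a clean transference from the model operator $T_\Gamma$ to its smoothly modulated version, using that translation commutes with Fourier multipliers. The only point requiring a moment's care is justifying Fubini when inserting the Fourier representation of $m$; this is routine given the Schwartz decay of $\check m$ and the boundedness of the inner integrand on the support of $m\,d\sigma_\Gamma$. Note that the argument does not use the assumption $p,q,r \in (1,\infty)$ anywhere beyond what is already needed for $T_\Gamma$ itself.
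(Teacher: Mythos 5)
Your argument is correct and is essentially identical to the paper's own proof: both expand the smooth factor $m$ as a Fourier integral of an $L^1$ (Schwartz) function, rewrite $B_{m\,d\sigma_\Gamma}$ as a superposition of $T_\Gamma$ applied to translates of $f$ and $g$, and conclude by Minkowski's inequality together with translation invariance of the Lebesgue norms. The extra remark justifying Fubini is a harmless refinement of the same argument.
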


\begin{proof} We just develop the smooth symbol $\sigma$ via Fourier transform: there exists a smooth function $k\in\s(\R^2)$ such that
$$ \sigma(\xi,\eta)=\int e^{i(\xi y + \eta z)} K(y,z) dy dz.$$
So we have
$$ B_{md\sigma_\Gamma} = \int K(y,z) T_\Gamma(\tau_y f,\tau_z g) dy dz.$$ 
Then, using that the translation do not change the Lebesgue norm and $K\in L^1(\R^2)$, it follows by Minkowski inequality that $B_{md\sigma_\Gamma}$ is bounded from $L^p \times L^q$ to $L^{r'}$. \end{proof}

\mb We want now to combine the two previous kinds of argument (using the decay of the kernel due to the curvature and the orthogonality properties in the frequency space, used in Section \ref{SOPP}).

\begin{prop} \label{prop:proprest} Assume that the smooth curve $\Gamma$ has a non vanishing curvature. Then the bilinear multiplier $T_\Gamma$ associated to the singular symbol $m(\xi,\eta):=d\sigma_\Gamma(\xi,\eta)$ is bounded from $L^p(\R) \times L^q(\R)$ into $L^{r'}(\R)$ as soon as $p,q,r\in (1,\infty)$ satisfy
\be{eq:condiexp}
\left\{ \begin{array}{l}
        \frac{1}{p}-\frac{1}{q}+\frac{1}{r}\leq 1 \vsp \\
        \frac{-1}{p}+\frac{1}{q}+\frac{1}{r} \leq 1 \vsp \\
        \frac{1}{p}+\frac{1}{q}-\frac{1}{r}\leq 1 \vsp \\
        \frac{1}{p}+\frac{1}{q}+\frac{1}{r} >\frac{7}{3}. \vsp 
        \end{array} \right.
\ee
So $\Gamma$ satisfies a $(p,q,r)$-restriction inequality for such exponents.
\end{prop}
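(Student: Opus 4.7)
My plan is to establish the bound at the vertices of the admissible region and then interpolate. Writing the four inequalities of (\ref{eq:condiexp}) in coordinates $(1/p, 1/q, 1/r) \in [0,1]^3$, the region is a tetrahedron with vertices $(1,1,1)$, $(1, 2/3, 2/3)$, $(2/3, 1, 2/3)$, $(2/3, 2/3, 1)$; in the original $(p,q,r)$-variables, the vertices are $(1,1,1)$ together with the three permutations of $(1, 3/2, 3/2)$.

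The vertex $(1,1,1)$ is immediate: $\|T_\Gamma(f,g)\|_{L^\infty} \lesssim \|f\|_{L^1}\|g\|_{L^1}$ because $\widehat{d\sigma_\Gamma}$ is bounded by Lemma~\ref{lem:kernelosci}. The crux is the non-trivial endpoint
\begin{equation*}
\|T_\Gamma(f,g)\|_{L^3(\R)} \lesssim \|f\|_{L^1} \|g\|_{L^{3/2}}
\end{equation*}
together with its two permutations. To prove it, I would apply Hausdorff--Young to get $\|T_\Gamma(f,g)\|_{L^3} \leq \|\widehat{T_\Gamma(f,g)}\|_{L^{3/2}}$, then parameterize $\Gamma = (\xi(s),\eta(s))$ and use the level-set formula
\begin{equation*}
\widehat{T_\Gamma(f,g)}(\zeta) = \sum_{s\,:\,\xi(s)+\eta(s)=\zeta} \frac{\widehat f(\xi(s))\,\widehat g(\eta(s))\,|\Gamma'(s)|}{|\xi'(s)+\eta'(s)|}.
\end{equation*}
Changing variables $\zeta \mapsto s$ in the $L^{3/2}$-integral, using $|\widehat f|\leq\|f\|_{L^1}$ pointwise, and invoking the dual Hausdorff--Young $\|\widehat g\|_{L^3}\leq\|g\|_{L^{3/2}}$, the problem reduces to verifying that $\int_\Gamma |\Gamma'(s)|^3 \,(|\xi'(s)+\eta'(s)|\,|\eta'(s)|)^{-1}\,ds < \infty$, which is automatic away from characteristic points. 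Near a characteristic point, I would dyadically decompose $\Gamma$ at the scale where $\eta'$ (or $\xi'+\eta'$) has size $2^{-j}$; by the non-vanishing curvature these vanishings are of first order, and on each dyadic piece the pointwise decay of $\widehat{d\sigma_\Gamma}$ from Lemma~\ref{lem:kernelosci} can be used to absorb the additional singularity, summing geometrically.

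Multilinear complex interpolation between the four vertex estimates then yields the bound throughout the tetrahedron, which is exactly the region of (\ref{eq:condiexp}). The main obstacle will be the treatment of characteristic points in the non-trivial endpoint: the direct change-of-variables argument breaks down exactly where $\Gamma$ has a tangent parallel to one of the three distinguished axes, and one must carefully match the dyadic decomposition of $\Gamma$ to the pointwise kernel decay in order to recover the claimed estimate.
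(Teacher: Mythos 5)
Your reduction of the problem to the vertices is where the argument breaks down, because the three non-trivial vertices you propose to prove — the permutations of $(p,q,r)=(1,3/2,3/2)$ — lie exactly on the excluded plane $\frac1p+\frac1q+\frac1r=\frac73$ of (\ref{eq:condiexp}), and the endpoint bound $\|T_\Gamma(f,g)\|_{L^3}\lesssim\|f\|_{L^1}\|g\|_{L^{3/2}}$ is not established by your sketch (and, if it were, interpolation with $(1,1,1)$ would give the closed face, a strictly stronger statement than the proposition; note also that this point is borderline for the necessary conditions of Proposition~\ref{fourmi}(iii), so it cannot be taken for granted). Concretely, your Hausdorff--Young/change-of-variables computation reduces the endpoint to the finiteness of $\int_\Gamma \big(|\eta'(s)|\,|\xi'(s)+\eta'(s)|\big)^{-1}ds$, and with non-vanishing curvature the vanishing of $\eta'$ (or $\xi'+\eta'$) at a characteristic point is of \emph{first} order, so the integrand behaves like $|s-s_0|^{-1}$: each dyadic block where $|\eta'|\sim 2^{-j}$ contributes a quantity of size $O(1)$, and the sum over $j$ diverges logarithmically rather than geometrically. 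The proposed rescue — invoking the pointwise decay of $\widehat{d\sigma_\Gamma}$ from Lemma~\ref{lem:kernelosci} to ``absorb'' this — has no mechanism attached: your argument lives entirely on the frequency side, and no step is indicated that converts the physical-space kernel decay into a gain per dyadic block in the weighted integral. This is precisely the loss that forces the strict inequality in the statement.

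For comparison, the paper does not attempt any endpoint on the plane $\frac1p+\frac1q+\frac1r=\frac73$. It splits the kernel $K=\widehat{d\sigma_\Gamma}$ dyadically in the space variable, $K=\sum_j K_j+K_\Phi$ with $\widehat\Psi$ compactly supported, so that $\|K_j\|_{L^\infty}\lesssim 2^{-j/2}$ (Lemma~\ref{lem:kernelosci}) gives $\|B_{\widehat{K_j}}\|_{L^1\times L^1\to L^\infty}\lesssim 2^{-j/2}$, while on the frequency side $2^{-j}\widehat{K_j}\in\mathcal{M}^\Gamma_{2^{-j}}$, so Proposition~\ref{prop:221}(ii) (the $\epsilon^{3/4}$ bound at $(2,2,1)$ and its permutations) gives $\|B_{\widehat{K_j}}\|\lesssim 2^{j/4}$ on the plane $\frac1{p_0}+\frac1{q_0}+\frac1{r_0}=2$. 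Interpolating these two families and summing in $j$ yields geometric decay exactly when $\frac1p+\frac1q+\frac1r>\frac73$, which is where the strict inequality comes from. If you want to salvage your scheme, you must either prove the vertex estimates by genuinely new means (your current reduction cannot give them), or work with quantitative, $j$-dependent bounds on dyadic pieces as the paper does and accept the open region; as written, the crucial endpoint step is a gap.
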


\begin{proof} The idea is to improve the previous estimates by interpolating with the decay of the kernel (Lemma \ref{lem:kernelosci}). \\
Consider $K$ the linear kernel in $\R^2$ given by
$$ K(x_1,x_2) : = \int e^{i(x_1\xi+x_2\eta)} d\sigma_\Gamma(\xi,\eta).$$
We split the kernel in the space variable, using a function $\Psi \in \mathcal{S}$ (such that $\Psi(0)=0$, $\widehat{\Psi}$ is compactly supported in $[-1,1]$, and $\sum_j \Psi(2^j \cdot)=1$) as follows:
$$ K(x_1,x_2) =\sum_{j\geq 0}  \Psi(2^{-j}(x_1,x_2)) K(x_1,x_2) + \Phi(x_1,x_2) K(x_1,x_2):= \sum_j K_j + K_\phi,$$
where $\Phi$ satisfies
$$ \Phi (\cdot) = 1-\sum_{j\geq 0} \Psi(2^{-j} \cdot).$$
Since $K$ satisfies the bound
$$ \left| K(x_1,x_2)\right| \lesssim (1+|(x_1,x_2)|)^{-1/2},$$
(due to Lemma \ref{lem:kernelosci}), it follows that
$$ \| K_j\|_{L^\infty(\R^2)} \lesssim 2^{-j/2},$$
which gives
\be{cont1}   \|B_ {\widehat{K_j}}\|_{L^1 \times L^1 \to L^\infty } \lesssim 2^{-j/2}. \ee
Moreover since $K\in L^\infty$, it comes
\be{cont1bis}   \|B_ {\widehat{K_\Phi}}\|_{L^1 \times L^1 \to L^\infty } \lesssim 1. \ee
In addition, by writing the kernel $K_j$ in the frequency space, we have
$$ \widehat{K_j}(\xi_1,\eta_1) = \int 2^{2j} \widehat{\Psi}(2^j(\xi_1-\xi_2,\eta_1-\eta_2)) d\sigma_\Gamma(\xi_2,\eta_2).$$
Consequently the bilinear symbol $(\xi,\eta) \rightarrow  2^{-j} \widehat{K_j}(\xi_1,\eta_1)$ belongs to $\mathcal{M}_{2^{-j}}^\Gamma$. \\
According to Subsection \ref{subsec:221}, the bilinear operator associated to $2^{-j} K_j$ is bounded from $L^2 times L^2$ into $L^\infty$ (and by changing the role of $p,q,r$). So, we know that  
\be{cont2} \|B_ {\widehat{K_j}}\|_{L^{p_0} \times L^{q_0} \to L^{r_0'} } \lesssim 2^j 2^{-3j/4} \lesssim 2^{j/4} \ee
for all exponents $p_0,q_0,r_0\in[1,2]$ satisfying
\begin{equation}
 \frac{1}{p_0}+\frac{1}{q_0} + \frac{1}{r_0}=2. \label{eq:hold}
 \end{equation}
Concerning the remainder term $K_\Phi$, it is clear that
\be{cont2bis} \|B_ {\widehat{K_\Phi}}\|_{L^{p_0} \times L^{q_0} \to L^{r_0'} } \lesssim 1. \ee
Using real or complex bilinear interpolation in a one hand between (\ref{cont1}) and (\ref{cont2}) and in the other hand between (\ref{cont1bis}) and (\ref{cont2bis}), it yields that for every ``intermediate'' triplet $(p,q,r)$ between $(p_0,q_0,r_0)$ and $(1,1,1)$ (where $(p_0,q_0,r_0)$ is any triplet of $[1,2]^3$ verifying (\ref{eq:hold}))
\be{eq:decrois} \|B_ {\widehat{K_\Phi}}\|_{L^{p} \times L^{q} \to L^{r'} } \lesssim 1 \quad \textrm{and} \quad \|B_ {\widehat{K_j}}\|_{L^{p} \times L^{q} \to L^{r'} } \lesssim 2^{-\epsilon j}, \ee
with some $\epsilon:=\epsilon(p,q,r)>0$ as soon as $\frac{1}{p}+\frac{1}{q}+\frac{1}{r} >\frac{7}{3}$.
Then, summing over $j\geq 0$ proves the boundedness of $T_K$: $ \|T_ {K}\|_{L^{p} \times L^{q} \to L^{r'} } <\infty$. The range of allowed exponents exactly is the one described by (\ref{eq:condiexp}). Indeed the first inequality in (\ref{eq:condiexp}) is the one given by the plane containing $p=q=r=1$, $p=q=2$ $r=1$ and $p=1$ $q=r=2$, the second inequality is given by the plane containing $p=q=r=1$, $p=q=2$ $r=1$ and $q=1$ $p=r=2$ and the third inequality is given by the plane containing $p=q=r=1$, $r=q=2$ $p=1$ and $q=1$, $p=r=2$. The fourth equation in (\ref{eq:condiexp}) corresponds to the condition required in order to have some $\epsilon>0$ (due to the strict inequality) such that (\ref{eq:decrois}) holds.
\end{proof}

The set of $(p,q,r)$ given by (\ref{eq:condiexp}) is the tetrahedron built on the points $(1,1,1)$, $(1,3/2,3/2)$, $(3/2,1,3/2)$ and $(3/2,3/2,1)$. So by interpolation with Proposition \ref{prop:bilinearrestriction}, we get the following result.

\begin{thm} \label{thm:rest} Assume that the smooth curve $\Gamma$ has a non vanishing curvature. Then the bilinear multiplier $T_\Gamma$ associated to the singular symbol $m(\xi,\eta):=d\sigma_\Gamma(\xi,\eta)$ is bounded from $L^p(\R) \times L^q(\R)$ into $L^{r'}(\R)$ as soon as $(p,q,r)\in [1,2]^3$ belongs to the convex hull of the seven following points~: 
\be{eq:condiexpbbis} (1,1,1),\ (1,1,2), (3/2,3/2,1) \ee
and the others ones obtained by symmetry.
So $\Gamma$ satisfies a $(p,q,r)$-restriction inequality for such exponents.
\end{thm}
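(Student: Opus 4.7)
The plan is to prove Theorem \ref{thm:rest} by interpolating three families of estimates already established. Proposition \ref{prop:proprest} provides the restriction-extension inequality on the (open) tetrahedron defined by (\ref{eq:condiexp}), whose vertices are $(1,1,1)$ and the three permutations of $(3/2,3/2,1)$; Proposition \ref{prop:bilinearrestriction} covers the open region $1/p+1/q+1/r > 5/2$; and Proposition \ref{prop:112}(ii) supplies the critical restricted type estimate at the boundary vertex $(1,1,2)$.

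For the three ``log-corner'' vertices $(1,1,2)$, $(1,2,1)$, $(2,1,1)$, which sit on $1/p+1/q+1/r = 5/2$ and cannot be reached by strong-type bounds (in view of the inevitable logarithmic loss in Proposition \ref{macareux}(i)), I would use the restricted type bound
$$
|\langle B_{m_\epsilon}(\chi_F,\chi_G),\chi_H\rangle| \lesssim \epsilon |F||G||H|^{1/2}
$$
of Proposition \ref{prop:112}(ii) --- crucially free of the logarithmic factor --- and pass to the limit via the approximation of Proposition \ref{prop:weak} and the procedure described in Subsection \ref{bilinrestr}. This yields a restricted weak-type $(1,1,2)$ bound for $T_\Gamma$. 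The analogous bounds at $(1,2,1)$ and $(2,1,1)$ follow from the same argument after permuting the roles of $f$, $g$, $h$ in the trilinear form $\langle T_\Gamma(f,g),h\rangle$, the kernel estimates in the proof of Proposition \ref{prop:112}(ii) being insensitive to such permutations.

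With strong-type bounds at the four vertices $(1,1,1)$, $(3/2,3/2,1)$, $(3/2,1,3/2)$, $(1,3/2,3/2)$ (from Proposition \ref{prop:proprest}, using a limiting argument to reach the boundary faces of the open tetrahedron if needed) and restricted weak-type bounds at the three endpoint vertices $(1,1,2)$, $(1,2,1)$, $(2,1,1)$, I would conclude by multilinear Marcinkiewicz-type interpolation, obtaining strong-type boundedness at every interior point of the convex hull of the seven listed vertices. The main technical obstacle is that simultaneous restricted weak type at three distinct vertices requires a careful convex-geometric verification: each of the three endpoint vertices must be suitably isolated from the others along the interpolating segments so that the standard trilinear Marcinkiewicz scheme applies. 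In the present configuration this is automatic, since $(1,1,2)$, $(1,2,1)$, $(2,1,1)$ are the three ``extreme'' corners of the top face of the hull and every segment joining two of them passes through the interior, which is already covered by strong-type bounds.
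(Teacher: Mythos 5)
Your proposal is correct and follows essentially the same route as the paper: the proof of Theorem \ref{thm:rest} is exactly the interpolation between the tetrahedron of Proposition \ref{prop:proprest} (vertices $(1,1,1)$, $(3/2,3/2,1)$ and permutations, i.e. condition (\ref{eq:condiexp})) and the region $\frac{1}{p}+\frac{1}{q}+\frac{1}{r}>\frac52$ of Proposition \ref{prop:bilinearrestriction}, whose closure contributes the corners $(1,1,2)$, $(1,2,1)$, $(2,1,1)$. Your additional handling of those three corners via the restricted-type estimate of Proposition \ref{macareux}(ii) (passed to the limit symbol $d\sigma_\Gamma$) is an extra step the paper does not take --- it simply forms the convex hull of the two regions, so the corner vertices are reached only as limits --- and it is a sensible refinement, since strong type genuinely fails at $(1,1,2)$ for curves such as the circle (consistent with the $\epsilon\sqrt{-\log\epsilon}$ behaviour in Proposition \ref{macareux}(i)).
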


\mb Having obtained some ``bilinear Fourier restriction-extension inequalities'', we now come back to the smooth symbols $m_\epsilon$. For a curve $\Gamma$, it should be reasonable from a bilinear Fourier restriction-extension inequality to prove (\ref{ecureuil}) with a decay function $\alpha(\epsilon)=\epsilon$. However, to do that, we have to decompose the $\epsilon$-neighborhood at the scale $\epsilon$ and then to sum up all these small pieces. Since we start from a global estimate with a symbol carried on the whole curve, we have to do this splitting uniformly ``along the tangential variable''. That is why we cannot deduce (\ref{ecureuil}) for all symbols $m_\epsilon$ belonging to the class ${\mathcal M}^\Gamma_\epsilon$. 

\mb Let us assume that $\Gamma$ is a smooth and compact curve and denote the distance function $\nu:=d_\Gamma$. For every $(\xi,\eta)\notin \Gamma$, we know that $|\nabla \nu(\xi,\eta)|=1$. With this notation, $\nabla \nu$ can be considered as the direction of local normal coordinates and $(\nabla \nu)^{\perp}$ as the direction of the local tangential coordinate. \\
We are interesting in symbols $m_\epsilon$ satisfying a nice behavior in the tangential directions given by $\nabla \nu^\perp$. More precisely, we are interested with the symbols $m_\epsilon$ taking the following form
\begin{equation}
 m_\epsilon := \frac{1}{\epsilon} \int_\Gamma  \phi \left(\frac{(\xi,\eta)-\lambda}{\epsilon}\right) m(\lambda) d\sigma_\Gamma(\lambda) , \label{eq:symbconv} 
\end{equation}
with a smooth and compactly supported function $m$ on $\Gamma$ and a smooth function $\phi\in C^\infty_0(\R^2) $ such that 
$ \phi(\xi,\eta)=1 $ if $|(\xi,\eta)|\leq 1/2$ and $ \phi(\xi,\eta)=0 $ if $|(\xi,\eta)|\geq 1$.

Let us check the following point ``$m_\epsilon$ is regular at the scale $\epsilon$ in the normal direction and at the scale $1$ in the tangential direction''. 

\begin{prop} \label{prop:resttomul2bis} Let $m_\epsilon$ be a symbol given by (\ref{eq:symbconv}), then it satisfies the following regularity~:
 \be{eq:regs} \left\| \nabla^\alpha m_\epsilon \right\|_{L^\infty(\R^2)} \lesssim \epsilon^{-|\alpha|},\quad \left\| \langle (\nabla \nu)^{\perp} ,\nabla m_\epsilon \rangle \right\|_{L^\infty(\R^2)} \lesssim 1.\ee
\end{prop}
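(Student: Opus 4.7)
The plan is to parametrise $\Gamma$ by arc length as $s\mapsto \gamma(s)$ and rewrite
\[
m_\epsilon(\xi,\eta) = \frac{1}{\epsilon}\int \phi\!\left(\frac{(\xi,\eta)-\gamma(s)}{\epsilon}\right) m(\gamma(s))\,ds.
\]
For the first inequality in \eqref{eq:regs}, I will just differentiate under the integral: each Cartesian derivative produces a factor $\epsilon^{-1}$ in front of $(\nabla^\alpha\phi)(\cdots)$, while the compactness of $\operatorname{supp}\phi$ combined with the fact that $\gamma$ is $1$-Lipschitz forces the integrand to be supported on an $s$-set of length $\lesssim \epsilon$. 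Together with the $\epsilon^{-1}$ prefactor and the smoothness of $m$ on $\Gamma$, this yields $\|\nabla^\alpha m_\epsilon\|_\infty \lesssim \epsilon^{-|\alpha|}$.

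The tangential estimate rests on the geometric identity $(\nabla\nu)^\perp(\xi,\eta) = \pm\gamma'(s_0)$, where $\gamma(s_0)$ is the foot of the perpendicular from $(\xi,\eta)$ to $\Gamma$. This is exact: on a tubular neighbourhood of $\Gamma$, $\nabla\nu$ is the unit outward normal at $\gamma(s_0)$, so its perpendicular is the unit tangent $\pm\gamma'(s_0)$ there. Using this, I compute
\[
\langle (\nabla\nu)^\perp,\nabla m_\epsilon\rangle(\xi,\eta) = \frac{\pm 1}{\epsilon^2}\int \bigl\langle \gamma'(s_0),\,(\nabla\phi)\!\left(\tfrac{(\xi,\eta)-\gamma(s)}{\epsilon}\right)\bigr\rangle\,m(\gamma(s))\,ds
\]
and split $\gamma'(s_0) = \gamma'(s) + \bigl(\gamma'(s_0)-\gamma'(s)\bigr)$. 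On the support of the integrand, $|(\xi,\eta)-\gamma(s)|\leq \epsilon$ forces $|s-s_0|\lesssim \epsilon$, and the smoothness of $\gamma$ then gives $|\gamma'(s_0)-\gamma'(s)| \lesssim \epsilon$; this already bounds the remainder term by $\epsilon^{-2}\cdot \epsilon \cdot \epsilon = O(1)$.

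For the main term the key identity is
\[
\bigl\langle \gamma'(s),\,(\nabla\phi)\!\left(\tfrac{(\xi,\eta)-\gamma(s)}{\epsilon}\right)\bigr\rangle = -\epsilon\,\frac{d}{ds}\,\phi\!\left(\tfrac{(\xi,\eta)-\gamma(s)}{\epsilon}\right),
\]
which already kills one $\epsilon^{-1}$; integrating by parts in $s$ transfers the derivative onto $m(\gamma(s))$, which is uniformly bounded, and the remaining integral of $\phi(\cdots)$ over an $s$-interval of length $O(\epsilon)$ absorbs the last $\epsilon^{-1}$. The two contributions are thus $O(1)$.

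The main delicate point is the identification $(\nabla\nu)^\perp(\xi,\eta) = \pm\gamma'(s_0)$, which is what produces the genuine cancellation. Its validity requires $\epsilon$ smaller than the reach of the compact smooth curve $\Gamma$, so that the nearest-point projection is well defined and smooth away from $\Gamma$; the singular set $\Gamma$ itself is Lebesgue-null and can be ignored for the $L^\infty$ bound. Everything else reduces to elementary differentiation under the integral, a first-order Taylor estimate for $\gamma'$, and a single integration by parts.
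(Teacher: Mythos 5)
Your argument is correct and essentially reproduces the paper's proof: the crude bound comes from differentiating under the integral, and the tangential bound from the same cancellation --- writing the tangential direction as $\gamma'$ at a nearby parameter up to an $O(\epsilon)$ error and integrating by parts in the arc-length variable via $\langle\gamma'(s),\nabla\phi\rangle=-\epsilon\,\tfrac{d}{ds}\,\phi\bigl(\tfrac{(\xi,\eta)-\gamma(s)}{\epsilon}\bigr)$ --- the only cosmetic difference being that you freeze the direction at the foot point $s_0$, while the paper decomposes $(\nabla\nu)^\perp$ in the moving frame $\{\gamma'(t),\gamma'(t)^{\perp}\}$ and uses $|\langle\gamma'(t),\nabla\nu\rangle|\lesssim\epsilon$. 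One small fix: the $s$-support having length $\lesssim\epsilon$ does not follow from $\gamma$ being $1$-Lipschitz (that inequality goes the wrong way); it follows from $\Gamma$ being a smooth compact embedded curve, i.e.\ ${\mathcal H}^1(\Gamma\cap \overline{B}((\xi,\eta),\epsilon))\lesssim\epsilon$, which is exactly what the paper invokes.
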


\begin{proof} First for fixed $(\xi,\eta)$, 
$$ \left|m_\epsilon(\xi,\eta)\right| \leq \epsilon^{-1} {\mathcal H}^1(\Gamma \cap B((\xi,\eta),\epsilon)) \lesssim 1$$
and
$$ \nabla m_\epsilon(\xi,\eta) =  \frac{1}{\epsilon^2} \int_\Gamma  \nabla \phi \left(\frac{(\xi,\eta)-\lambda}{\epsilon}\right) m(\lambda) d\sigma_\Gamma(\lambda).$$
By iterating, we easily get that $\|\nabla^\alpha m_\epsilon\|_{L^\infty(\R^2)}\lesssim \epsilon^{-|\alpha|}$. Let us check the tangential derivative. Let us choose $\gamma$ a normalized parametrization of $\Gamma$ : $\gamma:[0,1] \rightarrow \R^2$ with $|\gamma'(t)|=1$. Then, 
$$ \nabla m_\epsilon(\xi,\eta) =  \frac{1}{\epsilon^2} \int_0^1  \nabla \phi \left(\frac{(\xi,\eta)-\gamma(t)}{\epsilon}\right) m(\gamma(t))dt.$$
Since
\begin{align*}
 \frac{1}{\epsilon} \int_0^1 \langle \nabla \phi \left(\frac{(\xi,\eta)-\gamma(t)}{\epsilon} \right), \gamma'(t) \rangle m(\gamma(t)) dt & = -\int_0^1 \left[\frac{d}{dt} \phi \left(\frac{(\xi,\eta)-\gamma(t)}{\epsilon} \right)\right] m(\gamma(t)) dt \\
 & = \int_0^1 \phi \left(\frac{(\xi,\eta)-\gamma(t)}{\epsilon} \right) \left[\frac{d}{dt} m(\gamma(t))\right] dt.
 \end{align*}
So as previously, 
$$ \left| \frac{1}{\epsilon^2} \int_0^1 \langle \nabla \phi \left(\frac{(\xi,\eta)-\gamma(t)}{\epsilon} \right), \gamma'(t) \rangle m(\gamma(t)) dt \right| \lesssim 1.$$
Consequently, 
\begin{align*}
\left| \frac{1}{\epsilon^2} \int_0^1 \langle \nabla \phi \left(\frac{(\xi,\eta)-\gamma(t)}{\epsilon} \right),  (\nabla \nu)^{\perp} \rangle m(\gamma(t)) dt \right| & \\
& \hspace{-3cm} \lesssim 1 + \left| \frac{1}{\epsilon^2} \int_0^1 \langle \nabla \phi \left(\frac{(\xi,\eta)-\gamma(t)}{\epsilon} \right),  \gamma'(t)^\perp\rangle \langle \gamma'(t)^\perp, (\nabla \nu)^{\perp} \rangle m(\gamma(t)) dt \right| .\end{align*}
However $\langle \gamma'(t)^\perp, (\nabla \nu)^{\perp} \rangle= \langle \gamma'(t), \nabla \nu \rangle$ and since $|(\xi,\eta)-\gamma(t)|\leq \epsilon$, the smoothness of the curve $\Gamma$ implies that 
$$ \left|\langle \gamma'(t), \nabla \nu \rangle \right| \lesssim \epsilon,$$
which concludes the proof of (\ref{eq:regs}).
\end{proof} 

By similar arguments, we can obtain estimates for the higher order differentiation of these specific symbols.

\begin{cor} The symbols $m_\epsilon$ given by (\ref{eq:symbconv}) belong to the class ${\mathcal N}^\Gamma_\epsilon$.
\end{cor}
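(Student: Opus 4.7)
The plan is to verify the three defining properties of the class $\mathcal{N}^\Gamma_\epsilon$ (Definition~\ref{herisson}) for any $m_\epsilon$ given by (\ref{eq:symbconv}). The smoothness of $\Gamma$ is assumed. The support condition is immediate: since $\phi$ vanishes outside $B(0,1)$, only those $\lambda\in\Gamma$ with $|(\xi,\eta)-\lambda|\le\epsilon$ contribute to $m_\epsilon(\xi,\eta)$, so $\mathrm{Supp}\,m_\epsilon\subset\Gamma_\epsilon$; and the inclusion $\mathrm{Supp}\,m_\epsilon\subset B(0,1)$ follows from the compactness of $\mathrm{Supp}\,m\subset\Gamma$ together with a harmless rescaling, for $\epsilon$ small.

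The main task is the bound $|\partial_{\nabla\nu}^\alpha\partial_{(\nabla\nu)^\perp}^\beta m_\epsilon|\lesssim\epsilon^{-\alpha}$. Since Proposition~\ref{prop:resttomul2bis} already supplies $|\nabla^k m_\epsilon|\lesssim\epsilon^{-k}$, the $\alpha$ normal derivatives are under control; the content is that each of the $\beta$ tangential derivatives must be essentially free of loss. I would proceed by induction on $\beta$, the heart of the matter being the pure tangential case $|\partial_{(\nabla\nu)^\perp}^\beta m_\epsilon|\lesssim 1$. The mechanism is the one in Proposition~\ref{prop:resttomul2bis}: for a unit-speed parametrization $\gamma$ of $\Gamma$,
$$
\Bigl\langle\nabla\phi\Bigl(\tfrac{(\xi,\eta)-\gamma(t)}{\epsilon}\Bigr),\,\gamma'(t)\Bigr\rangle=-\epsilon\,\partial_t\Bigl[\phi\Bigl(\tfrac{(\xi,\eta)-\gamma(t)}{\epsilon}\Bigr)\Bigr],
$$
which after integration by parts in $t$ against $m(\gamma(t))\,dt$ shows that any gradient of $m_\epsilon$ taken in the direction $\gamma'(t)$ costs no negative power of $\epsilon$. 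Moreover, for $(\xi,\eta)\in\Gamma_\epsilon$ with nearest point $\gamma(t_*)$, $(\nabla\nu)^\perp$ coincides with $\gamma'(t_*)$, and the smoothness of $\Gamma$ yields $\langle\gamma'(t),\nabla\nu\rangle=O(\epsilon)$ for every $t$ lying in the support of the integrand. Hence the splitting
$$
\partial_{(\nabla\nu)^\perp}=(\nabla\nu)^\perp\cdot\nabla=\gamma'(t)\cdot\nabla+O(\epsilon)\cdot\nabla
$$
lets us trade every tangential derivative either for a bounded error term (the $O(\epsilon)$ remainder combines with at worst one spatial derivative, costing $\epsilon^{-1}$, and thus contributes $O(1)$) or for a $\partial_t$ that can be integrated by parts onto the smooth factor $m\circ\gamma$.

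To iterate, one observes that $(\nabla\nu)^\perp$ is a smooth vector field on $\Gamma_\epsilon$ (provided $\epsilon$ is less than the reach of $\Gamma$) with all derivatives bounded by constants depending only on $\Gamma$; in particular, differentiating $(\nabla\nu)^\perp$ produces only universal factors, not $\epsilon^{-1}$ losses. At each inductive step one therefore writes $\partial_{(\nabla\nu)^\perp}^\beta m_\epsilon=(\nabla\nu)^\perp\cdot\nabla\bigl[\partial_{(\nabla\nu)^\perp}^{\beta-1}m_\epsilon\bigr]$, invokes the splitting above, and either transfers the tangential derivative to a $t$-derivative inside the integral (then integrating by parts onto $m(\gamma(t))$ and the Jacobian factors, both uniformly smooth) or absorbs the $O(\epsilon)$ correction against a single spatial derivative. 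This yields $|\partial_{(\nabla\nu)^\perp}^\beta m_\epsilon|\lesssim 1$; combining with the first estimate of Proposition~\ref{prop:resttomul2bis} for the $\alpha$ normal derivatives produces the claimed $\epsilon^{-\alpha}$ bound, placing $m_\epsilon$ in $\mathcal{N}^\Gamma_\epsilon$.

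The main obstacle I anticipate is the bookkeeping in the inductive step: one must simultaneously track derivatives falling on the vector field $(\nabla\nu)^\perp$ (which requires uniform smoothness of the signed distance on $\Gamma_\epsilon$, valid away from the medial axis of $\Gamma$) and on the kernel factor $\phi((\xi,\eta)-\gamma(t))/\epsilon)$, while keeping the decomposition into the ``good'' $\gamma'(t)$-component (turned into $\partial_t$ and integrated by parts) and the ``error'' $O(\epsilon)$-component stable under iteration. Once this hierarchy is laid out, the estimates propagate inductively with only bounded multiplicative losses, and the corollary follows.
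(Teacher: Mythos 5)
Your overall strategy --- iterate the integration-by-parts mechanism of Proposition~\ref{prop:resttomul2bis} --- is exactly what the paper has in mind (it only says ``by similar arguments''), but the termwise bookkeeping you describe does not close at the second tangential derivative, and this is a genuine gap. After one step, the ``error'' piece of your splitting is $\frac{1}{\epsilon^2}\int \langle \nabla\phi\bigl(\tfrac{(\xi,\eta)-\gamma(t)}{\epsilon}\bigr),\gamma'(t)^\perp\rangle\, b(t,\xi,\eta)\, m(\gamma(t))\,dt$ with $b(t,\xi,\eta)=\langle \nabla\nu(\xi,\eta),\gamma'(t)\rangle=O(\epsilon)$ on the support. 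When the next tangential derivative falls on $b$, the smallness is destroyed: $\partial_{(\nabla\nu)^\perp}b=\langle D^2\nu\,(\nabla\nu)^\perp,\gamma'(t)\rangle\approx \pm\kappa + O(\epsilon)$, which is $O(1)$ wherever the curvature does not vanish, while the kernel factor still costs a full $\epsilon^{-1}$; similarly, the integration by parts in your ``good'' piece produces $\partial_t b=\langle\nabla\nu,\gamma''(t)\rangle\approx\pm\kappa=O(1)$. Each of these contributions is genuinely of size $\epsilon^{-1}$ pointwise: take $\Gamma$ the unit circle, $m\equiv 1$, $\phi$ radial, and a point at distance $\sim\epsilon$ from $\Gamma$; then the individual term is $\sim \frac{w}{\epsilon}\int\psi'\bigl(\tfrac{w^2+\tau^2}{2}\bigr)d\tau$ (with $v=\epsilon w$, $\phi=\psi(|\cdot|^2/2)$), even though the full second tangential derivative vanishes identically by radial symmetry. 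So ``only bounded multiplicative losses'' is false term by term; the estimate survives only because these $O(\epsilon^{-1})$ terms cancel --- their sum is governed by $\kappa(t)-\kappa(u)=O(\epsilon)$, $u$ being the parameter of the nearest point --- i.e.\ the tangential derivative in $(\xi,\eta)$ must be coupled with the corresponding shift of the integration parameter $t$, not distributed over the factors separately. Your phrase about keeping the decomposition ``stable under iteration'' is precisely what fails in the naive scheme: the smallness of the error coefficient is not preserved by further tangential differentiation.

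The clean repair (and presumably what the authors mean) is to pass to Fermi/tubular coordinates: write $(\xi,\eta)=\gamma(u)+v\,n(u)$ with $|v|\le\epsilon$ below the reach of $\Gamma$, and substitute $t=u+\epsilon\tau$ in (\ref{eq:symbconv}). One finds $m_\epsilon=G(u,v/\epsilon)$ where all derivatives of $G$ in $(u,w)$ are bounded uniformly in $\epsilon$ (every $u$-differentiation hits only $\gamma'$, $n$, $\gamma''$, or $m\circ\gamma$ evaluated at $u+\epsilon\tau$). Since on $\Gamma_\epsilon$ one has $\partial_{(\nabla\nu)^\perp}=(1-v\kappa(u))^{-1}\partial_u$ and $\partial_{\nabla\nu}=\pm\partial_v$, with the coefficient smooth and bounded together with its derivatives, the bounds of Definition~\ref{herisson} follow at all orders with no delicate cancellation. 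Alternatively, you can keep your induction but you must strengthen the inductive hypothesis to a structural representation in which each error coefficient is small together with its derivatives along the coupled direction ($x$ tangential plus $t$-shift); as written, the proposal's inductive step would produce unbounded terms.
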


For such specific symbols $m_\epsilon$, we can prove equivalence for every exponents $p,q,r$ between a restriction-extension property and a decay rate in (\ref{ecureuil}) with $\alpha(\epsilon)=\epsilon$.

\begin{prop} \label{prop:resttomul2}
Let $\Gamma$ be a smooth and compact curve. Assume that for some exponents $p,q,r'\in[1,\infty]$, the compact curve $\Gamma$ satisfies a $(p,q,r)$-restriction inequality. 
Then there exists a constant $c$ such that for all $\epsilon\leq 1$ and all symbol $m_\epsilon$ given by (\ref{eq:symbconv})
\be{eq:op2bis} \|T_{m_\epsilon}\|_{L^p \times L^q \to L^{r'}} \leq c\epsilon. \ee
\end{prop}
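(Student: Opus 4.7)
The plan is to view $m_\epsilon$ as a smoothing of the singular arc-length symbol $m\,d\sigma_\Gamma$, and to transfer the $L^p\times L^q\to L^{r'}$ boundedness of $T_{m\,d\sigma_\Gamma}$ (which follows from the $(p,q,r)$-restriction hypothesis via the earlier unlabeled proposition of this section) to $T_{m_\epsilon}$, losing only the expected factor $\epsilon$.

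Setting $\phi_\epsilon(\xi,\eta):=\epsilon^{-2}\phi(\xi/\epsilon,\eta/\epsilon)$ viewed as a smooth function on $\R^2$, a direct comparison with (\ref{eq:symbconv}) shows that
\[
m_\epsilon \;=\; \epsilon\cdot (m\,d\sigma_\Gamma)*\phi_\epsilon,
\]
where the convolution is in $\R^2$; this is a well-defined smooth function since $\phi_\epsilon$ is smooth and compactly supported and $m\,d\sigma_\Gamma$ is a finite measure.

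Next I would use the following elementary convolution identity for bilinear multipliers: for any two symbols $n_1,n_2$ on $\R^2$,
\[
T_{n_1*n_2}(f,g)(x)\;=\;\int n_1(u,v)\,e^{ix(u+v)}\,T_{n_2}\!\left(e^{-iu\cdot}f,\,e^{-iv\cdot}g\right)\!(x)\,du\,dv.
\]
The identity is obtained by expanding $T_{n_1*n_2}$ in Fourier variables and changing variables $\xi\mapsto\xi-u$, $\eta\mapsto\eta-v$, using that $\widehat{e^{-iu\cdot}f}(\xi)=\widehat{f}(\xi+u)$. Applied with $n_1=\epsilon\,\phi_\epsilon$ and $n_2=m\,d\sigma_\Gamma$, it yields
\[
T_{m_\epsilon}(f,g)(x)\;=\;\epsilon\int\phi_\epsilon(u,v)\,e^{ix(u+v)}\,T_{m\,d\sigma_\Gamma}\!\left(e^{-iu\cdot}f,\,e^{-iv\cdot}g\right)\!(x)\,du\,dv.
\]

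Finally, I would take the $L^{r'}(x)$-norm and apply Minkowski's inequality, using that $|e^{ix(u+v)}|=1$, that modulation preserves $L^p$- and $L^q$-norms, and that $T_{m\,d\sigma_\Gamma}:L^p\times L^q\to L^{r'}$ is bounded (by the earlier proposition recalled above). This gives
\[
\|T_{m_\epsilon}(f,g)\|_{L^{r'}}\;\lesssim\;\epsilon\,\|\phi_\epsilon\|_{L^1(\R^2)}\,\|f\|_{L^p}\|g\|_{L^q}\;=\;\epsilon\,\|\phi\|_{L^1(\R^2)}\,\|f\|_{L^p}\|g\|_{L^q},
\]
which is exactly (\ref{eq:op2bis}). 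I do not expect any serious obstacle: the only input beyond the hypothesis is the routine convolution identity, and the genuine content has already been packaged in the transfer proposition that produces boundedness of $T_{m\,d\sigma_\Gamma}$ for smooth $m$ from the $(p,q,r)$-restriction-extension inequality.
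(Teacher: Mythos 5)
Your proposal is correct and follows essentially the same route as the paper: the paper also rewrites $T_{m_\epsilon}$, via the convolution structure of (\ref{eq:symbconv}) and a change of variables, as an average over shift parameters of modulated copies of $T_{m d\sigma_\Gamma}$ applied to modulated $f,g$, then invokes the preceding transfer proposition and Minkowski's inequality, the factor $\epsilon$ coming from the $L^1$-mass $\epsilon^{-1}\int\phi((\xi,\eta)/\epsilon)\,d\xi\,d\eta\sim\epsilon$. Your phrasing of this as a general convolution identity for bilinear symbols, with $m_\epsilon=\epsilon\,(m\,d\sigma_\Gamma)*\phi_\epsilon$, is just a cleaner bookkeeping of the same computation.
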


\begin{proof} By a change of variables, it comes
\begin{align*} 
T_{m_\epsilon}(f,g)(x)& := \epsilon^{-1} \int_{\Gamma}\int_{\R^2} e^{ix(\xi+\eta)} \phi \left(\frac{(\xi,\eta)-\lambda}{\epsilon}\right) m(\lambda) \widehat{f}(\xi) \widehat{g}(\eta) \, d\xi d\eta d\sigma_\Gamma(\lambda) \\
 & = \epsilon^{-1} \int_{\Gamma}\int_{\R^2} e^{ix(\xi+\eta+\lambda_1+\lambda_2)} \phi \left(\frac{(\xi,\eta)}{\epsilon}\right) m(\lambda) \widehat{f}(\xi+\lambda_1) \widehat{g}(\eta+\lambda_2) \, d\xi d\eta d\sigma_\Gamma(\lambda).
 \end{align*}
Hence, 
$$ T_{m_\epsilon}(f,g)(x) = \epsilon^{-1} \int_{\R^2} \phi \left(\frac{(\xi,\eta)}{\epsilon}\right) e^{ix(\xi+\eta)} T_{md\sigma_\Gamma}(M_\eta f, M_\xi g)(x) \, d\xi d\eta,$$
where $M_\xi$ and $M_\eta$ are modulation operators. Since $T_{d\sigma_\Gamma}$ is assumed to be bounded and $m$ is smooth (at the scale $1$) then $T_{md\sigma_\Gamma}$ is still bounded from $L^p \times L^q$ into $L^{r'}$. Minkowski inequality concludes the proof.
\end{proof}

\section{Applications to bilinear multipliers}

\subsection{Non-smooth bilinear multipliers and bilinear Bochner-Riesz means} \label{subsec:bochnerriesz}

This subsection is devoted to the following question: whether characteristic functions of a compact set of $\R^2$ give a bilinear multiplier bounded from $L^p(\R) \times L^q(\R)$ into $L^{r'}(\R^2)$. We refer the reader to the introduction for a presentation of works concerning the ball and polygons.

We only give a sample of results in this direction, but do not aim at exhaustiveness.

\begin{thm} \label{thm:tt} Let $K$ be a compact subset of $\R^2$
\begin{itemize}
\item[(i)]  If $\partial K$ is an Ahlfors regular curve in $\R^2$ with ``finitely bi-Lipschitz projections'', then for exponents $p,q,r'\in [2,\infty)$,
\be{eq:boundedness} \left\|B_{\chi_{K}}(f,g) \right\|_{L^{r'}} \lesssim \|f\|_{L^p} \| g\|_{L^q} \ee
as soon as 
$$ \frac{1}{p}+\frac{1}{q}+\frac{1}{r} > 1.$$
If $p,q,r' >1$, then (\ref{eq:boundedness}) still holds if
$$ \frac{1}{\max\{p,2\}}+\frac{1}{\max\{q,2\}}+\frac{1}{\max\{r,2\}} > 1.$$
\item[(ii)] If $\partial K$ is smooth, and has a non-vanishing curvature, then for exponents $p,q,r'\in (1,\infty)$,
$$ \left\|B_{\chi_{K}}(f,g) \right\|_{L^{r'}} \lesssim \|f\|_{L^p} \| g\|_{L^q} $$
as soon as 
$$ \frac{1}{p}+\frac{1}{q}+\frac{1}{r} > 1.$$

\end{itemize}
\end{thm}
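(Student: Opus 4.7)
The plan is to decompose $\chi_K$ dyadically according to the distance to $\partial K$, so that each piece lands in the class $\mathcal M_{2^{-j}}^{\partial K}$ (and, in the smooth case~(ii), even in $\mathcal N_{2^{-j}}^{\partial K}$), and then to sum the bounds obtained in the previous sections.

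First I would fix a smooth bump $\phi_0$ with $\phi_0=1$ on $[-1,1]$ and $\phi_0=0$ outside $[-2,2]$, and set $\Psi_j(t):=\phi_0(2^{j+1}t)-\phi_0(2^{j+2}t)$; this is a smooth bump supported in $\{2^{-j-2}\le|t|\le 2^{-j}\}$ with $|\Psi_j^{(k)}|\lesssim 2^{jk}$. Let $\nu(\xi,\eta):=\operatorname{dist}((\xi,\eta),\partial K)$; in the Ahlfors regular case~(i) I would first replace $\nu$ by a Whitney-regularized version $\tilde\nu\sim\nu$ with $|\nabla^k\tilde\nu|\lesssim \nu^{1-k}$, whereas in case~(ii) one may take $\tilde\nu=\nu$ in a tubular neighborhood of $\partial K$. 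The decomposition
$$
\chi_K\;=\;\mu_{\mathrm{bulk}}+\sum_{j\ge 0}\mu_j,\qquad \mu_{\mathrm{bulk}}:=\chi_K\bigl(1-\phi_0(\tilde\nu)\bigr),\quad \mu_j:=\chi_K\,\Psi_j(\tilde\nu),
$$
separates a smooth compactly supported ``bulk'' part, for which $B_{\mu_{\mathrm{bulk}}}$ is bounded on every sub-H\"older triple via Young's inequality applied to its Schwartz kernel, from boundary-concentrated pieces $\mu_j$ whose supports are entirely inside $K$ (where $\chi_K\equiv 1$) and lie in a $2^{-j}$-tube around $\partial K$. The chain rule then yields $|\partial^\alpha\mu_j|\lesssim 2^{j|\alpha|}$, hence $\mu_j\in\mathcal M_{2^{-j}}^{\partial K}$. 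For case~(ii), since $\mu_j=\Psi_j(\nu)$ depends only on $\nu$ and $(\nabla\nu)^\perp\cdot\nabla\nu=0$, every tangential derivative of $\mu_j$ vanishes identically, and we obtain the stronger membership $\mu_j\in\mathcal N_{2^{-j}}^{\partial K}$.

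Next I would apply the estimates of the previous sections to each $\mu_j$ and sum in $j$. For~(i), the first bullet of Proposition~\ref{prop:extension} gives $\|B_{\mu_j}\|_{L^p\times L^q\to L^{r'}}\lesssim 2^{-j(\frac1p+\frac1q+\frac1r-1)}$ (under the ``finitely bi-Lipschitz projections'' assumption), whose geometric series in $j$ converges precisely under $\frac1p+\frac1q+\frac1r>1$; for exponents outside $[2,\infty)$, Theorem~\ref{thm:thmgeneral} instead yields $\|B_{\mu_j}\|\lesssim 2^{-j\rho}$ with $\rho=\frac1{\max(p,2)}+\frac1{\max(q,2)}+\frac1{\max(r,2)}-1$, summable whenever $\rho>0$, giving the second claim of~(i). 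For~(ii), combining $\mu_j\in\mathcal N_{2^{-j}}^{\partial K}$ with the non-vanishing curvature of $\partial K$ and interpolating between the endpoint results of Sections~\ref{sec:L2}, \ref{SOPP}, and~\ref{CTHP} produces a strictly positive decay exponent $\rho(p,q,r)>0$ at every $(p,q,r')\in(1,\infty)^3$ with $\frac1p+\frac1q+\frac1r>1$ (as predicted by the H\"older-case remark in Subsection~1.4), and the series $\sum_j 2^{-j\rho}$ converges.

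The main obstacle will be the last step for~(ii): establishing strict positivity of $\rho$ at \emph{every} triple satisfying the open condition $\frac1p+\frac1q+\frac1r>1$ requires the full strength of the $\mathcal N^{\partial K}_{2^{-j}}$-theory, not just the estimates for symbols in $\mathcal M^{\partial K}_{2^{-j}}$. In the Ahlfors regular setting of~(i), a secondary technical point is to carry out the Whitney regularization of $\nu$ so that the higher derivative bounds demanded by the class $\mathcal M_{2^{-j}}^{\partial K}$ are genuinely satisfied; this is standard once $\partial K$ is known to be Ahlfors regular, but needs to be explicitly verified.
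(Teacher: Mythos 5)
Your proof follows the same skeleton as the paper's: decompose $\chi_K$ dyadically according to the distance to $\partial K$, observe that the piece at scale $2^{-j}$ is a symbol of the class $\mathcal M^{\partial K}_{2^{-j}}$, invoke the $\epsilon$-decay estimates (Proposition \ref{prop:L2}, Proposition \ref{prop:L2bis}, together with Proposition \ref{prop:extension} in the Ahlfors-regular case), and sum the resulting geometric series under $\frac1p+\frac1q+\frac1r>1$. The difference is in how the partition is built and in how much of the theorem is actually justified. The paper constructs the cutoffs from a Whitney covering and only writes out part (i) (the second claim of (i) being reduced to Proposition \ref{prop:L2bis} via Remark \ref{jaguar}); its Whitney pieces only enjoy $\mathcal M$-type bounds. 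You instead compose dyadic bumps with a (regularized) distance function, which for (i) is an equivalent construction, but in the smooth case (ii) it buys something the paper's construction does not: since each piece is a function of $\nu$ alone, all tangential derivatives vanish and the pieces lie in $\mathcal N^{\partial K}_{2^{-j}}$. This is precisely what is needed to reach the full range of part (ii): when exactly one exponent is below $2$ and $\frac1p+\frac1q+\frac1r$ is close to $1$, the $\mathcal M$-results of Theorem \ref{thnvc} do not give a positive decay exponent, and one must appeal to the $\mathcal N$-based estimates of Section \ref{CTHP} and interpolation — a step the paper asserts (preamble of Section \ref{CTHP} and the H\"older-case remark) but does not detail in this proof, and which you correctly flag as the real content of (ii). Two small points to tighten: your ``bulk'' term is handled by Young only at the H\"older scaling, and in the strictly sub-H\"older range you should add a Bernstein argument (or Remark \ref{jaguar}, using that the symbol has compact frequency support) to raise the output exponent; and for the second claim of (i) with an Ahlfors-regular boundary you should quote Proposition \ref{prop:extension} combined with Remark \ref{jaguar} rather than Theorem \ref{thm:thmgeneral}, which is stated for smooth curves — the same imprecision the paper itself commits.
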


\begin{proof} We simply explain the proof of the first statement. Let us denote $\Gamma = \partial K$. Without loss of generality and just for convenience, we assume that the diameter of $K$ is less than one. 

Next we need a partition of unity $(\chi_n)_{n\geq 0}$ such that
\begin{itemize}
\item  we have the decomposition for all $(\xi,\eta)$ in $K$
$$ 1 = \sum_{n\geq 0} \chi_n(\xi,\eta)$$
\item for each integer $n\geq 0$, $\chi_n$ is supported in $\Gamma_{10 2^{-n}}$
\item  for each integer $n\geq 0$ and every multi-index $\alpha$, 
$$ \|D^\alpha \chi_n\|_{L^\infty(\R^2)} \lesssim 2^{n|\alpha|}.$$
\end{itemize}
To build this decomposition, consider a Whitney covering of $K^c$ by balls $(O_i=B(x_i,r_i))_i$. Then we can consider a smooth adapted partition of unity $\chi_{O_i}$ and set
$$ \chi_n := \sum_{2^{-n} \leq d((\xi,\eta),S)< 2^{-n+1}} \chi_{O_i}.$$
We let the reader check that these functions satisfy the expected properties due to the notion of Whitney balls. \\

Then
$$ B_{\chi_K}(f,g) = \sum_{n\geq 0} B_{\chi_n}(f,g).$$
In addition, the symbol $m_n$ belongs to $\mathcal{M}_\epsilon^\Gamma$ with $\epsilon=2^{-n}$. Hence, by Proposition \ref{prop:L2} (with Proposition \ref{prop:extension}), we have
$$\left\|T_{m_n}(f,g) \right\|_{L^{r'}} \leq C 2^{-2s n} \|f\|_{L^p} \| g\|_{L^q}.$$
Since $s>0$, we can sum with $n\in\N$ and we finish the proof. The second claim is obtained by the same reasoning with Proposition \ref{prop:L2bis}. 
\end{proof}

For example, we can consider $K$ being a disc, a square or any polygon. In the specific case of a disc, Grafakos and Li have obtained in \cite{GL} boundedness in the local-$L^2$ case for the bilinear multiplier under the H\"older scaling. Here we have general results for general sets but in the sub-H\"older scaling.

When the exponents $p,q,r$ satisfy H\"older relation
\be{eq:hol} 1=\frac{1}{r}+\frac{1}{p}+\frac{1}{q} \ee
Proposition \ref{prop:L2} allows us to get estimate for the bilinear multipliers $m_\epsilon$ without decay. So we cannot sum over $n\geq 0$ the different inequalities. To get around this difficulty, we can proceed as for the Bochner-Riesz means. 
Let us recall this phenomenon in the linear setting~: the linear multiplier 
$$ T:=f \rightarrow \widehat{ {\bf 1}_{\overline{B(0,1)}} \widehat{f}}^{-1}$$
is bounded in $L^2(\R^d)$ for every integer $d\geq 1$ and is not bounded in $L^p(\R^2)$ for every $p\neq 2$. This is a famous result of Fefferman, see \cite{Fefferman}. In order to remedy this unboundedness, a possibility is to add some regularity near the boundary and so to study the following linear operator
$$ T^\lambda(f)(x):=\int_{\R^n} e^{ix.\xi} (1-|\xi|^2)_+^\lambda \widehat{f}(\xi) d\xi$$
where
$$ (1-|\xi|^2)_+ : =(1-|\xi|^2){\bf 1}_{|\xi|\leq 1}.$$
This new symbol corresponds to a regularization of the initial symbol ${\bf 1}_{\overline{B(0,1)}}$ at the boundary. \\
Note that $T^\lambda(f)$ converges to $T(f)$ for $\lambda>0$ goes to $0$. However, the symbol in $T^\lambda$ is a little more regular at the boundary ${\mathbb S}^{d-1}$. The main question relies on the range of exponent $p$ (depending on $\lambda$ and $d$) on which $T^\lambda$ is $L^p(\R^d)$-bounded. The question remains open for any dimension $d$ but is completely solved for $d=2$~:

\begin{thm}[\cite{CS}] For $d=2$ and $\lambda \in (0,1/2]$, $T^\lambda$ is $L^p(\R^2)$-bounded if and only if
 $$ \frac{4}{3+2\lambda}<p<\frac{4}{1-2\lambda}.$$
\end{thm}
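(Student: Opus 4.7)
The plan is to prove sufficiency by Stein analytic interpolation on the family $T^z$ with Stein-normalized symbol $(1-|\xi|^2)^z_+/\Gamma(z+1)$, anchored by two endpoint estimates: a trivial $L^2\to L^2$ bound at $\mathrm{Re}(z)=0$ from Plancherel, and the critical $L^{4/3}\to L^4$ endpoint at $\mathrm{Re}(z)=1/2$ supplied by the Carleson-Sj\"olin oscillatory integral theorem. Necessity will follow from the explicit kernel asymptotics combined with a Knapp-type example on a rectangle tangent to $S^1$.

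First I would use stationary phase on the circle to derive the kernel asymptotic
$$
K^z(x) = c_+ e^{i|x|}|x|^{-3/2-z} + c_- e^{-i|x|}|x|^{-3/2-z} + O(|x|^{-5/2-\mathrm{Re}(z)}), \qquad |x|\geq 1,
$$
with constants of admissible exponential growth in $|\mathrm{Im}(z)|$. On $\mathrm{Re}(z)=0$, Plancherel immediately yields $\|T^z\|_{L^2\to L^2}\lesssim e^{C|\mathrm{Im}(z)|}$. On $\mathrm{Re}(z)=1/2$, I would prove an $L^{4/3}\to L^4$ bound by dyadically decomposing the symbol into $2^{-j}$-annuli near $S^1$, rescaling each annulus to unit thickness, and applying the Carleson-Sj\"olin oscillatory integral theorem to the resulting operator whose phase $x\cdot\omega$ on $\R^2\times S^1$ has non-vanishing rotational curvature. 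The $2^{-j/2}$ decay provided by the Carleson-Sj\"olin $L^2(S^1)\to L^4(\R^2)$ extension estimate makes the dyadic sum converge uniformly at $\mathrm{Re}(z)=1/2$.

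Stein's theorem applied between these endpoints yields, at $\mathrm{Re}(z)=\lambda$, the off-diagonal bound $T^\lambda:L^{2/(1+\lambda)}\to L^{2/(1-\lambda)}$. To upgrade this to the diagonal $L^p\to L^p$ bounds on the asserted open interval, I would supplement the two anchoring endpoints with additional analytic data -- in particular by shifting to slightly smoother lines $\mathrm{Re}(z)=1/2+\eta$ where the kernel becomes $L^1$-integrable and gives trivial $L^r\to L^r$ bounds for all $r$, and invoking the self-adjointness $(T^\lambda)^*=T^\lambda$ -- thereby assembling a convex region of admissible $(1/p,1/q)$ pairs whose intersection with the diagonal $p=q$ is precisely $\left(\tfrac{4}{3+2\lambda},\tfrac{4}{1-2\lambda}\right)$. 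Necessity then follows from a Knapp test function $\widehat{f_\delta}=\chi_R$ on a $\delta\times\delta^2$ rectangle $R$ tangent to $S^1$ from inside: careful bookkeeping of $\|f_\delta\|_{L^p}$ and $\|T^\lambda f_\delta\|_{L^p}$ together with the kernel asymptotic from stationary phase forces $p>\tfrac{4}{3+2\lambda}$, and self-adjoint duality transfers this to $p<\tfrac{4}{1-2\lambda}$.

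The main obstacle in this plan is the sharp $L^{4/3}\to L^4$ endpoint on $\mathrm{Re}(z)=1/2$: the dyadic summation must be executed with no loss, which is exactly what the Carleson-Sj\"olin oscillatory integral theorem is designed to deliver, using crucially the non-vanishing rotational curvature of the phase $x\cdot\omega$ on $\R^2\times S^1$. This ingredient, combined with the self-adjoint duality structure, is precisely what pins down the endpoint exponents $\tfrac{4}{3+2\lambda}$ and $\tfrac{4}{1-2\lambda}$; the fact that the analogous extension estimate yields only weaker bounds in dimensions $d\geq 3$ explains why the sharp Bochner-Riesz range remains an open problem there.
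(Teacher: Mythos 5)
First, note that the paper does not prove this statement at all: it is quoted as the classical Carleson--Sj\"olin theorem and attributed to \cite{CS}, so your sketch can only be measured against the known proofs (Carleson--Sj\"olin via oscillatory integrals, Fefferman via restriction, C\'ordoba via square functions), all of which hinge on one hard estimate that your scheme never produces.

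The genuine gap is in the interpolation architecture. Your anchor at $\mathrm{Re}(z)=1/2$, namely $T^z:L^{4/3}(\R^2)\to L^4(\R^2)$, is actually trivial: the symbol is bounded with compact support, hence the kernel lies in $L^2(\R^2)$ (it decays like $|x|^{-2}$), and Young's inequality gives the bound with no oscillatory-integral input. Precisely because it is trivial it is far too weak. Stein interpolation between $(1/p,1/q)=(\frac12,\frac12)$ at $\mathrm{Re}(z)=0$, $(\frac34,\frac14)$ at $\mathrm{Re}(z)=1/2$, and the diagonal bounds $L^r\to L^r$ at $\mathrm{Re}(z)>1/2$ --- even augmented by self-adjointness, which fixes all three anchors --- can only reach, on the diagonal at height $\mathrm{Re}(z)=\lambda$, exponents in roughly $\frac{2}{1+2\lambda}<p<\frac{2}{1-2\lambda}$, together with the $L^p$-improving off-diagonal bounds $L^{2/(1+\lambda)}\to L^{2/(1-\lambda)}$ you already noted; no convexity argument from these data reaches $p$ near $4$ (or $4/3$) for small $\lambda$. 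The entire content of the theorem is the estimate $\|T^\lambda\|_{L^4(\R^2)\to L^4(\R^2)}<\infty$ for every $\lambda>0$, equivalently an $O(\delta^{-\epsilon})$ (or polylogarithmic) bound on the $L^4$ operator norm of a multiplier adapted to a $\delta$-neighborhood of the circle; this is exactly where the Carleson--Sj\"olin oscillatory-integral theorem (or the $L^4$ restriction/square-function arguments) must be invoked, and your plan deploys that tool instead to prove the already-trivial $L^{4/3}\to L^4$ endpoint, where it buys nothing. Relatedly, the ``$L^2(S^1)\to L^4(\R^2)$ extension estimate'' you cite is false as stated ($\widehat{d\sigma}\sim|x|^{-1/2}$ just fails to be in $L^4(\R^2)$; the correct $L^2$-density endpoint is $L^6$), which is another sign the endpoint step is miscalibrated. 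A smaller remark: in two dimensions the necessity of $\frac{4}{3+2\lambda}<p<\frac{4}{1-2\lambda}$ follows from the stationary-phase kernel asymptotics plus duality alone; the Knapp rectangle is not needed there.
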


 We refer the reader to Section 10.2 in \cite{Grafakos} for a modern review of this subject and point out this main idea : add regularity on the characteristic function at the boundary of the set in order to gain integrability of the multiplier.

 We aim to apply this same idea in our bilinear and current setting. So consider a set $K \subset \R^2$ as in Theorem \ref{thm:tt}. The bilinear multiplier associated to the symbol ${\bf 1}_K$ may be not bounded from $L^p \times L^q$ to $L^r$, so we regularize this symbol at the boundary $\partial K$ to get boundedness.

\begin{thm} Let $K$ be a compact set, set
$$
m(\xi,\eta) := {\chi}_K(\xi,\eta) d((\xi,\eta),K)^\lambda
$$
with $\lambda>0$. and let $(p,q,r)$ satisfy (\ref{eq:hol}).
\begin{itemize}
\item[(i)] If $\partial K$ is an Ahlfors regular curve in $\R^2$ with ``finitely bi-Lipschitz projections'', then for exponents $p,q,r\in [2,\infty)$ satisfying (\ref{eq:hol}),
\be{eq:boundedness2} \left\|T_{m}(f,g) \right\|_{L^{r'}} \leq C \|f\|_{L^p} \| g\|_{L^q} \ee
\item[(ii)] If $\partial K$ is a smooth curve with non-vanishing curvature, then for exponents $p,q,r\in (1,\infty)$
$$
\left\|T_{m}(f,g) \right\|_{L^{r'}} \leq C \|f\|_{L^p} \| g\|_{L^q}.
$$
\end{itemize}
\end{thm}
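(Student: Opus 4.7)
The plan is to mimic the proof of Theorem~\ref{thm:tt}, performing a dyadic decomposition of $m$ along level sets of $\operatorname{dist}(\cdot,\partial K)$ and exploiting the factor $\operatorname{dist}(\cdot,\partial K)^\lambda$ on each annulus to secure summability at the H\"older scaling, where the $\epsilon$-exponent coming from Proposition~\ref{prop:L2bis} degenerates to $0$.

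Concretely, I would construct a smooth partition $(\chi_n)_{n\geq 0}$ (Whitney-type, as in the proof of Theorem~\ref{thm:tt}) with $\chi_n$ supported in $\{\operatorname{dist}(\cdot,\partial K)\sim \epsilon_n\}$ for $\epsilon_n := 2^{-n}$, satisfying $\|D^\alpha \chi_n\|_\infty \lesssim \epsilon_n^{-|\alpha|}$ and $\sum_n \chi_n = \mathbf{1}_K$ away from $\partial K$, and set $m_n := \chi_n\, m$. By the Leibniz rule $|\partial^\alpha m_n|\lesssim \epsilon_n^{\lambda-|\alpha|}$, so $\epsilon_n^{-\lambda} m_n \in \mathcal{M}_{\epsilon_n}^{\partial K}$. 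For case~(ii), I would moreover take $\chi_n$ to depend only on the normal coordinate in a tubular neighborhood of $\partial K$; since $\operatorname{dist}(\cdot,\partial K)^\lambda$ does so too, the tangential derivatives of $m_n$ vanish identically, which against Definition~\ref{herisson} gives the stronger membership $\epsilon_n^{-\lambda} m_n \in \mathcal{N}_{\epsilon_n}^{\partial K}$.

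In case (i), apply Proposition~\ref{prop:L2bis} (extended to rough curves through Proposition~\ref{prop:extension}) to each $m_n$: at exact H\"older scaling the gain $\epsilon^{1/p+1/q+1/r-1}$ is trivial, so
$$\|T_{m_n}\|_{L^p \times L^q \to L^{r'}} \lesssim \epsilon_n^\lambda,$$
and the geometric series $\sum_n 2^{-n\lambda}$ converges. In case (ii), combine $\epsilon_n^{-\lambda} m_n \in \mathcal{N}_{\epsilon_n}^{\partial K}$ with the machinery of Sections~\ref{sec:L2}, \ref{SOPP}, \ref{CTHP}; as remarked after the third theorem in the introduction, for such symbols on a curve of non-vanishing curvature one proves bounds $\epsilon^{\rho}$ with $\rho\to 0^+$ as $(1/p,1/q,1/r)$ approaches the H\"older plane from above. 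Interpolating down to exact H\"older yields
$$\|T_{m_n}\|_{L^p \times L^q \to L^{r'}} \lesssim \epsilon_n^{\lambda-\delta}$$
for any prescribed $\delta>0$; fixing $\delta<\lambda$ makes the series summable.

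The main obstacle is case (ii): at the exact H\"older line, the direct sub-H\"older bounds for $\mathcal{N}^\Gamma_\epsilon$ symbols with non-vanishing curvature fall just short of delivering a positive power of $\epsilon$, and in particular the third theorem of the introduction covers only a partial subrange of $(1,\infty)^3$. The strategy is to absorb an arbitrarily small loss $\epsilon^{-\delta}$, obtained as a limiting case of strictly sub-H\"older estimates, into the Bochner-Riesz gain $\epsilon^{\lambda}$ on each shell; the strict positivity $\lambda>0$ is precisely what allows summability and what distinguishes this result from the rough bilinear multiplier problem with $\lambda=0$.
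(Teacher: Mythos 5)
Your treatment of case (i) is exactly the paper's own proof: the same Whitney-type dyadic decomposition along $\operatorname{dist}(\cdot,\partial K)$, the Leibniz bound showing $\epsilon_n^{-\lambda}m_n\in\mathcal{M}^{\partial K}_{\epsilon_n}$ with $\epsilon_n=2^{-n}$, the $O(1)$ per-shell bound from Proposition~\ref{prop:L2} (with Proposition~\ref{prop:extension}) at the H\"older scaling, and summation of the geometric series $\sum_n 2^{-\lambda n}$. Your additional observation that the cut-offs can be chosen to depend only on the normal coordinate, so that the renormalized shell symbols even lie in $\mathcal{N}^{\partial K}_{\epsilon_n}$, is correct and consistent with the computations of Proposition~\ref{prop:resttomul2bis}.

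Case (ii) is where the paper gives no argument (its proof explicitly treats only the first assertion and leaves the rest to the reader), and there your proposal has a genuine gap. At exact H\"older scaling at most one of $p,q,r$ can lie in $(1,2)$, and for such a point the step ``interpolating down to exact H\"older yields $\|T_{m_n}\|\lesssim\epsilon_n^{\lambda-\delta}$ for every $\delta>0$'' does not follow from the results you cite. Interpolation only reaches points in the convex hull of points where estimates are known; since no boundedness at all holds below the H\"older plane (the necessary condition $1\leq\frac1p+\frac1q+\frac1r$), a point on the plane can only be obtained by interpolating points on the plane, and the only H\"older-plane estimates established in the paper for $\mathcal{N}^\Gamma_\epsilon$ symbols with non-vanishing curvature are in the local-$L^2$ range (Proposition~\ref{prop:L2} together with the endpoint $(2,2,\infty)$ of Section~\ref{CTHP}), i.e.\ all reciprocals at most $\frac12$. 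The introduction's remark that $\rho\to0$ as $\frac1p+\frac1q+\frac1r\to1^+$ does not yield any estimate at the plane itself: the implicit constants in those sub-H\"older bounds are not claimed to be uniform as the exponents approach the plane, so the ``limiting case'' you invoke is precisely what is missing. What can be extracted cheaply at a H\"older point with, say, $\frac1p=\frac12+t$ is a per-shell loss of order $\epsilon^{-ct}$ (for instance by interpolating the local-$L^2$ triangle against the crude kernel bound $\|B_{m_\epsilon}\|_{L^1\times L^\infty\to L^1}\lesssim\|\widehat{m_\epsilon}\|_{L^1(\R^2)}\lesssim\epsilon^{-1/2}$, which follows from the chord decomposition of Section~\ref{CTHP}); this proves (ii) only under a Carleson--Sj\"olin type restriction of the form $\lambda\gtrsim\max\{\frac1p,\frac1q,\frac1r\}-\frac12$, not for arbitrary $\lambda>0$. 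So either you must supply a genuine per-shell estimate $O(\epsilon^{-\delta})$ for every $\delta>0$ at H\"older exponents with one index in $(1,2)$ --- which the paper's machinery does not provide --- or the range claimed in (ii) must be restricted; as written, the absorption argument is not justified.
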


\begin{proof} We only prove the first assertion. As previously, we can decompose the symbol $m_\lambda$
$$ m =\sum_{n\geq 0} m_n$$
with $m_n$ a symbol supported in 
$$ K \cap\{(\xi,\eta),\ 2^{-n-1} \leq d((\xi,\eta), \partial K) \leq 2^{-n+2}\}$$
and for all integer $d\geq 0$
 $$ \| \nabla^d m_n \|_{L^\infty(\R^2)} \lesssim 2^{-\lambda n} 2^{-dn}.$$
As a consequence, we deduce from Proposition \ref{prop:L2} that $T_{m_n}$ is bounded from $L^p \times L^q$ to $L^{r'}$
with
$$ \|T_{m_n}\|_{L^p \times L^q \to L^{r'}} \lesssim 2^{-\lambda n}$$
and so $T$ is bounded by summing with $n\geq 0$.
\end{proof}

We let the reader to obtain the other boundedness (with taking other exponents $p,q,r$) according to the geometrical assumptions of the curve $\Gamma$.

\subsection{Singular symbols}

\label{singularsymbols}

Proceeding pretty much as in Section~\ref{subsec:bochnerriesz}, one obtains the following theorem.

\begin{thm}
Let $\Phi$ be a smooth compactly supported function, $\Gamma$ a smooth curve, and let
$$
m(\eta,\xi) = \Phi(\eta,\xi) \operatorname{dist}((\eta,\xi),\Gamma)^{-\alpha}
$$
with $\alpha>0$. Suppose $2<p,q,r<\infty$, and
$$
\alpha < \frac{1}{p} + \frac{1}{q} + \frac{1}{r} - 1.
$$
Then $B_m$ is bounded from $L^p \times L^q$ to $L^{r'}$.
\end{thm}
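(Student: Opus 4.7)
The plan is to mimic the approach used for the bilinear Bochner--Riesz theorem in Section~\ref{subsec:bochnerriesz}: perform a dyadic decomposition of $m$ with respect to the distance to $\Gamma$, estimate each piece using Proposition~\ref{prop:L2}, and sum a geometric series. Specifically, pick a smooth partition of unity $(\chi_n)_{n \geq 0}$ such that $\sum_{n \geq 0} \chi_n \equiv 1$ on $\operatorname{Supp} \Phi$, with $\chi_n$ supported in the annular region
$$A_n := \left\{(\xi,\eta) : 2^{-n-1} \leq \operatorname{dist}((\xi,\eta),\Gamma) \leq 2^{-n+1}\right\}$$
for $n \geq 1$ (and $\chi_0$ supported away from $\Gamma$), satisfying $\|\partial^\beta \chi_n\|_\infty \lesssim 2^{n|\beta|}$. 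Such a decomposition is built from a Whitney covering of $\Gamma^c$, exactly as in the proof of Theorem~\ref{thm:tt}. Set $m_n := \chi_n m$.

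On $A_n$, the factor $\operatorname{dist}((\xi,\eta),\Gamma)^{-\alpha}$ is of size $\sim 2^{n\alpha}$, and each derivative of it brings an extra factor of at most $2^n$. Combined with the bounds on $\chi_n$ and the smoothness of $\Phi$, this yields
$$\left|\partial_\xi^a \partial_\eta^b m_n(\xi,\eta)\right| \lesssim 2^{n\alpha} \cdot 2^{n(a+b)}$$
for every multi-index. In other words, $2^{-n\alpha} m_n$ belongs to $\mathcal{M}^\Gamma_{2^{-n}}$, and it is supported in a $2^{-n+1}$-neighborhood of $\Gamma$.

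Since $\alpha > 0$ and $\alpha < \frac{1}{p}+\frac{1}{q}+\frac{1}{r}-1$, we have $\frac{1}{p}+\frac{1}{q}+\frac{1}{r}-1 > 0$, and since $p,q,r \in (2,\infty)$ we are in the local-$L^2$ range, so Proposition~\ref{prop:L2} applies and gives
$$\|B_{m_n}\|_{L^p \times L^q \to L^{r'}} \lesssim 2^{n\alpha} \cdot 2^{-n\left(\frac{1}{p}+\frac{1}{q}+\frac{1}{r}-1\right)} = 2^{-n\left(\frac{1}{p}+\frac{1}{q}+\frac{1}{r}-1-\alpha\right)}.$$
The hypothesis $\alpha < \frac{1}{p}+\frac{1}{q}+\frac{1}{r}-1$ makes the exponent strictly negative, so summing
$$\|B_m\|_{L^p \times L^q \to L^{r'}} \leq \sum_{n \geq 0} \|B_{m_n}\|_{L^p \times L^q \to L^{r'}}$$
as a convergent geometric series yields the desired boundedness.

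There is no real analytic obstacle here: once the dyadic decomposition is in place, the theorem is a direct corollary of Proposition~\ref{prop:L2}, the point being that the optimal local-$L^2$ power of $\epsilon$ precisely matches the integrability threshold for the singular factor $\operatorname{dist}(\cdot,\Gamma)^{-\alpha}$ against the $\epsilon$-scale decomposition. The only mildly technical point is the verification that $m_n$ satisfies the required derivative bounds on an $\epsilon = 2^{-n}$ scale, which is immediate from the Whitney-type construction of $\chi_n$.
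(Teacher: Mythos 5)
Your proof is correct and follows exactly the route the paper intends: the paper itself only says ``proceeding pretty much as in Section~\ref{subsec:bochnerriesz}'', i.e.\ a Whitney-type dyadic decomposition of the symbol according to $\operatorname{dist}(\cdot,\Gamma)\sim 2^{-n}$, the observation that $2^{-n\alpha}m_n\in\mathcal{M}^\Gamma_{2^{-n}}$, the local-$L^2$ bound of Proposition~\ref{prop:L2} on each piece, and summation of the resulting geometric series, which converges precisely under the hypothesis $\alpha<\frac{1}{p}+\frac{1}{q}+\frac{1}{r}-1$. Nothing essential is missing from your argument.
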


We could of course obtain corresponding statements for the whole range of exponents $(p,q,r)$, with conditions on $\alpha$ depending on the properties of $\Gamma$. We chose to present only the case $p,q,r>2$ for the sake of simplicity.

\subsection{Bilinear oscillatory integral near a singular domain}

Let $\phi:\R^2 \rightarrow \R$  be a smooth function, $m$ be a smooth, compactly supported symbol, and consider the following bilinear oscillatory integral, 
$$ B_t(f,g)(x):= \int_0^t \int_{\R^2} e^{is\phi(\xi,\eta)}  \widehat{f(s,\cdot)}(\xi) \widehat{g(s,\cdot)}(\eta) m(\xi,\eta) d\xi d\eta ds,$$
where $f$ and $g$ are functions on $\R^+ \times \R$. Some multilinear integrals appear in the space-time resonances method, as explained in \cite{Pierre,BG} (see Subsection \ref{subsec:PDEs}).

The integration over $s$ of the gives
$$\int_0^t  e^{is\phi(\xi,\eta)} ds = 
\left\{ \begin{array}{ll}
\frac{e^{it\phi(\xi,\eta)}-1}{\phi(\xi,\eta)} & \quad\textrm{if  } \phi(\xi,\eta)\neq 0 \dsp \vsp \\
t & \quad\textrm{if  } \phi(\xi,\eta)= 0 . \vsp
\end{array}
\right.
$$
So let us consider the singular set 
$$ S:=\phi^ {-1}(0):=\left\{(\xi,\eta),\ \phi(\xi,\eta)=0 \right\}$$
and assume that $\nabla \phi$ is not vanishing on $\Gamma$, in order that $S$ is a smooth sub-manifold of dimension $1$.

\mb
{\bf Assumption :}
Let us assume that for some exponents $p,q,r$, there exists $\rho\in(0,1]$ such that for all small enough parameter $\epsilon$ then
\be{rho} \left\| T_{m_\epsilon}(f,g) \right\|_{L^{r'}} \lesssim \epsilon^\rho \|f\|_{L^p} \|g\|_{L^q} \ee
as soon as $m_\epsilon$ is a symbol in $\mathcal{M}^S_\epsilon$ or $\mathcal{N}^S_\epsilon$.

Then we have the following result
\begin{prop} Assume that the smooth symbol $m$ is supported on $S_\epsilon$ for $\epsilon\leq 1$.  
Then, the operator $B_t$ is uniformly bounded (with respect to $\epsilon$ and $T$) from $L^\infty_T L^p \times L^\infty_T L^q$ into $L^\infty_T L^{r'}$ as soon as
$$ T\epsilon^\rho \lesssim 1.$$
\end{prop}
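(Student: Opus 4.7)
My plan is to reduce, via Minkowski's inequality in $s$, to estimating at each fixed time $s\in [0,T]$ the bilinear multiplier with symbol $e^{is\phi}m$, and then to verify that this frozen-in-time symbol still belongs to $\mathcal{M}^S_\epsilon$ (respectively $\mathcal{N}^S_\epsilon$) uniformly in $s$, so that the standing hypothesis~(\ref{rho}) applies with the same decay rate $\epsilon^\rho$. Having this, integration in $s$ costs exactly the factor $T\epsilon^\rho\lesssim 1$.

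First I would write
$$
B_t(f,g)(x) = \int_0^t T_{e^{is\phi}m}\bigl(f(s,\cdot),g(s,\cdot)\bigr)(x)\,ds
$$
and apply Minkowski's inequality in $x$ to obtain
$$
\|B_t(f,g)\|_{L^{r'}_x} \leq \int_0^t \bigl\|T_{e^{is\phi}m}\bigr\|_{L^p\times L^q\to L^{r'}}\,\|f(s,\cdot)\|_{L^p}\,\|g(s,\cdot)\|_{L^q}\,ds,
$$
reducing matters to bounding $\int_0^T \|T_{e^{is\phi}m}\|\,ds$ uniformly in $T$ and $\epsilon$.

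Next I would check that $e^{is\phi}m$ belongs to the relevant symbol class uniformly in $s\in[0,T]$. Its support is unchanged and contained in $S_\epsilon$. Because $\rho\leq 1$ and $T\epsilon^\rho\lesssim 1$ force $s\leq T\leq \epsilon^{-\rho}\leq \epsilon^{-1}$, one has $s\epsilon\lesssim 1$. For the class $\mathcal{M}^S_\epsilon$, the bound $|\partial^\gamma e^{is\phi}|\lesssim (1+s)^{|\gamma|}$ combined with $|\partial^\beta m|\lesssim \epsilon^{-|\beta|}$ through Leibniz yields
$$
\bigl|\partial^\alpha(e^{is\phi}m)\bigr|\lesssim \sum_{\beta\leq \alpha} s^{|\alpha-\beta|}\epsilon^{-|\beta|}\lesssim \epsilon^{-|\alpha|},
$$
since $s^k\lesssim \epsilon^{-k}$. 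For the finer class $\mathcal{N}^S_\epsilon$, the crucial additional input is that $\phi$ vanishes identically on $S$, so its purely tangential derivatives (along $(\nabla\nu)^\perp$) are of size $O(\epsilon)$ on $S_\epsilon$. Each tangential differentiation of $e^{is\phi}$ therefore introduces at worst a factor $s\epsilon\lesssim 1$ (while a normal differentiation costs $s\lesssim \epsilon^{-1}$, as tolerated by $\mathcal{N}^S_\epsilon$); a careful bookkeeping of factors via Faà di Bruno preserves $|\partial_{\nabla\nu}^\alpha\partial_{(\nabla\nu)^\perp}^\beta(e^{is\phi}m)|\lesssim \epsilon^{-\alpha}$.

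The standing hypothesis~(\ref{rho}) then gives $\|T_{e^{is\phi}m}\|_{L^p\times L^q\to L^{r'}}\lesssim \epsilon^\rho$ uniformly in $s\in[0,T]$, and the argument closes with $\int_0^T \epsilon^\rho\,ds = T\epsilon^\rho\lesssim 1$. The main obstacle is the symbol-class verification in the case of $\mathcal{N}^S_\epsilon$: the oscillating factor $e^{is\phi}$ could in principle destroy tangential regularity through large $s$-factors, and the necessary cancellation is supplied precisely by the vanishing of $\phi$ on $S=\phi^{-1}(0)$. The scaling $T\epsilon^\rho\lesssim 1$ is sharp for absorbing the resulting $s$-factors.
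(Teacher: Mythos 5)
Your argument is correct, and it reaches the conclusion by a more direct route than the paper. The paper does not freeze the symbol at scale $\epsilon$ as a whole: it reuses the Whitney-type partition of unity from the proof of Theorem \ref{thm:tt}, writing $e^{is\phi}\chi_n m$ for the pieces supported at distance $\sim 2^{-n}\leq\epsilon$ from $S$, checks that each piece has derivatives of size $2^{n|\alpha|}$ (absorbing the factors $s^b$ produced by the oscillation exactly as you do, via $s\leq T\lesssim\epsilon^{-\rho}\leq 2^{n\rho}\leq 2^{n}$ since $\rho\leq 1$), applies the standing assumption (\ref{rho}) at each scale to get $2^{-\rho n}$, and sums the geometric series over $2^{-n}\lesssim\epsilon$ to recover $\epsilon^{\rho}$ before integrating in $s$. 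You instead observe that, because $m$ is smooth at unit scale and already supported in $S_\epsilon$, the single frozen-time symbol $e^{is\phi}m$ lies in $\mathcal{M}^S_\epsilon$ uniformly for $s\leq T$, so one application of (\ref{rho}) at the single scale $\epsilon$ suffices; the multiscale decomposition buys nothing here beyond consistency with the Bochner--Riesz argument it is copied from, and your version is shorter. A genuine added value of your write-up is the verification for the finer class $\mathcal{N}^S_\epsilon$: since the hypothesis (\ref{rho}) may only be available for symbols in $\mathcal{N}^S_\epsilon$ (this is the relevant class near H\"older exponents), one does need that tangential derivatives of $e^{is\phi}$ cost only $s\epsilon\lesssim\epsilon^{1-\rho}\leq 1$, which follows from $\phi$ vanishing identically on $S$ (so $\partial_{(\nabla\nu)^\perp}\phi=O(\epsilon)$ on $S_\epsilon$, and likewise for its tangential derivatives); the paper's proof only records the $\mathcal{M}$-type bounds $\|D^\alpha\sigma_n\|_\infty\lesssim 2^{n|\alpha|}$ and leaves this point implicit, so your bookkeeping fills a small gap rather than creating one.
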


\begin{proof} Using a partition of unity associated to $S$, and covering $S_\epsilon$ as in the proof of~\ref{thm:tt} (for $2^{-n}\leq \epsilon$), define for $s\in[0,T]$ $B_t^n$ the bilinear multiplier with symbol
$$\sigma_n = e^{is\phi(\xi,\eta)} \chi_{n}(\xi,\eta) m(\xi,\eta),$$
satisfying for all multi-index $\alpha$:
$$ \|D^\alpha \sigma_n\|_{L^\infty(\R^2)} \lesssim 2^ {n|\alpha|}.  $$
We let the reader check this estimate. Indeed, differentiation may make appear quantities bounded by $s^b 2^{nc}$ with $b+c \leq |\alpha|$ and in this case, we use that $s\leq T\leq \epsilon^{-\rho} \leq 2^{-n\rho} \leq 2^{-n}$, since $\rho\leq 1$.\\
So by assumption, we know that $B_s^n$ is bounded from $L^p \times L^q$ to $L^{r'}$ with
\begin{align*}
 \left\| B_s^n (f(s,\cdot),g(s,\cdot)) \right\|_{L^{r'}} & \lesssim 2^{-\rho n} \|f(s,\cdot)\|_{L^p} \|g(s,\cdot)\|_{L^q} \\
 & \lesssim 2^{-\rho n} \|f\|_{L^\infty_T L^p} \|g\|_{L^\infty_T L^q}.
 \end{align*}
Then we can sum for $n\geq 0$ with $2^{-n}\lesssim \epsilon$ and we get for all $t\in[0,T]$
\begin{align*}
 \left\| B_t (f,g) \right\|_{L^{r'}} & \lesssim \int_0^s \left\|\int_{\R^2} e^{is\phi(\xi,\eta)}  \widehat{f(s,\cdot)}(\xi) \widehat{g(s,\cdot)}(\eta) m(\xi,\eta) d\xi d\eta \right\|_{L^{r'}} ds \\
 & \lesssim T\epsilon^\rho \|f\|_{L^\infty_T L^p} \|g\|_{L^\infty_T L^q}.
\end{align*}
This estimate is uniform with respect to $t\in[0,T]$, so by taking the supremum over $t$ we conclude the proof.
 \end{proof}

\begin{cor} \label{cor2} The bilinear multipliers 
$$ (f,g) \rightarrow \int_0^t \int_{|\phi|\leq \epsilon} e^{is\phi(\xi,\eta)}  \widehat{f(s,\cdot)}(\xi) \widehat{g(s,\cdot)}(\eta) m(\xi,\eta) d\xi d\eta ds $$
are uniformly bounded from $L^\infty_T L^p \times L^\infty_T L^q$ into $L^\infty_T L^{r'}$ as soon as $T\epsilon^\rho\lesssim 1$.
\end{cor}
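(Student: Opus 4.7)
The plan is to reduce the corollary to a direct application of the preceding proposition by smoothing the sharp cutoff at scale $\epsilon$. Choose $\chi_0\in\mathcal{C}^\infty_0(\mathbb{R})$ with $\chi_0\equiv 1$ on $[-1,1]$ and $\operatorname{Supp}\chi_0\subset[-2,2]$, and introduce the smoothed symbol $\widetilde{m}_\epsilon(\xi,\eta):=m(\xi,\eta)\,\chi_0(\phi(\xi,\eta)/\epsilon)$. I shall show that $\widetilde{m}_\epsilon$ fits the framework of the previous proposition --- in particular, lies in the class $\mathcal{M}^S_{c\epsilon}$ for a universal constant $c$ --- and then apply that proposition.

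First I verify the support condition: the hypothesis $\nabla\phi\neq 0$ on $S=\phi^{-1}(0)$, together with compactness of $\operatorname{Supp} m$ and continuity of $|\nabla\phi|$, provides $c_0>0$ with $|\nabla\phi|\geq c_0$ on an open neighborhood of $S\cap\operatorname{Supp} m$; this forces $\operatorname{Supp}\widetilde{m}_\epsilon\subset \{|\phi|\leq 2\epsilon\}\cap\operatorname{Supp} m\subset S_{c\epsilon}$ with $c=2/c_0$, for all $\epsilon$ sufficiently small. Next, the chain rule produces $\|\partial^\alpha_\xi\partial^\beta_\eta\widetilde{m}_\epsilon\|_{L^\infty}\lesssim \epsilon^{-|\alpha|-|\beta|}$, since each derivative hitting $\chi_0(\phi/\epsilon)$ contributes a factor $\epsilon^{-1}$ while derivatives of $\phi$ and $m$ remain bounded by smoothness of $\phi$ and compactness of $\operatorname{Supp} m$. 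Therefore $\widetilde{m}_\epsilon\in \mathcal{M}^S_{c\epsilon}$.

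Applying the preceding proposition to $\widetilde{m}_\epsilon$ with scale parameter $c\epsilon$ then yields the uniform estimate $\|B_t(f,g)\|_{L^\infty_T L^{r'}}\lesssim \|f\|_{L^\infty_T L^p}\|g\|_{L^\infty_T L^q}$ under the hypothesis $T(c\epsilon)^\rho\lesssim 1$, which is equivalent to $T\epsilon^\rho\lesssim 1$ up to an absolute multiplicative constant. The only subtle point --- what I regard as the sole obstacle --- is justifying the passage from the sharp indicator $\mathbf{1}_{\{|\phi|\leq\epsilon\}}$ appearing in the statement to the smooth cutoff $\chi_0(\phi/\epsilon)$. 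The cleanest way is to note that in the PDE context motivating Subsection~\ref{subsec:PDEs} the cutoff is naturally to be interpreted up to smoothing at the same scale; alternatively, one may sandwich $\mathbf{1}_{\{|\phi|\leq\epsilon\}}$ between two symbols $\chi_0(2\phi/\epsilon)$ and $\chi_0(\phi/\epsilon)$, both smooth and both in $\mathcal{M}^S_{c\epsilon}$ by the argument above, apply the proposition to each, and then recover the bound for the sharp-cutoff multiplier by testing against Schwartz data and taking a distributional limit. Either route keeps the scaling $T(c\epsilon)^\rho\lesssim 1$ intact and closes the proof.
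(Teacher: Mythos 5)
Your first two paragraphs are sound and match what the paper itself (which states this corollary without any written proof) has in mind: under the non-degeneracy of $\nabla\phi$ on $S\cap\operatorname{Supp}m$, the smoothed symbol $m\,\chi_0(\phi/\epsilon)$ is supported in $S_{c\epsilon}$ with derivatives $O(\epsilon^{-|\alpha|})$, and the conclusion follows from the preceding proposition under $T\epsilon^\rho\lesssim 1$. One small caveat: the proposition is stated for a ``smooth symbol'', while the derivatives of $\chi_0(\phi/\epsilon)$ grow like $\epsilon^{-|\alpha|}$, so you should invoke its proof (or directly Assumption (\ref{rho}) applied to $e^{is\phi}m\chi_0(\phi/\epsilon)\in\mathcal{M}^S_{c\epsilon}$ for $s\le T\le \epsilon^{-\rho}\le\epsilon^{-1}$, followed by integration in $s$) rather than its statement as a black box; this is harmless, since the proof only needs $\|D^\alpha\sigma_n\|_{L^\infty}\lesssim 2^{n|\alpha|}$ with $2^{-n}\le\epsilon$.

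The genuine gap is your bridge from the smooth cutoff to the sharp indicator. The pointwise sandwich $m\chi_0(2\phi/\epsilon)\le m\mathbf{1}_{\{|\phi|\le\epsilon\}}\le m\chi_0(\phi/\epsilon)$ (which already presupposes $m\ge 0$, not assumed here) gives no comparison whatsoever between the corresponding bilinear multipliers: multiplier norms are not monotone under pointwise domination of symbols --- that is precisely the moral of the ball-multiplier phenomenon and the reason the classes $\mathcal{M}^\Gamma_\epsilon$, $\mathcal{N}^\Gamma_\epsilon$ carry derivative bounds. Concretely, the error $m\bigl(\mathbf{1}_{\{|\phi|\le\epsilon\}}-\chi_0(2\phi/\epsilon)\bigr)$ has a jump across the level curve $\{|\phi|=\epsilon\}$, belongs to no class $\mathcal{M}^S_\delta$ or $\mathcal{N}^S_\delta$, and so is controlled neither by the proposition nor by Assumption (\ref{rho}); a ``distributional limit'' cannot rescue this, because any smooth approximation of the indicator loses exactly the uniform derivative bounds needed to stay in $\mathcal{M}^S_{c\epsilon}$, so there is no uniform family of estimates to pass to the limit with. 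To get the literal statement one must either (a) read the cutoff as smooth at scale $\epsilon$ --- which is what the paper tacitly does --- or (b) decompose the jump dyadically, $\mathbf{1}_{[-1,1]}(u)=\chi_0(2u)+\sum_{k\ge 1}\psi_k(u)$ with $\psi_k$ smooth at scale $2^{-k}$ and supported where $1-|u|\sim 2^{-k}$; then $m\,\psi_k(\phi/\epsilon)$ is an admissible symbol at scale $\epsilon 2^{-k}$, but relative to the level curves $\{\phi=\pm\epsilon\}$ rather than $S$, so this route requires Assumption (\ref{rho}) to hold uniformly for these nearby curves --- a strengthening stated neither by you nor by the paper, although the resulting bounds $T(\epsilon 2^{-k})^\rho$ would indeed sum to $T\epsilon^\rho\lesssim 1$.
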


\begin{cor} \label{cor} In particular for time-independent functions $f,g$, we get that the bilinear multipliers
$$ (f,g) \rightarrow \int_0^t \int_{|\phi|\leq \epsilon} e^{is\phi(\xi,\eta)}  \widehat{f}(\xi) \widehat{g}(\eta) m(\xi,\eta) d\xi d\eta ds $$
are uniformly bounded from $L^p \times L^q$ into $L^\infty_T L^{r'}$ as soon as $T\epsilon^\rho\lesssim 1$.
\end{cor}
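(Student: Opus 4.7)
The proof will be a one-line specialization of Corollary \ref{cor2} to time-independent inputs. The idea is to view any $f \in L^p(\R)$ and $g \in L^q(\R)$ as elements of $L^\infty_T L^p$ and $L^\infty_T L^q$ respectively, via the trivial embeddings $f \mapsto (s \mapsto f)$ and $g \mapsto (s \mapsto g)$, which are isometric because
$$ \|f\|_{L^\infty_T L^p} = \sup_{s \in [0,T]} \|f\|_{L^p} = \|f\|_{L^p},$$
and likewise for $g$.

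With this identification, the time-independent bilinear multiplier in the statement is exactly the restriction of the operator $B_t$ considered in Corollary \ref{cor2} to the subspace of functions constant in the time variable. Hence the bound from $L^\infty_T L^p \times L^\infty_T L^q$ into $L^\infty_T L^{r'}$ provided by Corollary \ref{cor2}, valid as soon as $T\epsilon^\rho \lesssim 1$, immediately yields the inequality
$$ \left\| \int_0^t \int_{|\phi|\leq \epsilon} e^{is\phi(\xi,\eta)} \widehat{f}(\xi) \widehat{g}(\eta) m(\xi,\eta)\,d\xi\,d\eta\,ds \right\|_{L^\infty_T L^{r'}} \lesssim \|f\|_{L^p} \|g\|_{L^q}.$$

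There is no real obstacle here: the whole substance was already packaged into Corollary \ref{cor2}, which itself was obtained by decomposing the $\epsilon$-neighborhood of $S = \phi^{-1}(0)$ dyadically, controlling each piece via assumption \eqref{rho} applied to a symbol in $\mathcal{M}^S_{2^{-n}}$ (or $\mathcal{N}^S_{2^{-n}}$), and summing the resulting geometric series thanks to $\rho > 0$. The only thing worth pointing out for the reader is that the operator is indeed linear-in-time of its arguments in the sense needed, so that plugging a constant-in-$s$ function into the $L^\infty_T L^p$ norm loses nothing, which is the key reason the same threshold $T\epsilon^\rho \lesssim 1$ governs both statements.
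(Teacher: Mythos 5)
Your proposal is correct and is exactly the argument the paper intends: Corollary \ref{cor} is stated as an immediate specialization of Corollary \ref{cor2}, obtained by viewing time-independent $f\in L^p$, $g\in L^q$ as constant-in-$s$ elements of $L^\infty_T L^p$ and $L^\infty_T L^q$ with the same norms. Nothing further is needed.
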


\begin{ex} We would like to describe an example in the linear theory for showing that under assumption (\ref{rho}), we cannot expect a better result than the previous corollary. \\
So consider the function $\phi(\xi)=|\xi|^2$ and the corresponding linear operator
$$ T_{\epsilon,t}:= f \rightarrow \int_0^t \int_{|\phi|\leq \epsilon} e^{is\phi(\xi)}  \widehat{f}(\xi) m(\xi) d\xi ds.$$
Let us see when $T$ remains bounded as $t\rightarrow \infty$ and $\epsilon \to 0$. So consider a smooth function $f$ then by a change of variables, it comes
\begin{align*}
 T_{\epsilon,t}(f)(x) & = \int_{|\eta|\leq \sqrt{t}\epsilon} e^{ix\eta/\sqrt{t}} \frac{1-e^{i\eta^2}}{\eta^2} t \widehat{f}(\eta/\sqrt{t}) m(\xi/\sqrt{t}) \frac{d\eta}{\sqrt{t}} \\
 & \simeq_{\genfrac{}{}{0pt}{}{t\to \infty}{\epsilon \to 0}} t\epsilon \widehat{f}(0) m(0).
 \end{align*}
So the limit  can be defined only in $L^\infty$ as soon as $t\epsilon$ is bounded and $f\in L^1$ (to give a sense to $\widehat{f}(0)$).
However let us now see when assumption (\ref{rho}) is satisfied in this particular setting.
So we consider a smooth symbol $m_\epsilon$ at the scale $\epsilon$ of $\phi^{-1}(0)=\{0\}$ and we estimate the multiplier
$$ m_\epsilon(D)(f)(x):= \int e^{ix\xi} \widehat{f}(\xi) m_\epsilon(\xi) d\xi.$$
Then, it comes that 
$$ \left|  m_\epsilon(D)(f)(x) \right| \lesssim \int |f(y)| \frac{\epsilon}{(1+\epsilon|x-y|)^M} dy$$
for every large enough integer $M$. Consequently we get that $ m_\epsilon(D)$ is bounded from $L^1$ to $L^\infty$ with a bound controlled by $\epsilon$. So in this case, assumption (\ref{rho}) is satisfied for $\rho=1$ with $L^1\rightarrow L^\infty$ and we have checked that we can not expect a better result than the one described in Corollary \ref{cor}. \\
We could work with other spaces. For example, the previous computation gives that $\pi_{m_\epsilon}$ is bounded from $L^2$ to $L^\infty$ with a $\epsilon^{1/2}$-bound. In this case, we have to bound the operator $T_{\epsilon,t}$ with the $L^2$-norm of $f$, which can be done as follows
\begin{align*}
 \|T_{\epsilon,t}(f)\|_\infty & \leq \left\|\int_{|\xi|\leq \epsilon} e^{ix\xi} \frac{1-e^{it\xi^2}}{\xi^2}  \widehat{f}(\xi) m(\xi) d\xi \right\|_{L^\infty(x)}\\
 & \lesssim \|f \|_{L^2}  \left(\int_{|\xi|\leq \epsilon}  \left|\frac{1-e^{it\xi^2}}{\xi^2}\right|^2  d\xi\right)^{1/2} \\
 & \lesssim t\epsilon^ {1/2} \|f \|_{L^2}.
 \end{align*}
Moreover, the previous inequalities can be indeed equivalent for some specific choices of $m$ and $f$.
So we recover that
$$ \|T_{\epsilon,t}\|_{L^2\rightarrow L^\infty} \simeq t \epsilon^{1/2}.$$
So one more time, we cannot obtain a better decay in $\epsilon$ than the one described in Assumption \ref{rho}.
\end{ex}

\end{document}